 \theoremstyle{plain}
\newtheorem{thm}{Theorem}[section]
\newtheorem{lemma}[thm]{Lemma}
\newtheorem{prop}[thm]{Proposition}
\newtheorem{cor}[thm]{Corollary}
\theoremstyle{definition}
\newtheorem{defn}[thm]{Definition}
\newtheorem{remark}[thm]{Remark}
\numberwithin{equation}{section}
\def\bI{\,\mathbf{I}\,}
\def\cA{\mathcal{A}}
\def\cL{\mathcal{L}}
\def \cP{\mathcal{P}}
\def\cR{\mathcal{R}}
\def\cS{\mathcal{S}}
\def\tr{\mathrm{tr}}
\renewcommand{\@makefnmark}{\mbox{\textsuperscript{}}}
\title{The combinatorics of automorphisms and opposition in generalised polygons}
\author{James Parkinson\footnote{Research supported under the Australian Research Council (ARC) discovery grant DP110103205.} \and Beukje Temmermans\and Hendrik Van Maldeghem}
\date{September 24, 2013}
\begin{document}

\maketitle

\vspace{-0.2cm}

\begin{abstract} We investigate the combinatorial interplay between automorphisms and opposition in (primarily finite) generalised polygons. We provide restrictions on the fixed element structures of automorphisms of a generalised polygon mapping no chamber to an opposite chamber. Furthermore, we give a complete classification of automorphisms of finite generalised polygons which map at least one point and at least one line to an opposite, but map no chamber to an opposite chamber.  Finally, we show that no automorphism of a finite thick generalised polygon maps all chambers to opposite chambers, except possibly in the case of generalised quadrangles with coprime parameters.
\end{abstract}

\section{Introduction}
The ``opposition relation'' is fundamental in Tits' theory of spherical and twin buildings. Roughly speaking, two elements are opposite if they are at maximal distance from one another. An incredible amount of information is encoded in this relation. For instance, given two opposite elements of a spherical building, the geometry of elements incident with the first element is isomorphic to the geometry of elements incident with the second element, with the isomorphism given by the relation of ``not being opposite''. Using this observation Tits \cite{tit:74} was able to classify all spherical buildings as soon as these geometries are rich enough, and the latter is the case whenever each direct factor of the building has rank at least~$3$. Consequently the irreducible spherical buildings of rank at least~3 are essentially equivalent to simple linear algebraic groups of relative rank at least~3, or simple classical linear groups, or certain related groups called ``groups of mixed type''. 

Tits' classification cannot be boldly extended to the rank 2 case (where the building is a \textit{generalised polygon}), as there are many generalised polygons with very different automorphism groups. Tits \& Weiss \cite{TW:02} have classified all generalised polygons satisfying the additional so-called \emph{Moufang condition}. But many non-Moufang examples exist. This is particularly interesting in the finite case, since the main conjectures and problems about finite generalised polygons were stated almost 30 years ago and still remain unresolved, see \cite{kan:84}. The most exciting among these conjectures and problems relate to automorphism groups. For example, the classification of the flag-transitive finite polygons, or the classification of finite polygons with a collineation group acting sharply transitive on the point set, are important open problems.

One of the aims of the present paper is to provide restrictions on how an arbitrary automorphism of a generalised polygon can act, particularly with respect to the opposition relation. The case of generalised quadrangles is analysed in~\cite{TTM:13}, and by the Feit-Higman Theorem~\cite{FH:64} the remaining finite thick generalised polygons are the digons, projective planes, generalised hexagons and generalised octagons. Digons are trivial objects, and projective planes turn out to be well behaved for our purposes. Thus the bulk of this paper deals with the more complicated cases of generalised hexagons and generalised octagons.

Our starting point is a result of Leeb \cite{lee:00} stating that every nontrivial automorphism of a thick spherical building must map at least one residue to an opposite residue (see also Abramenko \& Brown~\cite{AB:09}, and note that this result has been extended to twin buildings; see~\cite{GHM:11} for the case of involutions, and~\cite{DPM:13} for the general case). By far the most ``normal'' behaviour is that there is a chamber (that is, a maximal residue) mapped to an opposite chamber. Our aim is to describe, as precisely as possible, the conditions under which the ``abnormal'' situation where no chamber is mapped to an opposite chamber can occur. In this case the automorphism is called~\emph{domestic}, and recent work suggests that domesticity is intimately related to interesting subconfigurations  of spherical buildings. For example the domestic dualities in odd-dimensional projective spaces are the symplectic polarities, and these fix a large polar space (see~\cite{TTM:12}). The domestic dualities in buildings of type $\mathsf{E_6}$ are the polarities that fix a split building of type $\mathsf{F_4}$ (see~\cite{mal:12}), and the domestic trialities in buildings of type $\mathsf{D_4}$ are the trialities of order 3 of type $\mathsf{I}_{\mathrm{id}}$ fixing a split Cayley generalised hexagon (see \cite{mal:13}). These  examples make one believe that domestic automorphisms are rather well-behaved. Making this explicit, all of the above examples have the following property: The residues which are maximal subject to the condition of being mapped onto an opposite residue all have the same type. If we call a domestic automorphism which does \emph{not} satisfy this property \emph{exceptional domestic}, then no exceptional domestic automorphism for a spherical building of rank at least 3 is known. 

For an automorphism of a generalised polygon, being exceptional domestic is plainly equivalent to mapping at least one point to an opposite, and at least one line to an opposite, yet mapping no chamber to an opposite. In contrast to the higher rank situation, for thick generalised quadrangles it is shown in \cite{TTM:13} that there are precisely three exceptional domestic automorphisms (up to duality), and they only occur in the small quadrangles of order $(2,2)$, $(2,4)$ and $(3,5)$.  They all turn out to have order~4, which is in itself a noteworthy and rather mysterious fact. In the present paper we classify the exceptional domestic automorphisms of \textit{all} finite thick generalised polygons. We show that there are precisely three more examples of such automorphisms (up to duality). The first is a duality of the projective plane of order~$2$ (the Fano plane). The other two are collineations in the small hexagons of order $(2,2)$ and $(2,8)$. In particular, rather surprisingly there are no exceptional domestic automorphisms of finite thick octagons. Thus, in total, precisely $6$ exceptional domestic automorphisms of finite thick generalised polygons exist (up to duality). It turns out that all of the exceptional domestic collineations miraculously have order~4. We do not have an explanation (other than the proof) for this curious phenomenon. The exceptional domestic duality of the Fano plane has order~$8$. 

We also study automorphisms of generalised polygons which are domestic but not exceptional domestic. If $\theta$ is such an automorphism then either $\theta$ maps no point to an opposite point/line (in which case $\theta$ is called \textit{point-domestic}), or $\theta$ maps no line to an opposite line/point (in which case $\theta$ is called \textit{line-domestic}). We show that point-domestic automorphisms and line-domestic automorphisms do not exist for thick $(2n+1)$-gons, while for $2n$-gons the fixed element structures of point-domestic automorphisms and line-domestic automorphisms are shown to be intimately related to \textit{ovoidal subspaces} in the polygon (extending the above observation that domesticity is associated with interesting subconfigurations in spherical buildings). We make this description more concrete by classifying ovoidal subspaces into 3 distinct classes. Moreover, in the case of hexagons and octagons we provide further restrictions on the fixed element structures of point-domestic and line-domestic collineations in terms of the parameters of the polygon. 

The problem of classifying exceptional domestic automorphisms of infinite generalised polygons with diameter~$5$ or more seems to be very difficult (certainly the techniques used in \cite{TTM:13} for infinite quadrangles do not readily generalise to larger diameter). We are tempted to conjecture that there exists many exceptional domestic automorphisms of these polygons, but we have failed to construct a single one of them. So this problem remains open. However, a partial motivation for our investigations is the fact that in \textit{Phan-theory} (see \cite{gra:09}) one is interested in the geometry of chambers mapped to an opposite by an automorphism (usually an involution). Thus knowing when this geometry is empty is good start. Phan-theory is particularly interesting when applied to ``algebraic'' buildings, and in this case the rank~$2$ residues are Moufang polygons. In this respect, we show that infinite Moufang hexagons do not admit exceptional domestic collineations. A similar result for the infinite Moufang octagons is not available, but it seems rather safe to conjecture that no exceptional domestic collineations exist for these polygons either.

We note that the two exceptional domestic collineations of finite hexagons are in some sense the analogues of the exceptional domestic collineations of the quadrangles with parameters $(2,2)$ and $(2,4)$. The exceptional domestic collineation of the generalised quadrangle with parameters $(3,5)$ has no analogue in hexagons (or octagons). The square of this collineation is an \emph{anisotropic} involution, i.e., an involution sending \textit{every} chamber to an opposite chamber (it is, rather amazingly, the exact ``opposite" of a domestic collineation in that it maps as much as possible to opposites). Anisotropic automorphisms seem very rare indeed for thick finite irreducible buildings of rank at least~$2$, as is evidenced in \cite{DPM:13} where it is shown that they can only live in generalised polygons with coprime parameters. In the present paper we strengthen this by showing that no finite thick generalised hexagon or octagon admits an anisotropic automorphism. Hence only quadrangles with coprime parameters remain (and for some quadrangles with parameters $(2^{n}-1,2^n+1)$ examples of anisotropic automorphisms are known). This gives a ``moral'' reason why there is no analogue of the exceptional domestic collineation of the quadrangle with parameters $(3,5)$ for hexagons or octagons. Moreover, the non-existence of anisotropic automorphisms for hexagons and octagons is important in its own right, as it will certainly be useful in the classification of finite flag-transitive polygons. 

We use a variety of methods in this paper. The main drive of our classification theorems are combinatorial counting arguments, combined with geometric and group-theoretic arguments. Moreover, in \cite{tem:10} a generalisation of Benson's Theorem \cite{ben:70} on collineations of finite quadrangles was developed, and in the present paper we place these eigenvalue techniques into a systematic framework and make extensive use of them. Indeed at quite a few critical points in our arguments it is a wonderful miracle that when all of the combinatorial, geometric and group-theoretic arguments seem hopeless, the eigenvalue techniques come through to deliver the solution.

This paper is organised as follows. In Section~\ref{sect:2} we give definitions and provide precise statements of our main results. In Section~\ref{sect:3} we consider domesticity in $(2n+1)$-gons and give the classification of domestic automorphisms in projective planes. In Section~\ref{sect:4} we classify ovoidal subspaces in generalised polygons, and we show that the fixed element structures of point-domestic and line-domestic automorphisms of $2n$-gons are closely related to these subspaces. In Section~\ref{sect:5} we begin our study of exceptional domestic automorphisms by developing general combinatorial techniques for studying automorphisms of generalised polygons. In Section~\ref{sect:6a} we classify exceptional domestic automorphisms of finite thick generalised quadrangles, and in Section~\ref{sect:6} we give the classification for hexagons. In Section~\ref{sect:7} we carry out the much more intricate analysis for octagons. In Section~\ref{sect:8} we show that infinite Moufang hexagons do not admit exceptional domestic collineations. In Section~\ref{sect:9} we study anisotropic automorphisms, and prove that finite thick hexagons and octagons do not admit such automorphisms. Finally, in Appendix~\ref{app:A} we develop the eigenvalue techniques that are used at various points in this paper.

 We thank Alice Devillers for noting a mistake in an earlier version of Theorem~\ref{thm:1a}, and for supplying the example of the exceptional domestic duality of the Fano plane. We had initially overlooked this example, and it was also overlooked in~\cite{TTM:11}.

\section{Definitions and statement of results}\label{sect:2}

\subsection{Definitions}

\begin{defn}
Let $n\geq 2$ be a natural number. A \textit{generalised $n$-gon} is a nonempty point-line  geometry $\Gamma=(\mathcal{P},\mathcal{L},\mathbf{I})$ such that the following two axioms are satisfied:
\begin{enumerate}
\item[1.] $\Gamma$ contains no ordinary $k$-gon (as a subgeometry) for $2\leq k<n$.
\item[2.] Any two elements $x,y\in\mathcal{P}\cup\mathcal{L}$ are contained in some ordinary $n$-gon in~$\Gamma$.
\end{enumerate}
A generalised $n$-gon $\Gamma$ is \textit{thick} if it satisfies the additional axiom:
\begin{enumerate}
\item[3.] Each point of $\Gamma$ is on at least~$3$ lines, and each line contains at least~$3$ points.
\end{enumerate}
\end{defn}

As a general rule, we use lower case letters $p,q,\ldots$ to denote points, and capital letters $L,M,\ldots$ to denote lines. However in some instances, especially in definitions, we use the lower case letters $x,y,z$ to denote either points or lines. The polygon obtained by interchanging the roles of points and lines is called the \textit{dual} polygon, and we have the principle of duality: Every statement about generalised polygons has a dual statement by interchanging the roles of points and lines.

By the Feit-Higman Theorem~\cite{FH:64}, thick finite generalised $n$-gons only exist for $n=2,3,4,6,8$. The \textit{digons} ($n=2$) are trivial objects with every point incident with every line, and are of very little interest here. The generalised $3$-gons are \textit{projective planes}, and the generalised $n$-gons with $n=4,6,8$ are called \textit{generalised quadrangles}, \textit{generalised hexagons}, and \textit{generalised octagons} respectively. We sometimes omit the adjective `generalised' from these expressions.

If $\Gamma$ is a thick finite generalised $n$-gon with $n\geq 3$ then it follows that every point is incident with the same number of lines, say $t+1$ lines, and every line contains the same number of points, say $s+1$ points. The integers $s$ and $t$ are called the \textit{parameters} of~$\Gamma$. The precise determination of the parameter values $(s,t)$ that occur for a finite thick generalised $n$-gon is a famous open problem, however there are some well known constraints on the parameters, including (see \cite{FH:64,hae:81,hig:75}):
\begin{enumerate}
\item[] Projective planes: $s=t$.
\item[] Quadrangles: $s\leq t^2$, $t\leq s^2$, and $s^2(st+1)/(s+t)\in\mathbb{Z}$.
\item[] Hexagons: $s\leq t^3$, $t\leq s^3$, $s^3(s^2t^2+st+1)/(s^2+st+t^2)\in\mathbb{Z}$, and $\sqrt{st}\in\mathbb{Z}$.
\item[] Octagons: $s\leq t^2$, $t\leq s^2$, $s^4(s^3t^3+s^2t^2+st+1)/(s^3+s^2t+st^2+t^3)\in\mathbb{Z}$, and $\sqrt{2st}\in\mathbb{Z}$.
\end{enumerate}
In particular, for generalised octagons we have $s\neq t$. Up to duality, the known examples of finite thick quadrangles have $(s,t)\in\{(q,q),(q,q^2),(q^2,q^3),(q-1,q+1)\}$, and the known examples of finite thick hexagons have parameters $(s,t)\in\{(q,q),(q,q^3)\}$, with $q$ a prime power. Up to duality, the only known examples of finite thick octagons have parameters $(s,t)=(r,r^2)$ with $r$ an odd power of~$2$. The smallest thick projective plane is the \textit{Fano plane}, with parameters $(s,t)=(2,2)$.

The \textit{incidence graph} of a generalised $n$-gon $\Gamma=(\cP,\cL,\mathbf{I})$ is the graph with vertex set $\cP\cup\cL$, and vertices $x$ and $y$ from an edge if and only if $x\bI y$. The distance between $x,y\in\cP\cup\cL$ in the incidence graph is denoted~$d(x,y)$. The incidence graph of $\Gamma$ has diameter~$n$ and girth~$2n$, and these properties actually characterise generalised $n$-gons.

An \textit{automorphism} of a generalised $n$-gon~$\Gamma$ is a bijection $\theta:\cP\cup\cL\to\cP\cup\cL$ such that $p\in\cP$ and $L\in\cL$ are incident if and only if $p^{\theta}$ and $L^{\theta}$ are incident. An automorphism which maps points to points and lines to lines is called a \textit{collineation}, and an automorphism which interchanges the point and line sets is called a \textit{duality}. The \textit{fixed element structure} of an automorphism $\theta:\Gamma\to \Gamma$ is $\Gamma_{\theta}=\{x\in\cP\cup\cL\mid x^{\theta}=x\}$. It is well known (and easy to prove) that the fixed element structure of an automorphism of a generalised $n$-gon is either empty, or consists of a set of elements all opposite one another, or is a tree of diameter at most $n$ in the incidence graph, or is a sub-generalised $n$-gon.

As already discussed in the introduction, the notion of \textit{opposition} is fundamental in the theory of generalised $n$-gons (and more generally twin buildings). The element $x\in\cP\cup\cL$ is \textit{opposite} the element $y\in \cP\cup\cL$ if $d(x,y)=n$. In other words, $x$ is opposite~$y$ if $x$ and $y$ are at maximum distance in the incidence graph of~$\Gamma$. It is clear that if $n$ is even then opposite elements are of the same type (that is, either both points, or both lines), and if $n$ is odd then points are opposite lines, and lines are opposite points. A \textit{maximal flag} in a generalised $n$-gon is a pair $\{p,L\}$ with $p\bI L$. Borrowing from the general language of buildings, we refer to a maximal flag as a \textit{chamber}. The chamber $\{p,L\}$ is \textit{opposite} the chamber $\{p',L'\}$ if either $p'$ is opposite $p$ and $L'$ is opposite $L$, or $p'$ is opposite $L$ and $L'$ is opposite~$p$ (the former case occurs if $n$ is even, and the latter if $n$ is odd).

As a special case of a result of Leeb \cite[Sublemma~5.22]{lee:00} (see also \cite[Proposition~4.2]{AB:09}) every nontrivial automorphism of a thick generalised polygon maps ``something'' to an opposite, in the sense that it must map at least~$1$ point to an opposite point/line, or at least $1$ line to an opposite line/point, or at least $1$ chamber to an opposite chamber. This motivates the following definition.

\begin{defn} An automorphism of a generalised polygon (or more generally of a spherical building) is called \textit{domestic} if it maps no chamber to an opposite chamber. Furthermore, a domestic automorphism of a generalised polygon is called
\begin{itemize}
\item[1.] \textit{line-domestic} if it maps no line to an opposite line/point,
\item[2.] \textit{point-domestic} if it maps no point to an opposite point/line,
\item[3.] \textit{exceptional domestic} if it maps at least 1 point to an opposite point/line, and at least 1 line to an opposite line/point.
\end{itemize}
\end{defn}
Thus every domestic automorphism of a generalised polygon is either line-domestic, or point-domestic, or exceptional domestic, and these classes are disjoint. 

\begin{defn} An automorphism of a generalised polygon (or more generally of a spherical building) is called \textit{anisotropic} if it maps every chamber to an opposite chamber.
\end{defn}

Thus anisotropic automorphisms are the ``opposite'' of a domestic automorphism because they map ``as much as possible'' to opposites. However there are some remarkable connections: For example, it turns out that the square of the unique exceptional domestic collineation of the generalised quadrangle of order $(3,5)$ is an anisotropic collineation. 

Our description of the fixed element structures of point-domestic and line-domestic collineations of generalised $2n$-gons in Section~\ref{sect:4} is in terms of \textit{ovoidal subspaces}. We now give the definition of these objects. Note that they first appeared in \cite{BM:98} in a rather different context. 

\begin{defn}
Let $\Gamma=(\cP,\cL,\mathbf{I})$ be a generalised $2n$-gon. A \textit{subspace} of $\Gamma$ is a subset $\mathcal{S}\subseteq \cP\cup\cL$ such that
\begin{enumerate}
\item[1.] if $x,y\in\mathcal{S}$ are distinct collinear points  then the line determined by $x$ and $y$ is in $\mathcal{S}$, and 
\item[2.] if a line $L$ is in $\mathcal{S}$ then all points on $L$ are in $\mathcal{S}$. 
\end{enumerate}
\end{defn}

\begin{defn}
Let $\Gamma$ be a generalised $2n$-gon. A subspace $\mathcal{S}$ of $\Gamma$ is \textit{ovoidal} if:
\begin{enumerate}
\item[1.] if $x\in\cP\cup\cL$ then there is $y\in\cS$ with $d(x,y)\leq n$, and
\item[2.] if $x\in\cP\cup\cL$ and $y\in\cS$ with $d(x,y)<n$ then~$x$ is at minimal distance from a unique element~$z$ of $\mathcal{S}$. (By the definition of subspaces, $z$ is necessarily a point). 
\end{enumerate}
\end{defn}

Let $\Gamma$ be a generalised $2n$-gon. Recall that a \textit{distance $n$-ovoid} in $\Gamma$ is a set $\mathcal{S}$ of mutually opposite points such that every element of $\Gamma$ is at distance at most $n$ from some element of~$\mathcal{S}$. Recall that a subpolygon $\Gamma'$ of $\Gamma$ is \textit{full} if every point of $\Gamma$ incident with a line of $\Gamma'$ belongs to $\Gamma'$, and dually a subpolygon $\Gamma'$ is \textit{ideal} if every line of $\Gamma$ incident with a point of $\Gamma'$ belongs to~$\Gamma'$. A subpolygon $\Gamma'$ is \textit{large} if every element of $\Gamma$ is at distance at most $n$ from some element of~$\Gamma'$.

A \textit{path} in the incidence graph of a generalised $n$-gon is a sequence $x_0\bI x_1\bI\cdots \bI x_k$ with $x_i\in \cP\cup\cL$ for $0\leq i\leq k$. This path is a \textit{geodesic} if $d(x_0,x_k)=k$. It is useful to note that if $k\leq n$ then the path $x_0\bI x_1\bI\cdots \bI x_k$ is a geodesic if and only if $x_{i-1}\neq x_{i+1}$ for each $1\leq i\leq k-1$. Furthermore, if $d(x,y)<n$ then there is a unique geodesic $x=x_0\bI x_1\bI\cdots\bI x_k=y$ joining $x$ to~$y$. In this case the \textit{projection of $x$ onto $y$} is defined to be $\mathrm{proj}_xy=x_{k-1}$. That is, $\mathrm{proj}_xy$ is the penultimate element on the unique geodesic from $x$ to~$y$. 

For each $x\in \cP\cup\cL$ and each $0\leq k\leq n$ let
$$
\Gamma_k(x)=\{y\in\cP\cup\cL\mid d(x,y)=k\}
$$
be the sphere of radius $k$ centred at~$x$.

\subsection{Statement of results}

Our first results (Section~\ref{sect:3}) deal rather generally with domestic automorphisms in the case of $(2n+1)$-gons. In particular we show that point-domestic and line-domestic automorphisms do not exist for $(2n+1)$-gons, and we give the complete classification of domestic automorphisms in the case of projective planes.

\begin{thm}\label{thm:1a}\mbox{}\vspace{-0.1cm}
\begin{itemize}
\item[\emph{1.}] If an automorphism of a thick generalised $(2n+1)$-gon is domestic then it is an exceptional domestic duality. 
\item[\emph{2.}] If $\Gamma$ is a thick projective plane (potentially infinite) not isomorphic to the Fano plane then~$\Gamma$ admits no domestic automorphisms.
\item[\emph{3.}] The Fano plane admits a unique domestic automorphism up to conjugation. This automorphism is an exceptional domestic duality of order~$8$. In $\mathbb{ATLAS}$ notation \emph{\cite[p.3]{atlas}} the conjugacy class of this domestic duality is class~$8A$.
\end{itemize}
\end{thm}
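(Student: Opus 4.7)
\textbf{Plan for Theorem~\ref{thm:1a}.} For (1), the key observation is that in a thick $(2n+1)$-gon the diameter is odd and the incidence graph is bipartite, so opposition swaps the point and line types. Hence for a collineation $\theta$, $d(p,\theta(p))$ and $d(L,\theta(L))$ are always even, so no point and no line is mapped to an opposite: a nontrivial domestic collineation would therefore map no residue to an opposite, contradicting Leeb's theorem~\cite{lee:00,AB:09}. Thus every nontrivial domestic automorphism is a duality. For a duality $\theta$, set $P=\{p\in\cP:\theta(p)\text{ opp }p\}$ and $\cL^*=\{L\in\cL:\theta(L)\text{ opp }L\}$; then a chamber $\{p,L\}$ is mapped to an opposite iff $(p,L)\in P\times\cL^*$, so $\theta$ is domestic iff no $p\in P$ is incident with any $L\in\cL^*$, and is exceptional iff both $P$ and $\cL^*$ are nonempty. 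Leeb's theorem, together with the bijection $P(\theta)\leftrightarrow\cL^*(\theta^{-1})$ realised by $p\mapsto\theta(p)$, lets me assume $P\neq\emptyset$; the main content of (1) is showing $\cL^*\neq\emptyset$.

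For this I fix $p\in P$ with $L=\theta(p)$. Using $d(p,L)=2n+1$ and $d(M,L)=2n$ for $M\bI p$, a short distance computation gives $d(M,Q)=2n+1$ for every point $Q\bI L$ except $Q=\mathrm{proj}_ML$, so domesticity of the chamber $\{p,M\}$ forces $\theta(M)=\mathrm{proj}_ML$. This determines $\theta$ on the pencil through $p$ as the projection map, and dually $\theta^{-1}$ acts on the pencil on $L$ as $Q\mapsto\mathrm{proj}_Qp$; the two forced bijections are mutual inverses. The main technical obstacle is then to propagate these forced actions---either around an apartment $\Sigma$ through $p$ and $L$, or by repeated application of the same forcing argument at neighbouring pencils---and thereby identify \emph{some} line mapped to an opposite point (the natural candidate being $\theta(L)=p$), so that $\cL^*\neq\emptyset$ and $\theta$ is exceptional.

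For (2), let $\pi$ be a thick projective plane of order $s\geq 2$ (possibly infinite) not isomorphic to the Fano plane, and let $\theta$ be a nontrivial domestic automorphism. By (1) $\theta$ is an exceptional domestic duality. Fix $p\in P$ and $L=\theta(p)$; in a projective plane $\mathrm{proj}_ML=M\cap L$, so $\theta$ sends each of the $s+1$ lines through $p$ to its intersection with $L$. Together with the no-incidence condition on $P\times\cL^*$ and the projective-plane axioms, this yields rigid combinatorial restrictions. For finite~$\pi$ I would invoke the generalisation of Benson's theorem~\cite{ben:70} developed in Appendix~\ref{app:A}, applied to the incidence matrix and the action of $\theta$, to obtain a polynomial identity in $s$ whose only admissible solution with $s\geq 2$ is $s=2$; for infinite~$\pi$ a direct combinatorial argument using the forced projections at distinct pairs in $P\times\cL^*$ rules out $s\geq 3$. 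Either way $\pi\cong$~Fano, contradicting the hypothesis.

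For (3), the automorphism group of the Fano plane including dualities is $\mathrm{L}_3(2){:}2\cong\mathrm{PGL}(2,7)$ of order~$336$, whose outer coset consists of the $168$ dualities. I would exhibit an explicit order-$8$ duality $\theta$ (for instance the product of a suitable order-$4$ collineation with a polarity, or by directly writing down its permutation of $\cP\cup\cL$) and verify by a direct calculation that both $P$ and $\cL^*$ are nonempty and that no element of $P$ is incident with any element of $\cL^*$, showing $\theta$ is exceptional domestic. Uniqueness up to conjugation then follows from the ATLAS~\cite{atlas} classification: $8A$ is the unique conjugacy class of order-$8$ outer elements in $\mathrm{L}_3(2){:}2$, and the remaining outer classes (notably the polarity class $2A$, whose absolute-point locus is a conic that forces incidence between $P$ and $\cL^*$) are eliminated by a direct chamber-count; this case-by-case verification is the main obstacle in (3).
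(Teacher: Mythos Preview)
Your outline for part (1) contains a genuine oversight that turns a one-line argument into what you call ``the main technical obstacle.'' Once you have $p\in P$ with $L=p^{\theta}$ opposite $p$, the fact that $L\in\cL^*$ is immediate: $\theta$ is an automorphism of the incidence graph and hence an isometry, so $d(L,L^{\theta})=d(p^{\theta},L^{\theta})=d(p,L)=2n+1$. No propagation around apartments, no projection forcing, is needed. Your ``natural candidate $\theta(L)=p$'' is also wrong in general --- the paper explicitly shows $p':=L^{\theta}\neq p$ in the Fano example --- but the point is that $L^{\theta}$ is opposite $L$ regardless of whether it equals $p$. The paper's proof of this step (Lemma~\ref{lem:oddpldom}) is exactly this isometry observation, written out with an explicit path.

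For part (2), the paper does \emph{not} use the eigenvalue techniques of Appendix~\ref{app:A}; it gives a short uniform argument that works for finite and infinite planes simultaneously. Starting from $p$, $L=p^{\theta}$, $p'=L^{\theta}$, and the lines $L_1,\dots,L_{s+1}$ through $p$ with $p_i=L\cap L_i$, domesticity forces each $p_i^{\theta}$ to be a line through $p_i$. The case $p'=p$ is eliminated by exhibiting an explicit chamber sent to an opposite. If $p'\neq p$, say $p'\in L_1$, then $p_1^{\theta}=L_1$ but $p_i^{\theta}\neq L_i$ for $i\geq 2$; when $s\geq 3$ one again exhibits a chamber mapped to an opposite. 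This is a half-page direct argument, whereas your plan splits into an eigenvalue computation for the finite case and an unspecified combinatorial argument for the infinite case.

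For part (3), the paper derives existence and uniqueness simultaneously from the same local analysis: the conditions $p_1^{\theta}=L_1$, $p_2^{\theta}\neq L_2$, $p_3^{\theta}\neq L_3$ together with domesticity completely determine $\theta$ (so uniqueness does not require a separate case check over the outer classes of $\mathrm{L}_3(2){:}2$). The ATLAS is used only at the very end to distinguish the class $8A$ from $B*$, via an explicit character computation. Your plan to verify domesticity class-by-class would work but is more laborious than necessary.
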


Theorem~\ref{thm:1a} corrects an error from~\cite{TTM:11}, where it is claimed that no duality of any even-dimensonal projective space is domestic. This conflicts with the existence of the domestic duality of the Fano plane described in Theorem~\ref{thm:1a}. The reason is that in the proof provided in~\cite{TTM:11}, the case where the point set of the projective space is the union of two points and two hyperplanes is overlooked, and this situation occurs precisely for the Fano plane. Thus the correct statement to replace the second sentence in the statement of \cite[Theorem~3.1]{TTM:11} is that the only domestic duality of an even-dimensional projective space is the duality of the Fano plane described above. 

Our next results (Section~\ref{sect:4}) characterise point-domestic and line-domestic automorphisms of $2n$-gons by classifying their fixed element structures. 

\begin{thm}\label{thm:1b} No duality of a thick generalised $2n$-gon is domestic. The fixed element structure of a line-domestic collineation of a thick generalised $4n$-gon and the fixed element structure of a point-domestic collineation of a thick generalised $(4n+2)$-gon is an ovoidal subspace. 
\end{thm}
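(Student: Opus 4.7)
The first statement is immediate from Leeb's theorem, quoted in the introduction. In a $2n$-gon, opposite elements are necessarily of the same type; a duality $\theta$ sends each element to one of the opposite type, and so neither a point nor a line can be mapped by $\theta$ to an opposite element. Leeb's theorem forces $\theta$ to map at least one point, line, or chamber to an opposite, whence $\theta$ must map some chamber to an opposite chamber and is not domestic.

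For the ovoidal-subspace statement I focus on a line-domestic collineation $\theta$ of a thick $4n$-gon $\Gamma$; the $(4n+2)$-gon / point-domestic case is handled by a parallel argument with ``point'' and ``line'' interchanged at the appropriate spots. Set $\mathcal{S} = \Gamma_\theta$. The first subspace condition is routine: if $p, q \in \mathcal{S}$ are collinear and distinct, the unique line $L$ joining them (unique because $d(p, q) = 2 < 4n$) satisfies $\theta(L) = L$ by uniqueness of geodesics. For the covering condition, let $L$ be any line of $\Gamma$; line-domesticity gives $d(L, \theta(L)) = 2k < 4n$, so there is a unique geodesic $\gamma : L = y_0 \bI y_1 \bI \cdots \bI y_{2k} = \theta(L)$. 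By comparing this geodesic with its image $\theta(\gamma)$, which is the unique geodesic from $\theta(L)$ to $\theta^2(L)$, I would show $\theta(y_k) = y_k$, producing a fixed element at distance $k \leq 2n - 1$ from $L$; a point $x$ is handled by passing to an incident line. For uniqueness of the closest fixed element, suppose distinct $y_1, y_2 \in \mathcal{S}$ were at common minimal distance $d < 2n$ from some $x$. Since $d(y_1, y_2) \leq 2d < 4n$, the unique geodesic between $y_1$ and $y_2$ exists and, having two fixed endpoints, is fixed pointwise by $\theta$. If $d(y_1, y_2) = 2d$ this geodesic must pass through $x$, forcing $x \in \mathcal{S}$ and contradicting $d > 0$; if $d(y_1, y_2) < 2d$ there is backtracking at $x$, which produces a fixed element adjacent to $x$ and contradicts the minimality of $d$. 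A standard parity argument, using subspace property (b), then shows the unique closest fixed element is always a point.

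The main obstacle is the remaining subspace condition: that every point on a fixed line must itself be fixed. If $\theta$ fixes $L$ setwise but permutes its points non-trivially, I would propagate this non-trivial action outward through a carefully chosen sequence of projections to construct a line $M$ whose image $\theta(M)$ is opposite $M$, contradicting line-domesticity. This is precisely where the parity distinction between $4n$-gons and $(4n+2)$-gons is essential: in the $4n$-gon, half-distance geodesics from lines terminate at lines, and the propagation argument is of a fundamentally different flavour from the one needed in the $(4n+2)$-gon case.
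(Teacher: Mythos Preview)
Your treatment of the duality statement is correct and matches the paper's one-line reduction to Leeb's theorem.

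For the ovoidal-subspace statement there are two genuine gaps.

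First, you misidentify the ``main obstacle''. Showing that every point on a fixed line is fixed is in fact the easiest step: in the paper's argument (stated for point-domestic in a $(4n+2)$-gon, but dualising trivially), if $x$ is a non-fixed point on a fixed line $L$ then one simply completes $L\bI x$ to an arbitrary non-backtracking path $\gamma$ of length $2n$ ending at an element $z$ of the appropriate type. The concatenation $\gamma^{-1}\cdot\gamma^{\theta}$ is a non-stuttering path of length $4n$ from $z$ to $z^{\theta}$ (non-stuttering precisely because $x\neq x^{\theta}$), so $z$ is mapped to an opposite, contradicting domesticity directly. No delicate ``propagation through projections'' is required, and there is no subtle parity-dependent argument at this point.

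Second, and more seriously, your covering argument is incomplete. You propose to take the midpoint $y_k$ of the geodesic from $L$ to $\theta(L)$ and show $\theta(y_k)=y_k$ by ``comparing $\gamma$ with $\theta(\gamma)$''. This comparison only gives the conclusion immediately when $\theta^2(L)=L$; in general the geodesics $\gamma$ and $\theta(\gamma)$ merely share the one endpoint $\theta(L)$ and there is no a priori reason for them to overlap through $y_k$. The paper's proof supplies the missing ingredient: one first proves a parity claim, namely that (in the point-domestic $(4n+2)$-gon version) no point $x$ satisfies $d(x,x^{\theta})\equiv 2\pmod 4$. This is established by an outward-pushing argument: from such an $x$ one can step to a line, then a point at distance $4(\ell+1)+2$, and so on until one reaches an opposite point. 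Once this parity restriction is in hand, $d(x,x^{\theta})$ is always a multiple of $4$, the midpoint $z$ is a point, and a short case analysis (again using the parity claim to rule out the two possible failure modes) forces $z^{\theta}=z$. The parity step is where the $4n$ versus $4n+2$ distinction actually enters, not in the subspace condition as you suggest.

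Your uniqueness argument is essentially the paper's, phrased slightly differently, and is fine.
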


The need to distinguish between $4n$ and $(4n+2)$-gons in the statement of Theorem~\ref{thm:1b} is superficial, and only arises because the notion of an ovoidal subspace is not a `self-dual' notion. Of course a point-domestic collineation of a generalised $4n$-gon $\Gamma$ is a line-domestic collineation of the dual $4n$-gon $\Gamma^D$, and thus the fixed element structure is an ovoidal subspace in~$\Gamma^D$. Similarly a line-domestic collineation of a generalised $(4n+2)$-gon~$\Gamma$ is a point-domestic collineation of the dual $(4n+2)$-gon~$\Gamma^D$, and thus the fixed element structure is an ovoidal subspace in~$\Gamma^D$.

Next we classify ovoidal subspaces, thus making Theorem~\ref{thm:1b} more explicit:

\begin{thm}\label{thm:ovoidal} An ovoidal subspace $\mathcal{S}$ of a generalised $2n$-gon $\Gamma$ is either:
\begin{itemize}
\item[\emph{1.}] A distance $n$-ovoid.
\item[\emph{2.}] A large full subpolygon. 
\item[\emph{3.}] The ball $B_n(x)=\{y\in \cP\cup \cL\mid d(x,y)\leq n\}$ of radius~$n$ centred at an element $x\in \cP\cup\cL$. Moreover, if $n$ is even then $x\in\cP$ and if $n$ is odd then $x\in\cL$.
\end{itemize}
\end{thm}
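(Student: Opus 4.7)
My approach is to split according to whether $\cS$ contains any line, and in the non-empty case, whether the subgraph $\Sigma$ of the incidence graph of $\Gamma$ induced by $\cS$ contains a cycle.

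\textbf{Case 1: $\cS_L = \emptyset$.} I claim any two distinct points of $\cS$ are opposite, so that with axiom~(1) of ovoidal subspaces $\cS$ is a distance $n$-ovoid. Suppose otherwise, and pick $p, q \in \cS$ distinct with $d(p,q) = 2k$ minimal, where $1 \leq k < n$. If $k = 1$ the line $pq$ lies in $\cS_L$, contradicting $\cS_L = \emptyset$, so $k \geq 2$. Let $m$ be the midpoint of the unique geodesic from $p$ to $q$, so $d(m, p) = d(m, q) = k < n$. By axiom~(2) applied at $m$ (with $y = p$), there is a unique $z \in \cS$ minimising $d(m, \cdot)$. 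Uniqueness combined with $d(m, p) = d(m, q) = k$ rules out $z = p$ or $z = q$ and forces $d(m, z) < k$. Then $d(p, z) \leq k + d(m, z) < 2k < 2n$, so $\{p, z\} \subseteq \cS$ is a distinct non-opposite pair at strictly smaller distance than $2k$, contradicting minimality.

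\textbf{Case 2: $\cS_L \neq \emptyset$.} Fix $L \in \cS_L$; all points of $L$ lie in $\cS$ by subspace axiom~(2). Since the girth of the incidence graph of $\Gamma$ is $4n$, any cycle in $\Sigma$ has length at least $4n$. If $\Sigma$ contains a cycle, I use a shortest cycle (of length $4n$, i.e.\ an ordinary $2n$-gon) as an initial apartment; systematic use of uniqueness of projection (axiom~(2) of ovoidal subspaces) together with the subspace closure then builds up a full sub-generalised $2n$-gon structure on $(\cS_P, \cS_L, \bI)$. Fullness is immediate from subspace axiom~(2) and largeness comes from ovoidal axiom~(1), giving Type~2. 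If instead $\Sigma$ is a tree, let $x$ be its centre; the bipartition of the incidence graph into points and lines forces $x \in \cP$ when $n$ is even and $x \in \cL$ when $n$ is odd. A careful analysis of paths in $\Sigma$ (using the subspace axioms to show that $\Sigma$-paths between non-opposite elements coincide with $\Gamma$-geodesics) shows the tree has $\Gamma$-radius at most $n$ from $x$, so $\cS \subseteq B_n(x)$. For the reverse inclusion, for each $y \in B_n(x)$ I use axiom~(1) to find $s \in \cS$ within distance $n$ of $y$, then apply axiom~(2) iteratively along the geodesic from $y$ towards $s$ to conclude $y \in \cS$, giving $\cS = B_n(x)$ and Type~3.

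\textbf{Main obstacle.} The delicate step is the tree subcase: identifying the centre $x$ and proving $\cS$ is exactly $B_n(x)$. Opposite elements may well belong to the tree (connected by a unique in-tree path of length $2n$), so the absence of cycles is subtler than the absence of opposite pairs, and the ``radius $\leq n$'' bound on $\cS$ from $x$ must be derived carefully using the fact that axiom~(2) forces $\Sigma$-paths between non-opposite pairs to agree with $\Gamma$-geodesics. The reverse inclusion $B_n(x) \subseteq \cS$ requires interlocking use of both ovoidal axioms: axiom~(1) supplies enough bulk on every branch of the tree, while axiom~(2) prevents bifurcations or gaps within $B_n(x)$.
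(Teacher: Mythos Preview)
Your decomposition into ``no lines / has a line, with or without a cycle'' is exactly the paper's trichotomy, and your Case~1 argument is clean and complete. The gaps are all in Case~2.

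The first gap is that you repeatedly invoke (without stating or proving) the fact that if $p,q\in\cS$ are points with $d(p,q)<2n$, then the entire geodesic from $p$ to $q$ lies in $\cS$. This is the paper's Lemma~\ref{lem:close}, and it is not free: it is proved by an inductive midpoint argument very much in the spirit of your Case~1. It is precisely what makes ``builds up a full sub-$2n$-gon structure'' in your cycle subcase actually work, and what justifies your remark that ``$\Sigma$-paths between non-opposite pairs agree with $\Gamma$-geodesics'' in the tree subcase. You should isolate and prove it before either subcase.

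The second and more serious gap is in the tree subcase. You write ``let $x$ be its centre'', but you have not shown that $\Sigma$ has a unique centre, nor even that it has finite diameter; and your parity claim ($x\in\cP$ iff $n$ even) already presupposes that the diameter is exactly $2n$, which is nowhere established. The paper does not attempt to locate the centre of an a priori unknown tree. Instead it first proves, via a non-trivial argument using a containing ordinary $2n$-gon and the triangle inequality, that $\cS$ contains two \emph{opposite} points $x,y$; since $\Sigma$ has no cycle, exactly one length-$2n$ geodesic between them lies in $\cS$, and its midpoint $m$ is the candidate centre. One then shows (i) nothing at distance ${>}\,n$ from $m$ lies in $\cS$ (else Lemma~\ref{lem:close} would force an ordinary $2n$-gon into $\cS$), and (ii) everything at distance ${\leq}\,n$ from $m$ lies in $\cS$ (using both ovoidal axioms, essentially along the lines you sketch). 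Your outline of (ii) is on the right track, but you are missing the existence of opposite points, the construction of $m$, and the argument for~(i); ``a careful analysis of paths'' is not a substitute for these.
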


In the case of finite generalised $2n$-gons we also provide further restrictions on the fixed element structures of point-domestic and line-domestic automorphisms in terms of the parameters of the polygon. For example, we show that generalised hexagons with parameters $(t^3,t)$ do not admit point-domestic automorphisms (see the results in Section~\ref{sect:4} for more details).

Next we turn our attention to exceptional domestic collineations of finite generalised $2n$-gons. The case $n=2$ (quadrangles) is covered in~\cite{TTM:13}, although we provide a brief exposition of this case using our methods in~Section~\ref{sect:6a}. This serves as a useful illustration of the techniques that we use for the more involved cases of generalised hexagons and octagons. In Section~\ref{sect:6} we give the classification for finite hexagons:

\begin{thm}\label{thm:2}
Exceptional domestic collineations of finite thick generalised hexagons exist precisely for the classical hexagons with parameters $(s,t)\in\{(2,2),(2,8),(8,2)\}$. In each case there is a unique such collineation up to conjugation, and it has order~$4$. The fixed element structure in the $(s,t)=(2,2)$ consists of a point-line pair $\{p,L\}$, together with all lines through~$p$ and all points on~$L$. The fixed element structure in the $(s,t)=(8,2)$ case consists of a line $L$, all $9$ points on~$L$, and all lines through~$7$ of the points on~$L$ (and dually for the $(2,8)$ case).
\end{thm}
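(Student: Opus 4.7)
The plan is to combine the Benson-style eigenvalue machinery developed in Appendix~\ref{app:A} with local geometric constraints coming from the exceptional domesticity hypothesis, in order to force $(s,t)$ into a very short list of possibilities, and then handle the remaining cases by direct construction inside the known hexagons. Throughout, let $\theta$ be an exceptional domestic collineation of a finite thick hexagon $\Gamma$ of order $(s,t)$, and for $i\in\{0,2,4,6\}$ write $p_i$ for the number of points $x$ with $d(x,x^\theta)=i$, and $l_i$ for the analogous count on lines. By the Feit--Higman constraints we may assume $\sqrt{st}\in\ZZ$, $s\leq t^3$, and $t\leq s^3$.

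First I would specialise the eigenvalue techniques of Appendix~\ref{app:A} to hexagons. Working with the incidence-graph eigenvalues $\pm(s+t)$ and $\pm\sqrt{s+t\pm\sqrt{st}}$, one obtains a linear system expressing $f_0=p_0$, $f_1=l_0$, and the ``profile'' $(p_0,p_2,p_4,p_6)$ (resp.\ $(l_0,l_2,l_4,l_6)$) in terms of character-like sums over the eigenspaces. The key output is a small collection of congruence and positivity relations that any collineation must satisfy, parametrised by $(s,t)$.

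Next I would impose the exceptional domesticity hypothesis locally. Pick a point $p$ with $p^\theta$ opposite; then for each of the $t+1$ lines $L$ through $p$ the chamber $\{p,L\}$ must fail to be mapped to an opposite, so $L^\theta$ (which passes through $p^\theta$) is not opposite $L$, forcing $d(L,L^\theta)\in\{2,4\}$. Using the standard projection apparatus in hexagons and the uniqueness of minimal galleries, this translates into a precise bound on how many lines through $p$ can have $d(L,L^\theta)=2$ and how many can have $d(L,L^\theta)=4$, in terms of $s$ and $t$. The dual argument for a line $L$ with $L^\theta$ opposite produces companion relations involving the numbers $p_6$ and $l_6$. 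Together with the hypothesis $p_6\geq 1$ and $l_6\geq 1$, this substantially constrains the profile.

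The crux, and the expected main obstacle, is feeding these local constraints back into the eigenvalue equations to eliminate all parameter pairs except $(s,t)\in\{(2,2),(2,8),(8,2)\}$. I would proceed by a careful case analysis, separating the classical family $(q,q)$ from $(q,q^3)$ and its dual, and in each family using divisibility together with the strict positivity $p_6,l_6\geq 1$ to reduce to small $s$ and $t$. Some low-$q$ cases (for example $q=3,4,5$ in the $(q,q)$ family) will likely need to be excluded by sharper combinatorial arguments, because the eigenvalue equations alone become delicate near the boundary.

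Finally, for each of the three surviving parameter pairs I would establish existence, uniqueness and the claimed fixed structure. For $(s,t)=(2,2)$ the hexagon is the split Cayley hexagon $H(2)$, and inside $\mathrm{Aut}(H(2))=G_2(2)$ one exhibits an order-$4$ collineation fixing a flag $\{p,L\}$ together with all lines on $p$ and all points on $L$; uniqueness up to conjugacy follows from $\ATLAS$ class data or a direct examination of the conjugacy classes of $G_2(2)$. For $(s,t)=(2,8)$ (and dually $(8,2)$) one works inside the twisted triality hexagon $T(2,8)$ and its automorphism group $^3D_4(2)$, again constructing an order-$4$ element with the described fixed structure (a line $L$, all $9$ points on $L$, and all lines through $7$ of those points) and reading off uniqueness from the character table. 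The order-$4$ claim in each case follows a posteriori because $\theta^2$ fixes pointwise the ovoidal-type substructure identified in Step~2, so Theorem~\ref{thm:1b} and Theorem~\ref{thm:ovoidal} applied to $\theta^2$ force $\theta^2$ to be trivial while $\theta$ itself cannot be an involution (an involution mapping a point to an opposite would, in the local analysis of Step~2, also map some chamber to an opposite).
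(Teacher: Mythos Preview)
Your proposal has the right general flavour---combining eigenvalue constraints with local geometric analysis---but it is missing the structural backbone that the paper relies on, and your final argument for order~$4$ is actually incorrect.

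The paper's proof does not use the eigenvalue formulae of Appendix~\ref{app:A} as the primary elimination tool. Instead, the engine is a sequence of results (Lemmas~\ref{lem:B}--\ref{lem:E}) establishing that an exceptional domestic collineation must have both fixed points and fixed lines, and that the fixed element structure $\Gamma_\theta$ is a tree of diameter at most~$4$ in the incidence graph. This is not obvious and requires real work: one rules out the subhexagon case via Lemma~\ref{lem:Xgen}, then bounds the diameter by a geometric chamber-chasing argument. Your proposal never mentions the fixed element structure at all, so you have no handle on the shape of $\Gamma_\theta$, and without that the local analysis around a single point $p\in\cP_6$ produces far too few equations. Once the tree structure is in place, the paper's Lemma~\ref{lem:counthex2} performs a systematic count of the refined sets $\cL_i^j$ starting from a root $p_0$ of the tree, with parameters $a_1,\ldots,a_4$ recording the shape of~$\Gamma_\theta$. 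The decisive trick is then to repeat this count from \emph{several different roots} of the same tree (a fixed point $p_0$, a fixed line $L_0$, and another fixed point $p_0'$), obtaining an overdetermined linear system in the tree parameters $A,B,C$ that forces the exact fixed structures claimed. The eigenvalue techniques enter only at the very end, in the existence proof, to distinguish between two residual possibilities for $|\cL_4^3|$.

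Your order-$4$ argument is self-contradictory: you claim that Theorem~\ref{thm:1b} applied to $\theta^2$ forces $\theta^2$ to be trivial, but then $\theta$ would be an involution, not of order~$4$. In fact no such a priori argument is given in the paper; the order and uniqueness are read off directly from the $\mathbb{ATLAS}$ conjugacy class data for $G_2(2)$ and $^3D_4(2)$, after the fixed element structure has been pinned down combinatorially.
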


The situation for finite octagons is rather different, and perhaps surprisingly no exceptional domestic collineations exist. Thus, in Section~\ref{sect:7} we prove: 

\begin{thm}\label{thm:3}
No finite thick generalised octagon admits an exceptional domestic collineation.
\end{thm}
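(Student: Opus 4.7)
The plan is to combine the Benson-type eigenvalue identities of Appendix~\ref{app:A} with a local combinatorial analysis around points and lines mapped to opposites, in the same spirit as the hexagon argument of Section~\ref{sect:6} but working considerably harder to control the larger spectral and combinatorial complexity of octagons. Throughout, suppose for a contradiction that $\theta$ is an exceptional domestic collineation of a finite thick octagon $\Gamma$ with parameters $(s,t)$, so $s\neq t$ and $\sqrt{2st}\in\NN$. For $i\in\{0,1,2,3,4\}$ write $f_i$ (resp.\ $g_i$) for the number of points (resp.\ lines) $x$ with $d(x,x^{\theta})=2i$, so that $f_4>0$ and $g_4>0$, and for every incident pair $(p,L)$ at least one of $d(p,p^{\theta})$, $d(L,L^{\theta})$ is strictly less than $8$.

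The first step is to assemble the spectral identities. The natural point-matrices of an octagon have eigenvalues expressible as algebraic functions of $s,t,\sqrt{2st}$, and applying the appropriate Benson-type trace identities to $\theta$ produces several linear relations among the $f_i$ with coefficients depending on the fixed-element counts and on $\sqrt{2st}$. Combined with the normalisation $\sum_i f_i=(s+1)(st+1)(s^2t^2+1)$, the dual relations among the $g_i$, and the rigid constraints on the fixed substructure (a tree of diameter at most $8$, a proper sub-octagon, or a set of mutually opposite elements), this gives a tightly-controlled linear system on $(f_0,\ldots,f_4;g_0,\ldots,g_4)$.

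The second step is to exploit the chamber constraint locally. If $p^{\theta}$ is opposite $p$ and $L\bI p$, then $d(L,p^{\theta})=7$, whence $d(L,L^{\theta})\geq 6$, and the domesticity hypothesis forces $d(L,L^{\theta})=6$ for every such line $L$. Counting flags $(p,L)$ with $p\bI L$, $p^{\theta}$ opposite $p$, and $d(L,L^{\theta})=6$ in two ways yields the inequality $(t+1)f_4\leq(s+1)g_3$, and dually $(s+1)g_4\leq(t+1)f_3$. Feeding these flag inequalities back into the spectral system and imposing that the eigenvalue multiplicities be non-negative integers drastically restricts the admissible pairs $(s,t)$.

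The third step is case-by-case elimination. For each surviving parameter pair one introduces refined invariants---secondary trace identities weighted by the distance-$2k$ relations, or the action of $\theta$ on ideal or full subpolygons, or an analysis of $\theta^2$ using the non-existence of anisotropic collineations of finite thick octagons (Section~\ref{sect:9})---and derives an arithmetic or structural contradiction. The principal obstacle will be precisely this last step: the octagon parameter conditions $s\leq t^2$, $t\leq s^2$, $\sqrt{2st}\in\NN$, and the divisibility $s^4(s^3t^3+s^2t^2+st+1)/(s^3+s^2t+st^2+t^3)\in\NN$ leave many more small candidates than the hexagon constraints, and since $s\neq t$ one cannot invoke self-duality as a shortcut; one must juggle the $f_i$- and $g_i$-systems simultaneously, carefully tracking spectral multiplicities alongside the flag inequalities to extract a contradiction in every remaining case.
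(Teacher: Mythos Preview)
Your proposal is an outline rather than a proof, and the tools you name are too coarse to close the gap you yourself identify in Step~3. Two missing ideas are decisive.

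First, the flag inequality $(t+1)f_4\leq (s+1)g_3$ is far weaker than what the paper actually uses. The paper refines each $\mathcal{P}_{2k}$ and $\mathcal{L}_{2k}$ into subsets $\mathcal{P}_{2k}^r$, $\mathcal{L}_{2k}^r$ (recording how much of the geodesic to the image is $\theta$-invariant) and obtains \emph{exact} double-counting formulae (Proposition~\ref{prop:Agen}, equations~\eqref{eq:octformulae}) rather than inequalities. These equalities, together with the fixed-element tree structure (Lemmas~\ref{lem:Foct}, \ref{lem:Goct}), allow one to express every $|\mathcal{L}_i^j|$ in terms of the tree shape and just two free integers $x=|\mathcal{L}_2^2|$, $y=|\mathcal{L}_4^3|$ (Proposition~\ref{prop:maincountoct}). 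Your $f_i,g_i$ package loses exactly the information needed to make the arithmetic bite.

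Second, the paper's central device---absent from your plan---is to re-root the fixed-element tree at different vertices and recompute. The same collineation gives the same $x,y$, but the auxiliary quantities $X_1(p_0),\ldots,X_6(p_0)$ (intermediate count parameters, each required to be a nonnegative integer) depend on the root. Choosing several roots produces a system of inequalities that, combined with the eigenvalue congruences of~\eqref{eq:noct1} and the prior reduction to the six parameter pairs of Lemma~\ref{lem:Cgen}, forces a contradiction in every case (Propositions~\ref{prop:octproof1}--\ref{prop:octproof5}). Your Step~3 gestures at ``refined invariants'' and ``case-by-case elimination'' but does not supply either the re-rooting mechanism or the actual casework, which is the bulk of the argument. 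Without these, the spectral identities plus your flag inequality do not determine enough to eliminate even the smallest case $(s,t)=(4,2)$.
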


It is elementary that there are no exceptional domestic automorphisms of generalised $2$-gons (digons). Thus, combining Theorems~\ref{thm:1a},~\ref{thm:2} and~\ref{thm:3} with \cite[Theorem~2.1]{TTM:13} we obtain the following complete and very satisfying classification of exceptional domestic automorphisms of finite thick generalised polygons:

\begin{cor}\label{cor:4} Let $\Gamma$ be a thick finite generalised $n$-gon with parameters $(s,t)$, and suppose that $\Gamma$ admits an exceptional domestic automorphism~$\theta$. Then $n=3$, $n=4$, or $n=6$. Furthermore:
\begin{itemize}
\item[\emph{1.}] If $n=3$ then $(s,t)=(2,2)$, and $\theta$ is a duality of order~$8$, unique up to conjugation. 

\item[\emph{2.}] If $n=4$ then $(s,t)\in\{(2,2),(2,4),(4,2),(3,5),(5,3)\}$ and $\theta$ is a collineation of order~$4$. In each case there is a unique exceptional domestic collineation up to conjugation. In the cases $(s,t)\in\{(3,5),(5,3)\}$ the fixed element structure of this collineation is empty. The fixed element structures for the remaining cases are as follows:

\begin{figure}[!h]
\centering
\subfigure[$(s,t)=(2,2)$]{
\begin{tikzpicture}[scale=0.7]
\draw (-1.5,0) -- (1.5,0);
\node at (-1,0) {$\bullet$};
\end{tikzpicture}
}\hspace{2cm}
\subfigure[$(s,t)=(4,2)$, and dually for $(s,t)=(2,4)$]{
\begin{tikzpicture}[scale=0.7]
\draw (-1.5,0) -- (1.5,0);
\phantom{\draw (-4.5,0) -- (4.5,0);}
\node at (-1,0) {$\bullet$};
\node at (0,0) {$\bullet$};
\node at (1,0) {$\bullet$};
\end{tikzpicture}
}
\caption{Exceptional domestic collineations of quadrangles}\label{fig:fixedquad}
\end{figure}
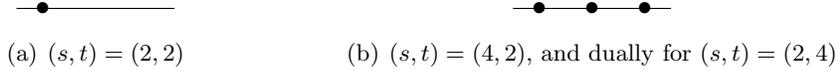

\item[\emph{3.}] If $n=6$ then $(s,t)\in\{(2,2),(2,8),(8,2)\}$ and $\theta$ is a collineation of order~$4$. In each case there is a unique exceptional domestic collineation up to conjugation, and the fixed element structures are as follows: 

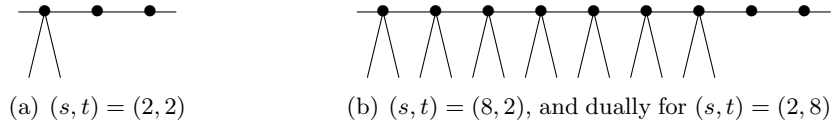
\begin{figure}[!h]
\centering
\subfigure[$(s,t)=(2,2)$]{
\begin{tikzpicture}[scale=0.7]
\draw (-1.5,0) -- (1.5,0);
\draw (-1.3,-1.25) -- (-1,0) -- (-0.7,-1.25);
\node at (-1,0) {$\bullet$};
\node at (0,0) {$\bullet$};
\node at (1,0) {$\bullet$};
\end{tikzpicture}
}\hspace{2cm}
\subfigure[$(s,t)=(8,2)$, and dually for $(s,t)=(2,8)$]{
\begin{tikzpicture}[scale=0.7]
\draw (-4.5,0) -- (4.5,0);
\draw (-4.3,-1.25) -- (-4,0) -- (-3.7,-1.25);
\draw (-3.3,-1.25) -- (-3,0) -- (-2.7,-1.25);
\draw (-2.3,-1.25) -- (-2,0) -- (-1.7,-1.25);
\draw (-1.3,-1.25) -- (-1,0) -- (-0.7,-1.25);
\draw (-0.3,-1.25) -- (0,0) -- (0.3,-1.25);
\draw (0.7,-1.25) -- (1,0) -- (1.3,-1.25);
\draw (1.7,-1.25) -- (2,0) -- (2.3,-1.25);
\node at (-4,0) {$\bullet$};
\node at (-3,0) {$\bullet$};
\node at (-2,0) {$\bullet$};
\node at (-1,0) {$\bullet$};
\node at (0,0) {$\bullet$};
\node at (1,0) {$\bullet$};
\node at (2,0) {$\bullet$};
\node at (3,0) {$\bullet$};
\node at (4,0) {$\bullet$};
\end{tikzpicture}}
\caption{Exceptional domestic collineations of hexagons}\label{fig:fixed}
\end{figure}
\end{itemize}
\end{cor}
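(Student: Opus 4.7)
The plan is to assemble the corollary directly from the theorems that precede it together with the Feit-Higman Theorem cited in the introduction. Since a finite thick generalised $n$-gon requires $n \in \{2,3,4,6,8\}$, only these five values need to be considered, and the odd values $n \geq 5$ require no separate treatment.

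First I would dispose of the digon case $n = 2$. In a digon every point is incident with every line, so chambers $\{p, L\}$ and $\{p', L'\}$ are opposite if and only if $p \neq p'$ and $L \neq L'$. Domesticity therefore forces the automorphism to fix either every point or every line, which is incompatible with the requirement that at least one point and at least one line be sent to an opposite. This justifies the remark made immediately before the corollary statement that no exceptional domestic automorphisms exist for $n=2$.

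For $n = 3$, Theorem~\ref{thm:1a}(1) says that any domestic automorphism of a thick $(2m{+}1)$-gon must be an exceptional domestic duality, and parts (2)-(3) then identify the Fano plane together with its order-$8$ duality as the unique example up to conjugacy, yielding item~(1) of the corollary. For $n = 4$, the statement about the five sporadic exceptional domestic collineations in the small quadrangles is exactly \cite[Theorem~2.1]{TTM:13}, giving item~(2); for $n = 6$, Theorem~\ref{thm:2} gives item~(3).

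The remaining case $n=8$ asserts that no exceptional domestic automorphism exists at all, and here I would split on the type of the automorphism. By Theorem~\ref{thm:1b} no duality of a thick $2m$-gon is domestic, hence no duality of an octagon can be exceptional domestic; and Theorem~\ref{thm:3} rules out exceptional domestic collineations of octagons. These two observations together exhaust the automorphism group and complete the proof. The corollary is essentially a packaging result, so there is no genuine obstacle; the only point that needs attention is to remember to handle dualities separately in the octagon case, since an exceptional domestic automorphism is not a priori assumed to be a collineation.
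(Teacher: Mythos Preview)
Your proposal is correct and matches the paper's own approach, which simply states that the corollary follows by combining Theorems~\ref{thm:1a}, \ref{thm:2}, \ref{thm:3}, and \cite[Theorem~2.1]{TTM:13} together with the elementary digon observation. One small refinement: the same duality issue you flag for $n=8$ also applies to $n=6$ (and in principle $n=4$), since Theorem~\ref{thm:2} is stated only for collineations; so Theorem~\ref{thm:1b} should be invoked uniformly for all even $n$, not just $n=8$.
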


\begin{remark}
For each of the cases occurring in Corollary~\ref{cor:4} it is known that there is a unique generalised $n$-gon with the given parameters (see \cite[Chapter~6]{PT:09} and \cite{CT:85}). In most cases detailed information on the automorphism groups can be found in the~$\mathbb{ATLAS}$ of Finite Groups, with the exceptions being the generalised quadrangles with $(s,t)= (3,5),(5,3)$ since in these cases the group is not simple. In all other cases we identify the conjugacy classes of the exceptional domestic automorphisms. Specifically, the conjugacy classes of the domestic automorphisms in the cases $(n,s,t)=(3,2,2),(4,2,2),(4,2,4),(6,2,2),(6,8,2)$ are~$8A$ \cite[p.3]{atlas}, $4A$ \cite[p.5]{atlas}, $4C$ \cite[p.27]{atlas}, 
$4C$ \cite[p.14]{atlas}, and~$4A$ \cite[p.89]{atlas}, respectively.
\end{remark}

\begin{remark} The classification of exceptional domestic collineations of non-thick polygons can be derived from the thick case. For example, suppose that $\Gamma$ is the double of a thick generalised polygon $\Gamma'$ such that $\Gamma$ has $2$ points on each line. Then: (1) $\Gamma$ does not admit an exceptional domestic automorphism. (2) If $\theta$ is a point domestic automorphism of $\Gamma$ then $\theta$ is the identity. (3) If $\theta$ is a line-domestic automorphism of $\Gamma$ then $\theta$ is domestic in the underlying polygon~$\Gamma'$.
\end{remark}

As discussed in the introduction, once the diameter of the polygon is $5$ or more the study of exceptional domestic automorphisms of infinite generalised polygons seems to be difficult. Thus a natural first step is to consider the Moufang case. We have been able to treat the case of Moufang hexagons. The case of Moufang octagons appears to be considerably harder, and remains open. In Section~\ref{sect:8} we prove:

\begin{thm}\label{thm:5} No infinite Moufang hexagon admits an exceptional domestic automorphism.
\end{thm}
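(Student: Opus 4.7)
The plan is to suppose $\theta$ is an exceptional domestic automorphism of an infinite Moufang hexagon $\Gamma$ and derive a contradiction by combining rigid local distance constraints at points and lines mapped to opposites with the abundance of root elations guaranteed by the Moufang condition. Since no thick $2n$-gon admits domestic dualities (Theorem~\ref{thm:1b}), $\theta$ is necessarily a collineation, and by hypothesis there exist a point $p$ with $d(p,p^\theta)=6$ and a line $M$ with $d(M,M^\theta)=6$.

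First I would carry out the local analysis at $p$. The no-opposite-chamber hypothesis forces, for each line $L$ through $p$, the strict inequality $d(L,L^\theta)<6$; parity and the triangle inequality with $d(p,p^\theta)=6$ then pin down $d(L,L^\theta)=4$. A short geodesic argument shows that $L^\theta$ is exactly the neighbour of $p^\theta$ at distance $4$ from $L$, so $\theta$ induces a bijection $L\mapsto L^\theta$ between the pencil of lines at $p$ and the pencil at $p^\theta$, together with a second bijection between the distinguished points $q_L=\mathrm{proj}_L p^\theta$ and $\mathrm{proj}_{L^\theta} p$. Dually, at $M$ we obtain analogous bijections between the points of $M$ and those of $M^\theta$, together with compatible line data.

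Next I would invoke the Moufang condition. For a root $\alpha$ (a $5$-path) passing through $p$, the root group $U_\alpha$ acts transitively on the apartments containing $\alpha$ and meeting a prescribed second apartment through $\alpha$. Conjugating $\theta$ by $u\in U_\alpha$ yields another exceptional domestic collineation $\theta_u=u^{-1}\theta u$, to which the rigidity above applies at the point $u^{-1}(p)$ and line $u^{-1}(M)$. Varying $u$ over the infinite group $U_\alpha$ produces an infinite family of points mapped to opposites by the various $\theta_u$. The goal is to combine this point family with the dual line family coming from $M$ to produce, for some single conjugate $\theta_u$, an incident pair $(q,N)$ with both $q$ and $N$ sent to opposites by $\theta_u$; the chamber $\{q,N\}$ would then be mapped to an opposite chamber, contradicting the domesticity of $\theta_u$.

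The principal obstacle will be to engineer this alignment uniformly across the Moufang hexagons. The Tits--Weiss classification describes all Moufang hexagons via hexagonal systems with prescribed commutation relations of the form $[U_i,U_j]\subseteq\prod_{i<k<j} U_k$, and I expect the argument to either proceed via this common axiomatic framework or, failing a unified proof, to split into verifications for the split Cayley, twisted triality, and mixed families. The most delicate step is to choose the root $\alpha$ so that the rigidity at $p$ and the dual rigidity at $M$ are simultaneously activated by a single conjugation, without the conjugating element trivially fixing both local data sets and producing a tautology, and this is where I anticipate the bulk of the technical work will lie.
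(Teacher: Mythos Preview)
Your opening steps are fine: $\theta$ is indeed a collineation by Theorem~\ref{thm:1b}, and the local analysis showing that every line through a point $p\in\cP_6$ lies in $\cL_4$ is correct and used implicitly in the paper as well.

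The core strategy, however, does not work. Conjugating a domestic collineation by any automorphism yields another domestic collineation: if $\theta$ maps no chamber to an opposite, then neither does $\theta_u=u^{-1}\theta u$, since $\{q,N\}$ is mapped to an opposite by $\theta_u$ if and only if $\{u(q),u(N)\}$ is mapped to an opposite by $\theta$. So the family $\{\theta_u:u\in U_\alpha\}$ contains no new information at all --- the question of whether some $\theta_u$ maps a chamber to an opposite is literally identical to the question for $\theta$ itself. Your proposed ``alignment'' step is therefore not a technical difficulty to be overcome but a tautology: the infinite root group moves the witnessing point and line around rigidly together, and you are no closer to an incident opposite pair than when you started.

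The paper's argument is entirely different and does not use root groups or conjugation. It exploits the fact (due to Ronan) that in a Moufang hexagon one may choose the point set so that all points are \emph{regular}, which makes hyperbolic lines $p^q$ available. The key geometric observation is that for $p\in\cP_6$, the set of points collinear with $p$ and \emph{not} mapped to an opposite is contained in the union of at most two hyperbolic lines; hence if three points of a single hyperbolic line through $p$ lie in $\cP_4$, the entire hyperbolic line does. Starting from a carefully chosen chain $p,p_1,p_2,q$ with $p,p_1\in\cP_6$ and $q\in\cP_4$ (using Lemma~\ref{lem:pldom} to guarantee $\cP_4^2\cup\cP_4^3\neq\emptyset$), the paper builds two infinite sets $\cR$ and $\cS$ of points in $\cP_4$ and $\cP_6$ respectively, arranged so that every point of $\cR$ is at distance~$4$ from every point of $\cS$. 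Infinitude then lets one select $y\in\cS$ and three points of $\cR$ whose projections toward $y$ all lie in $\cL_6$; the three resulting distance-$2$ neighbours of $y$ lie in $\cP_4$ and in a common hyperbolic line $y^{q_1}$, forcing $y^{q_1}\subseteq\cP_4$ --- but $p\in y^{q_1}\cap\cP_6$, a contradiction. The Moufang condition enters only through point regularity, not through the root group action.
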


The study of domestic automorphisms is, morally, the study of ``how little'' can be mapped to opposites. In Section~\ref{sect:9} we turn our attention to the companion question, and investigate ``how much'' can be sent to opposites. The most drastic situation is that of an anisotropic automorphism, where \textbf{every} chamber is mapped to an opposite chamber. We prove the following theorem (we actually prove a more precise result, see Section~\ref{sect:9}).

\begin{thm}\label{thm:6} No finite thick generalised hexagon or octagon admits an anisotropic automorphism.
\end{thm}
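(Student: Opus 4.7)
The plan is to combine the Benson-type eigenvalue identities developed in Appendix~\ref{app:A} with the fact from~\cite{DPM:13} that any anisotropic automorphism of a finite generalised polygon must have coprime parameters. Suppose $\theta$ is anisotropic in a finite thick $2n$-gon with $n\in\{3,4\}$. Since every chamber is mapped to an opposite chamber, every point $p$ satisfies $d(p,p^\theta)=2n$ and every line $L$ satisfies $d(L,L^\theta)=2n$. Writing $f_k(\theta)=|\{p\in\cP : d(p,p^\theta)=2k\}|$, anisotropy means $f_0=\cdots=f_{n-1}=0$ and $f_n=v$, with the dual statement for lines. Substituting these values into the trace identities of Appendix~\ref{app:A} collapses them to a small system of equations of the form $\sum_\lambda c_\lambda(s,t)\,\tr(\theta|_{V_\lambda})=v\cdot p_k(s,t)$, where $V_\lambda$ ranges over the eigenspaces of the point graph and $c_\lambda,p_k$ are explicit polynomials in $s$ and $t$.

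The next step is to apply integrality: each trace $\tr(\theta|_{V_\lambda})$ is an algebraic integer bounded in absolute value by the multiplicity $m_\lambda$. By~\cite{DPM:13} we have $\gcd(s,t)=1$, so the Feit-Higman conditions $\sqrt{st}\in\ZZ$ for hexagons and $\sqrt{2st}\in\ZZ$ for octagons force $(s,t)=(a^2,b^2)$ with $\gcd(a,b)=1$ in the hexagon case, and $\{s,t\}=\{c^2,2d^2\}$ with $\gcd(c,2d)=1$ in the octagon case. The collapsed trace identities then become tight divisibility constraints on the integers $v=(s+1)(s^2t^2+st+1)$ for hexagons, and on the corresponding expression for octagons. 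Combined with the Haemers-Roos inequalities $s\leq t^3$, $t\leq s^3$ (hexagons) and $s\leq t^2$, $t\leq s^2$ (octagons), these constraints should eliminate every admissible parameter pair.

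The hard part will be the octagon analysis. The point-graph eigenvalues of an octagon include irrational values involving $\sqrt{2st}$, so the traces $\tr(\theta|_{V_\lambda})$ on the corresponding eigenspaces naturally lie in $\ZZ[\sqrt{2}]$ and must be handled by separating their rational and $\sqrt{2}$-parts, then symmetrising over the Galois action; extracting usable integer constraints will require combining several such identities. In the hexagon case, by contrast, coprimality together with $\sqrt{st}\in\ZZ$ immediately forces both $s$ and $t$ to be perfect squares, and after this substitution the remaining divisibility conditions should be rigid enough that a short case analysis, aided by the inequality $s\leq t^3$, rules out every thick possibility.
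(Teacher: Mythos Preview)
Your general strategy---plug $|\cL_0|=\cdots=|\cL_{2n-2}|=0$ into the Benson-type formulae of Appendix~\ref{app:A} and extract integrality constraints---is exactly what the paper does. But you have the relative difficulty of the two cases backwards, and there is a real gap in your hexagon plan.

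For octagons the argument is essentially one line: with all $|\cL_{2k}|=0$ for $k<4$, the formula for $n_2$ in~(\ref{eq:noct1}) collapses to $n_2=(s+1)(t+1)(s^2t^2+1)/(2st(s+t))$, and integrality forces $s\mid t+1$ and $t\mid s+1$. For $s,t\ge 2$ this is already impossible (and in any case incompatible with $\sqrt{2st}\in\ZZ$). There is no need to work in $\ZZ[\sqrt{2}]$ or to separate Galois-conjugate eigenspaces; the rational eigenvalue alone suffices.

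For hexagons your sketch does not close. Writing $s=a^2$, $t=b^2$, the formula for $n_3-n_2$ in~(\ref{eq:nhex1}) gives $a\mid b^2+1$ and $b\mid a^2+1$, but this system has \emph{infinitely many} solutions---precisely the pairs $(F_{2n-1},F_{2n+1})$ of odd-indexed Fibonacci numbers (Lemma~\ref{lem:fibonacci})---and every such pair satisfies the Haemers--Roos bound $s\le t^3$. So the inequality you invoke eliminates nothing. The paper finishes by parametrising these solutions via the Fibonacci lemma and then checking that a \emph{second} integrality condition, the one for $n_1$, fails for all of them except the degenerate $a=1$. Your proposal is missing both the Fibonacci parametrisation and the use of a second eigenvalue constraint to kill the surviving infinite family.
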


It follows from \cite[Theorem~1.3]{DPM:13} that no duality of a generalised $2n$-gon can be anisotropic, and that no collineation of a generalised $(2n+1)$-gon can be anisotropic. In \cite[Proposition~5.4]{DPM:13} it is shown that no duality of a finite projective plane is anisotropic, and in \cite[Corollary~4.3]{TTM:09} (see also \cite{ben:70}) it is shown that if a finite thick generalised quadrangle with parameters $(s,t)$ admits an anisotropic collineation then $s$ and $t$ are coprime. Thus, in the language of buildings (see~\cite{AB:08}), Theorem~\ref{thm:6} implies that:

\begin{cor}\label{cor:7}
If a finite thick irreducible spherical building of rank at least~$2$ admits an anisotropic automorphism, then the building is a quadrangle with coprime parameters.
\end{cor}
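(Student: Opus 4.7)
The plan is to deduce the corollary by combining Theorem~\ref{thm:6} with the already-established results for anisotropic automorphisms of low rank objects, via a short case analysis on the Coxeter type of the building.

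First I would let $\Delta$ be a finite thick irreducible spherical building of rank $r\geq 2$ carrying an anisotropic automorphism $\theta$, and dispose of the case $r\geq 3$. By the Tits classification the irreducible Coxeter diagrams of rank at least $3$ are of type $A_n$, $B_n=C_n$, $D_n$, $E_6$, $E_7$, $E_8$, or $F_4$, and every such diagram contains an $A_2$ subdiagram, so $\Delta$ has a rank-$2$ residue isomorphic to a finite thick projective plane (and in types $B_n, C_n, F_4$ also a quadrangle residue). The standard residue argument for spherical buildings, as used implicitly in the proof of \cite[Theorem~1.3]{DPM:13}, then produces from $\theta$ an anisotropic automorphism of some such projective-plane (or quadrangle) residue, which the rank-$2$ analysis below rules out.

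For $r=2$, the Feit-Higman theorem restricts $\Delta$ to be an $n$-gon with $n\in\{3,4,6,8\}$, and I would split on $n$. If $n=3$ then \cite[Theorem~1.3]{DPM:13} excludes anisotropic collineations and \cite[Proposition~5.4]{DPM:13} excludes anisotropic dualities, so no automorphism qualifies. If $n=6$ or $n=8$ then \cite[Theorem~1.3]{DPM:13} again kills anisotropic dualities, and Theorem~\ref{thm:6} of the present paper kills anisotropic collineations. If $n=4$ then \cite[Theorem~1.3]{DPM:13} kills dualities, while \cite[Corollary~4.3]{TTM:09} forces $\gcd(s,t)=1$ for any anisotropic collineation. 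Thus only the quadrangle case with coprime parameters survives.

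The main obstacle is the rank-$2$ reduction in the step $r\geq 3$: one must produce, from a hypothetical anisotropic $\theta$, an anisotropic automorphism of some rank-$2$ residue (or else derive a direct contradiction), so that the polygon classification can be brought to bear. Everything else is essentially bookkeeping, with the critical new input being Theorem~\ref{thm:6}, which eliminates the hexagonal and octagonal possibilities that would otherwise remain on the rank-$2$ list.
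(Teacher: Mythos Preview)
Your approach is essentially the paper's own: assemble Theorem~\ref{thm:6} with the cited rank~$2$ results (\cite[Theorem~1.3 and Proposition~5.4]{DPM:13}, \cite[Corollary~4.3]{TTM:09}) and invoke a rank reduction for buildings of rank~$\geq 3$. The paper itself gives no further detail than the sentence ``in the language of buildings (see~\cite{AB:08}), Theorem~\ref{thm:6} implies~\ldots'', so your explicit case analysis on $n\in\{3,4,6,8\}$ is if anything more careful than what appears there.

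One caveat on the rank~$\geq 3$ step: your phrasing that the residue argument ``produces from $\theta$ an anisotropic automorphism of some such projective-plane (or quadrangle) residue'' is an oversimplification. For a $J$-residue $R$ one only obtains a well-defined induced map $\mathrm{proj}_R\circ\theta$ on $R$ when $R$ and $R^{\theta}$ are \emph{opposite} residues, which forces $\sigma(J)=w_0(J)$ (with $\sigma$ the type permutation of $\theta$ and $w_0$ the opposition involution). For instance, for a collineation of a building of type $A_n$ with $n$ odd, no connected rank~$2$ subset $J$ satisfies this, and the only self-opposite rank~$2$ residues are reducible grids, which do admit anisotropic automorphisms and so give no contradiction. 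The actual exclusion of rank~$\geq 3$ therefore requires a slightly more refined argument than a one-line appeal to ``the standard residue argument''; you are right to flag this as the genuine obstacle, and the paper likewise defers it entirely to the literature.
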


Note that there are examples of anisotropic automorphisms in certain finite generalised quadrangles with coprime parameters.  See \cite[Remark~4.5]{TTM:13}, where it is noted that there are quadrangles of order $(2^n-1,2^n+1)$ which admit anisotropic automorphisms. For example, it turns out that the square of the exceptional domestic collineation in the quadrangle with parameters~$(3,5)$ is anisotropic. There are also plenty of examples of anisotropic automorphisms of infinite generalised polygons.

\section{Domesticity in $(2n+1)$-gons}\label{sect:3}

The results of this section prove Theorem~\ref{thm:1a}.

\begin{lemma}\label{lem:oddcollineation}
No collineation of a thick generalised $(2n+1)$-gon is domestic.
\end{lemma}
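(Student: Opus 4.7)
The plan is to deduce this lemma immediately from the version of Leeb's theorem quoted in the introduction: every nontrivial automorphism of a thick generalised polygon must map at least one point to an opposite point/line, at least one line to an opposite line/point, or at least one chamber to an opposite chamber. My strategy is to show that for a collineation of a $(2n+1)$-gon the first two possibilities cannot occur, which then forces the third and hence non-domesticity.

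The key observation is a parity one. The incidence graph of a generalised $(2n+1)$-gon is bipartite with parts $\cP$ and $\cL$, and two elements are opposite precisely when they are at distance $2n+1$. Since $2n+1$ is odd, any pair of opposite elements must consist of one point and one line; equivalently, no point is opposite a point and no line is opposite a line. A collineation $\theta$, by definition, maps $\cP$ to $\cP$ and $\cL$ to $\cL$, so for every point $p$ the image $p^{\theta}$ is a point and hence cannot be opposite $p$, and symmetrically no line is sent to an opposite line. This rules out the first two alternatives in Leeb's theorem.

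Consequently Leeb's theorem forces any nontrivial collineation $\theta$ to map at least one chamber to an opposite chamber, so $\theta$ is not domestic. The identity is tacitly excluded, as in Leeb's theorem. I do not anticipate any genuine obstacle: the proof is essentially a one-line consequence of Leeb's theorem combined with the bipartite structure of the incidence graph of a $(2n+1)$-gon, and should mirror the very short Section~\ref{sect:3} in which the lemma is stated.
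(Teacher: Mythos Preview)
Your proof is correct and follows essentially the same route as the paper: the parity observation that opposition in a $(2n+1)$-gon pairs points with lines, combined with the fact that collineations preserve types, forces Leeb's theorem to yield a chamber mapped to an opposite. Your explicit remark that the identity is tacitly excluded is a nice clarification that the paper leaves implicit.
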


\begin{proof}
In a $(2n+1)$-gon, if $x\in\cP\cup\cL$ is opposite $y\in\cP\cup\cL$ then either $x\in\cP$ and $y\in\cL$, or $x\in\cL$ and $y\in\cP$. Since collineations preserve $\cP$ and $\cL$ it is impossible for any point or line to be mapped to an opposite point or line. Thus every collineation of a thick generalised $(2n+1)$-gon must map at least one chamber to an opposite chamber (by \cite[Sublemma~5.22]{lee:00}).
\end{proof}

\begin{lemma}\label{lem:oddpldom}
No duality of a thick generalised $(2n+1)$-gon is point-domestic or line-domestic.
\end{lemma}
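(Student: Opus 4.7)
The plan is to reduce the assertion to Leeb's theorem by establishing a basic symmetry: for a duality, the condition ``some point is mapped to an opposite'' is equivalent to ``some line is mapped to an opposite''. Once this symmetry is in place, the result follows immediately.

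First I would record the key observation that for any automorphism $\theta$ of the incidence graph and any element $x$, we have $d(x^\theta, x^{\theta^2}) = d(x, x^\theta)$, simply because $\theta$ preserves distances. Applied to a duality $\theta$ of a $(2n{+}1)$-gon, if $p$ is a point with $d(p, p^\theta) = 2n+1$ (so $p$ is mapped to an opposite line), then the line $L := p^\theta$ satisfies $d(L, L^\theta) = d(p^\theta, p^{\theta^2}) = 2n+1$, so $L$ is mapped to an opposite point. Thus $\theta$ itself sends the set
\[
A = \{p \in \cP : d(p, p^\theta) = 2n+1\}
\]
bijectively onto the set
\[
B = \{L \in \cL : d(L, L^\theta) = 2n+1\},
\]
and in particular $A = \emptyset$ if and only if $B = \emptyset$.

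Next I would unpack what it means for a duality to map a chamber to an opposite chamber. Since $2n+1$ is odd, the definition of opposite chambers forces the ``crossed'' case: for a chamber $\{p,L\}$ the image chamber $\{p^\theta, L^\theta\}$ (with point $L^\theta$ and line $p^\theta$) is opposite to $\{p,L\}$ precisely when $d(p,p^\theta) = d(L,L^\theta) = 2n+1$. In particular, a chamber mapped to an opposite chamber automatically witnesses both $A \neq \emptyset$ and $B \neq \emptyset$.

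Finally I would invoke Leeb's result (cited in the excerpt before Definition~2.2): since a duality is nontrivial (it exchanges the two parts of the bipartition of the incidence graph and hence cannot fix any chamber), some residue must be mapped to an opposite residue. For a generalised polygon the only residues are points, lines, chambers (and the trivial residues), so at least one of $A$, $B$ or the set of chambers mapped to opposite chambers is nonempty. By the two observations above, $A$ and $B$ are simultaneously empty or nonempty, and a nonempty chamber-opposite set forces both. Hence both $A$ and $B$ are nonempty, so $\theta$ is neither point-domestic nor line-domestic. The main (and only slightly subtle) step is the distance-preservation observation together with the correct interpretation of chamber opposition in the odd case; once these are in place, everything else is a direct appeal to Leeb's theorem.
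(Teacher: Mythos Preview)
Your proof is correct and essentially identical to the paper's: both use that $\theta$ preserves distances to conclude that if $p$ is mapped to an opposite then so is $L=p^\theta$ (the paper phrases this as applying $\theta$ to a geodesic of length $2n+1$ from $p$ to $p^\theta$). One minor quibble: a duality \emph{can} fix a chamber (an absolute flag with $p^\theta=L$ and $L^\theta=p$), but this is harmless since nontriviality of $\theta$ already follows from the fact that it swaps the two bipartition classes of the incidence graph.
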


\begin{proof}
If $\theta$ is a domestic duality of a thick $(2n+1)$-gon then we may assume up to duality that a point $p$ is mapped to an opposite line $L=p^{\theta}$ (by \cite[Sublemma~5.22]{lee:00}). Let $p\bI L_1\bI p_1\bI\cdots \bI L_n\bI p_n\bI L$ be a path of length $2n+1$ from $p$ to $L$. Applying $\theta$ we have a path $L\bI L_1^{\theta}\bI p_1^{\theta}\bI\cdots\bI L_n^{\theta}\bI p_n^{\theta}\bI L^{\theta}$, and so the line~$L$ is mapped to an opposite point $p'=L^{\theta}$. Thus $\theta$ is neither point-domestic nor line-domestic.
\end{proof}

Thus the first part of Theorem~\ref{thm:1a} is proven. We now prove the remaining parts of Theorem~\ref{thm:1a}, which we state again for convenience.

\begin{thm}\label{thm:pp} Let $\Gamma$ be a thick projective plane, not necessarily finite.
\begin{itemize}
\item[\emph{1.}] If $\Gamma$ is not isomorphic to the Fano plane then $\Gamma$ admits no domestic automorphisms.
\item[\emph{2.}] If $\Gamma$ is isomorphic to the Fano plane then there exists a unique domestic automorphism up to conjugation. This automorphism is an exceptional domestic duality of order~$8$. In $\mathbb{ATLAS}$ notation \emph{\cite[p.3]{atlas}} the conjugacy class of the domestic duality is class~$8A$.
\end{itemize}
\end{thm}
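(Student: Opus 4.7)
The plan is to combine Lemmas~\ref{lem:oddcollineation} and~\ref{lem:oddpldom} with a combinatorial and spectral analysis of absolute elements. By Lemma~\ref{lem:oddcollineation} no collineation of a thick $(2n+1)$-gon is domestic, so any domestic automorphism of $\Gamma$ must be a duality; by Lemma~\ref{lem:oddpldom} no duality is point- or line-domestic, so any such domestic automorphism is automatically exceptional domestic. It thus suffices to classify exceptional domestic dualities of thick projective planes.

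Let $\theta$ be such a duality of a thick projective plane $\Gamma$ of order $s$, and call a point $p$ (respectively a line $L$) \emph{absolute} if $p\bI p^\theta$ (respectively $L^\theta\bI L$). In a projective plane a point is opposite a line iff they are non-incident, so $\theta$ sends the chamber $\{p,L\}$ to an opposite chamber $\{L^\theta,p^\theta\}$ exactly when neither $p$ nor $L$ is absolute. Hence domesticity is the statement that non-absolute points and non-absolute lines are mutually non-incident. Applying $\theta$ to the relation $p\bI p^\theta$ yields $p^\theta\bI p^{\theta^2}$, so $\theta$ sends absolute points bijectively to absolute lines; writing $a,b$ for the numbers of absolute points and lines we therefore have $a=b$, and $a<v:=s^2+s+1$ by exceptional domesticity. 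Counting incidences, each non-absolute point lies on $s+1$ absolute lines, and each absolute line $L$ contains at most $s$ non-absolute points (the point $L^\theta\in L$ being absolute, as one checks by applying $\theta$ to $L^\theta\bI L$), so
\[
(s+1)(v-a)=\sum_{L\text{ abs}}m(L)\leq sa,\qquad\text{so}\qquad a\geq\frac{(s+1)v}{2s+1}.
\]

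The main obstacle is upgrading this estimate to the conclusion $s=2$. The crude counting alone is insufficient, and I would supplement it with the Benson-type eigenvalue framework developed in Appendix~\ref{app:A}: realising $\theta$ by permutation matrices on the row- and column-indices of the incidence matrix $M$ and exploiting the spectrum $\{\pm(s+1),\pm\sqrt s\}$ of $M$ together with $\mathrm{tr}(MP_\sigma)$-type identities, one extracts congruence and divisibility constraints on $a$ depending only on $s$ and on the cycle structure of $\theta^2$. Combining these with the inequality above and with the structural fact that the collineation $\theta^2$ must permute the non-empty set of $v-a$ non-absolute points should exclude every $s\geq 3$, as intended in~\cite{TTM:11}. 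Once $s=2$, the plane is Fano, $\mathrm{Aut}(\Gamma)\cong\mathrm{PGL}_3(2)\rtimes\langle\delta\rangle\cong\mathrm{PGL}_2(7)$ has order $336$, and the dualities form the non-identity coset; their conjugacy classes are listed in the $\mathbb{ATLAS}$. I would verify domesticity on a representative of each class by direct inspection of its action on the $21$ chambers of the Heawood graph, finding that only the order-$8$ class $8A$ is domestic. Existence is furnished by Devillers' example acknowledged in the introduction; exceptional domesticity is automatic since $\Gamma$ is thick; and uniqueness up to conjugation, the order $8$, and the $\mathbb{ATLAS}$ label then all follow, completing the proof.
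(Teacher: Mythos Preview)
Your reduction via Lemmas~\ref{lem:oddcollineation} and~\ref{lem:oddpldom} is correct, and the absolute-point framework is a natural way to rephrase domesticity. However, the proposal has a genuine gap at the crucial step, and a structural mismatch with the statement being proved.

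First, the step from the inequality $a\geq(s+1)v/(2s+1)$ to the conclusion $s=2$ is not carried out. You acknowledge that the counting is insufficient and propose to invoke Benson-type eigenvalue arguments, but you only say these ``should exclude every $s\geq 3$'' without doing so. This is the heart of the theorem, and it is left as an aspiration rather than an argument. Second, and more seriously, the theorem is stated for thick projective planes that are \emph{not necessarily finite}, whereas your entire approach (counting absolute points, appealing to the spectrum of the incidence matrix, invoking trace identities) presupposes finiteness. Even a complete eigenvalue argument would leave the infinite case untouched.

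The paper's proof avoids both problems with a short, purely incidence-geometric argument that works uniformly for finite and infinite planes. Starting from a point $p$ with $p^\theta=L$ opposite, one looks at the pencil of lines $L_1,\ldots,L_{s+1}$ through $p$ and the points $p_i=L\cap L_i$; domesticity forces each $p_i^\theta$ to be a line through $p_i$. A brief case analysis on whether $p'=L^\theta$ equals $p$ shows that if $s\geq 3$ one can always choose a line $M$ through some $p_i$ avoiding $L$, $L_i$, and $p_i^\theta$, and then the chamber $\{M\cap L_1,M\}$ is mapped to an opposite. This forces $s=2$ directly, with no spectral input, and the remaining determination of $\theta$ on the Fano plane is then explicit. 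You would do well to replace the eigenvalue sketch with this elementary local argument.
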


\begin{proof} By Lemma~\ref{lem:oddcollineation} and Lemma~\ref{lem:oddpldom} the only possible domestic automorphisms of thick projective planes are exceptional domestic dualities. 

Let $\Gamma$ be a thick projective plane with parameter~$s$ (with the possibility that $s=\infty$), and suppose that $\theta$ is an exceptional domestic duality of~$\Gamma$. Thus there exists a point $p$ mapped to an opposite line~$L=p^{\theta}$, and by the proof of Lemma~\ref{lem:oddpldom} the line $L$ is mapped to an opposite point $p'=L^{\theta}$.

Let $L_1,\ldots,L_{s+1}$ be the lines through~$p$, and for each $i$ let $p_i=L\cap L_i$. By domesticity, $d(p_i,p_i^{\theta})=1$ for each~$i$, and so $p_i^{\theta}$ is a line through~$p_i$ (necessarily distinct from~$L$, but perhaps equal to $L_i$). Each line through $p_i$ distinct from $L_i,p_i^{\theta}$ is mapped to an opposite point. Similarly, note that $L_i^{\theta^{-1}}$ is a point on $L_i$ (necessarily distinct from $p$, but perhaps equal to~$p_i$), and that each point on $L_i$ distinct from $p_i,L_i^{\theta^{-1}}$ is mapped to an opposite line. 

If $p'=p$ then $p_i^{\theta}=L_i$ for all~$i$. By thickness there is a line $M$ through $p_1$ with $M\neq L,L_1$. Since $L_1=p_1^{\theta}$ we have $M\neq L_1,p_1^{\theta}$, and so $M$ is mapped by $\theta$ to an opposite point. Let $q=M\cap L_2$. Then $q\neq p_2$, and since $p_2=L_2^{\theta^{-1}}$ we have $q\neq p_2,L_2^{\theta^{-1}}$, and so~$q$ is mapped to an opposite line. Thus the chamber $\{q,M\}$ is mapped to an opposite chamber, a contradiction.

Thus $p'\neq p$. After relabelling the lines $L_1,\ldots,L_{s+1}$ we may assume that $p'$ is a point on~$L_1$. Then $p_1^{\theta}=L_1$, and $p_i^{\theta}\neq L_i$ for all $2\leq i\leq s+1$. If $s\geq 3$ then there is a line $M$ through $p_2$ with $M\neq L,L_2,p_2^{\theta}$, and so $M$ is mapped to an opposite point. Let $q=M\cap L_1$. Since $q\neq p_1=L_1^{\theta^{-1}}$ we see that $q$ is mapped to an opposite line, and so the chamber $\{q,M\}$ is mapped to an opposite chamber, a contradiction. 
Thus $s=2$, and so $\Gamma$ is the Fano plane. It is straight forward to see that the conditions $p_1^{\theta}=L_1$, $p_2^{\theta}\neq L_2$, and $p_3^{\theta}\neq L_3$ plus domesticity completely determine~$\theta$. Explicitly, let $\cP=\{p,p_1,p_2,p_3,q_1,q_2,q_3\}$ where for each $i$ we write $q_i$ for the point on $L_i$ with $q_i\neq p,p_i$, and let $\cL=\{L,L_1,L_2,L_3,M_1,M_2,M_3\}$ where for each $i$ we write $M_i$ for the line through $p_i$ with $M_i\neq L,L_i$ (see the figure). Then~$\theta$ is as follows.

\vspace{-0.3cm}
\noindent\begin{minipage}{0.6\textwidth}
\begin{align*}
p^{\theta}&= L&L^{\theta}&= q_1\\
p_1^{\theta}&= L_1& L_1^{\theta}&= p_1\\
p_2^{\theta}&= M_2&L_2^{\theta}&= p_2\\
p_3^{\theta}&= M_3&L_3^{\theta}&= p_3\\
q_1^{\theta}&= M_1&M_1^{\theta}&= p\\
q_2^{\theta}&= L_2&M_2^{\theta}&= q_3\\
q_3^{\theta}&= L_3&M_3^{\theta}&= q_2
\end{align*}
\end{minipage}
\begin{minipage}{0.4\textwidth}
\vspace{0.5cm}\begin{center}
\begin{tikzpicture} [scale=1.1]
\path 
(0,0) node (0) [shape=circle,draw,fill=black,scale=0.5,label=right:\footnotesize{$q_2$}] {}
({cos(30)},{sin(30)}) node (1) [shape=circle,draw,fill=black,scale=0.5,label=right:\footnotesize{$p_2$}] {}
({cos(150)},{sin(150)}) node (2) [shape=circle,draw,fill=black,scale=0.5,label=left:\footnotesize{$q_3$}] {}
({cos(270)},{sin(270)}) node (3) [shape=circle,draw,fill=black,scale=0.5,label=below:\footnotesize{$q_1$}] {}
(0,{1/(sin(30))}) node (4) [shape=circle,draw,fill=black,scale=0.5,label=right:\footnotesize{$p_3$}] {}
({-1/(tan(30))},{-1}) node (5) [shape=circle,draw,fill=black,scale=0.5,label=below left:\footnotesize{$p$}] {}
({1/(tan(30))},{-1}) node (6) [shape=circle,draw,fill=black,scale=0.5,label=below right:\footnotesize{$p_1$}] {}
;
\draw (1)--(4)--(2)--(5)--(3)--(6)--(1);
\draw (4)--(0)--(3);
\draw (5)--(0)--(1);
\draw (6)--(0)--(2);
\draw (0,0) circle (1cm);
\draw [<-] (1.4,-0.3)--(1.75,-0.2);
\path (1.9,-0.15) node {\footnotesize{$L$}};
\draw [<-] (-1.4,-0.3)--(-1.75,-0.2);
\path (-2,-0.15) node {\footnotesize{$L_3$}};
\draw [->] (-0.9,1.25)--(-0.1,1.25);
\path (-1.2,1.25) node {\footnotesize{$M_3$}};
\draw [bend left,<-] (0.3,1) to (1,1.5);
\path (1.3,1.5) node {\footnotesize{$M_2$}};
\draw [<-] (-1,-0.65)--(-1,-1.4);
\path (-1,-1.6) node {\footnotesize{$L_2$}};
\draw [<-] (1.3,-0.65)--(1.85,-0.65);
\path (2.1,-0.65) node {\footnotesize{$M_1$}};
\draw [<-] (1,-1.1)--(1,-1.4);
\path (1,-1.6) node {\footnotesize{$L_1$}};
\end{tikzpicture}
\end{center}
\end{minipage}
\vspace{0.2cm}

Thus $\theta$ has order~$8$. Note that $\theta$ is determined by the choice of a triple $(p,L_1,L)$ with $p$ a point, $L_1$ a line through $p$, and $L$ a line opposite $p$. Since the automorphism group acts transitively on such triples it follows that $\theta$ is unique up to conjugation. Thus in $\mathbb{ATLAS}$ notation \cite[p.3]{atlas} the conjugacy class of $\theta$ is either~$8A$ or $B*$. 

To show that $\theta$ is in class~$8A$ we make an explicit calculation to verify that $\chi_4(\theta)=\sqrt{2}$ (in the notation of \cite[p.3]{atlas}). To do this we build the~$6$-dimensional irreducible representation~$\chi_4$ from the point set of~$\Gamma$. Identify the points with the column vectors $p=(6,-1,-1,-1,-1,-1,-1)/\sqrt{42}$, $p_1=(-1,6,-1,-1,-1,-1,-1)/\sqrt{42}$, and so on. Identify each line $L'$ of~$\Gamma$ with a unit vector in the intersection of the space spanned by the points on~$L'$ with the space spanned by the points not on~$L'$.  Explicitly we may take
$$
L'=\frac{1}{7\sqrt{2}}\left(4\sum p'-3\sum p''\right)
$$
where the first sum is over the points~$p'$ with $p'\bI L'$, and the second is over the points $p''$ not incident with~$L'$. The automorphism $\theta$ acts on the space $V$ spanned by the points with matrix
$$
\pi(\theta)=\begin{pmatrix}
a&b&a&a&a&b&b\\
b&b&a&a&b&a&a\\
b&a&b&a&a&b&a\\
b&a&a&b&a&a&b\\
a&b&b&b&a&a&a\\
a&a&a&b&b&b&a\\
a&a&b&a&b&a&b
\end{pmatrix}\qquad\textrm{where $a=(2-3\sqrt{2})/14$ and $b=(1+2\sqrt{2})/7$}.
$$
In particular, $\mathrm{Tr}(\pi(\theta))=1+\sqrt{2}$. Since $\pi$ is a direct sum of the trivial representation and the $6$-dimensional irreducible representation~$\chi_4$ we deduce that $\chi_4(\theta)=\sqrt{2}$.
\end{proof}

\begin{cor}\label{cor:oddgons} The only domestic automorphism of a finite thick generalised $(2n+1)$-gon is the exceptional domestic duality of the Fano plane described in Theorem~\emph{\ref{thm:pp}}.
\end{cor}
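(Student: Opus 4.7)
The plan is to observe that this corollary is an immediate combination of two results already available in the excerpt: the Feit--Higman restriction on parameters of finite thick generalised polygons, and Theorem~\ref{thm:pp}.

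First, I would invoke the Feit--Higman theorem, which is recalled explicitly in Section~\ref{sect:2}: thick finite generalised $n$-gons exist only for $n\in\{2,3,4,6,8\}$. Among odd values of $n$, the only one in this list is $n=3$. Hence a finite thick generalised $(2n+1)$-gon must be a projective plane (i.e.\ $2n+1=3$). In particular, there is nothing to prove for $n\geq 2$ since no such polygons exist.

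It then remains to handle the case of a finite thick projective plane $\Gamma$. Here I would appeal directly to Theorem~\ref{thm:pp}: applied to $\Gamma$ (which is certainly among the thick projective planes covered by that theorem, whether or not it is finite), it states that the only domestic automorphism of any thick projective plane is the exceptional domestic duality of the Fano plane. Since the Fano plane is itself finite, this gives the desired conclusion.

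There is essentially no obstacle: the corollary is a one-line deduction, and its content is simply to package the content of Theorem~\ref{thm:pp} in the language of $(2n+1)$-gons using Feit--Higman to eliminate all other diameters. No new calculation or construction is needed.
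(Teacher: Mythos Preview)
Your proposal is correct and follows exactly the same approach as the paper: invoke Feit--Higman to reduce the finite thick $(2n+1)$-gon case to projective planes, then apply Theorem~\ref{thm:pp}.
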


\begin{proof}
By the Feit-Higman Theorem the only finite thick generalied $(2n+1)$-gons are projective planes, and so the corollary is immediate from Theorem~\ref{thm:pp}.
\end{proof}

We note that there is an error in the proof of the theorems in Section~7.3 of the thesis of the second author that seems to be unfixable, and so deciding if Corollary~\ref{cor:oddgons} holds without the finiteness assumption remains an open problem.

\section{Domesticity in $2n$-gons}\label{sect:4}

The results of this section prove Theorem~\ref{thm:1b}. Firstly, an argument similar to Lemma~\ref{lem:oddcollineation} shows that no duality of a thick generalised $2n$-gon is domestic. Thus the proof is complete when we prove:

\begin{prop} Let $n\geq 1$. The fixed element structure of a line-domestic collineation of a thick generalised $4n$-gon and the fixed element structure of a point-domestic collineation of a thick generalised $(4n+2)$-gon is an ovoidal subspace. 
\end{prop}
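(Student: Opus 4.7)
The plan is to verify each of the three defining conditions of an ovoidal subspace for $\mathcal{S} = \Gamma_{\theta}$, working for concreteness with a line-domestic collineation of a $4n$-gon (the point-domestic case of a $(4n+2)$-gon is symmetric after swapping parities). The engine of the proof is a \emph{doubling argument}: given a fixed element $y$ and a geodesic extending $y$, one concatenates the geodesic with its $\theta$-image through the pivot $y$; provided the walk does not backtrack at $y$, it is a non-backtracking walk of length twice that of the original geodesic, and when this length equals the diameter it is automatically a geodesic (in a generalised $n$-gon a non-backtracking walk of length at most $n$ is a geodesic, since girth equals twice the diameter), producing an element mapped by $\theta$ to an opposite.

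First I would prove the subspace condition. The nontrivial half is that every fixed line $L$ has all its points fixed. Assume not, and pick $p \in L$ with $p^{\theta} \neq p$ (so $p^{\theta} \in L$). Use axiom~(2) to extend the flag $\{L,p\}$ to a geodesic $L = x_0 \bI x_1 = p \bI x_2 \bI \cdots \bI x_{2n}$, and form the doubled walk
$$
x_{2n},\,\ldots,\,x_1,\,L,\,\theta(x_1),\,\ldots,\,\theta(x_{2n})
$$
of length $4n$. The only possible backtrack is at the pivot $L$, which would require $x_1 = \theta(x_1)$, i.e.\ $p = p^{\theta}$, contrary to assumption. Thus the walk is non-backtracking of length equal to the diameter, hence a geodesic, and because $2n$ is even the endpoint $x_{2n}$ is a line opposite its image $\theta(x_{2n})$, contradicting line-domesticity. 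The complementary subspace condition, that two fixed collinear points determine a fixed line, is immediate from the uniqueness of the line through two points. The point-domestic case in $(4n+2)$-gons is handled identically after taking the extension to length $2n+1$, so that the terminal element is a point at the diameter distance $4n+2$ from its image.

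Given the subspace property, I would combine it with the classification of fixed element structures of polygon automorphisms (empty, a set of mutually opposite elements, a tree of bounded diameter, or a sub-polygon) to constrain $\mathcal{S}$ to fit the three shapes listed in Theorem~\ref{thm:ovoidal}. If $\mathcal{S}$ contains no line, any two fixed points at distance less than $4n$ would, by uniqueness of non-diameter geodesics together with $\theta$-invariance of the geodesic between them, force a fixed line en route --- a contradiction --- so all fixed points are mutually opposite and $\mathcal{S}$ is a candidate distance-$2n$ ovoid. If $\mathcal{S}$ contains a line, the subspace property propagates and, in conjunction with the classification, identifies $\mathcal{S}$ with either a large full subpolygon or the ball $B_{2n}(p)$ centred at a fixed point~$p$. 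In each of the three shapes, the remaining ovoidal conditions --- every element of $\Gamma$ within distance $2n$ of $\mathcal{S}$, and uniqueness of the nearest element when the distance is less than $2n$ --- are verified by refinements of the doubling argument: any violation would allow one to anchor a non-backtracking walk of length $4n$ whose endpoints are opposite lines.

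The main obstacle lies in the covering and uniqueness steps, particularly in the ball and subpolygon cases: one must tailor the choice of the geodesic extension so that the resulting doubled walk produces opposite \emph{lines} rather than opposite points (which would not contradict line-domesticity), and so that the non-backtracking condition at every pivot is secured by the subspace property established in the first step.
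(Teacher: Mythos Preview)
Your first step --- proving the subspace condition via the doubling argument --- is correct and essentially identical to the paper's opening move.

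The remainder of the proposal has a genuine gap. You propose to derive the covering and uniqueness conditions by first pinning down the \emph{shape} of $\mathcal{S}$ (ovoid, ball, or full subpolygon) using the general classification of fixed structures, and then verifying the ovoidal axioms shape-by-shape. But this is backwards: the general classification only tells you that $\mathcal{S}$ is empty, a set of mutually opposite elements, a tree of diameter at most the polygon's diameter, or a subpolygon. To upgrade ``tree'' to the specific ball $B_{2n}(x)$, or ``subpolygon'' to a \emph{large} full subpolygon, you would already need the covering property you are trying to prove. Your suggestion that covering follows from ``refinements of the doubling argument'' is not substantiated: the doubling argument anchors at a fixed element and produces a line opposite its image, but covering requires producing a fixed element near an \emph{arbitrary} point of $\Gamma$, which is a different task.

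The paper's key idea, which you are missing, is a parity constraint on displacements: in the point-domestic $(4n+2)$-gon case, no point $x$ can have $d(x,x^{\theta}) \equiv 2 \pmod 4$. This is proved by a ``pushing out'' induction --- if $d(x,x^{\theta}) = 4\ell+2$ with $\ell<n$, thickness lets one choose a neighbouring line $M$ with $d(M,M^{\theta}) = 4\ell+4$, and iterating eventually yields a point opposite its image. Once every point has displacement divisible by~$4$, the midpoint $z$ of the geodesic from $x$ to $x^{\theta}$ is itself a point, and a short case analysis (again using the parity constraint) shows $z^{\theta}=z$. This gives covering directly, without any appeal to the shape of $\mathcal{S}$; uniqueness then follows from a straightforward tree argument. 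The parity claim is the engine of the whole proof, and nothing in your proposal approximates it.
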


\begin{proof}
We give the proof for $(4n+2)$-gons. The proof for $4n$-gons is analogous, and only requires renaming some points as lines, and vice versa. So let $\theta$ be a point-domestic collineation of a generalised $(4n+2)$-gon $\Gamma$. 

We first show that the fixed element structure $\Gamma_{\theta}$ is a subspace. It is clear that if $x$ and $y$ are collinear fixed points then the line determined by $x$ and $y$ is also fixed. Now suppose that there is a fixed line~$L$, and suppose for a contradiction that there is a point $x$ on $L$ with $x^{\theta}\neq x$. Complete the path $L\bI x$ to a path $\gamma$ of length $2n+1$, and let $z$ be the last element on this path (and so $z$ is a point). The concatenation of the paths $\gamma^{-1}$ and $\gamma^{\theta}$ gives a non-stuttering path of length $4n+2$ connecting $z$ with $z^{\theta}$, and so $z^{\theta}$ is opposite~$z$, contradicting point-domesticity. Thus $\Gamma_{\theta}$ is a subspace.

We claim that:
\begin{align}
\label{eq:claim1}
\textrm{There are no points $x$ with $d(x,x^{\theta})=4\ell+2$ for some $0\leq \ell\leq n$.}
\end{align} 
By point-domesticity the claim is true for $\ell=n$. Suppose that $\ell<n$ and that $d(x,x^{\theta})=4\ell+2$. Since $\Gamma$ is thick we may choose a line $M$ through $x$ such that $M\neq \mathrm{proj}_{x^{\theta}}x$ and $M^{\theta}\neq \mathrm{proj}_{x}x^{\theta}$. Thus $d(M,M^{\theta})=4\ell+4$. Since $4\ell+4\neq 4n+2$ we can repeat the argument to find a point $y$ on $M$ with $d(y,y^{\theta})=4(\ell+1)+2$, and continuing we construct a point mapped to an opposite point, contradicting point-domesticity. Thus~(\ref{eq:claim1}) holds. 

Let $x$ be any point of $\Gamma$. By (\ref{eq:claim1}) $d(x,x^{\theta})$ is a multiple of~$4$, and so there is a unique point $z$ with $d(x,z)=d(z,x^{\theta})=d(x,x^{\theta})/2$. We claim that $z^{\theta}=z$, which shows that every point of $\Gamma$ is at distance at most $2n$ from a fixed point, verifying the first property of ovoidal subspaces. Suppose instead that $z^{\theta}\neq z$. Let $d(x,z)=2\ell$, and let $(p_0,p_1,\ldots,p_{\ell})$ be the points on the geodesic from $p_0=z$ to $p_{\ell}=x^{\theta}$, and let $(q_0,q_1,\ldots,q_{\ell})$ be the points on the geodesic from $q_0=z$ to $q_{\ell}=x$. Let $k\geq 0$ be minimal with the property that $p_k=q_k^{\theta}$. Since $p_{\ell}=q_{\ell}^{\theta}$ and $z^{\theta}\neq z$ we have $0<k\leq \ell$. There are two possibilities:
\begin{enumerate}
\item[(1)] If the point $q_{k-1}^{\theta}$ is on the line determined by $p_k$ and $p_{k-1}$ then by minimality of $k$ we have $q_{k-1}^{\theta}\neq p_{k-1}$, and so $d(q_{k-1},q_{k-1}^{\theta})=4(k-1)+2$, contradicting~(\ref{eq:claim1}).
\item[(2)] If $q_{k-1}^{\theta}$ is not on the line determined by $p_k$ and $p_{k-1}$ then for each point $u$ on the line determined by $q_{k-1}$ and $q_k$ distinct from points $q_{k-1}$ and $q_k$ we have $d(u,u^{\theta})=4k+2$, again contradicting~(\ref{eq:claim1}). 
\end{enumerate}
Thus $z=z^{\theta}$.

It remains to show that if some element $x_0$ is at distance less than $2n$ from some fixed point, then it is at minimal distance from a unique fixed point. Let $x_0$ be such a point, and suppose that $x_0$ is at minimal distance $\ell$ from two fixed points $y$ and~$z$. Thus $d(y,x_0)=d(z,x_0)=\ell$. Let $x_0\bI x_1\bI\cdots\bI x_{\ell}$ be a minimal length path from $x_0$ to $y=x_{\ell}$, and let $x_0'\bI x_1'\bI\cdots\bI x_{\ell}'$ be a minimal length path from $x_0'=x_0$ to $z=x_{\ell}'$. Let $j\leq \ell$ be maximal with respect to the property that $x_j=x_j'$. Since $2\ell<4n+2$ the element $x_j$ belongs to the unique shortest path connecting $z$ with $y$, and hence is fixed under~$\theta$. By minimality of $\ell$ we have $j=\ell$, and thus $y=x_{\ell}=x_{\ell}'=z$, completing the proof.
\end{proof}

We now classify ovoidal subspaces, making Theorem~\ref{thm:1b} considerably more explicit.

\begin{lemma}\label{lem:close} Let $\mathcal{S}$ be an ovoidal subspace of a generalised $2n$-gon. Suppose that $x$ and $y$ are points of $\cS$ with $d(x,y)<2n$. Then every element on the unique geodesic joining $x$ and $y$ is contained in~$\cS$.
\end{lemma}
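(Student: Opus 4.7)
I plan to prove the lemma by induction on $k := d(x,y)/2$. The base case $k = 1$ is immediate: $x$ and $y$ are collinear points of $\cS$, so the first subspace axiom places their joining line $L$ in $\cS$, and the second axiom places every point of $L$ (hence every element of the geodesic $x \bI L \bI y$) in $\cS$.

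For the inductive step, fix $k \geq 2$ and assume the lemma holds for all pairs of points in $\cS$ at distance strictly less than $2k$. Let $\alpha$ denote the unique geodesic from $x$ to $y$. It suffices to exhibit one element of $\cS$ lying on $\alpha$ strictly between $x$ and $y$, since then the induction hypothesis applied to the two subpairs (passing to neighbouring points of $\alpha$ via the second subspace axiom if the intermediate element happens to be a line) yields $\alpha \subseteq \cS$. To this end I will show that the midpoint $w$ of $\alpha$---a point if $k$ is even and a line if $k$ is odd, at distance $k$ from each of $x$ and $y$---lies in $\cS$.

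Suppose for contradiction that $w \notin \cS$. Since $d(w,x) = k < n$ and $x \in \cS$, property (2) of ovoidal subspaces yields a unique closest element $w' \in \cS$ to $w$, which is necessarily a point, at distance $d_0 := d(w,w')$. As $y \in \cS$ also lies at distance $k$ from $w$ and $y \neq x$, uniqueness forces $d_0 < k$. I now examine the first step of the geodesic $\delta$ from $w$ to $w'$, giving three cases: $\delta$ starts along $\alpha$ toward $x$, along $\alpha$ toward $y$, or off $\alpha$. The decisive combinatorial input is the girth bound: the incidence graph of $\Gamma$ has girth $4n$, and since $2(k + d_0) < 4k < 4n$, every reduced closed walk of length at most $2(k + d_0)$ must be trivial. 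In the ``off-$\alpha$'' case this forces the concatenation of the $x$-to-$w$ segment of $\alpha$ with $\delta$ to be a simple path that coincides with the unique geodesic from $x$ to $w'$; then $d(x,w') = k + d_0 < 2k$, so the inductive hypothesis applied to $(x,w')$ places this geodesic, and in particular $w$, in $\cS$---a contradiction. In the remaining two cases, a parallel girth argument shows that some intermediate element $\alpha_{k \pm e}$ of $\alpha$ (for some $e \geq 1$ measuring the overlap of $\delta$ with $\alpha$) lies on the geodesic from $x$ to $w'$, and hence in $\cS$ by the inductive hypothesis; this is an element of $\cS$ strictly between $x$ and $y$ on $\alpha$, and a final application of induction to the two halves of $\alpha$ once again forces $w \in \cS$, contradicting the assumption.

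The main technical obstacle is the case analysis on how $\delta$ interacts with $\alpha$: tracking the overlap length $e$, checking parities so that one is handed a point (rather than a line) to which the inductive hypothesis applies directly (invoking the second subspace axiom to pass to adjacent points of $\alpha$ when the intermediate element is a line), and invoking the girth bound at the correct place in each case. The overall structure is that the girth constraint, together with the inductive hypothesis, converts the non-uniqueness witness $\{x,y\}$ at distance $k$ from $w$ into a concrete point of $\cS$ on $\alpha$, which is enough to conclude.
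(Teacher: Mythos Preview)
Your proof is correct and rests on the same geometric idea as the paper's: take the midpoint $w$ of the geodesic, invoke ovoidal property~(2) to produce a strictly closer point $w'\in\cS$, and then exploit the girth bound to see that the path through $w$ to $w'$ from one of the endpoints is a geodesic of length $k+d_0<2k$. The paper phrases this as a descent (produce a strictly shorter ``bad'' pair and iterate down to distance~$4$), whereas you phrase it as an induction; these are equivalent packagings of the same argument.

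Your three--case split on the first step of $\delta$ is more work than necessary. The paper avoids it in one stroke: simply choose $x_1\in\{x,y\}$ so that the concatenation $x_1\to w\to w'$ is non-backtracking (in your language, if $\delta$ heads toward $x$ pick $x_1=y$, otherwise pick $x_1=x$). Then $w$ itself lies on the geodesic from $x_1$ to $w'$, and the inductive hypothesis applied to $(x_1,w')$ at distance $k+d_0<2k$ immediately gives $w\in\cS$, contradicting the assumption. This dispenses with tracking the overlap length $e$, the parity bookkeeping to ensure $\alpha_{k\pm e}$ is a point, and the second round of induction on the two halves of $\alpha$. What you wrote is not wrong, just longer than it needs to be.
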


\begin{proof}
Suppose that $x$ and $y$ are points of $\cS$ with $d(x,y)=2k<2n$, and let $\gamma$ be the unique geodesic joining $x$ and $y$. Suppose that the only elements of $\gamma$ in $\cS$ are $x$ and~$y$ (for otherwise we can apply the following argument to a pair of shorter geodesics). Let $m$ be the middle element of the geodesic~$\gamma$. By definition of ovoidal subspaces there is a unique point $z\in\cS$ at minimal distance to~$m$, and since $d(m,x)=d(m,y)=k$ we have $d(m,z)=k_1<k$. Now we can choose $x_1\in\{x,y\}$ such that $m$ is on the geodesic $\gamma_1$ joining $x_1$ to $y_1=z$. Note that the middle element $m_1$ of $\gamma_1$ is contained in $\gamma$, and since $z$ has minimal distance to~$m$ no element on the geodesic $\gamma_1$ other than $x_1$ and $y_1$ is in~$\cS$. Repeating the above argument to the new pair $x_1,y_1$ we eventually find $x',y'\in\cS$ with $d(x',y')=4$ such that no element on the geodesic $\gamma'$ joining $x'$ and $y'$ is contained in~$\cS$. This is plainly in contradiction with the definition of ovoidal subspaces (because there is no unique element of $\cS$ nearest the middle element $m'$ of $\gamma'$), completing the proof.
\end{proof}

\begin{proof}[Proof of Theorem~\emph{\ref{thm:ovoidal}}]
Let $\mathcal{S}$ be an ovoidal subspace of a generalised $2n$-gon $\Gamma$. If $\mathcal{S}$ contains only mutually opposite points then, by definition, $\mathcal{S}$ is a distance $n$-ovoid. If $\mathcal{S}$ contains an ordinary $2n$-gon then by the definition of subspaces and Lemma~\ref{lem:close} we see that $\mathcal{S}$ is a full subpolygon, and from the definition of ovoidal subspace this subpolygon is large. 

It remains to consider the case that $\mathcal{S}$ contains at least one line, but no ordinary $2n$-gon, and we aim to show that $\mathcal{S}=B_n(m)$ for some element $m\in\cP\cup\cL$. Let $x,y$ be points of $\mathcal{S}$ at maximal distance~$d(x,y)=2k$. We claim that $k=n$. Suppose that $k<n$, and let $\Sigma$ be an ordinary $2n$-gon containing $x$ and $y$. Then $\Sigma$ is the union of paths $\gamma_1$ and $\gamma_2$, where $\gamma_1$ is the geodesic joining $x$ and $y$. Let $z$ be the middle element of the longer path~$\gamma_2$. Note that neither $\mathrm{proj}_zx$ nor $\mathrm{proj}_zy$ is in $\mathcal{S}$, for this would contradict maximality of~$d(x,y)$. We also have $z\notin\mathcal{S}$ (for otherwise $\Sigma\subseteq \mathcal{S}$ by Lemma~\ref{lem:close}), and $d(x,z)=d(y,z)=2n-k>n$. Hence there is $u\in\mathcal{S}$ with $d(u,z)\leq n$. Since $\mathrm{proj}_xz\neq\mathrm{proj}_yz$ we may suppose (up to renaming $x$ and $y$) that $\mathrm{proj}_uz\neq \mathrm{proj}_xz$. The non-back-tracking path $\gamma$ from $u$ to $x$ consisting of the geodesic from $u$ to $z$ followed by the geodesic from $z$ to $x$ has length $\ell=d(u,z)+2n-k\leq 3n-k$. Since $z\neq \mathcal{S}$ it follows that $2n\leq \ell$. Let $v$ be the point on $\gamma$ at distance $2n$ from $x$ (with distance measured in $\gamma$). Thus $x$ and $v$ are opposite one another, and $d(u,v)\leq n-k$. Thus, by the triangle inequality, $d(u,x)\geq n+k$, contradicting the maximality of $d(x,y)=2k$. 

Hence $\mathcal{S}$ contains points $x$ and $y$ with $d(x,y)=2n$. By assumption there is a line $L\in\mathcal{S}$. By the properties of subspaces the points $\mathrm{proj}_xL$ and $\mathrm{proj}_yL$ are in $\mathcal{S}$, and so by Lemma~\ref{lem:close} at least $1$ minimal path (of length~$2n$) between $x$ and $y$ is contained in~$\mathcal{S}$. Since we assume that $\mathcal{S}$ contains no ordinary $2n$-gon we conclude that exactly one minimal length path, say $\gamma'$, from $x$ to $y$ is contained in~$\mathcal{S}$. Let $m$ be the middle element of $\gamma'$ (and so $m$ is either a point or a line). We claim that no element at distance bigger than $n$ from $m$ is in~$\mathcal{S}$. Suppose that $z\in\mathcal{S}$ with $d(z,m)=j>n$. We may assume that $j=n+1$ (for if $j<2n$ then, using Lemma~\ref{lem:close}, we can replace $z$ by the element contained in the geodesic joining $z$ and $m$ at distance $n+1$ from $m$, and if $j=2n$ then we find a geodesic from $m$ to $z$ contained in~$\mathcal{S}$, and replace $z$ by the element at distance $n+1$ from $m$ on this geodesic). By renaming $x$ and $y$ if necessary we may suppose that $\mathrm{proj}_zm\neq \mathrm{proj}_xm$. Hence there is a unique ordinary $2n$-gon $\Sigma$ containing $x,m,z$, and by Lemma~\ref{lem:close} $\Sigma$ is contained in $\mathcal{S}$, a contradiction. 

Finally we claim that every element $v$ with $d(m,v)\leq n$ is in~$\mathcal{S}$. Let $d(m,v)=j\leq n$, and suppose that $v\notin\mathcal{S}$. Suppose first that $j<n$. Choose an element $u$ with $d(m,u)=n+j$ and $d(v,u)=n$. By the previous paragraph $u\notin \mathcal{S}$, and so from the definition of  ovoidal subspaces there is an element $x'\in\mathcal{S}$ with $d(u,x')\leq n$. Since $d(x',m)+d(m,u)+d(u,x')\leq n+n+j+n<4n$, the union of the geodesics joining $x'$ and $m$, and $m$ and $u$, and $u$ and $x'$, is not a cycle, and thus is a tree with one branch point. Since $d(u,x')<d(u,v)$ and $d(m,x')\geq d(m,v)$ we see that $v$ lies on the shortest path from $m$ to $x'$, and hence $v\in\mathcal{S}$, a contradiction. If $j=n$ then $v$ is a point, and by the previous argument the line $\mathrm{proj}_mv$ is in~$\mathcal{S}$. Thus, by the definition of a subspace, $v\in\mathcal{S}$ as well.
\end{proof}

Using the classification of ovoidal subspaces and the eigenvalue techniques developed in Appendix~\ref{app:A} we can give some more precise information for point-domestic and line-domestic collineations of generalised hexagons and octagons. Recall that a \textit{central collineation} of a generalised $2n$-gon is a collineation whose fixed element structure equals the ball of radius $n$ centred at a point.

Consider generalised hexagons. By \cite{off:02} distance~$3$-ovoids in generalised hexagons only exist for parameters $s=t$. It is shown in \cite[Corollary~5.8]{TTM:09} that no nontrivial automorphism of a finite generalised hexagon with parameters $(t,t)$ with $t\equiv0\mod 3$ can fix a distance $3$-ovoid. By \cite{tha:76} the parameters of any large full subhexagon $\Gamma'$ are $(s,\sqrt{t/s})$, and so in particular $t\geq s$. In regards to central collineations, we have the following proposition. 

\begin{prop}
Let $\Gamma$ be a finite thick generalised hexagon with parameters $(s,t)$. If $\Gamma$ admits a central collineation then $t^3(st+s^2+1)/(s^2+st+t^2)$ is an integer. In particular, if $s=t^3$ then $\Gamma$ does not admit a central collineation.
\end{prop}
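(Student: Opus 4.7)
My plan is to apply the eigenvalue machinery of Appendix~\ref{app:A} to the central collineation $\theta$ with center~$p$. First I would extract the combinatorial data forced by the hypothesis that the fixed element structure equals $B_3(p)$: there are $f_0 = 1 + s(t+1)$ fixed points and $(t+1)(1+st)$ fixed lines. Moreover each point $q$ at distance~$4$ from $p$ lies on a unique line $M$ of $B_3(p)$ (the line of the geodesic $p \to q$ at distance~$3$ from $p$); this line $M$ is fixed set-wise by $\theta$, and its unique fixed point is distinct from $q$, so $\theta(q) \in M\setminus\{q\}$ and hence $d(q,\theta(q))=2$. A dual argument shows every non-fixed line $N$ lies at distance~$5$ from $p$ and shares its fixed line of distance~$3$ from $p$ with $\theta(N)$ as a common neighbour, so $d(N,\theta(N))\leq 4$; in particular $f_3^L = 0$.

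The spectral input is the hexagonal Benson-type identity. The point-collinearity matrix $A_1$ of $\Gamma$ has spectrum $\{s(t+1),\,s-1\pm\sqrt{st},\,-(t+1)\}$, and a direct computation in the Bose-Mesner algebra of the distance-regular point graph yields
\[
q(A_1) \;=\; (t+1)\bigl(A_3 - sA_2 + s^2 A_1 - s^3 I\bigr),
\]
where $q(X)=(X-s(t+1))(X-(s-1+\sqrt{st}))(X-(s-1-\sqrt{st}))$. This matrix vanishes on the eigenspaces $V_0,V_1,V_2$ and has eigenvalue $-(t+1)(s+1)(s^2+st+t^2)$ on the $-(t+1)$-eigenspace $V_3$. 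Equating the two expressions for $\mathrm{tr}(q(A_1) P_\theta)$, and carrying out the analogous computation on the line side using the concurrency matrix $B_1$ (with spectrum $\{t(s+1),\,t-1\pm\sqrt{st},\,-(s+1)\}$), yields the pair of integer identities
\[
(s+1)(s^2+st+t^2)\,m_3^{\theta} \;=\; s^3 f_0 - s^2 f_1 + sf_2 - f_3, \qquad (t+1)(s^2+st+t^2)\,m_3^{\theta,L} \;=\; t^3 f_0^L - t^2 f_1^L + tf_2^L - f_3^L,
\]
with $m_3^\theta, m_3^{\theta,L}\in\mathbb{Z}$.

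Substituting $f_0^L = (t+1)(1+st)$, $f_3^L = 0$, and $f_1^L + f_2^L = s^2 t^2(t+1)$ into the second identity simplifies it to
\[
(s^2+st+t^2)\,m_3^{\theta,L} \;=\; t^3(s^2+st+1) - t\,f_1^L.
\]
The combinatorial crux is a bijection between lines $N$ at distance~$5$ from $p$ with $|N\cap\theta(N)|=1$ and opposite points $q$ with $d(q,\theta(q))=2$: if $N\cap\theta(N) = \{q^*\}$ then $q^* \in \cP_6(p)$ and its $\theta$-preimage lies on $N$, and conversely each such opposite-point pair determines a unique line. Combining this with the point-side identity and the balance equations $\sum f_i = v$, $\sum f_i^L = v_L$ allows one to eliminate the undetermined populations of opposite elements and isolate the divisibility $(s^2+st+t^2)\mid t^3(s^2+st+1)$. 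The main technical obstacle lies precisely here: one must carefully track $\theta$-orbits on opposite points (ruling out short collinear cycles via the girth constraint on $\Gamma$, and controlling the apartment-type orbits that are a priori compatible with the girth) so that the residual contributions cancel and the stated divisibility emerges from the integrality of $m_3^{\theta,L}$.

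For the case $s=t^3$, polynomial long division gives
\[
\frac{t^3(st+s^2+1)}{s^2+st+t^2} \;=\; \frac{t(t^6+t^4+1)}{t^4+t^2+1} \;=\; t^3 + \frac{t(1-t^2)}{t^4+t^2+1},
\]
and since $0 < t(t^2-1) < t^4+t^2+1$ for every integer $t\geq 2$, the fractional term lies strictly in $(-1,0)$; hence the expression is not an integer and, by the first part, $\Gamma$ admits no central collineation.
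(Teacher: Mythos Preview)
Your approach is essentially the paper's own: compute the line-displacement data for the central collineation and feed it into the hexagonal eigenvalue formula~(\ref{eq:nhex1}) for $n_1$. Your identity $(s^2+st+t^2)\,m_3^{\theta,L} = t^3(s^2+st+1) - t\,f_1^L$ is exactly what drops out of that formula once one substitutes $|\cL_0|=(t+1)(st+1)$, $|\cL_6|=0$, and $|\cL_2|+|\cL_4|=(t+1)s^2t^2$. So the setup is right.

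The gap is that you never establish $f_1^L=|\cL_2|=0$, and instead try to neutralise this unknown via a bijection that you yourself flag as the ``main technical obstacle'' and do not carry out. In fact $|\cL_2|=0$ is elementary, and this is the one observation you are missing. Suppose some line $L$ at distance~$5$ from $p$ had $d(L,\theta(L))=2$, so $L$ and $\theta(L)$ meet in a point~$r$. With your notation $p\bI L_1\bI p_2\bI L_3\bI p_4\bI L$, the point $r$ cannot be $p_4$ (otherwise $\theta^{-1}(p_4)\in L\cap L_3=\{p_4\}$ would force $p_4$ to be fixed), so $d(p,r)=6$. But then
\[
L \,\bI\, r \,\bI\, \theta(L) \,\bI\, \theta(p_4) \,\bI\, L_3 \,\bI\, p_4 \,\bI\, L
\]
is a non-stuttering closed walk of length~$6$ in the incidence graph (all six vertices are distinct: the three lines lie at pairwise different distances from $p$ or are a non-fixed line and its image, and the three points lie at distances $4,4,6$ from $p$ with $p_4\neq\theta(p_4)$). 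This contradicts the girth~$12$ of the incidence graph. Hence every non-fixed line is mapped to distance exactly~$4$, giving $|\cL_4|=(t+1)s^2t^2$ and $|\cL_2|=0$. Your own formula then reads $(s^2+st+t^2)\,m_3^{\theta,L}=t^3(s^2+st+1)$, and no bijection is needed. Your treatment of the case $s=t^3$ is correct and matches the paper's.
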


\begin{proof}
If $\Gamma$ admits a central collineation $\theta$ then it is elementary that $\theta$ has $(t+1)(st+1)$ fixed lines, and that $\theta$ maps $(t+1)s^2t^2$ lines to distance~$4$. Using these values in~(\ref{eq:nhex1}) (see Appendix~\ref{app:A}) we compute $n_1=t^3(st+s^2+1)/(s^2+st+t^2)$. In particular, if $s=t^3$ then $n_1=t^3-t(t^2-1)/(t^4+t^2+1)$, which is never an integer for $t\geq 2$.  
\end{proof}

In particular, the above observations, together with Theorem~\ref{thm:1b}, establish that generalised hexagons with parameters $(t^3,t)$ do not admit point-domestic automorphisms. 

Now consider generalised octagons. 

\begin{prop} Large full proper suboctagons do not exist in finite thick generalised octagons.
\end{prop}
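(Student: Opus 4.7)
The plan is to derive a contradiction via the equitable-partition/quotient-matrix method combined with the known spectrum of a generalised octagon. Suppose for contradiction that $\Gamma'$ is a large full proper suboctagon of a thick finite octagon $\Gamma$, with parameters $(s,t')$ and $(s,t)$ respectively and $t'<t$. By Theorem~\ref{thm:ovoidal}, $\Gamma'$ is an ovoidal subspace: every element of $\Gamma$ lies within distance~$4$ of $\Gamma'$, and every element at distance strictly less than~$4$ has a unique nearest element of $\Gamma'$. A short argument using this uniqueness and the girth of $\Gamma$ shows that the nearest element is always a point of $\cP'$, and consequently every line of $\Gamma$ not in $\cL'$ meets $\cP'$ in at most one point; in particular each point $q\in\cP'$ lies on exactly $t'+1$ lines of $\cL'$ together with $t-t'$ ``external'' lines, each meeting $\cP'$ only in $q$.

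One then partitions $\cP = \cP'\sqcup\cP_2\sqcup\cP_4$ and $\cL = \cL'\sqcup\cL_1\sqcup\cL_3$ by distance to $\cP'$. Using girth~$16$ and the uniqueness of projection one checks that this partition is equitable for the bipartite incidence graph of $\Gamma$, with quotient matrices
\[
B=\begin{pmatrix} t'+1 & t-t' & 0 \\ 0 & 1 & t \\ 0 & 0 & t+1 \end{pmatrix}, \qquad C=\begin{pmatrix} s+1 & 0 & 0 \\ 1 & s & 0 \\ 0 & 1 & s \end{pmatrix}.
\]
The matrix $BC$ has constant row sums equal to $(s+1)(t+1)$ and determinant $s^2(s+1)(t+1)(t'+1)$, so the all-ones vector is an eigenvector with eigenvalue $(s+1)(t+1)$ while the remaining two eigenvalues $\mu_2,\mu_3$ of $BC$ satisfy
\[
\mu_2+\mu_3 = 2s+t+st', \qquad \mu_2\mu_3 = s^2(t'+1).
\]

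Because the partition is equitable, $\mu_2$ and $\mu_3$ are eigenvalues of the point-adjacency matrix $A^{T}A$, whose spectrum for a generalised octagon $(s,t)$ is $\{(s+1)(t+1),\,s+t+e,\,s+t,\,s+t-e,\,0\}$ with $e=\sqrt{2st}\in\ZZ$ by the Feit--Higman theorem. Since $(s+1)(t+1)$ appears with multiplicity one, one must have $\{\mu_2,\mu_3\}\subseteq\{s+t+e,\,s+t,\,s+t-e,\,0\}$. The final step is to enumerate all unordered multisets of size two drawn from this four-element set and substitute each into the sum and product relations above. In every case the resulting equations in $(s,t,t')$ either are directly impossible (forcing, for instance, $t=0$, $t'<0$, $s^2+t^2=0$, or a quadratic in $t$ with strictly negative discriminant) or they reduce to the single candidate $t=s$ and $t'=1$; this remaining case is ruled out because Feit--Higman then demands $e=s\sqrt{2}\in\ZZ$, which is impossible for any $s\geq 1$.

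The main obstacle is the verification that the partition is equitable, and in particular pinning down the structure of lines at distance~$3$ from $\cP'$: such a line must contain exactly one point of $\cP_2$ (its projection onto the nearest element of $\cP'$) and exactly $s$ points of $\cP_4$, which relies on the girth~$16$ of $\Gamma$ to exclude short-cut paths that would place additional points of $\cP_2$ or of $\cP'$ on the line. Once equitability is in hand, the trace, determinant and case-by-case enumeration against the octagon spectrum are routine.
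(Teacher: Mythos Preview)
Your argument is correct and takes a genuinely different route from the paper's proof. The paper first invokes an external result (\cite[Proposition~1.8.8]{mal:98}) to force $t'=1$, and then does a direct count of the lines of $\Gamma$ by distance $0,2,4$ from $\Gamma'$; equating this count to $|\cL|=(t+1)(s^3t^3+s^2t^2+st+1)$ produces the polynomial identity $s^2(s-t)(t-1)(st^2+s^2t+2st+s+t)=0$, which is impossible for $s,t\geq 2$. Your approach, by contrast, does not assume $t'=1$: you set up the equitable partition by distance to $\cP'$, extract the two nontrivial eigenvalues of the quotient matrix, and match them against the known spectrum of $NN^T$; the case analysis then \emph{derives} $s=t$ and $t'=1$ in the only surviving case, which is killed by Feit--Higman. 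What your method buys is self-containment (no appeal to \cite{mal:98} for $t'=1$) and a more structural explanation via spectra; what the paper's method buys is brevity and the avoidance of the equitability verification and the ten-case enumeration.

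One small point: your reference to Theorem~\ref{thm:ovoidal} is in the wrong direction---that theorem says ovoidal subspaces are of three types, not that large full subpolygons are ovoidal. You do not actually need the ovoidal machinery: the equitability of your partition follows directly from fullness (every line of $\cL'$ has all its points in $\cP'$), the subspace property (two collinear points of $\cP'$ determine a line of $\cL'$), largeness, and girth~$16$ (ruling out two projections from an element of $\cL_3$ onto $\cP'$), exactly as you outline in your final paragraph. It would be cleaner to drop the ovoidal citation and state these facts directly.
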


\begin{proof}
Let $\Gamma$ be a finite thick generalised octagon with parameters $(s,t)$. Suppose that $\Gamma'$ is a large full suboctagon with parameters $(s,t')$. From \cite[Proposition~1.8.8]{mal:98} we have $t'=1$. The number of lines in $\Gamma'$ is $2(s^3+s^2+s+1)$. The number of lines at distance $2$ from some line in $\Gamma'$ is $(s+1)(s^3+s^2+s+1)(t-1)$, and the number of lines at distance $4$ from some line in $\Gamma'$ is $(s+1)(s^3+s^2+s+1)(t-1)st$. If $\Gamma'$ is large then these lines exhaust all lines of $\Gamma$, and so
$$
(s^3+s^2+s+1)(2+(s+1)(t-1)+(s+1)(t-1)st)=(t+1)(s^3t^3+s^2t^2+st+1).
$$
Thus $s^2(s-t)(t-1)(st^2+s^2t+2st+s+t)=0$, a contradiction as $s,t\geq 2$.
\end{proof}

In regards to distance $4$-ovoids, we note that no such ovoid has been found in the Ree-Tits octagons, and one may conjecture that none exist in any finite thick octagons. Indeed \cite[Theorem~1]{CM:00} implies that no distance $4$-ovoid exists in the Ree-Tits octagon with parameters $(2,4)$. Thus the most likely possibility for a line-domestic collineation is a central collineation. In this direction we give the following proposition.

\begin{prop}
Let $\Gamma$ be a finite thick generalised octagon with parameters $(s,t)$. If $\Gamma$ admits a central collineation then $t^4(s-1)(st+s^2+s+1)/(s^3+st^2+s^2t+t^3)$ is an integer. In particular, if $s=t^2$ then $\Gamma$ does not admit a central collineation. If $(s,t)=(2t,t)$ then $t\equiv 0,3,4,5,8,9,10,13,14\mod 15$, and if $(s,t)=(s,2s)$ then $s\equiv 0,1,2,5,6,7,10,11,12\mod 15$.  \end{prop}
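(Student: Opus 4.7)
I plan to proceed by direct analogy with the hexagon case on the preceding page: first compute the line-distance spectrum of the central collineation~$\theta$, then substitute into the octagon version of the eigenvalue identity developed in Appendix~\ref{app:A}, and finally extract the three arithmetic consequences from the requirement that the resulting multiplicity~$n_1$ is a non-negative integer.

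For the distance spectrum, let $p$ be the centre of $\theta$, so the fixed elements are precisely those in $B_4(p)$. Since geodesics of length strictly less than the diameter~$8$ are unique, the tree expansion from $p$ is exact, and a direct count gives: $(t+1)(1+st)$ fixed lines (the $t+1$ lines at distance $1$ plus the $(t+1)st$ lines at distance $3$); $(t+1)s^2t^2$ lines at distance $5$ from $p$, each mapped to distance $2$ (the penultimate element on the unique geodesic $p\to L$ is a fixed point at distance $4$, which $L$ and $\theta L$ then share); and $(t+1)s^3t^3$ lines at distance $7$ from $p$, each mapped to distance $6$ (the last fixed element on the geodesic is again a point at distance $4$, so a non-backtracking path of length $6$ exists, and shared points between $L$ and $\theta L$ can be ruled out because any such point would have to be fixed by $\theta$, contradicting its distance $6$ or $8$ from $p$). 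No lines are sent to distance $4$ or $8$. Substituting this spectrum into the octagon analogue of (\ref{eq:nhex1}) from Appendix~\ref{app:A} and simplifying, using the factorisation $s^3+s^2t+st^2+t^3=(s+t)(s^2+t^2)$, should yield
\[
n_1 \;=\; \frac{t^4(s-1)(s^2+st+s+1)}{(s+t)(s^2+t^2)},
\]
from which the first assertion follows.

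The three specialisations are then short arithmetic exercises. For $s=t^2$, the formula reduces to $n_1=t(t-1)(t^4+t^3+t^2+1)/(t^2+1)$; polynomial division gives $t^4+t^3+t^2+1=(t^2+1)(t^2+t)+(1-t)$, so integrality of $n_1$ would force $(t^2+1)\mid t(t-1)^2$, and since $\gcd(t^2+1,t)=1$ and $(t-1)^2<t^2+1$ for $t\geq 2$, this is impossible. For $(s,t)=(2t,t)$ the formula collapses to $t(2t-1)(6t^2+2t+1)/15$; divisibility by~$3$ is automatic (check the three residues), and since $6t^2+2t+1\equiv(t+1)^2\pmod 5$, divisibility by~$5$ reduces to $t\equiv 0,3,4\pmod 5$, which combined with any residue mod~$3$ gives the nine listed residues modulo~$15$. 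The case $(s,t)=(s,2s)$ is parallel: it reduces to $16s(s-1)(3s^2+s+1)/15$ with $\gcd(16,15)=1$, divisibility by~$3$ is again automatic, and a mod-$5$ case check yields $s\equiv 0,1,2\pmod 5$, hence the complementary nine residues modulo~$15$.

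The main obstacle will be identifying and correctly applying the octagon analogue of (\ref{eq:nhex1}) from Appendix~A and carrying out the rational-function simplification cleanly into the stated closed form. The distance-spectrum count itself is elementary given the fixed structure $B_4(p)$, but the claim that lines at distance $7$ from $p$ go uniformly to distance $6$ (rather than $4$ or $8$) requires the care indicated above, leaning on the fact that $\theta$ fixes nothing outside $B_4(p)$ together with uniqueness of geodesics of length less than~$8$.
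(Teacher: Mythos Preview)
Your approach is exactly the paper's: compute the line-distance spectrum of the central collineation and feed it into the formula for $n_1$ from Appendix~\ref{app:A}. Your spectrum $|\mathcal{L}_0|=(t+1)(st+1)$, $|\mathcal{L}_2|=(t+1)s^2t^2$, $|\mathcal{L}_6|=(t+1)s^3t^3$ is correct (indeed the paper's own text says ``distance~$4$'' for the last batch, but this is a typo---substituting $|\mathcal{L}_4|=(t+1)s^3t^3$ into (\ref{eq:noct1}) does \emph{not} give the stated $n_1$, whereas your $|\mathcal{L}_6|$ does). Two small corrections: first, the computation actually yields
\[
n_1=-\frac{t^4(s-1)(s^2+st+s+1)}{(s+t)(s^2+t^2)},
\]
with a minus sign; second, $n_1$ is not a multiplicity and need not be non-negative---it is a sum of roots of unity (the trace of $\Theta$ restricted to an eigenspace), so only integrality is available, which is all you use. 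Your three specialisations are handled correctly. One stylistic point: for lines at distance~$7$ from $p$, the existence of a non-backtracking path of length $6<8$ already forces $d(L,L^\theta)=6$ exactly, so the separate ``shared points'' argument is superfluous.
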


\begin{proof}
If $\Gamma$ admits a central collineation $\theta$ then it is elementary that there are $(t+1)(st+1)$ fixed lines, $(t+1)s^2t^2$ lines mapped to distance~$2$, and $(t+1)s^3t^3$ lines mapped to distance~$4$. Using these values in~(\ref{eq:noct1}) gives
$n_1=-t^4(s-1)(st+s^2+s+1)/(s^3+st^2+s^2t+t^3)$. The remaining claims easily follow.
\end{proof}

\section{Exceptional domestic collineations}\label{sect:5}

We now begin the classification of exceptional domestic collineations of generalised $2n$-gons. In this section we prove some general lemmas, before specialising to the case of quadrangles, hexagons and octagons in the following sections. 

Let $\theta$ be a collineation of a generalised $2m$-gon. For $k=0,1,2,\ldots,m$ let
\begin{align}\label{eq:pointlinesets}
\mathcal{P}_{2k}(\theta)=\{p\in\cP\mid d(p,p^{\theta})=2k\}\qquad\textrm{and}\qquad \mathcal{L}_{2k}(\theta)=\{L\in\cL\mid d(L,L^{\theta})=2k\}.
\end{align}
We will usually suppress the dependence on $\theta$, and simply write $\mathcal{P}_{2k}=\mathcal{P}_{2k}(\theta)$ and $\mathcal{L}_{2k}=\mathcal{L}_{2k}(\theta)$.

The following simple argument shows that exceptional domestic collineations are necessarily rather rare for finite generalised $2m$-gons.

\goodbreak

\begin{lemma}\label{lem:Cgen}
Let $\Gamma$ be a finite thick generalised $2m$-gon with parameters $(s,t)$, and suppose that~$\Gamma$ admits an exceptional domestic collineation~$\theta$. Then
\begin{enumerate}
\item If $m=2$ then $(s,t)$ or $(t,s)$ is in $\{(2,2),(4,2),(3,3),(5,3),(6,3),(9,3),(4,4)\}$.
\item If $m=3$ then $(s,t)$ or $(t,s)$ is in $\{(2,2),(8,2),(3,3),(27,3),(4,4),(5,5),(6,6),(7,7)\}$.
\item If $m=4$ then $(s,t)$ or $(t,s)$ is in $\{(4,2),(6,3),(8,4),(10,5),(12,6),(14,7)\}$.
\end{enumerate}
\end{lemma}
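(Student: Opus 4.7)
The plan is to convert the condition ``exceptional domestic'' into quantitative constraints on the distance-distributions $\cP_{2k}$ and $\cL_{2k}$, and then combine these with the eigenvalue machinery developed in Appendix~\ref{app:A}.

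The first step is a short local analysis around points of $\cP_{2m}$. Let $p\in\cP_{2m}$ and let $L\bI p$. Splicing the edges $L\bI p$ and $p^{\theta}\bI L^{\theta}$ onto any geodesic from $p$ to $p^{\theta}$ gives a walk of length $2m+2$ from $L$ to $L^{\theta}$. Since the incidence graph has girth $4m>2m+2$, this walk cannot wind around a cycle, so its reduction to a geodesic proceeds purely by backtracks at the two interior vertices $p$ and $p^{\theta}$. Enumerating the possibilities yields $d(L,L^{\theta})\in\{2m-2,2m\}$, with $d(L,L^{\theta})=2m-2$ if and only if $L^{\theta}$ is the ``geodesic partner'' of $L$ through $p^{\theta}$. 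Domesticity forbids any chamber $\{p,L\}$ with $p\in\cP_{2m}$ and $L\in\cL_{2m}$, and so \emph{every} one of the $t+1$ lines through $p$ must lie in $\cL_{2m-2}$; dually, every one of the $s+1$ points on a line of $\cL_{2m}$ lies in $\cP_{2m-2}$.

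The second, and decisive, step is to push these local constraints through the eigenvalue identities of Appendix~\ref{app:A}. For each $m\in\{2,3,4\}$ the appendix produces formulas (of which (\ref{eq:nhex1}) and (\ref{eq:noct1}) are instances) expressing each $|\cP_{2k}|$ and $|\cL_{2k}|$ as an integral linear combination, with coefficients polynomial in $s$ and $t$, of the non-negative integer multiplicities $n_{i}$ of the eigenvalues of $\theta$ on the point- and line-modules. Feeding in the chamber-vanishing identity $\sum_{p\in\cP_{2m},\,L\bI p}[L\in\cL_{2m}]=0$ together with the exceptional-domesticity hypothesis $|\cP_{2m}|\geq 1$ and $|\cL_{2m}|\geq 1$, and imposing $n_{i}\in\ZZ_{\geq 0}$, one would extract a uniform bound of the shape $\min(s,t)\leq C(m)$ with $C(2)=4$, $C(3)=7$ and $C(4)=7$.

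The final, purely enumerative, step combines this bound with the classical Feit--Higman and Higman--Haemers restrictions on $(s,t)$ recalled in Section~\ref{sect:2}: namely $s\leq t^{2}$, $t\leq s^{2}$ and $s^{2}(st+1)/(s+t)\in\ZZ$ for quadrangles; $s\leq t^{3}$, $t\leq s^{3}$, $\sqrt{st}\in\ZZ$ and $s^{3}(s^{2}t^{2}+st+1)/(s^{2}+st+t^{2})\in\ZZ$ for hexagons; $s\neq t$, $s\leq t^{2}$, $t\leq s^{2}$, $\sqrt{2st}\in\ZZ$ and the quartic divisibility for octagons. A straightforward check over $\min(s,t)\leq C(m)$ produces exactly the stated lists. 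The main obstacle is the second step: I do not expect a single uniform argument across $m\in\{2,3,4\}$, and one is likely forced into a diameter-by-diameter numerical analysis isolating, in each case, the particular eigenvalue multiplicity that is pushed non-integral or negative once $\min(s,t)$ exceeds $C(m)$.
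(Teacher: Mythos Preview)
Your first step (the local analysis forcing all lines through a point of $\cP_{2m}$ into $\cL_{2m-2}$, and dually) is correct and matches the paper. Your third step (filtering through the Feit--Higman/Higman--Haemers constraints) is also fine.

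The gap is the second step. You propose to feed the conditions $|\cP_{2m}|\geq 1$, $|\cL_{2m}|\geq 1$ and the chamber-vanishing identity into the eigenvalue formulae of Appendix~\ref{app:A} and extract $\min(s,t)\leq C(m)$. But those formulae express the multiplicities $n_i$ as linear combinations of \emph{all} the $|\cL_{2k}|$, and the hypotheses you list do not pin down $|\cL_0|,|\cL_2|,\ldots,|\cL_{2m-2}|$ at all. For any fixed $(s,t)$ there is plenty of freedom in these intermediate cardinalities, so non-negativity and integrality of the $n_i$ do not by themselves force $\min(s,t)$ to be small. You would first need independent lower bounds on $|\cP_{2m-2}|$ and $|\cP_{2m}|$ (and their duals) of size comparable to $|\cP|$, and once you have those the eigenvalue machinery is no longer needed.

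The paper's proof is entirely elementary and avoids Appendix~\ref{app:A} altogether. For the hexagon case it fixes $L\in\cL_6$ and propagates outward: domesticity forces all of $\Gamma_1(L)$ into $\cP_4$, then at least $(s+1)(t-2)$ lines of $\Gamma_2(L)$ into $\cL_6$, and so on through $\Gamma_5(L)$, yielding $|\cP_4|\geq (s+1)(1+s(t-2)+s^2(t-2)^2)$. A parallel count from some $L'\in\cL_4$ gives $|\cP_6|\geq (s-1)(1+(s-2)t+(s-2)^2t^2)$. Since $|\cP_4|+|\cP_6|\leq |\cP|=(s+1)(s^2t^2+st+1)$, one obtains an explicit polynomial inequality in $s,t$ whose leading term is $+s^3t^2$; it visibly fails for $s,t\geq 8$. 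The dual inequality lets one assume $s\geq t$, and then checking $t=2,\ldots,7$ against the known parameter constraints produces the list. The $m=2$ and $m=4$ cases are analogous. This is the missing idea: a direct tree-propagation lower bound, not an eigenvalue argument.
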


\begin{proof}
We will prove the $m=3$ case. The $m=2$ and $m=4$ cases are analogous. Suppose that $\theta$ is a domestic collineation, and let $\mathcal{P}_0,\mathcal{P}_2,\mathcal{P}_4,\mathcal{P}_6$ and $\mathcal{L}_0,\mathcal{L}_2,\mathcal{L}_4,\mathcal{L}_6$ be the sets defined in~(\ref{eq:pointlinesets}). Since $\theta$ is exceptional domestic there exists a line $L\in \mathcal{L}_{6}$. Since $\theta$ maps no chamber to an opposite, all $s+1$ points in $\Gamma_1(L)$ are in $\mathcal{P}_{4}$. Then at least $(s+1)(t-2)$ of the lines in $\Gamma_2(L)$ are in $\mathcal{L}_{6}$, and thus at least $(s+1)(t-2)s$ of the points in $\Gamma_3(L)$ are in $\mathcal{P}_{4}$. Then at least $(s+1)(t-2)^2s$ lines in $\Gamma_4(L)$ are in $\mathcal{L}_6$, and at least $(s+1)(t-2)^2s^2$ points in $\Gamma_5(L)$ are in $\mathcal{P}_4$. Since $\mathcal{P}=\Gamma_1(L)\cup\Gamma_3(L)\cup\Gamma_5(L)$ we see that
$$
|\mathcal{P}_{4}|\geq(s+1)(1+s(t-2)+s^2(t-2)^2).
$$

On the other hand, since $\theta$ is exceptional domestic there exists a line $L'\in \mathcal{L}_{4}$ (for otherwise $\mathcal{P}_{6}=\emptyset$). At least $s-1$ of the points in $\Gamma_1(L')$ are in $\mathcal{P}_{6}$, and at least $(s-1)t$ of the lines in $\Gamma_2(L')$ are in $\mathcal{L}_{4}$. Then at least $(s-1)t(s-2)$ points in $\Gamma_3(L')$ are in $\mathcal{P}_{6}$, and at least $(s-1)t^2(s-2)$ of the lines in $\Gamma_4(L')$ are in $\mathcal{L}_{4}$. Then at least $(s-1)t^2(s-2)^2$ of the points in $\Gamma_5(L')$ are in $\mathcal{P}_6$, and so
$$
|\mathcal{P}_{6}|\geq (s-1)(1+(s-2)t+(s-2)^2t^2).
$$

But $|\mathcal{P}_{4}|+|\mathcal{P}_{6}|\leq |\cP|=(s+1)(s^2t^2+st+1)$, and thus
$$
(s+1)(1+s(t-2)+s^2(t-2)^2)+(s-1)(1+(s-2)t+(s-2)^2t^2)-(s+1)(s^2t^2+st+1)\leq 0.
$$
Repeating the argument counting from a point $p\in \mathcal{P}_6$ and then a point $p'\in \mathcal{P}_4$ we see that the dual inequality (with $s$ and $t$ interchanged) also holds. Thus we may assume that $s\geq t$. It is clear that the inequality fails for large values of $s$ and $t$, because the leading term on the left hand side is $+s^3t^2$. Indeed the inequality fails if $s,t\geq 8$ (write $s=x+8$ and $t=y+8$ and observe that the left hand side of the inequality is a polynomial in $x$ and $y$ with nonnegative coefficients). It remains to consider the cases $2\leq t\leq 7$. If $t=7,6,5,4$ then the inequality forces $s< 10,11,14,22$ (respectively). Combining this with the known constraints on the parameters of a hexagon from Section~\ref{sect:2} proves the lemma in this case. 
\end{proof}

\begin{remark}
The possibility $(s,t)=(6,3)$ in the  $m=2$ case of Lemma~\ref{lem:Cgen} can be removed, since it is known that no quadrangle with parameters $(6,3)$ exists, see \cite{DZ:76} or \cite[6.2.2]{PT:09}. However this case is easily eliminated in our later arguments anyway.
\end{remark}

\begin{lemma}\label{lem:Xgen}
Let $\Gamma$ be a generalised $2m$-gon, and let $\theta$ be a domestic collineation of~$\Gamma$. If there exist $p_0\in \mathcal{P}_0$ and $L_0\in \mathcal{L}_0$ with $d(p_0,L_0)=2m-1$ then either all lines through $p_0$ are fixed by~$\theta$, or all points on $L_0$ are fixed by~$\theta$.
\end{lemma}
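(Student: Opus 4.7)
Suppose for a contradiction that there exist both a line $M$ through $p_0$ with $M^\theta\neq M$ and a point $q$ on $L_0$ with $q^\theta\neq q$. The plan is to produce a chamber mapped to an opposite chamber by~$\theta$, contradicting domesticity.

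Since $d(p_0,L_0)=2m-1<2m$, the geodesic from $p_0$ to $L_0$ is unique, and since both endpoints are fixed every vertex along it is fixed; write $L'=\mathrm{proj}_{p_0}L_0$ and $p'=\mathrm{proj}_{L_0}p_0$. Then $M\neq L'$ and $q\neq p'$, and the standard projection property of generalised polygons gives $d(M,L_0)=2m$, $d(p_0,q)=2m$, and (from $q$ opposite $p_0$ together with $M\bI p_0$) also $d(M,q)=2m-1$. The resulting unique length-$(2m-1)$ geodesic from $M$ to~$q$, concatenated with the edges $p_0\bI M$ and $q\bI L_0$, is a length-$(2m+1)$ path from $p_0$ to $L_0$ edge-disjoint from the fixed geodesic, and their union is a cycle of length $4m$, i.e.\ an apartment $\Sigma$ of~$\Gamma$ containing $\{p_0,L_0,M,q\}$. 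Label its vertices $w_0=p_0,\,w_1=L',\,\ldots,\,w_{2m-1}=L_0,\,w_{2m}=q,\,\ldots,\,w_{4m-1}=M$.

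The key move is to consider the chamber $C=\{a,b\}$ where $a=w_{3m}$ and $b=w_{3m-1}$, the midpoint edge of the non-fixed side of~$\Sigma$. I claim $d(a,a^\theta)=d(b,b^\theta)=2m$, from which $C^\theta=\{a^\theta,b^\theta\}$ is opposite~$C$, the sought contradiction. For~$a$, consider the walk
\[
a=w_{3m}\bI w_{3m+1}\bI\cdots\bI w_{4m-1}=M\bI p_0\bI M^\theta\bI w_{4m-2}^\theta\bI\cdots\bI w_{3m}^\theta=a^\theta,
\]
of length $(m-1)+1+1+(m-1)=2m$; this is non-backtracking because $w_{4m-2}\neq p_0$, $M^\theta\neq M$, and hence $w_{4m-2}^\theta\neq p_0$. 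Dually, for~$b$ one uses the length-$2m$ walk $b\bI\cdots\bI q\bI L_0\bI q^\theta\bI\cdots\bI b^\theta$, non-backtracking because $w_{2m+1}\neq L_0$, $q^\theta\neq q$, and $w_{2m+1}^\theta\neq L_0$.

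Finally, any non-backtracking walk $W$ of length $2m$ between two vertices $u,v$ of a generalised $2m$-gon forces $d(u,v)=2m$: comparing $W$ with any putative shorter geodesic $\gamma$ of length $d<2m$ and cancelling the maximal common prefix and suffix (say $k$ edges in total) produces a non-backtracking closed walk of length $2m+d-2k$ at some vertex, which must be at least the girth $4m$, forcing $d\geq 2m$, a contradiction. Applying this to $a,a^\theta$ and $b,b^\theta$ gives the claim. The only real obstacle is conceptual---recognising the correct chamber as the midpoint edge of the non-fixed side of the unique apartment through $p_0,L_0,M,q$; once this is identified, the non-backtracking checks at the turning points $M,p_0,M^\theta$ and $q,L_0,q^\theta$ together with the girth argument are routine.
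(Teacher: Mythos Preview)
Your proof is correct and follows essentially the same approach as the paper: the paper chooses a geodesic $p_0\bI L\bI x_2\bI\cdots\bI x_{2m-1}\bI p$ from $p_0$ to the non-fixed point $p$ passing through the non-fixed line $L$, and then exhibits exactly the same two length-$2m$ non-backtracking paths (through $p_0$ and through $L_0$) to show that the middle chamber $\{x_m,x_{m+1}\}$ is sent to an opposite. Your $w_{3m},w_{3m-1}$ are precisely the paper's $x_m,x_{m+1}$; the only difference is that you spell out the apartment construction and the girth argument more carefully, whereas the paper leaves these as implicit.
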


\begin{proof}
Suppose that there is a line $L\bI p_0$ with $L^{\theta}\neq L$ and a point $p\bI  L_0$ with $p^{\theta}\neq p$. Since $d(p_0,p)=2m$ we can choose a geodesic $p_0\bI L\bI x_2\bI x_3\bI\cdots\bI x_{2m-1}\bI p$ from $p_0$ to $p$ passing through~$ L$ (the $x_j$ with $j$ even are points, and the $x_j$ with $j$ odd are lines). But then the chamber $\{x_m,x_{m+1}\}$ is mapped to an opposite chamber, because we have the geodesic paths 
\begin{align*}
&x_m\bI x_{m-1}\bI\cdots \bI x_2\bI L \bI p_0\bI L^{\theta}\bI x_2^{\theta}\bI\cdots\bI x_m^{\theta},\\
&x_{m+1}\bI x_{m+2}\bI\cdots\bI x_{2m-1}\bI p\bI  L_0\bI p^{\theta}\bI x_{2m-1}^{\theta}\bI\cdots\bI x_{m+1}^{\theta},
\end{align*}
a contradiction.
\end{proof}

Let $\theta$ be a collineation of a generalised $2m$-gon, and let $\mathcal{P}_{2k}=\cP_{2k}(\theta)$ and $\mathcal{L}_{2k}=\cL_{2k}(\theta)$ be the sets from~(\ref{eq:pointlinesets}). We further decompose these sets as follows. If $p\in \mathcal{P}_{2k}$ with $1\leq k\leq m-1$ then there is a unique geodesic
$
p=x_0\bI x_1\bI x_2\bI\cdots\bI x_{2k-1}\bI x_{2k}=p^{\theta}
$
joining $p$ to $p^{\theta}$. For $r=1,2,\ldots,k+1$, let
$$
\mathcal{P}_{2k}^{r}=\{p\in \mathcal{P}_{2k}\mid \textrm{$x_{j}^{\theta}=x_{2k-j}$ for $0\leq j\leq k+1-r$ and $x_j^{\theta}\neq x_{2k-j}$ for $k+1-r<j\leq k$}\},
$$
with the obvious dual definition for $\mathcal{L}_{2k}^r$. For example, for generalised octagons ($m=4$) we have
\begin{align*}
\mathcal{P}_6^1&=\{p\in \mathcal{P}_6\mid p\bI L_1\bI p_2\bI L_3\bI p_4\bI L_5\bI p^{\theta}\textrm{ with $L_1^{\theta}= L_5$ and $p_2^{\theta}= p_4$ and $L_3^{\theta}= L_3$}\}\\
\mathcal{P}_6^2&=\{p\in \mathcal{P}_6\mid p\bI L_1\bI p_2\bI L_3\bI p_4\bI L_5\bI p^{\theta}\textrm{ with $L_1^{\theta}= L_5$ and $p_2^{\theta}= p_4$ and $L_3^{\theta}\neq L_3$}\}\\
\mathcal{P}_6^3&=\{p\in \mathcal{P}_6\mid p\bI L_1\bI p_2\bI L_3\bI p_4\bI L_5\bI p^{\theta}\textrm{ with $ L_1^{\theta}= L_5$ and $p_2^{\theta}\neq p_4$ and $L_3^{\theta}\neq L_3$}\}\\
\mathcal{P}_6^4&=\{p\in \mathcal{P}_6\mid p\bI L_1\bI p_2\bI L_3\bI p_4\bI L_5\bI p^{\theta}\textrm{ with $L_1^{\theta}\neq L_5$ and $p_2^{\theta}\neq p_4$ and $L_3^{\theta}\neq L_3$}\}.
\end{align*}
Then for $1\leq k\leq m-1$ we have
$$
\mathcal{P}_{2k}=\mathcal{P}_{2k}^1\cup \mathcal{P}_{2k}^2\cup\cdots\cup \mathcal{P}_{2k}^{k+1}\qquad\textrm{and}\qquad \mathcal{L}_{2k}=\mathcal{L}_{2k}^1\cup \mathcal{L}_{2k}^2\cup\cdots\cup \mathcal{L}_{2k}^{k+1}
$$
where the union is disjoint.

It is helpful to keep the following elementary observations in mind. Let $\theta$ be a collineation of a  generalised $2m$-gon with parameters $(s,t)$, and let $1\leq k\leq m-1$.
\begin{enumerate}
\item If $p\in \mathcal{P}_{2k}^r$ with $1\leq r\leq k$ then exactly $1$ of the lines through $p$ is in $\mathcal{L}_{2k-2}^r$, and the remaining $t$ lines through $p$ are in $\mathcal{L}_{2k+2}^r$.
\item If $p\in \mathcal{P}_{2k}^{k+1}$ then exactly $2$ of the lines through $p$ are in $\mathcal{L}_{2k}^{k+1}$, and the remaining $t-2$ lines through $p$ are in $\mathcal{L}_{2k+2}^{k+1}$.
\item If $\theta$ is domestic, and if $p\in\cP_{2m}$, then all $t+1$ lines through $p$ are in $\cL_{2m-2}$. 
\end{enumerate}
(Here we interpret $\mathcal{L}_{2m}^r=\mathcal{L}_{2m}$ and $\mathcal{L}_0^r=\mathcal{L}_0$ for all $r$, and the dual statements also hold.)

The following formulae are a main ingredient in our arguments.

\begin{prop}\label{prop:Agen}
Let $\Gamma$ be a thick finite generalised $2m$-gon with parameters $(s,t)$. The following formulae hold for any collineation $\theta$ of~$\Gamma$, where $1\leq k<m$.
\begin{align*} 
|\mathcal{P}_{2k}^r|&=\begin{cases}s|\mathcal{L}_{2k-2}^r|&\textrm{if $1\leq r< k$}\\
(s-1)|\mathcal{L}_{2k-2}^r|&\textrm{if $r=k$}\\
|\mathcal{L}_{2k}^r|&\textrm{if $r=k+1$.}
\end{cases}
\end{align*}
If $\theta$ is domestic then also
\begin{align*} 
(t+1)|\mathcal{P}_{2m}|&=s|\mathcal{L}_{2m-2}^1|+s|\mathcal{L}_{2m-2}^2|+\cdots+s|\mathcal{L}_{2m-2}^{m-1}|+(s-1)|\mathcal{L}_{2m-2}^m|.
\end{align*}
The dual formulae (with $P$ and $L$ interchanged and $s$ and $t$ interchanged) also hold. 
\end{prop}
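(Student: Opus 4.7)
The plan is to establish each formula by a double counting of incident pairs $(p,L)$, organised by the refinements $\cP_{2k}=\bigcup_r \cP_{2k}^r$ and $\cL_{2k}=\bigcup_r \cL_{2k}^r$ together with the unique-geodesic structure of a generalised $2m$-gon.

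For the cases $1\leq r\leq k$ the correspondence is projection. Given $p\in \cP_{2k}^r$ with geodesic $p=x_0 \bI x_1 \bI \cdots \bI x_{2k}=p^\theta$, I would set $L=x_1=\mathrm{proj}_p p^\theta$ and verify by a direct index check on the sub-geodesic $y_j=x_{j+1}$ ($0\leq j\leq 2k-2$) from $L$ to $L^\theta$ that the equalities $x_j^\theta=x_{2k-j}$ for $0\leq j\leq k+1-r$ translate into $y_{j-1}^\theta=y_{(2k-2)-(j-1)}$ exactly in the range $0\leq j-1\leq k-r$, placing $L$ in $\cL_{2k-2}^r$ with the same~$r$. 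Conversely, for $L\in \cL_{2k-2}^r$ I count the $p\bI L$ whose projection onto $p^\theta$ is $L$ and for which $d(p,p^\theta)=2k$. These are the points on $L$ distinct from the two ``bad'' points $y_1=\mathrm{proj}_L L^\theta$ and $p^*=(y_{2k-3})^{\theta^{-1}}$ (the latter lying on $L$ since $y_{2k-3}\bI L^\theta$). The two bad points coincide precisely when $y_1^\theta=y_{2k-3}$, which by the defining condition of $\cL_{2k-2}^r$ holds if and only if $r\leq k-1$. This gives fibre size $s$ for $r<k$ and $s-1$ for $r=k$, and the parallel index check confirms that each good $p$ lies in $\cP_{2k}^r$ with the same $r$. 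Double counting then yields the first two formulas.

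For $r=k+1$ the identity $|\cP_{2k}^{k+1}|=|\cL_{2k}^{k+1}|$ follows at once from the ``2 and 2'' observations stated just before the proposition: each $p\in \cP_{2k}^{k+1}$ is incident with exactly $2$ lines of $\cL_{2k}^{k+1}$, and dually each $L\in \cL_{2k}^{k+1}$ is incident with exactly $2$ points of $\cP_{2k}^{k+1}$; counting the set of flags $\{p,L\}$ with $p\in \cP_{2k}^{k+1}$ and $L\in \cL_{2k}^{k+1}$ in two ways gives $2|\cP_{2k}^{k+1}|=2|\cL_{2k}^{k+1}|$. For the final formula I use domesticity as follows: if $p\in \cP_{2m}$ and $L\bI p$ with $L\in \cL_{2m}$, then the chamber $\{p,L\}$ is opposite $\{p^\theta,L^\theta\}$, a contradiction. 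Hence every line through $p$ lies in $\cL_{2m-2}$, so counting pairs $(p,L)$ with $p\in \cP_{2m}$ and $L\bI p$ by first coordinate gives $(t+1)|\cP_{2m}|$. The bad-points analysis from the first step, now with $k=m$, shows that for $L\in \cL_{2m-2}^r$ with $r<m$ the two bad points merge so that $s$ of the points on $L$ lie in $\cP_{2m}$, while for $r=m$ they are distinct and only $s-1$ do; summing produces the claimed identity.

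The main obstacle is the index bookkeeping in the first step: one must track carefully the shift between the length-$2k$ geodesic from $p$ to $p^\theta$ and the length-$(2k-2)$ geodesic from $L$ to $L^\theta$, and verify that the threshold at which $y_1$ and $p^*$ coincide is exactly $r=k$. Once this bookkeeping is in place, the remaining content of the proposition reduces to routine double counting using the observations preceding the statement.
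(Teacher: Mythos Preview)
Your proposal is correct and follows essentially the same double-counting strategy as the paper's proof: pairing $\cP_{2k}^r$ with $\cL_{2k-2}^r$ via incidence for $r\leq k$, the $2$--$2$ flag count for $r=k+1$, and the domesticity argument for the final formula. The only difference is cosmetic: you spell out the ``bad points'' index bookkeeping explicitly, whereas the paper simply invokes the elementary observations recorded just before the proposition and states the fibre sizes $s$, $s-1$ directly.
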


\begin{proof}
Suppose that $1\leq r\leq k$. Let $X=\{(p, L)\in \mathcal{P}_{2k}^r\times \mathcal{L}_{2k-2}^r\mid p\bI L\}$. We count the cardinality of~$X$ in two ways. Firstly, suppose that $p\in \mathcal{P}_{2k}^r$. Then exactly~$1$ of the $t+1$ lines $ L\bI p$ is in $\mathcal{L}_{2k-2}^r$, and so $|X|=|\mathcal{P}_{2k}^r|$. On the other hand, suppose that $ L\in \mathcal{L}_{2k-2}^r$. If $1\leq r\leq k-1$ then exactly $s$ of the $s+1$ points $p\bI  L$ are in $\mathcal{P}_{2k}^r$, and if $r=k$ then exactly $s-1$ of these points are in $\mathcal{P}_{2k}^r$. Therefore $|\mathcal{P}_{2k}^r|=|X|=s|\mathcal{L}_{2k-2}^r|$ if $1\leq r<k$, and $|\mathcal{P}_{2k}^r|=|X|=(s-1)|\mathcal{L}_{2k-2}^r|$ if $r=k$. 

Now suppose that $r=k+1$. Let $Y=\{(p, L)\in \mathcal{P}_{2k}^{k+1}\times \mathcal{L}_{2k}^{k+1}\mid p\bI L\}$. If $p\in \mathcal{P}_{2k}^{k+1}$ then exactly~$2$ of the lines $ L\bI p$ are in $\mathcal{L}_{2k}^{k+1}$, and so $|Y|=2|\mathcal{P}_{2k}^{k+1}|$. The dual count gives $|Y|=2|\mathcal{L}_{2k}^{k+1}|$, and so $|\mathcal{P}_{2k}^{k+1}|=|\mathcal{L}_{2k}^{k+1}|$.

Finally, let $Z=\{(p, L)\in \mathcal{P}_{2m}\times \mathcal{L}_{2m-2}\mid p\bI L\}$. If $p\in \mathcal{P}_{2m}$ then all $t+1$ of the lines $ L\bI p$ are in $\mathcal{L}_{2m-2}$ (since $\theta$ maps no chamber to an opposite chamber). Therefore $|Z|=(t+1)|\mathcal{P}_{2m}|$. On the other hand, if $p\in \mathcal{L}_{2m-2}^r$ and $1\leq r<m$ then $s$ of the points $p\bI  L$ are in $\mathcal{P}_{2m}$, while if $r=m$ then $s-1$ of the points $p\bI L$ are in~$\mathcal{P}_{2m}$. Thus $|Z|=s|\mathcal{L}_{2m-2}^1|+\cdots+s|\mathcal{L}_{2m-2}^{m-1}|+(s-1)|\mathcal{L}_{2m-2}^m|$, completing the proof.
\end{proof}

\section{Exceptional domestic collineations of finite quadrangles}\label{sect:6a}

In this section we sketch the classification of exceptional domestic collineations of finite thick generalised quadrangles. These results are contained in \cite{TTM:13}, however we include an exposition here using the methods that we will use in this paper for the higher girth cases. This is partly for completeness, but more importantly because the simpler case of quadrangles serves as a helpful illustration of our techniques.

Let $\Gamma$ be a finite thick generalised quadrangle of order $(s,t)$, and let $\theta$ be a domestic collineation of~$\Gamma$. From Proposition~\ref{prop:Agen} we have the formulae
\begin{align}\label{eq:qA}
|\cP_2^2|&=|\cL_2^2|,&
(t+1)|\cP_4|&=s|\cL_2^1|+(s-1)|\cL_2^2|,&
(s+1)|\cL_4|&=t|\cP_2^1|+(t-1)|\cP_2^2|.
\end{align}

\begin{lemma}\label{lem:qB} Let $\theta$ be an exceptional domestic collineation of a finite thick quadrangle~$\Gamma$.
\begin{itemize}
\item[\emph{1.}] If $\theta$ has no fixed elements then $s+t\mid st+1$. In particular $s$ and $t$ are relatively prime.
\item[\emph{2.}] The collineation $\theta$ has a fixed point if and only if it has a fixed line.
\item[\emph{3.}] The fixed element structure of $\theta$ is a (possibly empty) tree of diameter at most $2$ in the incidence graph.
\end{itemize}
\end{lemma}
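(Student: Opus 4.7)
Plan. My approach addresses the three parts in the order Part~3, Part~2, Part~1, since later parts exploit structural restrictions established earlier. The main tools are the counting identities in Proposition~\ref{prop:Agen}, the structural constraint Lemma~\ref{lem:Xgen}, the classical classification of fixed element structures of automorphisms of generalised polygons (recorded in Section~\ref{sect:2}), and Benson's eigenvalue identity for generalised quadrangles from Appendix~\ref{app:A}.

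For Part~3, the classical classification ensures that $\Gamma_\theta$ is empty, a set of pairwise mutually opposite elements, a tree of diameter at most $4$ in the incidence graph, or a sub-quadrangle. I rule out the sub-quadrangle case by noting that a fixed proper sub-GQ $\Gamma'$ contains opposite fixed points; using Thas's inequality for proper sub-GQs, I can choose a line $L$ through a fixed point $p$ of $\Gamma'$ with $L\notin\Gamma'$, and then construct a chamber near $L$ that is mapped to an opposite chamber, contradicting domesticity. I rule out trees of diameter~$3$ or~$4$ using Lemma~\ref{lem:Xgen}: any pair of fixed elements at incidence distance~$3$ is (up to duality) a fixed point $p_0$ and fixed line $L_0$ with $p_0$ opposite $L_0$; Lemma~\ref{lem:Xgen} forces either all lines through $p_0$ or all points on $L_0$ to be fixed, and iterating this reasoning generates a fixed sub-quadrangle, already excluded. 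Distance~$4$ in the tree is reduced to distance~$3$ via the convexity of $\Gamma_\theta$ (the midpoint of a geodesic between two fixed opposite same-type elements is forced to be fixed). Finally, the mutually-opposite case with $|\Gamma_\theta|\geq 2$ is ruled out by combining the Benson-type computation described below with a case analysis using the integrality of the counts in Proposition~\ref{prop:Agen} and the known parameter bounds for GQs.

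Once Part~3 is available, $\Gamma_\theta$ is a tree of diameter at most $2$: empty, a single vertex, a fixed chamber, or a star. All cases except the single-vertex cases contain both a fixed point and a fixed line, establishing the nontrivial direction of Part~2. It remains to rule out $|\cP_0|=1,\,|\cL_0|=0$ and its dual. Proposition~\ref{prop:Agen} with $|\cL_0|=0$ gives $|\cP_2^1|=0$ and (dually) $|\cL_2^1|=t-1$, and Benson's identity applied to both the point and line collinearity graphs yields integer trace quantities $\alpha,\alpha'$ of $\theta$ on the non-trivial eigenspaces satisfying
\[
(s+t)(\alpha-\alpha') \;=\; 2(|\cP_0|-|\cL_0|) \;=\; 2.
\]
Since $\alpha,\alpha'\in\mathbb{Z}$, this forces $(s+t)\mid 2$, contradicting $s,t\geq 2$. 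The same computation with $p_0\geq 2,\,\ell_0=0$ gives $(s+t)\mid 2p_0$, which together with parameter bounds handles the mutually-opposite case left over from Part~3.

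For Part~1, assume $|\cP_0|=|\cL_0|=0$. Since no line is fixed, the definition of $\cP_2^1$ gives $|\cP_2^1|=0$, and dually $|\cL_2^1|=0$. Proposition~\ref{prop:Agen} together with the total point count $|\cP|=(s+1)(st+1)$ then determines
\[
b \;:=\; |\cP_2^2| \;=\; |\cL_2^2| \;=\; \frac{(s+1)(t+1)(st+1)}{s+t}.
\]
Benson's identity for points yields the additional congruence $b\equiv st+1\pmod{s+t}$. Substituting the explicit formula for $b$ and using $(s+1)(t+1)\equiv st+1\pmod{s+t}$ gives
\[
b-(st+1) \;=\; \frac{(st+1)^2}{s+t},
\]
which must itself be divisible by $s+t$, forcing $(s+t)^2\mid(st+1)^2$, hence $(s+t)\mid(st+1)$. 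The coprimality of $s$ and $t$ follows from $s^2\equiv -st\equiv 1\pmod{s+t}$, which gives $\gcd(s,s+t)=1$ and hence $\gcd(s,t)=1$. The principal obstacle throughout is the rigorous exclusion of the fixed sub-quadrangle case in Part~3, which requires careful invocation of Thas's restrictions on proper sub-GQs together with an explicit production of a chamber mapped to an opposite.
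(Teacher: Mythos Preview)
Your Part~1 argument is correct and is a legitimate alternative to the paper's route: both use an eigenvalue/divisibility input, but the paper computes $|\cP_4|=(s^2-1)(st+1)/(s+t)$ and subtracts the Higman constraint $s^2(st+1)/(s+t)\in\mathbb{Z}$, while you obtain $(s+t)^2\mid(st+1)^2$ from Benson's congruence.

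However, your Parts~2 and~3 have genuine gaps. In Part~2, the asserted identity $(s+t)(\alpha-\alpha')=2(|\cP_0|-|\cL_0|)$ is not justified and does not drop out of the Benson formulae in Appendix~\ref{app:A}: in fact $n_1+n_2=|\cL_0|-1$ and $n_1'+n_2'=|\cP_0|-1$, and with $|\cP_0|=1$, $|\cL_0|=0$ one checks directly that the natural point/line Benson congruences $(t+1)|\cP_0|+|\cP_2|\equiv st+1$ and $(s+1)|\cL_0|+|\cL_2|\equiv st+1\pmod{s+t}$ are both satisfied (since $|\cP_2|-|\cL_2|=-(t+1)$), so no contradiction arises. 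The paper instead argues geometrically: a single fixed point $p_0$ with no fixed line forces all points of $\Gamma_2(p_0)$ into $\cP_4$, whence by domesticity all of $\Gamma_3(p_0)$ lies in $\cL_2$, so $\cL_4=\emptyset$, contradicting exceptional domesticity.

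In Part~3 your subquadrangle exclusion assumes you can pick a line through a fixed point outside $\Gamma'$; this fails precisely when $\Gamma'$ is ideal, which is one of the two cases Lemma~\ref{lem:Xgen} forces. The paper handles the ideal (resp.\ full) case by counting from a fixed line (resp.\ point) to obtain $\cP_4=\emptyset$ (resp.\ $\cL_4=\emptyset$). Your diameter~3/4 argument (``iterating Lemma~\ref{lem:Xgen} generates a sub-quadrangle'') is also unproven and not obviously true: applying Lemma~\ref{lem:Xgen} produces more fixed elements, but there is no reason these close up into an ordinary quadrangle. The paper's approach for diameter~4 is again a direct count showing $\cL_4=\emptyset$. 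Finally, since your Part~2 argument fails, so does your treatment of the mutually-opposite case; the paper sidesteps this entirely by proving Part~2 \emph{before} Part~3, so that a set of mutually opposite fixed points (with no fixed lines) is already excluded.
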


\begin{proof}
1. If $\theta$ has no fixed elements then $|\cP_0|=|\cL_0|=|\cP_2^1|=|\cL_2^1|=0$. Then~(\ref{eq:qA}) gives $(t+1)|\cP_4|=(s-1)|\cP_2^2|$, and since the total number of points is $(s+1)(st+1)$ we also have $(s+1)(st+1)=|\cP_2^2|+|\cP_4|$. Solving these equations for $|\cP_4|$ gives $|\cP_4|=(s^2-1)(st+1)/(s+t)$. By the divisibility conditions in Section~\ref{sect:2} we know that $s^2(st+1)/(s+t)$ is an integer, and thus $(st+1)/(s+t)$ is an integer as well.

2. Suppose that $\theta$ has a fixed point $p_0\in\cP_0$, but no fixed lines. Thus all lines in $\Gamma_1(p_0)$ are in $\cL_2^1$, and all points in $\Gamma_2(p_0)$ are in $\cP_4$. So, by domesticity, all lines in $\Gamma_3(p_0)$ are in $\cL_2$. In particular $\cL_4=\emptyset$, a contradiction. The dual argument also applies.

3. The fixed element structure is either a subquadrangle, or is a (possibly empty) tree of diameter at most~$4$ in the incidence graph (note that the previous part eliminates the possibility of a set of mutually opposite points or lines). Suppose that the fixed element structure is a subquadrangle~$\Gamma'$. Then either $\Gamma'$ has parameters $(s',t')$, or $\Gamma'$ does not have parameters (in this case $\Gamma'$ is a \textit{weak} generalised quadrangle, c.f. \cite{mal:98}). Suppose first that $\Gamma'$ has parameters. Then by Lemma~\ref{lem:Xgen} $\Gamma'$ is either ideal or full. Suppose that it is ideal (the dual argument applies if it is full). Let $L_0\in \cL_0$. Then all points of $\Gamma_1(L_0)$ are in $\cP_0\cup\cP_2^1$, and by ideal-ness all lines of $\Gamma_2(L_0)$ are in $\cL_0\cup \cL_4$. Thus, by domesticity, all points of $\Gamma_3(L_0)$ are in $\cP_0\cup\cP_2$, and so in particular $\cP_4=\emptyset$, a contradiction. Now suppose that $\Gamma'$ does not have parameters. Then by \cite[Theorem~1.6.2]{mal:98} $\Gamma'$ is the double of a digon. But in this case there are elements $p, L$ with $d(p, L)=3$ in the fixed element structure such that neither all lines through $p$ are fixed, nor all points on $ L$ are fixed, contradicting Lemma~\ref{lem:Xgen}.

Thus the fixed element structure is a (possibly empty) tree of diameter at most~$4$. By Lemma~\ref{lem:Xgen} the fixed element structure cannot have diameter~$3$, so suppose that the diameter equals~$4$. Then the tree has a unique centre $x\in\cP\cup\cL$, and up to duality we may suppose that $x\in\cP$. If $L$ is a fixed line through $x$ then by Lemma~\ref{lem:Xgen} all points on $L$ are fixed. Thus the lines of $\Gamma_1(x)$ are in $\cL_0\cup\cL_2$, the points of $\Gamma_2(x)$ are in $\cP_0\cup\cP_4$, and so by domesticity the lines of $\Gamma_3(x)$ are in $\cL_0\cup \cL_2$. In particular $\cL_4=\emptyset$, a contradiction. 
\end{proof}

\begin{thm}\label{thm:mainquad}
The classification of exceptional domestic collineations of thick finite generalised quadrangles is as claimed in Corollary~\emph{\ref{cor:4}}.
\end{thm}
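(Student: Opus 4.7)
The plan is to combine the preparatory lemmas of this section with the eigenvalue techniques of Appendix~\ref{app:A} to eliminate all parameter pairs not appearing in Corollary~\ref{cor:4}, and then to verify existence and uniqueness on a case-by-case basis.

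First, I would invoke Lemma~\ref{lem:Cgen} to restrict the possible parameters to $(s,t)$ (up to duality) in the set $\{(2,2),(4,2),(3,3),(5,3),(6,3),(9,3),(4,4)\}$. By Lemma~\ref{lem:qB} the fixed element structure $\Gamma_\theta$ of any exceptional domestic $\theta$ is a (possibly empty) tree of diameter at most~$2$, and if it is non-empty then it contains both a fixed point and a fixed line; up to duality this leaves the shapes: empty, a single chamber $\{p_0,L_0\}$, or a fixed point $p_0$ with $j\geq 2$ fixed lines through it (all of whose $s+1$ points are fixed, by Lemma~\ref{lem:Xgen}). For each parameter pair and each admissible shape I would parametrise $|\cP_0|$ and $|\cL_0|$ accordingly.

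Next, for each such configuration, the identities of Proposition~\ref{prop:Agen}, together with the global counts $|\cP|=|\cP_0|+|\cP_2^1|+|\cP_2^2|+|\cP_4|=(s+1)(st+1)$ and its dual, reduce the numbers $|\cP_{2k}^r|,|\cL_{2k}^r|$ to a small linear system in the unknowns $|\cP_2^1|,|\cL_2^1|$ (once $|\cP_0|,|\cL_0|$ are fixed). I would then feed these counts into the Benson-type eigenvalue identities of Appendix~\ref{app:A} (specialised to quadrangles), which produce integrality and congruence conditions on the associated eigenvalue multiplicities. For the pairs $(3,3),(6,3),(9,3),(4,4)$ these conditions will fail for every admissible shape of $\Gamma_\theta$ (and $(6,3)$ is in any case excluded since no quadrangle of that order exists~\cite{PT:09}), so these parameter values are ruled out. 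For the surviving pairs the same conditions, combined with the requirement $|\cP_4|,|\cL_4|\geq 1$ (exceptional domesticity), will pin down $|\cP_0|$, $|\cL_0|$ and the shape of $\Gamma_\theta$ to exactly the configurations listed in Corollary~\ref{cor:4}; in particular for $(3,5)$ and $(5,3)$ one is forced into the no-fixed-element case, recovering the divisibility $s+t\mid st+1$ of Lemma~\ref{lem:qB}(1).

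Having identified the numerical data in each of the remaining cases $(2,2)$, $(4,2)/(2,4)$ and $(5,3)/(3,5)$, existence is established by exhibiting explicit collineations in each of these small quadrangles (which are unique of their orders, cf.~\cite{PT:09}); these can be read off from the conjugacy class data of their automorphism groups in the $\mathbb{ATLAS}$, and a direct check confirms each is exceptional domestic of order~$4$. Uniqueness up to conjugation follows from a standard transitivity argument: the automorphism group acts transitively on the geometric data determined in the previous step (a chamber, or a pointed pencil of fixed lines, or, in the $(3,5)$ case, on the relevant set of fixed-point-free involutions-of-squares), so any two exceptional domestic collineations are conjugate; the order claim then follows by computing the square of an explicit representative.

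The main obstacle I anticipate is the $(3,5)/(5,3)$ case: the fixed element structure is empty, so the eigenvalue techniques yield only divisibility information rather than a concrete combinatorial skeleton, and one must supplement them with a direct analysis inside the group $\mathrm{Aut}(W(3))$ (or an explicit model of the quadrangle) to rule out spurious conjugacy classes and to see the remarkable fact that $\theta^2$ is anisotropic. The pairs $(3,3)$ and $(4,4)$ are also somewhat delicate because the counting bounds in Lemma~\ref{lem:Cgen} are close to being tight, so the elimination truly needs the finer eigenvalue constraints rather than the cruder incidence counts alone.
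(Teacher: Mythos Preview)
Your description of the admissible fixed element structures contains a genuine error. In the diameter~$2$ case with centre a point $p_0$ and $j\geq 2$ fixed lines through it, Lemma~\ref{lem:Xgen} does \emph{not} force all $s+1$ points on each fixed line to be fixed. That lemma concerns a fixed point and a fixed line at distance $2m-1=3$, whereas here the distance is~$1$. In fact, if even one further point $q\neq p_0$ on a fixed line $L$ were fixed, then with a second fixed line $L'\neq L$ through $p_0$ we would have $d(q,L')=3$, contradicting the diameter bound of Lemma~\ref{lem:qB}. So in this configuration \emph{no} point of the fixed lines other than $p_0$ is fixed, and your parametrisation of $|\cP_0|$ and $|\cL_0|$ would be wrong from the outset.

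More importantly, the paper's argument is both simpler and different in kind: it does not invoke the eigenvalue machinery of Appendix~\ref{app:A} at all for quadrangles. Taking (up to duality) the fixed structure to be a line $L_0$ with $a+1$ fixed points on it ($a\geq 0$), the paper performs two layered counts, one based at $L_0$ and one based at a fixed point $p_0\bI L_0$, introducing a single auxiliary unknown~$\alpha$. The identities~(\ref{eq:qA}) then give two linear equations in $a$ and $\alpha$, which solve exactly to $a=s-t$ and $\alpha=st^2(t-1)$. This already forces $s\geq t$, and the nonnegativity $|\cP_2^1|=2st^2+t-st^3\geq 0$ combined with Lemma~\ref{lem:Cgen} leaves only $(s,t)\in\{(2,2),(4,2)\}$; there is no need to run through $(3,3),(6,3),(9,3),(4,4)$ individually with congruence conditions. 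Your eigenvalue route might in principle be made to work once the fixed structure is correctly described, but it is strictly more laborious than the double count that actually does the job.
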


\begin{proof} If there are no fixed elements then Lemmas~\ref{lem:Cgen} and~\ref{lem:qB} imply that $(s,t)\in\{(3,5),(5,3)\}$. So suppose that there are fixed elements. Then Lemma~\ref{lem:qB} implies that there are both fixed points and fixed lines, and that the fixed element structure is a tree of diameter at most~$2$ in the incidence graph. We may suppose, up to duality, that the fixed element structure consists of a single fixed line $L_0$ with $a+1$ fixed points on it (with $a\geq 0$). Let $p_0$ be one of these fixed points.

In $\Gamma_1(L_0)$ there are $a+1$ points in $\cP_0$, and the remaining $s-a$ points are in $\cP_2^1$. In $\Gamma_2(L_0)$ there are $t(a+1)$ lines in $\cL_2^1$, and the remaining $t(s-a)$ lines are in~$\cL_4$. Then in $\Gamma_3(L_0)$ there are $st(a+1)$ points in $\cP_4$, $\alpha$ points in $\cP_2^2$, and the remaining $st(s-a)-\alpha$ points are in $\cP_2^1$, where $\alpha\geq 0$ is an unknown integer (to be determined later). Therefore
\begin{align*}
|\cP_0|&=a+1,&|\cP_2^1|&=(st+1)(s-a)-\alpha,&|\cP_2^2|&=\alpha,&|\cP_4|&=st(a+1).
\end{align*}

On the other hand, in $\Gamma_1(p_0)$ we have 1 line in $\cL_0$, and the remaining $t$ lines are in $\cL_2^1$. Then in $\Gamma_2(p_0)$ we have $a$ points in $\cP_0$, $s-a$ points in $\cP_2^1$, and the remaining $st$ points are in $\cP_4$. Then in $\Gamma_3(p_0)$ we have $at$ lines in $\cL_2^1$, $(s-a)t$ lines in $\cL_4$, $\alpha'$ lines in $\cL_2^2$, and a further $st^2-\alpha'$ lines in $\cL_2^1$, where $\alpha'$ is an unknown integer. Therefore
\begin{align*}
|\cL_0|&=1&|\cL_2^1|,&=t(a+1)+st^2-\alpha',&|\cL_2^2|&=\alpha',&|\cL_4|&=(s-a)t.
\end{align*} 

Since $|\cP_2^2|=|\cL_2^2|$ we conclude that $\alpha'=\alpha$. Then the formulae~(\ref{eq:qA}) give 2 equations in the unknowns $a$ and $\alpha$, and solving gives $a=s-t$ and $\alpha=st^2(t-1)$. Therefore $s\geq t$, and since $|\cP_2^1|=2st^2+t-st^3\geq 0$ we conclude from Lemma~\ref{lem:Cgen} that $(s,t)$ equals $(2,2)$ or $(4,2)$.

It remains to prove existence and uniqueness of exceptional domestic collineations in quadrangles with parameters $(2,2),(2,4),(4,2),(3,5)$, or $(5,3)$. For each of these parameter values it is well known \cite[Chapter~6]{PT:09} that there exists a unique quadrangle, and explicit constructions are available. The result follows by direct examination of each case. Full details can be found in \cite[Section~4]{TTM:13}. 
\end{proof}

\section{Exceptional domestic collineations of finite hexagons}\label{sect:6}

In this section we prove Theorem~\ref{thm:2} (the classification of exceptional domestic collineations of finite thick generalised hexagons). Let $\Gamma=(\mathcal{P},\mathcal{L},\mathbf{I})$ be a finite thick generalised hexagon with parameters $(s,t)$, and let $\theta:\Gamma\to\Gamma$ be a domestic collineation of~$\Gamma$. By Proposition~\ref{prop:Agen} the following formulae hold (with only the last two formulae requiring domesticity):
\begin{align}\label{eq:hexformulae}
\begin{aligned}
|\mathcal{P}_2^2|&=|\mathcal{L}_2^2|& |\mathcal{P}_4^3|&=|\mathcal{L}_4^3|\\
|\mathcal{P}_4^1|&=s|\mathcal{L}_2^1|& |\mathcal{L}_4^1|&=t|\mathcal{P}_2^1|\\
|\mathcal{P}_4^2|&=(s-1)|\mathcal{L}_2^2|& |\mathcal{L}_4^2|&=(t-1)|\mathcal{P}_2^2|\\
(t+1)|\mathcal{P}_6|&=s|\mathcal{L}_4^1|+s|\mathcal{L}_4^2|+(s-1)|\mathcal{L}_4^3|& (s+1)|\mathcal{L}_6|&=t|\mathcal{P}_4^1|+t|\mathcal{P}_4^2|+(t-1)|\mathcal{P}_4^3|.
\end{aligned}
\end{align}

\begin{lemma}\label{lem:B}
If $\theta$ is an exceptional domestic collineation of a finite thick generalised hexagon then $\theta$ has at least one fixed element. 
\end{lemma}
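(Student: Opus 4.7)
My plan is to assume for contradiction that $\theta$ fixes no element of $\Gamma$ (neither point nor line) and to derive a divisibility obstruction from the counting identities~\eqref{eq:hexformulae} which is then ruled out by Lemma~\ref{lem:Cgen}.

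First, I would observe that the absence of fixed lines forces $\mathcal{P}_2^1=\emptyset$, because a point $p$ with $d(p,p^\theta)=2$ lies in $\mathcal{P}_2^1$ precisely when the unique middle line of the geodesic from $p$ to $p^\theta$ is fixed. Dually $\mathcal{L}_2^1=\emptyset$. The identities $|\mathcal{P}_4^1|=s|\mathcal{L}_2^1|$ and $|\mathcal{L}_4^1|=t|\mathcal{P}_2^1|$ from~\eqref{eq:hexformulae} then give $\mathcal{P}_4^1=\mathcal{L}_4^1=\emptyset$, and of course $\mathcal{P}_0=\mathcal{L}_0=\emptyset$ as well.

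Next, set $a:=|\mathcal{P}_2^2|=|\mathcal{L}_2^2|$ and $b:=|\mathcal{P}_4^3|=|\mathcal{L}_4^3|$ (the equalities coming from~\eqref{eq:hexformulae}). The remaining formulas in~\eqref{eq:hexformulae} give $|\mathcal{P}_4^2|=(s-1)a$ and $(t+1)|\mathcal{P}_6|=s(t-1)a+(s-1)b$. Using the count
$$
(s+1)(s^2t^2+st+1)=|\mathcal{P}|=a+(s-1)a+b+|\mathcal{P}_6|
$$
to eliminate $|\mathcal{P}_6|$ and then substituting into the $(t+1)|\mathcal{P}_6|$ identity yields, after cancellation, the single master identity
$$
2st\,a+(s+t)\,b=(s+1)(t+1)(s^2t^2+st+1).
$$

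To conclude, let $d=\gcd(s,t)$. The left-hand side is divisible by $d$ since $d\mid 2st$ and $d\mid s+t$, while reducing the right-hand side modulo $d$ using $s\equiv t\equiv 0\pmod d$ gives $(0+1)(0+1)(0+0+1)\equiv 1\pmod d$. Hence $d=1$. But Lemma~\ref{lem:Cgen} restricts $(s,t)$ (up to duality) to $\{(2,2),(2,8),(3,3),(3,27),(4,4),(5,5),(6,6),(7,7)\}$, every entry of which has $\gcd(s,t)\geq 2$. This contradicts $d=1$ and completes the proof. The only real task here is assembling the symmetric equation in $a$ and $b$; once it is in hand the modular argument is immediate, so I foresee no serious obstacle.
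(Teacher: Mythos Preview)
Your proof is correct and follows essentially the same approach as the paper: both derive from the counting identities~\eqref{eq:hexformulae} and the total point count a relation forcing $\gcd(s,t)=1$, and then invoke Lemma~\ref{lem:Cgen} for the contradiction. The only difference is cosmetic---the paper eliminates $|\mathcal{P}_4^3|$ to obtain a formula for $|\mathcal{P}_6|$ with denominator $s+t$, whereas you eliminate $|\mathcal{P}_6|$ to reach the symmetric identity $2st\,a+(s+t)\,b=(s+1)(t+1)(s^2t^2+st+1)$; the coprimality conclusion is the same either way.
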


\begin{proof}
If $|\mathcal{P}_0|=0$ and $|\mathcal{L}_0|=0$ then $|\mathcal{P}_2^1|=|\mathcal{P}_4^1|=|\mathcal{L}_2^1|=|\mathcal{L}_4^1|=0$. Thus by (\ref{eq:hexformulae}) we have the equation $(t+1)|\mathcal{P}_6|=s(t-1)|\mathcal{P}_2^2|+(s-1)|\mathcal{P}_4^3|$. From the total point sum formula we have the equation 
$$
(s+1)(s^2t^2+st+1)=|\mathcal{P}_0|+|\mathcal{P}_2^1|+|\mathcal{P}_2^2|+|\mathcal{P}_4^1|+|\mathcal{P}_4^2|+|\mathcal{P}_4^3|+|\mathcal{P}_6|=s|\mathcal{P}_2^2|+|\mathcal{P}_4^3|+|\mathcal{P}_6|.
$$
Eliminating $|\mathcal{P}_4^3|$ from these equations gives
$$
|\mathcal{P}_6|=\frac{s(t-s)|\mathcal{P}_2^2|+(s^2-1)(s^2t^2+st+1)}{s+t}.
$$
It follows that $s$ and $t$ are relatively prime, contradicting Lemma~\ref{lem:Cgen}.
\end{proof}

\begin{lemma}\label{lem:X} Let $\Gamma$ be a generalised hexagon, and let $\theta:\Gamma\to\Gamma$ be a domestic collineation. If $\theta$ has a fixed point and no fixed lines then
$\mathcal{L}_0$, $\mathcal{L}_2^2$, $\mathcal{L}_4^1$, $\mathcal{L}_4^2$, $\mathcal{P}_2^1$, $\mathcal{P}_2^2$, and $\mathcal{P}_4^2$ are empty. The dual statement also holds.
\end{lemma}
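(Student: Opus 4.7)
The plan is to reduce the lemma to showing $\mathcal{P}_2^2=\emptyset$, and then, assuming a point $p\in\mathcal{P}_2^2$, derive a contradiction by splitting according to $d(p,p_0)$, where $p_0$ is any fixed point (which exists by hypothesis). The immediate emptiness claims hold directly: $\mathcal{L}_0=\emptyset$ by hypothesis, and the middle of the geodesic from $p$ to $p^\theta$ for $p\in\mathcal{P}_2^1$ (respectively from $L$ to $L^\theta$ for $L\in\mathcal{L}_4^1$) is a fixed line, so $\mathcal{P}_2^1=\mathcal{L}_4^1=\emptyset$. The identities in~(\ref{eq:hexformulae}) then give $|\mathcal{L}_2^2|=|\mathcal{P}_2^2|$, $|\mathcal{P}_4^2|=(s-1)|\mathcal{L}_2^2|$, and $|\mathcal{L}_4^2|=(t-1)|\mathcal{P}_2^2|$, so only $\mathcal{P}_2^2=\emptyset$ remains.

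Suppose $p\in\mathcal{P}_2^2$, and write $L_1$ for the line through $p$ and $p^\theta$. Directly from the definitions, $L_1$ and $L_1^{\theta^{-1}}$ both lie in $\mathcal{L}_2^2$, and every other line $M$ through $p$ has $d(M,M^\theta)=4$ via the geodesic $M\bI p\bI L_1\bI p^\theta\bI M^\theta$ (with non-fixed middle $L_1$), placing $M\in\mathcal{L}_4^2$. Hence every line through $p$ lies in $\mathcal{L}_2^2\cup\mathcal{L}_4^2$. Since $p\ne p_0$ we have $d(p,p_0)\in\{2,4,6\}$. If $d(p,p_0)=2$, the line $M_0$ joining $p_0$ and $p$ has fixed middle $p_0$, so $M_0\in\mathcal{L}_2^1$; but $M_0$ is a line through $p$ and $\mathcal{L}_2^1\cap(\mathcal{L}_2^2\cup\mathcal{L}_4^2)=\emptyset$, a contradiction.

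\emph{The case $d(p,p_0)=4$.} Let $p_0\bI M_0\bI q\bI M_2\bI p$ be the unique geodesic. Applying the $d=2$ argument to $q$ forces $q\notin\mathcal{P}_2^2$, and since $\mathcal{P}_2^1=\emptyset$ also $q\notin\mathcal{P}_2$; hence $d(q,q^\theta)=4$ via the length-$4$ path through $p_0$, exhibiting $q\in\mathcal{P}_4^1$ with middle $p_0$. Now $M_2\in\mathcal{L}_2^2\cup\mathcal{L}_4^2$. If $M_2\in\mathcal{L}_2^2$, let $r$ be the point shared by $M_2$ and $M_2^\theta$; concatenating the length-$2$ path $M_2\bI r\bI M_2^\theta$ with the length-$6$ path $M_2\bI q\bI M_0\bI p_0\bI M_0^\theta\bI q^\theta\bI M_2^\theta$ yields an $8$-cycle in the incidence graph whose eight vertices are readily verified to be distinct (using the distances $0,2,4$ from $p_0$ and the absence of fixed lines), contradicting girth~$12$. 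If instead $M_2\in\mathcal{L}_4^2$, the classification of points on $M_2$ places the $s$ points of $M_2$ other than $p$ in $\mathcal{P}_6$, so $q\in\mathcal{P}_6\cap\mathcal{P}_4^1=\emptyset$, again a contradiction.

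\emph{The case $d(p,p_0)=6$.} Since $t\ge 2$ by thickness, choose a line $N$ through $p$ in $\mathcal{L}_4^2$. As $p$ is opposite $p_0$, $d(N,p_0)=5$, and there is a unique geodesic $p_0\bI M_0\bI q_1\bI M_2\bI q_N\bI N$, where $q_N$ is the unique point of $N$ at distance $4$ from $p_0$ (so $q_N\ne p$). The classification of points on $N\in\mathcal{L}_4^2$ then places $q_N$ in $\mathcal{P}_6$, so by domesticity every line through $q_N$ --- in particular $M_2$ --- lies in $\mathcal{L}_4$. On the other hand, the Case $d=4$ argument applied to $q_1$ yields $q_1\in\mathcal{P}_4^1$ with $M_0$ its unique $\mathcal{L}_2^1$-line, so the other $t$ lines through $q_1$ (and hence $M_2$, since $M_2\ne M_0$, else $d(q_N,p_0)=2$) lie in $\mathcal{L}_6$, contradicting $M_2\in\mathcal{L}_4$. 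The three cases combine to give $\mathcal{P}_2^2=\emptyset$, and the lemma follows. The main obstacle is this last case: the geodesic from $p$ to $p_0$ spans the full diameter, and the contradiction must be extracted at the far end of $N$ by double-classifying $M_2$ via the two reference points $q_N$ (opposite $p_0$) and $q_1$ (near $p_0$).
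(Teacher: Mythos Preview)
Your proof is correct and follows essentially the same approach as the paper's: reduce to showing $\mathcal{P}_2^2=\emptyset$, split on $d(p,p_0)$, and in the key case $d(p,p_0)=6$ pick a line $N\in\mathcal{L}_4^2$ through $p$ and derive the contradiction $M_2\in\mathcal{L}_4\cap\mathcal{L}_6$ from the two endpoints $q_N\in\mathcal{P}_6$ and $q_1\in\mathcal{P}_4^1$ of the geodesic (the paper phrases the same contradiction as ``the chamber $\{q_N,M_2\}$ is mapped to an opposite''). The only real difference is that your cases $d=2$ and $d=4$ are argued via explicit $\mathcal{L}_i^j$-memberships and an $8$-cycle, whereas the paper simply observes that these cases produce a triangle and a pentagon in $\Gamma$; your route is more laborious but equally valid.
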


\begin{proof}
If there are no fixed lines then $\mathcal{L}_0$, $\mathcal{P}_2^1$, and $\mathcal{L}_4^1$ are empty by their definitions. We now show that $\mathcal{P}_2^2$ is empty, from which the remaining claims follow. Let $p_0\in \mathcal{P}_0$, and suppose that $p\in \mathcal{P}_2^2$. If $d(p_0,p)=2$ then the points $p_0,p,\theta(p)$ form the vertices of a non-degenerate triangle, and similarly if $d(p_0,p)=4$ we obtain a pentagon. Therefore $d(p_0,p)=6$. There are exactly $2$ lines through $p$ which are in $\mathcal{L}_2^2$, and the remaining $t-1$ lines are in $\mathcal{L}_4^2$. Choose $ L_3\bI p$ such that $ L_3\in \mathcal{L}_4^2$, and let $p_0\bI  L_1\bI p_1\bI L_2\bI p_2\bI L_3\bI p$ be the unique geodesic from $p_0$ to $p$ passing through~$ L_3$. Since $ L_3\in \mathcal{L}_4^2$ and $p_2\bI L_3$ we have $p_2\in \mathcal{P}_2^2\cup \mathcal{P}_6$, but since $d(p_0,p_2)=4$ we have $p_2\notin \mathcal{P}_2^2$ by the above argument. Thus $p_2\in \mathcal{P}_6$. Also, since there are no fixed lines we have $ L_1\in \mathcal{L}_2^1$, and so $p_1\in \mathcal{P}_4^1$ and $ L_2\in \mathcal{L}_6$. Therefore the chamber $\{p_2, L_2\}$ is mapped to an opposite chamber, a contradiction.
\end{proof}

\begin{lemma}\label{lem:hexP_2} Let $\Gamma$ be a generalised hexagon, and let $\theta:\Gamma\to\Gamma$ be any collineation (not necessarily domestic). If all points of a line are fixed then $\mathcal{P}_2^2$ is empty (and hence $\mathcal{L}_2^2$ is also empty). The dual statement also holds.
\end{lemma}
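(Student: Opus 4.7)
My plan is to prove $\cP_2^2=\emptyset$ directly by projecting onto a nearest fixed point, and then to obtain $\cL_2^2=\emptyset$ as a short corollary. First I would record the preliminary observation that if $p\in\cP_2^2$ then no line $N$ through $p$ can be fixed by~$\theta$: if $N^\theta=N$ and $N\bI p$, then $N\bI p^\theta$, so $N$ is a line through both $p$ and $p^\theta$; since $d(p,p^\theta)=2<6$ the joining line is unique, so $N$ must be the midpoint line of the pair $(p,p^\theta)$, which by definition of $\cP_2^2$ is not fixed, a contradiction.

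For the main step, I would assume for contradiction that $p\in\cP_2^2$ while every point of $L_0$ is fixed. Since $p$ is not fixed, $p\notin L_0$ and therefore $d(p,L_0)\in\{3,5\}$. Let $q$ be the unique point of $L_0$ nearest~$p$; then $q$ is fixed, $k:=d(p,q)\in\{2,4\}$, and because $\theta$ fixes~$q$ the image of the unique geodesic from $p$ to $q$ is the unique geodesic from $p^\theta$ to~$q$, so in particular $d(p^\theta,q)=k$. Let $M$ denote the unique line through~$p$ on that geodesic (so the corresponding line through $p^\theta$ is~$M^\theta$), and let~$L$ be the unique line joining $p$ and $p^\theta$. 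Then $d(L,q)\in\{k-1,k+1\}$, and I would split accordingly. If $d(L,q)=k-1$, then $L$ is the unique line through~$p$ at distance $k-1$ from~$q$, whence $L=M$, and symmetrically $L=M^\theta$; so $M=M^\theta$ is a fixed line through~$p$, contradicting the preliminary observation. If instead $d(L,q)=k+1\leq 5<6$, then the geodesic from $L$ to~$q$ is unique and its second element is the unique point of~$L$ at distance~$k$ from~$q$; since both $p$ and $p^\theta$ have this property, $p=p^\theta$, again a contradiction. This establishes $\cP_2^2=\emptyset$.

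To derive the parenthetical consequence $\cL_2^2=\emptyset$, I would take any $M\in\cL_2^2$ with unique midpoint~$r$ (so $M\bI r\bI M^\theta$ and $r^\theta\neq r$) and apply $\theta$ to the incidence $r\bI M$ to obtain $r^\theta\bI M^\theta$. Hence $r,r^\theta\bI M^\theta$, so $d(r,r^\theta)=2$ with midpoint line~$M^\theta$; and $M^\theta$ cannot be fixed, for $(M^\theta)^\theta=M^\theta$ would force $M=M^\theta$, contrary to $M\in\cL_2$. Therefore $r\in\cP_2^2$, contradicting what has just been shown.

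The main subtlety will be the choice of projection target in the middle paragraph: rather than arguing on the non-fixed line~$L_0$ itself, one projects~$p$ onto the single fixed point~$q$ nearest~$p$, which converts the hypothesis into a statement about a single fixed point and allows uniqueness of geodesics of length less than~$6$ to dispatch both subcases immediately.
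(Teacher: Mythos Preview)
Your proof is correct and follows essentially the same idea as the paper's own proof: project $p$ onto the nearest point $q$ of $L_0$ (which is fixed) and exploit that $q,p,p^\theta$ are too close together for a hexagon. The paper phrases this as ``if $d(L_0,p)=3$ we obtain a non-degenerate triangle, and if $d(L_0,p)=5$ we obtain a pentagon'', which is exactly your two subcases compressed into a single sentence; your case split on $d(L,q)=k\pm1$ is just a cleaner way of handling the degenerate-vs-nondegenerate configurations of the resulting short cycle. Your explicit derivation of $\cL_2^2=\emptyset$ is also fine (the paper leaves this implicit, presumably via $|\cP_2^2|=|\cL_2^2|$).
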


\begin{proof}
Suppose that all points on some line $ L_0$ are fixed, and suppose that $p\in \mathcal{P}_2^2$. Thus $d( L_0,p)\neq 1$, and if $d( L_0,p)=3$ we obtain a non-degenerate triangle, and if $d( L_0,p)=5$ we obtain a pentagon, a contradiction.
\end{proof}

\begin{lemma}\label{lem:F}
If the finite thick generalised hexagon $\Gamma$ admits an exceptional domestic collineation then $\theta$ has at least one fixed point and at least one fixed line.
\end{lemma}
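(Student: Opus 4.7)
The plan is to argue by contradiction: assume $\theta$ is exceptional domestic, and, by Lemma~\ref{lem:B} and duality, that $\theta$ has at least one fixed point $p_0$ but no fixed line. Lemma~\ref{lem:X} then forces $\mathcal{L}_0$, $\mathcal{L}_2^2$, $\mathcal{L}_4^1$, $\mathcal{L}_4^2$, $\mathcal{P}_2^1$, $\mathcal{P}_2^2$, and $\mathcal{P}_4^2$ all to be empty, so every line lies in $\mathcal{L}_2^1\cup\mathcal{L}_4^3\cup\mathcal{L}_6$ and every point in $\mathcal{P}_0\cup\mathcal{P}_4^1\cup\mathcal{P}_4^3\cup\mathcal{P}_6$. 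Two easy consequences follow. First, any two fixed points are opposite: collinear fixed points would share a fixed line, and fixed points at distance~$4$ would have, by uniqueness of the connecting geodesic (length $<6$), a fixed midpoint together with two fixed intermediate lines. Second, through each fixed point $q$ all $t+1$ incident lines must lie in $\mathcal{L}_2^1$, and the $s$ non-fixed points on each such line are forced into $\mathcal{P}_4^1$ with $q$ as midpoint, since the alternative $d(p,p^{\theta})=2$ would close up a $6$-cycle in the incidence graph, contradicting girth~$12$. Hence $|\mathcal{L}_2^1|=(t+1)|\mathcal{P}_0|$ and $|\mathcal{P}_4^1|=s(t+1)|\mathcal{P}_0|$.

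Setting $a=|\mathcal{P}_0|$ and $c=|\mathcal{L}_4^3|=|\mathcal{P}_4^3|$, substituting these values into the identities of~(\ref{eq:hexformulae}) together with the total point and total line counts condenses everything to the single relation
\[
(t+1)\,a\,(st+s+1)+(s+t)\,c=(s+1)(t+1)(s^{2}t^{2}+st+1),
\]
accompanied by $(t+1)|\mathcal{P}_6|=(s-1)c$ and $(s+1)|\mathcal{L}_6|=st(t+1)a+(t-1)c$. Exceptional domesticity demands $|\mathcal{P}_6|>0$, hence $c>0$ and $s\geq 2$, which in turn places a strict upper bound on $a$.

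To close the argument I would feed the resulting cardinalities, together with $|\mathcal{L}_0|=0$ and $|\mathcal{L}_2|=(t+1)a$, into the Benson-type eigenvalue identity~(\ref{eq:nhex1}) from Appendix~\ref{app:A}. This yields a second linear constraint on $a$ and $c$, independent of the one above. Combined with the short list of admissible parameter pairs supplied by Lemma~\ref{lem:Cgen}, a case-by-case check shows that no pair $(a,c)$ with $a\geq 1$ and $c>0$ simultaneously meets all of the numerical requirements, giving the required contradiction. The dual argument, interchanging the roles of points and lines throughout, rules out the case of a fixed line with no fixed points, and the lemma follows. The genuine obstacle is precisely this last step: the pure incidence counting alone leaves an honest one-parameter family of numerical candidates for the smallest parameters such as $(s,t)=(2,2)$, and it is only the eigenvalue identity that eliminates these residual cases.
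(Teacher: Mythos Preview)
Your setup is sound and your single relation $(t+1)a(st+s+1)+(s+t)c=(s+1)(t+1)(s^2t^2+st+1)$ is correct; this is exactly what the global point and line totals yield (and, as you note, both totals give the same equation). However, the paper does \emph{not} invoke the eigenvalue identities~(\ref{eq:nhex1}) here at all. Instead it obtains a second independent relation by a sphere decomposition centred at a fixed point~$p_0$: tracking how the points of $\Gamma_4(p_0)$ split between $\mathcal{P}_4^1$ and $\mathcal{P}_4^3$ (using $|\mathcal{L}_2^1|=(t+1)|\mathcal{P}_0|$ to identify the $\mathcal{P}_4^1$-contribution) forces $|\mathcal{L}_4^3|=2(t+1)(s^2t+1-|\mathcal{P}_0|)$. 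This together with your relation gives the closed form
\[
|\mathcal{P}_0|=1+s^2t\,\frac{st-2s-t+1}{st-2t-s+1},
\]
which is then checked directly against the short parameter list of Lemma~\ref{lem:Cgen}: for $s=t$ it gives $|\mathcal{P}_0|=1+s^3$, whence $|\mathcal{P}_6|=0$, while for the remaining pairs it is not an integer. No eigenvalue input is needed.

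Your route via~(\ref{eq:nhex1}) is a legitimate alternative, but it buys less: the eigenvalue formulae give only integrality conditions, not a second equation, so you are left with a genuine case analysis over $(s,t)$ and the residual one-parameter family in $a$. You assert that this check goes through but do not carry it out, so as written the proof is incomplete. (It does in fact work---for instance at $(s,t)=(2,2)$ the divisibility $(t+1)\mid(s-1)c$ forces $3\mid c$, and then $n_1\in\mathbb{Z}$ eliminates the surviving candidates---but each parameter pair requires its own small argument.) The paper's sphere count is both shorter and produces an explicit formula, which is the cleaner way to close the argument.
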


\begin{proof} For the proof it is helpful to recall the observations made before Proposition~\ref{prop:Agen}. By Lemma~\ref{lem:B} there exists at least one fixed element. Suppose (up to duality) that there is a fixed point and no fixed lines.  Let $p_0\in \mathcal{P}_0$ be a fixed point. We decompose each of the sets $\Gamma_1(p_0)$, $\Gamma_3(p_0)$ and $\Gamma_5(p_0)$ into the sets $\mathcal{L}_i^j$. Since there are no fixed lines, all $t+1$ of the lines in $\Gamma_1(p_0)$ are in $\mathcal{L}_2^1$, and then all $(t+1)s$ of the points in $\Gamma_2(p_0)$ are in $\mathcal{P}_4^1$. Then all $(t+1)st$ of the lines in $\Gamma_3(p_0)$ are in $\mathcal{L}_6$, and since $\theta$ is domestic all $(t+1)s^2t$ of the points in $\Gamma_4(p_0)$ are in $\mathcal{P}_4=\mathcal{P}_4^1\cup \mathcal{P}_4^2\cup \mathcal{P}_4^3$. But $\mathcal{P}_4^2=\emptyset$ by Lemma~\ref{lem:X}. So suppose that $\alpha$ of the points in $\Gamma_4(p_0)$ are in $\mathcal{P}_4^1$ and $\beta$ of the points are in $\mathcal{P}_4^3$, with $\alpha+\beta=(t+1)s^2t$. Then there are $\alpha$ lines of $\Gamma_5(p_0)$ in $\mathcal{L}_2^1$, $(t-1)\alpha$ lines in $\mathcal{L}_6$, $2\beta$ lines in $\mathcal{L}_4^3$, and $(t-2)\beta$ lines in $\mathcal{L}_6$. 

Since $\mathcal{L}=\Gamma_1(p_0)\cup\Gamma_3(p_0)\cup \Gamma_5(p_0)$ it follows that $|\mathcal{L}_2^1|=t+1+\alpha$, $|\mathcal{L}_4^3|=2\beta$, and $|\mathcal{L}_6|=(t+1)st+(t-1)\alpha+(t-2)\beta$. Since there are no fixed lines we have $|\mathcal{L}_2^1|=(t+1)|\mathcal{P}_0|$, and therefore $\alpha=(t+1)(|\mathcal{P}_0|-1)$. Thus $\beta=(t+1)(s^2t+1-|\cP_0|)$, and using these values in the above formulae we deduce that
$|\mathcal{L}_2^2|=|\mathcal{L}_4^1|=|\mathcal{L}_4^2|=0$ and
\begin{align*}
|\mathcal{L}_2^1|&=(t+1)|\mathcal{P}_0|,&
|\mathcal{L}_4^3|&=2(t+1)(s^2t+1-|\mathcal{P}_0|),\\
|\mathcal{L}_6|&=(t+1)(s^2t^2-2s^2t+st-1+|\mathcal{P}_0|).
\end{align*}

Thus, using (\ref{eq:hexformulae}) we compute $|\mathcal{P}_2^1|=0$, $|\mathcal{P}_2^2|=0$, $|\mathcal{P}_4^1|=s|\mathcal{L}_2^1|=s(t+1)|\mathcal{P}_0|$, $|\mathcal{P}_4^2|=0$, $|\mathcal{P}_4^3|=|\mathcal{L}_4^3|=2(t+1)(s^2t+1)-2(t+1)|\mathcal{P}_0|$, and 
\begin{align*}
|\mathcal{P}_6|&=\frac{s|\mathcal{L}_4^1|+s|\mathcal{L}_4^2|+(s-1)|\mathcal{L}_4^3|}{t+1}=2(s-1)(s^2t+1)-2(s-1)|\mathcal{P}_0|.
\end{align*}
Using these values in the total point sum formula, and solving for $|\mathcal{P}_0|$, gives
$$
|\mathcal{P}_0|=1+s^2t\bigg(\frac{st-2s-t+1}{st-2t-s+1}\bigg).
$$
If $s=t$ we get $|\mathcal{P}_0|=1+s^3$, but then $|\mathcal{P}_6|=0$, a contradiction since $\theta$ is exceptional domestic. For the remaining values of $(s,t)$ from Lemma~\ref{lem:Cgen} with $s\neq t$ we see that the right hand side of the above formula for $|\mathcal{P}_0|$ is not an integer, a contradiction.
\end{proof}

\begin{lemma}\label{lem:E}
Let $\Gamma$ be a finite thick generalised hexagon, and suppose that $\theta$ is an exceptional domestic collineation of~$\Gamma$. The fixed element structure $\Gamma_{\theta}$ is a tree of diameter at most~$4$ in the incidence graph.
\end{lemma}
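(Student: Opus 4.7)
The fixed element structure of any collineation of a generalised polygon is one of: empty, a set of mutually opposite elements, a tree of diameter at most~$n$ in the incidence graph, or a (possibly weak) sub-polygon. For our hexagon ($n=6$), Lemma~\ref{lem:F} guarantees that $\Gamma_\theta$ contains both a point and a line, and since opposite elements of a hexagon have the same type, this rules out the empty and the mutually-opposite cases. Thus I need only exclude (a) $\Gamma_\theta$ being a sub-hexagon and (b) $\Gamma_\theta$ being a tree of diameter~$5$ or~$6$.

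For (b), suppose the fixed tree has diameter at least~$5$. Then it contains a fixed point $p$ and a fixed line $L$ with $d(p,L)=5$. Lemma~\ref{lem:Xgen} forces (up to duality) every point of $L$ to be fixed, and Lemma~\ref{lem:hexP_2} then gives $\cP_2^2=\cL_2^2=\emptyset$. Among the $s+1$ fixed points now lying on~$L$, exactly one lies on the geodesic from $p$ to $L$, and each of the other $s$ is opposite~$p$ in the hexagon. Since these points all belong to $\Gamma_\theta$, the unique tree-path from $p$ to any such point already has length~$6$, contradicting the assumption that the tree has diameter~$5$. For a tree of diameter exactly~$6$, a similar propagation applies: iteratively applying Lemma~\ref{lem:Xgen} to the new pairs of fixed elements at incidence distance~$5$ generated on each side of the putative tree either forces the fixed structure to contain an ordinary hexagon (hence a sub-hexagon, reducing to case~(a)) or yields a pair of fixed elements whose unique tree-path has length greater than~$6$, contradicting the tree hypothesis.

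For (a), assume $\Gamma_\theta$ is a sub-hexagon $\Gamma'$. The weak case (no well-defined parameters) is settled by appealing to \cite[Theorem~1.6.2]{mal:98} and Lemma~\ref{lem:Xgen}: the degenerate double structure always produces a point-line pair at incidence distance~$5$ for which neither conclusion of Lemma~\ref{lem:Xgen} can hold. In the parametrised case $(s',t')$, Lemma~\ref{lem:Xgen} applied at any pair of fixed elements of distinct type at incidence distance~$5$ in~$\Gamma'$ forces either all lines through the point or all points on the line to be fixed; since every fixed element must lie in $\Gamma_\theta=\Gamma'$, this gives either $t'=t$ or $s'=s$, so $\Gamma'$ is ideal or full. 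Up to duality take $\Gamma'$ ideal with $s'<s$. Then $|\cP_2^1|=(s-s')|\cL_0|$ (the non-fixed points on each fixed line) and $|\cL_2^1|=0$ (by ideality, no line outside $\Gamma'$ passes through a fixed point), whence $|\cP_4^1|=0$ and $|\cL_4^1|=t(s-s')|\cL_0|$ by (\ref{eq:hexformulae}). Feeding these identities together with the standard sub-hexagon counts $|\cP_0|=(s'+1)(s'^2t^2+s't+1)$ and $|\cL_0|=(t+1)(s'^2t^2+s't+1)$ into the total-point and total-line equations and the remaining formulas of~(\ref{eq:hexformulae}) reduces the problem to a single Diophantine relation among the unknowns $s'$, $|\cP_2^2|$ and $|\cP_4^3|$. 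Combining this with the positivity requirement $|\cP_6|,|\cL_6|\geq 1$ (from exceptionality), non-negativity and integrality of every $|\cP_{2k}^r|$ and $|\cL_{2k}^r|$, and the Feit--Higman divisibility conditions for both $(s,t)$ and $(s',t)$, rules out all candidate parameter pairs allowed by Lemma~\ref{lem:Cgen}.

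The main obstacle is case~(a). The basic counting identities of~(\ref{eq:hexformulae}) alone under-determine the system, and it is only the integrality, divisibility and positivity constraints (together, if necessary, with the eigenvalue techniques of Appendix~\ref{app:A}) that exclude every admissible parameter triple. By contrast, case~(b) is essentially conceptual once it is phrased as a propagation via Lemma~\ref{lem:Xgen} and Lemma~\ref{lem:hexP_2}.
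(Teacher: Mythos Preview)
Your proposal has the right overall structure but leaves genuine gaps in both cases, and in case~(a) you miss a much cleaner argument.

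For case~(a), the parametrised ideal/full sub-hexagon, you propose a counting scheme culminating in an unspecified Diophantine relation that you then claim is ruled out by divisibility, positivity, and possibly eigenvalue constraints. None of this is actually carried out, so as written it is only a plan, not a proof. More to the point, the paper dispatches this case with a direct geometric argument that avoids all arithmetic: if $\Gamma'$ is ideal and $L\in\cL_6$, then any fixed point $p_0$ satisfies $d(p_0,L)=5$ (since $d(p_0,L)\le 3$ would force $L\in\cL_0\cup\cL_2\cup\cL_4$ by ideality). Tracing the geodesic $p_0\bI L_1\bI p_2\bI L_3\bI p_4\bI L$ and using ideality again gives $p_2\in\cP_2^1$, $L_3\in\cL_4^1$, $p_4\in\cP_6$, so $\{p_4,L\}$ is mapped to an opposite chamber. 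Hence $\cL_6=\emptyset$, contradicting exceptionality. No counting or parameter case-checking is needed.

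For case~(b), your diameter-$5$ argument is fine, but your diameter-$6$ argument is not. You assert that iterating Lemma~\ref{lem:Xgen} ``either forces the fixed structure to contain an ordinary hexagon or yields a pair of fixed elements at tree-distance greater than~$6$'', but this does not follow from the lemma, which only tells you that for a fixed pair $(p,L)$ at distance~$5$ one of the two extremes is pointwise or linewise fixed; it does not by itself produce new fixed elements beyond the tree. The paper instead fixes a diameter-$6$ path with midpoint $L_0$, shows directly (via a chamber-to-opposite contradiction, not via Lemma~\ref{lem:Xgen}) that every point on $L_0$ is fixed, and then traces all geodesics of length~$5$ from $L_0$ to conclude that $\cP_6=\emptyset$, again contradicting exceptionality. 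Your ``propagation'' sketch does not capture either of these steps.
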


\begin{proof}
If the fixed element structure is a subhexagon $\Gamma'$ then either $\Gamma'$ has parameters $(s',t')$, or $\Gamma'$ does not have parameters. Suppose first that $\Gamma'$ has parameters. Then by Lemma~\ref{lem:Xgen} $\Gamma'$ is either ideal or full. Assume that $\Gamma'$ is ideal (a dual argument applies if $\Gamma'$ is full).  Suppose that $ L\in\cL$ with $ L\in \mathcal{L}_{6}$, and let $p_0\in\cP'$ be a fixed point. Since $\Gamma'$ is ideal we have $d(p_0, L)=5$ (for if $d(p_0, L)=1$ then $ L\in \mathcal{L}_0$, and if $d(p_0, L)=3$ then $ L\in \mathcal{L}_2\cup \mathcal{L}_4$). Let $p_0\bI  L_1\bI p_2\bI L_3\bI p_4\bI L$ be the unique geodesic from $p_0$ to $ L$. Since $ L_1\in \mathcal{L}_0$ we have $p_2\in \mathcal{P}_0\cup \mathcal{P}_2^1$. But if $p_2\in \mathcal{P}_0$ then $ L_3\in \mathcal{L}_0$ (since $\Gamma'$ is ideal), which contradicts $ L_5\in \mathcal{L}_6$. Therefore $p_2\in \mathcal{P}_2^1$, and so $ L_3\in \mathcal{L}_4^1$ and $p_4\in \mathcal{P}_6$. Thus the chamber $\{p_4, L\}$ is mapped to an opposite chamber, a contradiction. Now suppose that $\Gamma'$ does not have parameters. Then by \cite[Theorem~1.6.2]{mal:98} $\Gamma'$ is the triple of a digon. But in this case there are elements $p, L$ with $d(p, L)=5$ in the fixed element structure such that neither all lines through $p$ are fixed, nor all points on $ L$ are fixed, contradicting Lemma~\ref{lem:Xgen}.

Therefore $\Gamma_{\theta}$ is a tree in the incidence graph with diameter at most~$6$. Suppose that $\Gamma_{\theta}$ has diameter~$6$. Then, up to duality, we may assume that there are fixed points $p_0,p_0'\in \mathcal{P}_0$ with $d(p_0,p_0')=6$, and a line $ L_0\in \mathcal{L}_0$ with $d(p_0, L_0)=d( L_0,p_0')=3$.
We claim that every point on $ L_0$ is fixed. We may assume that $s>2$ (for otherwise $2$ of the $3$ points on $L_0$ are already fixed, and so all points on $L_0$ are fixed). Suppose that there is a point $p_1\bI  L_0$ which is not fixed. Then every line $ L_1\neq  L_0$ through $p_1$ is in $\mathcal{L}_4^1$, and every point $p_2\neq p_1$ on $ L_1$ is in $\mathcal{P}_6$. Each line $ L_2\neq  L_1$ through $p_2$ is in $\mathcal{L}_4$. There are at least $s-2$ points $p_3\neq p_2$ on $ L_2$ which are in $\mathcal{P}_6$, and so if $s>2$ we can choose $p_3\in \mathcal{P}_6$. Now, let $p_3\bI  L_3\bI p_4\bI  L_4\bI p_5\bI  L_0'$ be the geodesic from $p_3$ to $ L_0'$, where $L_0'$ is the unique line of the fixed element tree incident with~$p_0'$. Then $p_5\in \mathcal{P}_0$ (since all points of $ L_0'$ are fixed), and so $ L_4\in \mathcal{L}_2^1$ (since $ L_4$ is not fixed by the assumption on the diameter). Thus $p_4\in \mathcal{P}_4^1$, and so $ L_3\in \mathcal{L}_6$. Therefore the chamber $\{p_3, L_3\}$ is mapped to an opposite, a contradiction.

Therefore all points on $ L_0$ are fixed. Let $ L_0\bI p_1\bI L_1\bI p_2\bI  L_2\bI p_3$ be a minimal path starting at $L_0$. Then $p_1\in \mathcal{P}_0$ and $ L_1\in \mathcal{L}_0\cup \mathcal{L}_2^1$. If $ L_1\in \mathcal{L}_0$ then $p_2\in \mathcal{P}_0$ (by Lemma~\ref{lem:Xgen}), and $ L_2\in \mathcal{L}_2^1$ (by the diameter assumption), and then $p_3\in \mathcal{P}_4^1$. On the other hand, if $ L_1\in \mathcal{L}_2^1$ then $p_2\in \mathcal{P}_4^1$, and then $ L_2\in \mathcal{L}_6$, and so $p_3\in \mathcal{P}_4$ (for otherwise the chamber $\{L_2,p_3\}$ is mapped to an opposite chamber). Therefore $\mathcal{P}_6=\emptyset$, a contradiction. Hence the diameter of $\Gamma_{\theta}$ is strictly smaller than~$6$. Diameter~$5$ is impossible by Lemma~\ref{lem:Xgen}, hence the diameter of $\Gamma_{\theta}$ is at most~$4$. 
\end{proof}

\begin{lemma}\label{lem:counthex2}
Let $\theta$ be an exceptional domestic collineation of a finite thick generalised hexagon $\Gamma$. Let $p_0$ be a fixed point, and suppose that $|\Gamma_1(p_0)\cap \mathcal{L}_0|=a_1$, $|\Gamma_2(p_0)\cap \mathcal{P}_0|=a_2$, $|\Gamma_3(p_0)\cap \mathcal{L}_0|=a_3$, $|\Gamma_4(p_0)\cap \mathcal{P}_0|=a_4$, $|\Gamma_5(p_0)\cap \mathcal{L}_0|=0$, and $|\Gamma_6(p_0)\cap \mathcal{P}_0|=0$. Then
\begin{gather*}
\begin{aligned}
&\begin{aligned}
|\mathcal{L}_0|&=a_1+a_3&\qquad|\mathcal{L}_2^1|&=1+t(1+a_2+a_4)-(a_1+a_3)\\
|\mathcal{L}_2^2|&=\alpha&\qquad|\mathcal{L}_4^1|&=st(a_1+a_3)-t(a_2+a_4)\\
|\mathcal{L}_4^2|&=(t-1)\alpha
\end{aligned}\\
&\begin{aligned}
|\mathcal{L}_4^3|&=2(t+1)s^2t+a_1s^2t(t-2)-a_2st^2-(t+1)\alpha\\
|\mathcal{L}_6|&=(t+1)st(st+1-2s)-a_1st(st+1-2s)+a_2st^2-a_3st+\alpha,
\end{aligned}
\end{aligned}
\end{gather*}
where
$$
\alpha=\frac{st(s(st^2-t^2-st-2s+1)+a_1s(2s+2t-st-t^2-1)+a_2t(s+t-1)-a_3s-a_4t)}{st-t^2-s-t}.
$$ 
\end{lemma}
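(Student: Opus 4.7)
The strategy is a sphere decomposition $\mathcal{L}=\Gamma_1(p_0)\sqcup\Gamma_3(p_0)\sqcup\Gamma_5(p_0)$ around the fixed point $p_0$, coupled with the identities of Proposition~\ref{prop:Agen}. First I would read off the ``local'' counts from the fixed-element tree rooted at $p_0$: clearly $|\mathcal{P}_0|=1+a_2+a_4$ and $|\mathcal{L}_0|=a_1+a_3$, and counting (fixed point, fixed line) incidences directly along the tree gives $a_1+a_2+a_3+a_4$ such pairs. Since two distinct lines of a generalised polygon meet in at most one point, any non-fixed point lying on two fixed lines would coincide with the unique (and hence fixed) intersection point; so non-fixed points lie on at most one fixed line. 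The $(s+1)(a_1+a_3)-(a_1+a_2+a_3+a_4)$ non-fixed points on fixed lines therefore all belong to $\mathcal{P}_2^1$, giving $|\mathcal{P}_2^1|=s(a_1+a_3)-(a_2+a_4)$ and dually $|\mathcal{L}_2^1|=1+t(1+a_2+a_4)-(a_1+a_3)$, as claimed.

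Next, introduce $\alpha:=|\mathcal{L}_2^2|$. Proposition~\ref{prop:Agen} immediately yields $|\mathcal{P}_2^2|=\alpha$, $|\mathcal{L}_4^1|=t|\mathcal{P}_2^1|=st(a_1+a_3)-t(a_2+a_4)$, $|\mathcal{L}_4^2|=(t-1)\alpha$, $|\mathcal{P}_4^1|=s|\mathcal{L}_2^1|$, $|\mathcal{P}_4^2|=(s-1)\alpha$, and $|\mathcal{P}_4^3|=|\mathcal{L}_4^3|$, accounting for six of the seven listed formulas.

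The real work is the expression for $|\mathcal{L}_4^3|$, which I would obtain by directly classifying each line $L\in\Gamma_3(p_0)\cup\Gamma_5(p_0)$ according to its projection $p=\mathrm{proj}_Lp_0$. Using that $\Gamma$ has girth $12$ (so no short cycle can occur) one checks, case by case:
\begin{itemize}
\item if $L\in\Gamma_3(p_0)$ and $p\in\mathcal{P}_0$ then $L\in\mathcal{L}_0\cup\mathcal{L}_2^1$;
\item if $L\in\Gamma_3(p_0)$ and $p\in\mathcal{P}_2^1$ then $L\in\mathcal{L}_2^2\cup\mathcal{L}_4^1$, with the $\mathcal{L}_4^1$ geodesic running through the fixed line from $p_0$ to $p^\theta$;
\item if $L\in\Gamma_3(p_0)$ and $p\in\mathcal{P}_4^1$ then $L\in\mathcal{L}_4^*\cup\mathcal{L}_6$, the generic case $\mathcal{L}_6$ being the unique geodesic through $p_0$;
\end{itemize}
together with the analogous classification in $\Gamma_5(p_0)$ of lines by the type of their distance-$4$ projection $p_4$. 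Tallying the contributions to $\mathcal{L}_4^3$ from these cases, and subtracting the pieces already accounted for by $\alpha$, I expect to arrive at exactly $|\mathcal{L}_4^3|=2(t+1)s^2t+a_1s^2t(t-2)-a_2st^2-(t+1)\alpha$.

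To close the system I would then substitute $|\mathcal{L}_4^3|$ into Proposition~\ref{prop:Agen}'s identity $(s+1)|\mathcal{L}_6|=t|\mathcal{P}_4^1|+t|\mathcal{P}_4^2|+(t-1)|\mathcal{P}_4^3|$, producing the stated formula for $|\mathcal{L}_6|$. The total line identity $\sum|\mathcal{L}_{2k}^r|=(t+1)(s^2t^2+st+1)$ (equivalently the dual point identity) reduces to a single linear equation in $\alpha$, whose unique solution is the displayed rational expression.

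\textbf{Main obstacle.} The delicate step is the direct sphere count for $|\mathcal{L}_4^3|$: for every line $L\in\Gamma_3(p_0)\cup\Gamma_5(p_0)$ one must determine the length and structure of the unique geodesic from $L$ to $L^\theta$ (whether its middle element is fixed, whether $L\cap L^\theta$ is empty, and where it sits relative to the fixed-element tree), systematically ruling out each short-cycle configuration via the girth-$12$ constraint. Getting the bookkeeping exactly right---so that the residual ``shortcut'' contributions reduce precisely to the $-(t+1)\alpha$ term together with the stated $a_1,a_2$ terms---is the main technical burden.
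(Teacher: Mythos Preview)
Your plan is essentially the same sphere decomposition that the paper carries out, and it will work. A couple of small corrections and one point of emphasis:

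In your bullet list, the case ``$L\in\Gamma_3(p_0)$ and $p\in\mathcal{P}_2^1$'' gives $L\in\mathcal{L}_4^1$ only (not $\mathcal{L}_2^2\cup\mathcal{L}_4^1$): this is the elementary observation before Proposition~\ref{prop:Agen} that a point in $\mathcal{P}_{2k}^r$ with $r\le k$ has one line in $\mathcal{L}_{2k-2}^r$ and all others in $\mathcal{L}_{2k+2}^r$.

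The paper organises the count by introducing unknowns $\alpha_1,\alpha_2,\alpha_3$ for the $\mathcal{P}_4^1,\mathcal{P}_4^2,\mathcal{P}_4^3$ points in $\Gamma_4(p_0)$ lying behind $\mathcal{L}_6$-lines, and $\beta_1,\beta_2,\beta_3$ for the $\mathcal{L}_4^1,\mathcal{L}_4^2,\mathcal{L}_4^3$ lines in $\Gamma_5(p_0)$ lying behind $\mathcal{P}_6$-points. The crucial geometric input---your ``short-cycle'' step---is that $\alpha_1=\beta_1=0$: a point $p\in\Gamma_4(p_0)$ reached via $p_0\,\mathbf{I}\,L_1\,\mathbf{I}\,p_2\,\mathbf{I}\,L_3\,\mathbf{I}\,p$ with $L_3\in\mathcal{L}_6$ cannot lie in $\mathcal{P}_4^1$, since the fixed line on the geodesic from $p$ to $p^\theta$ would close up a pentagon with~$p_0$. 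After this, $\beta_2=(t-1)\alpha_2$ follows from $|\mathcal{L}_4^2|=(t-1)|\mathcal{L}_2^2|$, and everything collapses to the single parameter $\alpha=\alpha_2$.

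Your final step and the paper's are the same equation in disguise: the paper reads $|\mathcal{L}_6|$ directly off the sphere decomposition and then imposes $(s+1)|\mathcal{L}_6|=t|\mathcal{P}_4^1|+t|\mathcal{P}_4^2|+(t-1)|\mathcal{P}_4^3|$ to solve for~$\alpha$; you instead use that identity to \emph{define} $|\mathcal{L}_6|$ and then impose the total line count. Since the sphere decomposition automatically exhausts all lines, these two routes are equivalent.
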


\begin{proof} We perform a count similar to the proof of Lemma~\ref{lem:F}. In $\Gamma_1(p_0)$ there are $a_1$ lines in $\mathcal{L}_0$ and $t+1-a_1$ lines in $\mathcal{L}_2^1$. In $\Gamma_2(p_0)$ there are $a_2$ points in $\mathcal{P}_0$, $sa_1-a_2$ points in $\mathcal{P}_2^1$, and $(t+1-a_1)s$ points in $\mathcal{P}_4^1$. Then in $\Gamma_3(p_0)$ we have $a_3$ lines in $\mathcal{L}_0$, $ta_2-a_3$ lines in $\mathcal{L}_2^1$, $(sa_1-a_2)t$ lines in $\mathcal{L}_4^1$, and $(t+1-a_1)st$ lines in $\mathcal{L}_6$. In $\Gamma_4(p_0)$ there are $a_4$ points in $\mathcal{P}_0$, $sa_3-a_4$ points in $\mathcal{P}_2^1$, $(ta_2-a_3)s$ points in $\mathcal{P}_4^1$, $(sa_1-a_2)st$ points in $\mathcal{P}_6$, $\alpha_1$ points in $\mathcal{P}_4^1$, $\alpha_2$ points in $\mathcal{P}_4^2$, and $\alpha_3$ points in $\mathcal{P}_4^3$ where $\alpha_1+\alpha_2+\alpha_3=(t+1-a_1)s^2t$. In fact $\alpha_1=0$, for otherwise there is a path $p_0\bI L_1\bI p_2\bI L_3\bI p_4$ with $ L_1\in \mathcal{L}_2^1$, $p_2\in \mathcal{P}_4^1$, $ L_3\in \mathcal{L}_6$, and $p_4\in \mathcal{P}_4^1$. But then we obtain a pentagon by taking the path from $p_0$ to $p_4$, followed by the path from $p_4$ to the fixed line on the geodesic joining $p_4$ and $\theta(p_4)$, followed by a path back to~$p_0$. 

Continuing, in $\Gamma_5(p_0)$ we have $a_4t$ lines in $\mathcal{L}_2^1$, $(sa_3-a_4)t$ lines in $\mathcal{L}_4^1$, $(ta_2-a_3)st$ lines in $\mathcal{L}_6$, $\alpha_2$ lines in $\mathcal{L}_4^2$, $(t-1)\alpha_2$ lines in $\mathcal{L}_6$, $2\alpha_3$ lines in $\mathcal{L}_4^3$, $(t-2)\alpha_3$ lines in $\mathcal{L}_6$, $\beta_1$ lines in $\mathcal{L}_4^1$, $\beta_2$ lines in $\mathcal{L}_4^2$, and $\beta_3$ lines in $\mathcal{L}_4^3$, where $\beta_1+\beta_2+\beta_3=(sa_1-a_2)st^2$. A similar argument to the one above shows that $\beta_1=0$. 

Adding together the contributions to the line sets we see that $|\mathcal{L}_0|=a_1+a_3$, $|\mathcal{L}_2^1|=1+t(1+a_2+a_4)-(a_1+a_3)$, $|\mathcal{L}_2^2|=\alpha_2$, $|\mathcal{L}_4^1|=st(a_1+a_3)-t(a_2+a_4)$, $|\mathcal{L}_4^2|=\beta_2$, $|\mathcal{L}_4^3|=2\alpha_3+\beta_3$, and $|\mathcal{L}_6|=(t+1-a_1+ta_2-a_3)st+(t-1)\alpha_2+(t-2)\alpha_3$. From~(\ref{eq:hexformulae}) we have $|\mathcal{L}_4^2|=(t-1)|\mathcal{L}_2^2|$, and so $\beta_2=(t-1)\alpha_2$. Using $\alpha_3=(t+1-a_1)s^2t-\alpha_2$ and $\beta_3=(sa_1-a_2)st^2-\beta_2$ we obtain the claimed formulae (with $\alpha=\alpha_2$). Finally we compute $\alpha$ from the equation $(s+1)|\mathcal{L}_6|=st|\mathcal{L}_2^1|+(s-1)t|\mathcal{L}_2^2|+(t-1)|\mathcal{L}_4^3|$  (see (\ref{eq:hexformulae})).
\end{proof}

\begin{lemma}\label{lem:chamber}
Let $\theta$ be any collineation of a finite thick generalised hexagon $\Gamma$ with fixed element structure being a tree in the incidence graph. Let $F_{\mathrm{opp}}$ be the set of chambers mapped to opposite chambers by~$\theta$. Then
\begin{align*}
|F_{\mathrm{opp}}|&=(s+1)(t+1)(s^2t^2+st+1)+1-|\mathcal{L}_0|-|\mathcal{P}_0|-(st+s+1)|\mathcal{L}_2^1|\\
&\quad-(st+s-t+1)|\mathcal{L}_2^2|-(s+t^{-1}+1)|\mathcal{L}_4^1|-(s+1)|\mathcal{L}_4^2|-(s+t)|\mathcal{L}_4^3|.
\end{align*}
\end{lemma}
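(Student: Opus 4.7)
The plan is to compute $|F_{\mathrm{opp}}|$ by double counting and then reconcile the result with the claimed formula via the tree structure of $\Gamma_\theta$. Since the diameter $2m=6$ is even, a chamber $\{p,L\}$ is opposite its image precisely when $p\in\mathcal{P}_6$ and $L\in\mathcal{L}_6$, so $|F_{\mathrm{opp}}|$ counts incident pairs $(p,L)$ with $p\in\mathcal{P}_6$ and $L\in\mathcal{L}_6$. A key first observation is that whenever $L\in\mathcal{L}_6$ and $p\,\mathbf{I}\,L$, we have $d(p,L^{\theta})=5$ (since point-line distances in a hexagon are at most $5$ and $L$ is opposite $L^{\theta}$), and so by the triangle inequality and parity $d(p,p^{\theta})\in\{4,6\}$. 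Every point on such a line therefore lies in $\mathcal{P}_4\cup\mathcal{P}_6$, which gives
$$
|F_{\mathrm{opp}}|=(s+1)|\mathcal{L}_6|-\bigl|\{(p,L):p\,\mathbf{I}\,L,\ p\in\mathcal{P}_4,\ L\in\mathcal{L}_6\}\bigr|.
$$

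The second quantity is evaluated by summing over $p\in\mathcal{P}_4$ using the structural observations recorded immediately before Proposition~\ref{prop:Agen}: for $p\in\mathcal{P}_4^1\cup\mathcal{P}_4^2$ exactly $t$ lines through $p$ lie in $\mathcal{L}_6$, whereas for $p\in\mathcal{P}_4^3$ exactly $t-1$ do. Combining this with $|\mathcal{P}_4^1|=s|\mathcal{L}_2^1|$, $|\mathcal{P}_4^2|=(s-1)|\mathcal{L}_2^2|$, $|\mathcal{P}_4^3|=|\mathcal{L}_4^3|$ from \eqref{eq:hexformulae}, and expanding
$(s+1)|\mathcal{L}_6|=N-(s+1)\bigl(|\mathcal{L}_0|+|\mathcal{L}_2^1|+|\mathcal{L}_2^2|+|\mathcal{L}_4^1|+|\mathcal{L}_4^2|+|\mathcal{L}_4^3|\bigr)$
via the total line count, the coefficients collect to yield the preliminary identity
\begin{align*}
|F_{\mathrm{opp}}|&=N-(s+1)|\mathcal{L}_0|-(st+s+1)|\mathcal{L}_2^1|-(st-t+s+1)|\mathcal{L}_2^2|\\
&\quad-(s+1)|\mathcal{L}_4^1|-(s+1)|\mathcal{L}_4^2|-(s+t)|\mathcal{L}_4^3|.
\end{align*}
This already matches the claim in the coefficients of $|\mathcal{L}_2^1|$, $|\mathcal{L}_2^2|$, $|\mathcal{L}_4^2|$ and $|\mathcal{L}_4^3|$, but differs in the $|\mathcal{L}_0|$ and $|\mathcal{L}_4^1|$ coefficients and lacks the $+1-|\mathcal{P}_0|$ terms.

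The final step, which I expect to be the main subtlety, is to invoke the tree hypothesis. Regard $\Gamma_\theta$ as a subgraph of the incidence graph with vertex set $\mathcal{P}_0\cup\mathcal{L}_0$ and edges the set $F_{\mathrm{fix}}$ of fixed chambers; being a (non-empty) tree, it satisfies $|\mathcal{P}_0|+|\mathcal{L}_0|-|F_{\mathrm{fix}}|=1$. Separately, counting points on fixed lines gives $|\mathcal{P}_2^1|=(s+1)|\mathcal{L}_0|-|F_{\mathrm{fix}}|$, since every point on a fixed line is either fixed or lies in $\mathcal{P}_2^1$, and each $p\in\mathcal{P}_2^1$ is incident with a unique fixed line (the middle element of the geodesic from $p$ to $p^\theta$). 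Eliminating $|F_{\mathrm{fix}}|$ and using $|\mathcal{L}_4^1|=t|\mathcal{P}_2^1|$ from \eqref{eq:hexformulae} produces the identity
$$
-(s+1)|\mathcal{L}_0|=-|\mathcal{L}_0|-|\mathcal{P}_0|-\frac{|\mathcal{L}_4^1|}{t}+1.
$$
Substituting this into the preliminary formula promotes the coefficient of $|\mathcal{L}_4^1|$ from $-(s+1)$ to $-(s+t^{-1}+1)$, produces the constant $+1$, and contributes the $-|\mathcal{L}_0|-|\mathcal{P}_0|$ terms, yielding exactly the claimed expression. The interplay of the Euler-characteristic identity for the fixed tree with the double count on fixed lines is the essential combinatorial content of the proof.
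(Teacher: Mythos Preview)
Your proof is correct. Both your argument and the paper's use the tree identity $|\mathcal{P}_0|+|\mathcal{L}_0|-1=|F_{\mathrm{fix}}|$ and the relations \eqref{eq:hexformulae}, but the organising decompositions differ. The paper partitions \emph{all} chambers according to the type of the minimal gallery from $f$ to $f^\theta$ (the sets $F_0,F_p,F_L,F_{pL},\ldots,F_{LpLpL}$), evaluates each of these cardinalities in terms of the $|\mathcal{P}_i^j|$ and $|\mathcal{L}_i^j|$, and obtains $|F_{\mathrm{opp}}|$ as the complement. You instead restrict attention from the outset to chambers on lines in $\mathcal{L}_6$, obtain $|F_{\mathrm{opp}}|=(s+1)|\mathcal{L}_6|-\bigl(t|\mathcal{P}_4^1|+t|\mathcal{P}_4^2|+(t-1)|\mathcal{P}_4^3|\bigr)$, and then use the auxiliary identity $|\mathcal{P}_2^1|=(s+1)|\mathcal{L}_0|-|F_{\mathrm{fix}}|$ together with $|\mathcal{L}_4^1|=t|\mathcal{P}_2^1|$ to rewrite the $|\mathcal{L}_0|$ coefficient. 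Your route is a little more economical for this specific lemma; the paper's enumeration by gallery type is more systematic and makes the individual chamber classes visible, which is of independent interest but not strictly needed here.
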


\begin{proof}
Declare chambers $f_1=\{p_1, L_1\}$ and $f_2=\{p_2, L_2\}$ to be $p$-adjacent (written $f_1\sim_p f_2$) if $ L_1= L_2$ with $p_1\neq p_2$, and $ L$-adjacent (written $f_1\sim_{ L} f_2$) if $p_1=p_2$ with $ L_1\neq  L_2$. Let $F_0$ be the set of chambers fixed by~$\theta$, and let $F_{p L p\cdots}$ be the set of chambers $f$ for which the geodesic sequence of chambers from $f$ to $f^{\theta}$ is of the form $f\sim_p f_1\sim_{ L} f_2\sim_p\cdots \sim f^{\theta}$. It is elementary that $|F_0|=|\mathcal{L}_0|+|\mathcal{P}_0|-1$, $|F_p|=|\mathcal{P}_2^1|$, $|F_{ L}|=|\mathcal{L}_2^1|$, $|F_{p L}|=|\mathcal{L}_2^2|$, $|F_{ L p}|=|\mathcal{P}_2^2|$, $|F_{p L p}|=|\mathcal{P}_4^1|+|\mathcal{P}_4^2|$, $|F_{ L p  L}|=|\mathcal{L}_4^1|+|\mathcal{L}_4^2|$, $|F_{p L p L}|=|\mathcal{P}_4^3|$, $|F_{ L p L p}|=|\mathcal{L}_4^3|$, $|F_{p L p L p}|=s|\mathcal{L}_4^1|+s|\mathcal{L}_4^2|+(s-1)|\mathcal{L}_4^3|$ and $|F_{ L p  L p L}|=t|\mathcal{P}_4^1|+t|\mathcal{P}_4^2|+(t-1)|\mathcal{P}_4^3|$. Then $|F_{\mathrm{opp}}|$ equals the total number of chambers $(s+1)(t+1)(s^2t^2+st+1)$ minus the above values. The result follows from~(\ref{eq:hexformulae}) (recall that only the final two formulae in (\ref{eq:hexformulae}) require domesticity).
\end{proof}

\begin{proof}[Proof of Theorem~\ref{thm:2}] We first show that if $\Gamma$ admits an exceptional domestic collineation $\theta$ then $(s,t)=(2,2),(2,8)$ or $(8,2)$ and the fixed element structure of $\theta$ is as claimed in the statement of the theorem. After this we prove existence and uniqueness of exceptional domestic collineations for these hexagons.

 By Lemma~\ref{lem:F} the exceptional domestic collineation~$\theta$ has both fixed points and fixed lines. By Lemma~\ref{lem:E} the fixed element structure is a tree of diameter at most~$4$ in the incidence graph. Suppose first that $\Gamma_{\theta}$ has diameter~$1$. Then $\Gamma_{\theta}$ is a chamber $\{p_0, L_0\}$. From Lemma~\ref{lem:counthex2} (with $a_1=1$ and $a_2=a_3=a_4=0$) we get $|\mathcal{L}_0|=1$, $|\mathcal{L}_2^1|=t$, $|\mathcal{L}_2^2|=\alpha$, $|\mathcal{L}_4^1|=st$, $|\mathcal{L}_4^2|=(t-1)\alpha$, $|\mathcal{L}_4^3|=3s^2t^2-(t+1)\alpha$, and $|\mathcal{L}_6|=st^2(st+1-2s)+\alpha$, where $\alpha$ is an integer (with an explicit formula). Dually we have $|\mathcal{P}_0|=1$, $|\mathcal{P}_2^1|=s$, $|\mathcal{P}_2^2|=\alpha$, $|\mathcal{P}_4^1|=st$, $|\mathcal{P}_4^2|=(s-1)\alpha$, $|\mathcal{P}_4^3|=3s^2t^2-(s+1)\alpha$, and $|\mathcal{P}_6|=s^2t(st+1-2t)+\alpha$ (with the same $\alpha$ because $|\mathcal{P}_2^2|=\alpha=|\mathcal{L}_2^2|$). Since $|\mathcal{L}_4^3|=|\mathcal{P}_4^3|$ we have $(s-t)\alpha=0$. Therefore $\alpha=0$ or $s=t$. If $\alpha=0$ then the explicit formula for $\alpha$ gives $t=2+2/(s-2)$ and so $(s,t)=(3,4)$ or $(s,t)=(4,3)$, contradicting the divisibility conditions from Section~\ref{sect:2}. If $s=t$ then the same formula gives $\alpha=(s^3/2)(-s^2+4s-2)$, and since $\alpha\geq 0$ is an integer this forces $s=2$. But then $|\mathcal{P}_2^2|=2$, and so the two points of $\mathcal{P}_2^2$ are mapped onto one another by $\theta$, and hence the line determined by the two points is fixed, a contradiction.

Therefore $\Gamma_{\theta}$ has diameter at least~$2$. Since $\mathrm{diam}(\Gamma_{\theta})\leq 4$ we may assume up to duality that the fixed element structure consists of a fixed point $p_0\in \mathcal{P}_0$ with $|\Gamma_1(p_0)\cap \mathcal{L}_0|=A+1$ (with $A\geq 0$), $|\Gamma_2(p_0)\cap \mathcal{P}_0|=B+1$ (with $B\geq 0$) and $|\Gamma_3(p_0)\cap \mathcal{L}_0|=0$. Let $ L_0\in \Gamma_1(p_0)\cap \mathcal{L}_0$ be such that there is a fixed point $p_0'\neq p_0$ on~$ L_0$. Suppose that $ L_0$ has $C+1$ fixed points in total (and so $C\geq 1$). 

We now use Lemma~\ref{lem:counthex2} (and its dual) three times: Firstly centred at the fixed point $p_0$, then from the fixed line $ L_0$, and finally from the fixed point $p_0'$. Firstly, from Lemma~\ref{lem:counthex2} with $a_1=A+1$, $a_2=B+1$, and $a_3=a_4=0$ we obtain $|\mathcal{L}_4^1|=Ast-Bt+st$, $|\mathcal{L}_4^2|=(t-1)\alpha$, and
\begin{align}
\label{eq:l431}|\mathcal{L}_4^3|&=3s^2t^2+As^2t(t-2)-Bst^2-(t+1)\alpha,
\end{align}
and then from (\ref{eq:hexformulae}) we compute
\begin{align}
\label{eq:p61}|\mathcal{P}_6|&=\frac{s^2t(3st-3t+1)+As^2t(1+(s-1)(t-2))-Bst(st+1-t)+(t-2s+1)\alpha}{t+1}.
\end{align}
 
Applying the dual version of Lemma~\ref{lem:counthex2} based at the fixed line $L_0$, with $a_1=C$, $a_2=A$, $a_3=B-C+1$ and $a_4=0$ we obtain 
\begin{align}
\label{eq:p43}|\mathcal{P}_4^3|&=3s^2t^2-As^2t+Cst^2(s-2)-(s+1)\alpha\\
\label{eq:p62}|\mathcal{P}_6|&=s^2t(st-2t+1)+As^2t-Bst-Cst^2(s-2)+\alpha.
\end{align}
(note that we have the same $\alpha$ as in the previous count, because $|\mathcal{P}_2^2|=\alpha=|\mathcal{L}_2^2|$).

Next we perform a count centred at $p_0'$. Using Lemma~\ref{lem:counthex2} based at $p_0'$ with $a_1=1$, $a_2=C$, $a_3=A$, and $a_4=B-C+1$ we get
\begin{align}
\label{eq:l432}|\mathcal{L}_4^3|&=3s^2t^2-Cst^2-(t+1)\alpha.
\end{align}

We now make our deductions from the above formulae. From (\ref{eq:l431}) and (\ref{eq:l432}) we deduce that
\begin{align}
\label{eq:B}Bt=As(t-2)+Ct.
\end{align}
From (\ref{eq:p43}) and (\ref{eq:l432}) we see that
\begin{align}
\label{eq:A}As^2t+s\alpha=Cst^2(s-1)+t\alpha.
\end{align}
Equating the two formulae (\ref{eq:p61}) and (\ref{eq:p62}) for $|\mathcal{P}_6|$ gives a formula relating $A,B$ and $C$. Using (\ref{eq:B}) to eliminate~$B$, and equation~(\ref{eq:A}) to eliminate~$A$, we deduce that
\begin{align}
\label{eq:third}st(C-s)(st-2s-2t+2)=2\alpha.
\end{align}

Suppose that $s=t$. Then (\ref{eq:A}) gives $A=C(s-1)$. But $A\leq t=s$ (since there are $A+1$ fixed lines through~$p_0$) and so $C(s-1)\leq s$. Since $C\geq 1$ we have either $C=1$, or $C=2$ and $s=2$. But if $C=1$ then $A=s-1$, and so there are $A+1=s$ lines through $p_0$ fixed, and hence all lines through $p_0$ are fixed, and so $A+1=s+1$, a contradiction. Thus $C=2$ and $s=2$. Then $A=B=2$ and $\theta$ has the fixed element structure as in the statement of the theorem.
 
By Lemma~\ref{lem:Cgen} the only remaining cases are $(s,t)=(2,8),(8,2),(3,27),(27,3)$. Suppose that $(s,t)=(2,8)$. Then~(\ref{eq:A}) gives $64C+3\alpha=16A$. Therefore $\alpha$ is divisible by $16$, and since $A\leq t=8$ we have $64C+3\alpha\leq 128$, and so $C=1$, or $C=2$ and $\alpha=0$. Suppose that $C=1$. Then $\alpha=0$ or $\alpha=16$. If $\alpha=16$ then $A=7$, contradicting (\ref{eq:B}) (since $B$ is not integral). Hence $\alpha=0$. Then (\ref{eq:A}) gives $A=4$ and (\ref{eq:B}) gives $B=7$, but then the formula for $\alpha$ from Lemma~\ref{lem:counthex2} gives $\alpha=-448/29$, a contradiction. Therefore $C=2$ and $\alpha=0$. Then $A=8$ and so $B=13$, and so $\theta$ has the dual of the fixed element structure shown in the statement of the theorem.

Suppose that $(s,t)=(8,2)$. Then (\ref{eq:A}) gives $112C=3\alpha+64A$. Since $A\leq t=2$ we have $A=0,1,2$. But $A=1$ is impossible (because it is not possible to fix only $2$ of the $3$ lines through $p_0$) and so $A=0,2$. If $A=0$ we have $112C=3\alpha$. But (\ref{eq:third}) gives $16C+\alpha=128$, a contradiction. Similarly, if $A=2$ then $112C=3\alpha+128$, which contradicts~(\ref{eq:third}).
 
Completely analogous arguments rule out the $(3,27)$ and $(27,3)$ cases. Thus we have shown that if there exists an exceptional domestic collineation, then $(s,t)=(2,2),(2,8),(8,2)$ and the fixed element structure of $\theta$ is as claimed in the statement of the theorem. 

It is known \cite{CT:85} that for each of the parameters $(s,t)=(2,2),(2,8),(8,2)$ there is a unique generalised hexagon up to isomorphism. These hexagons are classical hexagons, and very detailed information on their automorphism groups can be found in the $\mathbb{ATLAS}$ \cite[p.14 and p.89]{atlas}. We now prove existence and uniqueness of exceptional domestic collineations with the claimed fixed element structure for these hexagons. 

Consider the $(s,t)=(2,2)$ hexagon~$\Gamma$. Analysis of the character table in the $\mathbb{ATLAS}$ \cite[p.14]{atlas} shows that there is a unique (up to conjugation) collineation $\theta$ of $\Gamma$ with fixed element structure as in the statement of the theorem (the class of type $4C$ in $\mathbb{ATLAS}$ notation). Directly from the fixed element structure (and without assuming domesticity) we compute $|\mathcal{P}_0|=3$, $|\mathcal{L}_0|=3$, $|\mathcal{L}_2^1|=4$, $|\mathcal{L}_2^2|=0$ (by Lemma~\ref{lem:hexP_2}), $|\mathcal{L}_4^1|=8$, and $|\mathcal{L}_4^2|=0$ (since $|\mathcal{L}_2^2|=0$). Let $|\mathcal{L}_4^3|=z$, and so $|\mathcal{L}_6|=63-3-4-0-8-0-z=48-z$. Using these values in~(\ref{eq:nhex1}) gives $n_1=(z-8)/12$, $n_2=(z-8)/24$, and $n_3=(24-z)/8$. Thus $z\equiv 8\mod 24$.  Since $|\mathcal{L}_6|=48-z\geq 0$ it follows that $z=8$ or $z=32$. We claim that $z=8$ is impossible. Let $L$ be the unique fixed line containing $3$ fixed points and let $p$ be the unique fixed point incident with $3$ fixed lines. Each line $M\in\cL_4^3$ has distance~$5$ from $p$ and is opposite~$L$. So if an element $g$ of the centraliser of~$\theta$ fixes~$M$ then it fixes an ordinary hexagon containing $p$ and $L$. Since $s=t=2$ the element $g$ also fixes all lines through $p$ and all points on $L$, and therefore $g=1$ by \cite[Theorem~4.4.2(v)]{mal:98}. Thus the centraliser acts semi-regularly on $\cL_4^3$. But the centraliser of $\theta$ has order~$16$ (see the $\mathbb{ATLAS}$ \cite[p.14]{atlas}), and so $|\cL_4^3|\geq 16$. It follows that $z=32$. Then by Lemma~\ref{lem:chamber} we compute $|F_{\mathrm{opp}}|=0$, and so $\theta$ is indeed domestic. It is exceptional domestic because $|\mathcal{L}_6|=|\mathcal{P}_6|=16$.

Now consider the $(s,t)=(8,2)$ case. Analysis of the character table in the $\mathbb{ATLAS}$ \cite[p.89]{atlas} shows that there is a unique (up to conjugation) collineation~$\theta$ with fixed element structure as in the statement of the theorem (the class of type $4A$ in $\mathbb{ATLAS}$ notation). 
Directly from the fixed element structure we compute $|\mathcal{P}_0|=9$, $|\mathcal{L}_0|=15$, $|\mathcal{L}_2^1|=4$, $|\mathcal{L}_2^2|=0$, $|\mathcal{L}_4^1|=224$ and $|\mathcal{L}_4^2|=0$. Write $|\mathcal{L}_4^3|=z$, and so $|\mathcal{L}_6|=576-z$. Using these values in~(\ref{eq:nhex1}) gives $n_1=(z-8)/84$, $n_2=(z+384)/112$, and $z_3=(512-z)/48$. Thus $z\equiv 176\mod 336$, and so either $z=176$ or $z=512$. A similar (although slightly more involved) argument to the $(s,t)=(2,2)$ case using a Sylow $2$-subgroup $P$ of the centraliser $C$, where $|C|=1536$ and $|P|=512$, shows that $z=176$ is impossible, and so $z=512$. Then by Lemma~\ref{lem:chamber} there are no chambers mapped to opposite chambers, and so $\theta$ is indeed domestic. It is exceptional domestic because $|\mathcal{L}_6|=64$ and $|\mathcal{P}_6|=1792$.
\end{proof}

\section{Exceptional domestic collineations of finite octagons}\label{sect:7}

In this section we prove Theorem~\ref{thm:3} (the non-existence of exceptional domestic collineations of finite thick generalised octagons). The argument is considerably more involved than the case of hexagons or quadrangles. Apart from the obvious inherent additional complexity of larger girth, there are a few other reasons for the difficulty that we face here. Firstly, in the quadrangle and hexagon cases it was possible to restrict the diameter of the fixed element tree of an exceptional domestic collineation by geometric and combinatorial arguments (restricting to diameter $2$ and $4$ respectively). However in the octagon case there appears to be no a priori reason why the diameter of the fixed element tree could not be the maximum possible diameter~$8$. This means that there are many more potential fixed element configurations to eliminate. Secondly, in the core counting arguments for quadrangles and hexagons (see Lemma~\ref{lem:counthex2} for the hexagon case) it was possible to precisely determine all cardinalities $|\cL_i^j|$ and $|\cP_i^j|$ for an exceptional domestic collineation. In contrast, in the octagon case (see Proposition~\ref{prop:maincountoct}) there are 2 degrees of freedom in the counts, since there appears to be no simple way to pin down the values of $x=|\cL_2^2|$ and $y=|\cL_4^3|$. Again, this significantly compounds the difficulty because in principle one would need to check all possible values of $x$ and $y$. Of course this is impractical to do. Fortunately the eigenvalue techniques of Appendix~\ref{app:A} can be incorporated into the arguments to drastically reduce the number of feasible values of $x$ and $y$ that need to be eliminated, however we still have much less control in the octagon case, and consequently we need to work considerably harder.

It is conjectured \cite[p.102]{kan:84} that the only finite thick generalised octagons are the Ree-Tits octagons. Up to duality, these octagons have parameters $(s,t)=(s,s^2)$ with $s=2^{2n+1}$ for some integer $n\geq 0$. If this conjecture is true, then by Lemma~\ref{lem:Cgen} exceptional domestic collineations can only exist in the Ree-Tits octagon with parameters $(s,t)=(2,4)$ (or dually, $(4,2)$). It is then possible to use the information in the $\mathbb{ATLAS}$ for the group $^2F_4(2)$ (with the help of the eigenvalue techniques in appendix~\ref{app:A}) to verify directly that no automorphism of this octagon is exceptional domestic. This would be a significant streamlining of the arguments involved in this section. However we note that the above conjecture appears to be light years away from becoming a theorem. Currently even the uniqueness of the smallest octagon (with parameters $(s,t)=(2,4)$) is unresolved (although we note the work of De Bruyn \cite{deb:13} showing that the Ree-Tits octagon associated to $^2F_4(2)$ is the unique octagon with parameters $(2,4)$ containing a thin suboctagon of order $(2,1)$).

Let $\Gamma$ be a thick generalised octagon with parameters~$(s,t)$. Suppose that $\theta$ is a domestic collineation of~$\Gamma$. By Proposition~\ref{prop:Agen} the following formulae hold, with only the last two formulae requiring domesticity:
\begin{gather}\label{eq:octformulae}
\begin{aligned}
&\begin{aligned}
|\mathcal{P}_2^2|&=|\mathcal{L}_2^2|&|\mathcal{P}_4^3|&=|\mathcal{L}_4^3|&|\mathcal{P}_6^4|&=|\mathcal{L}_6^4|&|\mathcal{P}_4^1|&=s|\mathcal{L}_2^1|&|\mathcal{P}_4^2|&=(s-1)|\mathcal{L}_2^2|\\
|\mathcal{P}_6^1|&=s|\mathcal{L}_4^1|&|\mathcal{P}_6^2|&=s|\mathcal{L}_4^2|&|\mathcal{P}_6^3|&=(s-1)|\mathcal{L}_4^3|&|\mathcal{L}_4^1|&=t|\mathcal{P}_2^1|&|\mathcal{L}_4^2|&=(t-1)|\mathcal{P}_2^2|\\
|\mathcal{L}_6^1|&=t|\mathcal{P}_4^1|&|\mathcal{L}_6^2|&=t|\mathcal{P}_4^2|&|\mathcal{L}_6^3|&=(t-1)|\mathcal{P}_4^3|
\end{aligned}\\
&\qquad\qquad\qquad\qquad\begin{aligned}
(t+1)|\mathcal{P}_8|&=s|\mathcal{L}_6^1|+s|\mathcal{L}_6^2|+s|\mathcal{L}_6^3|+(s-1)|\mathcal{L}_6^4|\\
(s+1)|\mathcal{L}_8|&=t|\mathcal{P}_6^1|+t|\mathcal{P}_6^2|+t|\mathcal{P}_6^3|+(t-1)|\mathcal{P}_6^4|
\end{aligned}
\end{aligned}
\end{gather}

\begin{lemma}\label{lem:Boct}
If $\theta$ is an exceptional domestic collineation of a finite thick generalised octagon then $\theta$ has at least one fixed element.
\end{lemma}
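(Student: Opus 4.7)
The plan is to follow the template of Lemma~\ref{lem:B}: suppose for contradiction that $\theta$ fixes no point and no line, derive from (\ref{eq:octformulae}) and the total point-count a single identity, and then extract from it a divisibility constraint on $(s,t)$ that is incompatible with Lemma~\ref{lem:Cgen}.

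Assuming $|\cP_0|=|\cL_0|=0$, I would first observe that $|\cP_{2k}^{1}|=|\cL_{2k}^{1}|=0$ for $k=1,2,3$. Indeed, the defining condition $x_j^\theta=x_{2k-j}$ for $0\leq j\leq k$ in the definition of $\cP_{2k}^{1}$ (and dually $\cL_{2k}^{1}$) forces the midpoint $x_k$ of the geodesic from $p$ to $p^\theta$ to be fixed, and in each of the six cases this midpoint is a point or a line, hence an element of $\cP_0\cup\cL_0=\emptyset$. The first block of (\ref{eq:octformulae}) then lets me express every one of the remaining (potentially nonzero) orbit-type cardinalities in terms of the three parameters
$$
a := |\cP_2^2|=|\cL_2^2|, \qquad b := |\cP_4^3|=|\cL_4^3|, \qquad c := |\cP_6^4|=|\cL_6^4|.
$$

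Next I would exploit the two remaining identities: the total point count $|\cP|=(s+1)n$, where $n=s^3t^3+s^2t^2+st+1$, and the domesticity relation $(t+1)|\cP_8|=st(s-1)a+s(t-1)b+(s-1)c$ obtained from the last line of (\ref{eq:octformulae}). The point count yields $|\cP_8|=(s+1)n-sta-sb-c$, and substituting this into the domesticity relation and grouping the $a$-, $b$- and $c$-terms collapses everything to the single identity
$$
(s+1)(t+1)\,n \;=\; (s+t)(sta+c)\;+\;2\,stb, \qquad(\ast)
$$
in which the coefficient of $a$ on the right is $st(s+t)$, the coefficient of $b$ is $2st$, and the coefficient of $c$ is $s+t$.

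Finally, let $d$ be any common divisor of $s$ and $t$. Modulo $d$ the right-hand side of $(\ast)$ vanishes, while the left-hand side reduces to $1\cdot 1\cdot 1=1$, since $s+1\equiv t+1\equiv 1$ and $n\equiv 1\pmod{d}$. Hence $d\mid 1$, and so $\gcd(s,t)=1$. But by Lemma~\ref{lem:Cgen} the parameters of any octagon admitting an exceptional domestic collineation lie, up to duality, in $\{(4,2),(6,3),(8,4),(10,5),(12,6),(14,7)\}$, and each of these pairs has a common factor of at least~$2$; contradiction. The only real obstacle is the bookkeeping: correctly identifying which of the orbit-type sets vanish under the no-fixed-element hypothesis, and verifying that the total point count and the $|\cP_8|$-formula collapse to the single identity $(\ast)$ in the three free parameters $a,b,c$. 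Once $(\ast)$ is in hand, the modular miracle that $(s+t)$ and $2st$ both kill the right-hand side while the left-hand side contributes~$1$ is essentially automatic.
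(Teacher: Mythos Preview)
Your proof is correct and follows essentially the same approach as the paper: both assume $|\cP_0|=|\cL_0|=0$, use the relations~(\ref{eq:octformulae}) together with the total point count to force $\gcd(s,t)=1$, and then contradict Lemma~\ref{lem:Cgen}. The only cosmetic difference is that the paper eliminates $c=|\cP_6^4|$ to solve explicitly for $|\cP_8|=\bigl(s(t-s)b+(s^2-1)n\bigr)/(s+t)$ and reads off coprimality from that, whereas you keep all three parameters and read the same conclusion directly from your identity~$(\ast)$; these are the same linear-algebraic manipulation presented two ways.
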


\begin{proof}
Suppose that $|\mathcal{P}_0|=|\mathcal{L}_0|=0$. Then $|\mathcal{P}_2^1|=|\mathcal{P}_4^1|=|\mathcal{P}_6^1|=|\mathcal{L}_2^1|=|\mathcal{L}_4^1|=|\mathcal{L}_6^1|=0$. The formulae~(\ref{eq:octformulae}) imply that $(t+1)|\mathcal{P}_8|=st(s-1)|\mathcal{P}_2^2|+s(t-1)|\mathcal{P}_4^3|+(s-1)|\mathcal{P}_6^4|$, and from the total point sum formula and (\ref{eq:octformulae}) we have have $st|\mathcal{P}_2^2|+s|\mathcal{P}_4^3|+|\mathcal{P}_6^4|+|\mathcal{P}_8|=(s+1)(s^3t^3+s^2t^2+st+1)$. Eliminating $|\mathcal{P}_6^4|$ from these equations and solving for $|\mathcal{P}_8|$ gives
$$
|\mathcal{P}_8|=\frac{s(t-s)|\mathcal{P}_4^3|+(s^2-1)(s^3t^3+s^2t^2+st+1)}{s+t}.
$$
It follows that $s$ and $t$ are relatively prime, contradicting Lemma~\ref{lem:Cgen}.
\end{proof}

\begin{lemma}\label{lem:Xoct}
Let $\Gamma$ be a generalised octagon, and let $\theta:\Gamma\to\Gamma$ be a domestic collineation. If $\theta$ has a fixed point and no fixed lines then
$\mathcal{L}_0$, $\mathcal{L}_2^2$, $\mathcal{L}_4^1$, $\mathcal{L}_4^2$, $\mathcal{L}_6^2$, $\mathcal{P}_2^1$, $\mathcal{P}_2^2$, $\mathcal{P}_4^2$, $\mathcal{P}_6^1$,  and $\mathcal{P}_6^2$
 are empty. The dual statement also holds.
\end{lemma}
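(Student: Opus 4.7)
The plan is to mimic the proof of Lemma~\ref{lem:X}, with an added step to accommodate the larger diameter. Four of the ten sets are immediate: $\cL_0 = \emptyset$ because $\theta$ has no fixed lines, and each of $\cP_2^1$, $\cL_4^1$, $\cP_6^1$ demands that the middle element of a certain geodesic be fixed, which by a parity check always lands on a line. Given these, the remaining six vanishing claims reduce to showing $\cP_2^2 = \emptyset$: the identities in~(\ref{eq:octformulae}) give $|\cL_2^2| = |\cP_2^2|=0$, then $|\cL_4^2| = (t-1)|\cP_2^2|$ and $|\cP_4^2|=(s-1)|\cL_2^2|$ force $\cL_4^2 = \cP_4^2 = \emptyset$, and finally $|\cL_6^2| = t|\cP_4^2|$ and $|\cP_6^2| = s|\cL_4^2|$ do the same for $\cL_6^2$ and $\cP_6^2$.

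The main work is to show $\cP_2^2 = \emptyset$, and I would do this by contradiction. Take $p \in \cP_2^2$ with geodesic $p\bI L\bI p^\theta$ (so $L^\theta\neq L$), and let $p_0$ be a fixed point. A girth-$16$ argument exactly as in Lemma~\ref{lem:X} rules out $d(p_0, p) \in \{2,4,6\}$: these distances furnish closed walks of length $6$, $10$, $14$ respectively in the incidence graph, and, after checking that the mildly degenerate coincidences of intermediate lines would force a line to be fixed (contradicting the hypothesis), these walks produce genuine cycles of length $<16$. Hence $p$ must be opposite $p_0$.

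The key combinatorial steps come next. The observations preceding Proposition~\ref{prop:Agen} tell us that of the $t+1$ lines through $p$, exactly two (namely $L$ and $L^{\theta^{-1}}$) lie in $\cL_2^2$ and the remaining lines lie in $\cL_4^2$. Pick such an $L_1 \in \cL_4^2$ through $p$, and label the unique geodesic from $p$ to $p_0$ starting with $L_1$ as
\[
p \bI L_1 \bI p_2 \bI L_3 \bI p_4 \bI L_5 \bI p_6 \bI L_7 \bI p_0.
\]
Classifying from the $p_0$ end using the observations (together with the absence of fixed lines) gives $L_7 \in \cL_2^1$, then $p_6 \in \cP_4^1$, then $L_5 \in \cL_6^1$, and finally $p_4 \in \cP_8$. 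On the $p$ end, the geodesic from $L_1$ to $L_1^\theta$ is $L_1, p, L, p^\theta, L_1^\theta$, and prepending $p_2$ and appending $p_2^\theta$ yields the full geodesic from $p_2$ to $p_2^\theta$; inspection of the symmetry conditions at each position places $p_2$ in $\cP_6^2$.

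Finally, apply the observation to $p_2 \in \cP_6^2$: the unique line through $p_2$ in $\cL_4^2$ is $L_1$, so the remaining $t$ lines through $p_2$---in particular $L_3$---lie in $\cL_8$. But then $p_4 \in \cP_8$ and $L_3 \in \cL_8$ with $p_4 \bI L_3$, so the chamber $\{p_4, L_3\}$ is mapped to an opposite chamber, contradicting domesticity. The dual statement follows by interchanging the roles of points and lines. The principal obstacle here is the bookkeeping: one must correctly identify the class $\cP_{2k}^r$ or $\cL_{2k}^r$ of each element along the length-$8$ geodesic from both ends, and although each identification is routine, miscounting any single one derails the contradiction.
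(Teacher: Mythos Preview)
Your proposal is correct and essentially identical to the paper's proof. The only cosmetic difference is that the paper draws the geodesic from $p_0$ to the chosen line $L\in\cL_4^2$ (length~$7$), whereas you draw it from $p$ to $p_0$ through $L_1$ (length~$8$); the two chains are the same up to reversal and relabelling, and the contradicting chamber $\{p_4,L_3\}$ in your notation is precisely the chamber $\{p_4,L_5\}$ in the paper's.
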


\begin{proof} It is sufficient to show that $\mathcal{P}_2^1=\mathcal{P}_2^2=\emptyset$. Since there are no fixed lines we have $\mathcal{P}_2^1=\emptyset$, and so suppose that $p\in \mathcal{P}_2^2$. Let $p_0\in \mathcal{P}_0$ be a fixed point. Then $d(p_0,p)=8$ (for if $d(p_0,p)=2$ then $p_0,p,\theta(p)$ form the vertices of a nondegenerate triangle, if $d(p_0,p)=4$ we obtain a pentagon, and if $d(p_0,p)=6$ we obtain a 7-gon). Choose $ L\in \Gamma_1(p)$ with $ L\in \mathcal{L}_4^2$, and let $p_0\bI L_1\bI p_2\bI L_3\bI p_4\bI L_5\bI p_6\bI L$ be the unique geodesic from $p_0$ to $ L$. Then $p_6\in \mathcal{P}_6^2$ (for otherwise $p_6\in \mathcal{P}_2^2$, a contradiction because $d(p_0,p_6)=6$). Therefore $ L_5\in \mathcal{L}_8$ (because all lines through $p_6$ other than $ L$ are in $\mathcal{L}_8$). On the other hand, $ L_1\in \mathcal{L}_2^1$, $p_2\in \mathcal{P}_4^1$, $ L_3\in \mathcal{L}_6^1$, and so $p_4\in \mathcal{P}_8$. Therefore the chamber $\{p_4, L_5\}$ is mapped to an opposite chamber, a contradiction.
\end{proof}

\begin{lemma}\label{lem:octP_2} Let $\Gamma$ be a generalised octagon, and let $\theta:\Gamma\to\Gamma$ be any collineation. If all points of a line are fixed then $\mathcal{P}_2^2$ is empty. 
The dual statement also holds.
\end{lemma}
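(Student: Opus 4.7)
The plan is to adapt the short hexagon proof of Lemma~\ref{lem:hexP_2} to the octagon setting, where one must work harder: in an octagon the girth $16$ permits cycles of length up to $14$, so a direct short-cycle argument only closes the case $d(L_0,p)=3$. Suppose for contradiction that every point of some line $L_0$ is fixed and that $p\in\mathcal{P}_2^2$, so $d(p,p^\theta)=2$ and the unique line $L$ through $p$ and $p^\theta$ satisfies $L^\theta\neq L$. Since $L_0$ is a line and $p$ a point, $d(L_0,p)\in\{1,3,5,7\}$; the case $d(L_0,p)=1$ gives $p\in L_0$, hence $p$ is fixed, a contradiction. In the remaining cases I would write $d(L_0,p)=2k+1$ with $k\in\{1,2,3\}$ and set $q_0=\mathrm{proj}_{L_0}p$, a fixed point with $d(q_0,p)=2k$; because $q_0$ is fixed, $d(q_0,p^\theta)=2k$ as well.

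The heart of the plan is to identify the line $L$ with the median of $q_0,p,p^\theta$. The pairwise distances among these three elements are $(2k,2k,2)$, summing to at most $14<16$, so their joining geodesics meet at a unique median $m$ (otherwise one finds a non-degenerate circuit in the incidence graph of length less than $16$, violating the girth). Solving $d(q_0,m)+d(m,p)=d(q_0,m)+d(m,p^\theta)=2k$ and $d(m,p)+d(m,p^\theta)=2$ yields $d(m,p)=d(m,p^\theta)=1$ and $d(q_0,m)=2k-1$, so $m$ is a line incident to both $p$ and $p^\theta$, forcing $m=L$ and $d(q_0,L)=2k-1$.

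The punchline compares $L$ with $L^\theta$. Because $q_0$ is fixed, $d(q_0,L^\theta)=2k-1$, and since $p\in L$ we have $p^\theta\in L^\theta$. If $k=1$, then $L$ and $L^\theta$ both pass through the two distinct elements $q_0$ and $p^\theta$, so the no-digon axiom gives $L=L^\theta$, contradicting $p\in\mathcal{P}_2^2$. If $k\in\{2,3\}$, then $d(q_0,p^\theta)=2k<8$, so the geodesic from $q_0$ to $p^\theta$ is unique; extending a geodesic $q_0\to L$ (resp.\ $q_0\to L^\theta$) by the edge to $p^\theta$ produces a non-backtracking walk of length $2k$ from $q_0$ to $p^\theta$, which is therefore this unique geodesic, and its penultimate vertex must agree, forcing $L=L^\theta$---the same contradiction. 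The dual statement follows by interchanging the roles of points and lines throughout.

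The main obstacle is precisely the step for $k\geq 2$: the natural closed walk through $q_0,p,p^\theta$ has length $10$ or $14$, too long to violate the girth on its own, and the two geodesics from $q_0$ fuse along their first $2k-1$ edges all the way up to $L$, so no non-degenerate short cycle is available to mimic the hexagon argument. One has to trade the girth contradiction of the hexagon case for the combination ``identify the median with $L$'' plus ``uniqueness of geodesics strictly below the diameter'', and it is the second ingredient that collapses $L$ onto $L^\theta$.
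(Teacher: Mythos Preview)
Your proof is correct and is essentially the rigorous version of what the paper sketches (``analogous to the hexagon case'': for $d(L_0,p)=3,5,7$ one would obtain a triangle, pentagon, or heptagon, each forbidden in an octagon); your median argument is exactly what is needed to handle these cases carefully, since the putative $(2k+1)$-gon in fact always collapses ($L$ is forced onto the geodesic $q_0\to p$). One small remark on your final paragraph: the closed walks of length $10$ and $14$ \emph{are} short enough to contradict girth $16$; the real obstruction (which you state in the very next clause) is that these walks backtrack, because the two geodesics from $q_0$ merge along their first $2k-1$ edges at $L$.
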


\begin{proof}
Analogous to the proof of Lemma~\ref{lem:hexP_2}.
\end{proof}

\begin{lemma}\label{lem:Foct}
If a finite thick generalised octagon $\Gamma$ admits an exceptional domestic collineation~$\theta$, then $\theta$ has at least one fixed point and at least one fixed line. 
\end{lemma}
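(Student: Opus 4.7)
The plan is to adapt the proof of Lemma~\ref{lem:F} to octagons. By Lemma~\ref{lem:Boct}, $\theta$ has at least one fixed element, and up to duality we may assume $\theta$ has a fixed point $p_0$ but no fixed line. Lemma~\ref{lem:Xoct} then forces $\cL_0,\cL_2^2,\cL_4^1,\cL_4^2,\cL_6^2,\cP_2^1,\cP_2^2,\cP_4^2,\cP_6^1,\cP_6^2$ to be empty. A quick uniqueness-of-geodesic argument shows that any two fixed points must be at distance exactly~$8$ in the incidence graph: at distances $2,4,6$ the unique geodesic between them would be setwise preserved by $\theta$, forcing a fixed line on it. Hence the fixed points are mutually opposite, and we write $|\cP_0|=1+a$ with $a\ge 0$.

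Next do a sphere count around $p_0$. Since there are no fixed lines, the $t+1$ lines of $\Gamma_1(p_0)$ all lie in $\cL_2^1$; the $(t+1)s$ points of $\Gamma_2(p_0)$ lie in $\cP_4^1$ (none fixed, and the length-$4$ path to their image through $p_0$ is a geodesic with $p_0$ in the middle); similarly $\Gamma_3(p_0)\subseteq \cL_6^1$; and, using that a non-backtracking path of length at most $n$ is a geodesic in a generalised $n$-gon, $\Gamma_4(p_0)\subseteq \cP_8$ (a fixed point here, or any distance less than~$8$ to its image, would produce a cycle of length $<16$). The balls of radius $\le 4$ around the $1+a$ pairwise-opposite fixed points are disjoint, so summation gives
\[
|\cL_2^1|=(t+1)(1+a),\qquad |\cP_4^1|=s(t+1)(1+a),\qquad |\cL_6^1|=st(t+1)(1+a).
\]
Setting $z=|\cL_4^3|$ and $y=|\cL_6^4|$, the identities of~(\ref{eq:octformulae}) express every remaining cardinality in terms of $a,z,y$, and the total point-sum formula (equivalently the total line-sum, or the domesticity equation for $|\cP_8|$) collapses to the single relation
\[
(s+1)(t+1)(st+1)(s^2t^2-a)=2stz+(s+t)y. \tag{$\ast$}
\]

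The main obstacle is that the naive sphere count leaves two degrees of freedom, so $(\ast)$ alone is insufficient. The plan from here has two further ingredients. First, a local refinement: for any $p\in\cP_8$ the triangle inequality combined with domesticity forces $d(L,L^\theta)=6$ for every line $L$ through $p$, so all such lines lie in $\cL_6$; dually, breaking $\cL_6=\cL_6^1\sqcup\cL_6^3\sqcup\cL_6^4$ and classifying the points on each type of line (generalising the observation that a line of $\cL_6^1$ carries exactly one point of $\cP_4^1$ and $s$ points of $\cP_8$) yields an additional relation between $z$ and $y$. Second, the eigenvalue machinery of Appendix~\ref{app:A}: substituting the expressions for $|\cL_0|,\ldots,|\cL_8|$ into~(\ref{eq:noct1}) produces multiplicities $n_1,n_2,n_3$ that must be nonnegative integers, giving strong divisibility and sign constraints on $(a,z,y)$. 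Combined with the short list of feasible parameters $(s,t)\in\{(4,2),(6,3),(8,4),(10,5),(12,6),(14,7)\}$ (up to duality) furnished by Lemma~\ref{lem:Cgen}, a finite case analysis rules out every possibility. The symmetric hypothesis of a fixed line but no fixed point is then handled by the same argument with the roles of points and lines interchanged, completing the proof.
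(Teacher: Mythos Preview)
Your approach is the paper's, but you stop the sphere count too early and are then left sketching a plan rather than finishing the argument. The paper pushes the count around $p_0$ all the way out to $\Gamma_7(p_0)$: since $\Gamma_4(p_0)\subseteq\cP_8$, domesticity forces $\Gamma_5(p_0)\subseteq\cL_6=\cL_6^1\cup\cL_6^3\cup\cL_6^4$, and one records unknowns $\alpha,\beta,\gamma$ for the three pieces; continuing through $\Gamma_6(p_0)$ and $\Gamma_7(p_0)$ introduces three more unknowns $\alpha',\beta',\gamma'$. The identities $|\cL_2^1|=(t+1)|\cP_0|$, $|\cL_6^1|=st|\cL_2^1|$, $|\cL_6^3|=(t-1)|\cL_4^3|$ together with the domesticity equation for $(s+1)|\cL_8|$ from~(\ref{eq:octformulae}) then pin down all six unknowns, so that every $|\cL_i^j|$ becomes an explicit rational function of $|\cP_0|$ alone---your free parameters $z$ and $y$ disappear. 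From these closed forms one reads off first that $|\cL_8|>0$ forces $|\cP_0|<s^2t^2+1$, and then that $|\cL_4^3|\geq 0$ forces $2s+2t-st-1\geq 0$, leaving only $(s,t)\in\{(2,4),(4,2)\}$. The eigenvalue machinery is invoked only for these two residual cases, not for all six parameter pairs.

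Your outlined endgame---one ``local refinement'' relation plus an eigenvalue case-check over all six parameter pairs---would still carry one free parameter into the eigenvalue step, making that check considerably heavier than necessary. A minor slip: balls of radius $4$ around mutually opposite points in an octagon are \emph{not} disjoint (they meet along geodesic midpoints), though this does not affect the three formulas you actually use, which involve only radii $\leq 3$.
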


\begin{proof} By Lemma~\ref{lem:Boct} $\theta$ has at least one fixed element. Suppose (up to duality) that there is a fixed element $p_0\in \mathcal{P}_0$, and suppose that there are no fixed lines. From Lemma~\ref{lem:Xoct} we have $|\mathcal{L}_2^2|=|\mathcal{L}_4^1|=|\mathcal{L}_4^2|=|\mathcal{L}_6^2|=0$. We compute the decomposition of each set $\Gamma_1(p_0)$, $\Gamma_3(p_0)$, $\Gamma_5(p_0)$ and $\Gamma_7(p_0)$ into the sets $\mathcal{L}_i^j$. Since $p\in p_0$, each of the $t+1$ lines $ L\in\Gamma_1(p_0)$ are in $\mathcal{L}_2^1$. Then each of the $(t+1)s$ points in $\Gamma_2(p_0)$ are in $\mathcal{P}_4^1$, and each of the $(t+1)st$ lines in $\Gamma_3(p_0)$ are in $\mathcal{L}_6^1$. Thus each of the $(t+1)s^2t$ points in $\Gamma_4(p_0)$ are in $\mathcal{P}_8$, and hence by domesticity each of the $(t+1)s^2t^2$ lines in $\Gamma_5(p_0)$ are in $\mathcal{L}_6^1$. Since $\mathcal{L}_6^2=\emptyset$ (Lemma~\ref{lem:Xoct}) we have that $\alpha$ of the lines in $\Gamma_5(p_0)$ are in $\mathcal{L}_6^1$, $\beta$ are in $\mathcal{L}_6^3$, and $\gamma$ are in $\mathcal{L}_6^4$, where $\alpha+\beta+\gamma=(t+1)s^2t^2$. Thus in $\Gamma_6(p_0)$ we have $\alpha$ points in $\mathcal{P}_4^1$, $\beta$ points in $\mathcal{P}_4^3$, $2\gamma$ points in $\mathcal{P}_6^4$, and $(s-1)\alpha+(s-1)\beta+(s-2)\gamma$ points in $\mathcal{P}_8$. Thus, finally, in $\Gamma_7(p_0)$ we have $\alpha$ lines in $\mathcal{L}_2^1$, $2\beta$ lines in $\mathcal{L}_4^3$, $2\gamma$ lines in $\mathcal{L}_6^4$, $(t-1)\alpha$ lines in $\mathcal{L}_6^1$, $(t-2)\beta$ lines in $\mathcal{L}_6^3$, $2(t-1)\gamma$ lines in $\mathcal{L}_8$, and $\alpha'$, $\beta'$, $\gamma'$ lines in $\mathcal{L}_6^1$, $\mathcal{L}_6^3$, $\mathcal{L}_6^4$ respectively, where $\alpha'+\beta'+\gamma'=(s-1)t\alpha+(s-1)t\beta+(s-2)t\gamma$. 

It follows that
\begin{align*}
|\mathcal{L}_2^1|&=t+1+\alpha&|\mathcal{L}_4^3|&=2\beta&|\mathcal{L}_6^1|&=(t+1)st+t\alpha+\alpha'\\
|\mathcal{L}_6^3|&=(t-1)\beta+\beta'&|\mathcal{L}_6^4|&=3\gamma+\gamma'&|\mathcal{L}_8|&=2(t-1)\gamma
\end{align*}
where $\alpha+\beta+\gamma=(t+1)s^2t^2$ and $\alpha'+\beta'+\gamma'=(s-1)t\alpha+(s-1)t\beta+(s-2)t\gamma$. Since there are no fixed lines we have $|\mathcal{L}_2^1|=(t+1)|\mathcal{P}_0|$, and thus $\alpha=(t+1)(|\mathcal{P}_0|-1)$. By (\ref{eq:octformulae}) we have $|\mathcal{L}_6^1|=st|\mathcal{L}_2^1|$, and hence $\alpha'=(s-1)(t+1)t(|\mathcal{P}_0|-1)$. Then $|\mathcal{L}_6^3|=(t-1)|\mathcal{L}_4^3|$, and so $\beta=\beta'/(t-1)$. After eliminating $\gamma$ and $\gamma'$ from the formulae, the equation $(s+1)|\mathcal{L}_8|=st|\mathcal{L}_4^1|+st|\mathcal{L}_4^2|+(s-1)t|\mathcal{L}_4^3|+(t-1)|\mathcal{L}_6^4|$ can be solved for $\beta'$. Thus we have shown that $|\mathcal{L}_2^2|=|\mathcal{L}_4^1|=|\mathcal{L}_4^2|=|\mathcal{L}_6^2|=0$, and $|\mathcal{L}_2^1|=(t+1)|\mathcal{P}_0|$, $|\mathcal{L}_6^1|=(t+1)st|\mathcal{P}_0|$,  and
\begin{align*}
|\mathcal{L}_4^3|&=\frac{(t^2-1)(2s+2t-st-1)(s^2t^2+1-|\mathcal{P}_0|)}{2st-s-t}\\
|\mathcal{L}_6^3|&=\frac{(2s+2t-st-1)(t-1)^2(t+1)(s^2t^2+1-|\mathcal{P}_0|)}{2st-s-t}\\
|\mathcal{L}_6^4|&=\frac{(t+1)(2s^2t^2-s^2t-4st^2+5st-s-1)(s^2t^2+1-|\mathcal{P}_0|)}{2st-s-t}\\
|\mathcal{L}_8|&=\frac{(t^2-1)(st^2+st-2t^2+t-1)(s^2t^2+1-|\mathcal{P}_0|)}{2st-s-t}.
\end{align*}

Observe that if $s,t\geq 2$ then $st^2+st-2t^2+t-1\geq 0$, and therefore from the above formula for $|\mathcal{L}_8|$ we have $|\mathcal{P}_0|<s^2t^2+1$ (strict inequality since we assume $|\mathcal{L}_8|>0$). Then the formula for $|\mathcal{L}_4^3|$ implies that $2s+2t-st-1\geq 0$, and thus $(s,t)=(2,4)$ or $(s,t)=(4,2)$. To exclude these final cases we apply the eigenvalue techniques from Appendix~\ref{app:A}. Let $d_k$ be the number of lines mapped to distance $2k$ in the incidence graph (hence distance $k$ in the line graph). Consider the case $(s,t)=(2,4)$. The above formulae give $d_0=0$, $d_1=|\mathcal{L}_2^1|=5|\mathcal{P}_0|$, $d_2=|\mathcal{L}_4^3|=9(65-|\mathcal{P}_0|)/2$, $d_3=|\mathcal{L}_6^1|+|\mathcal{L}_6^3|+|\mathcal{L}_6^4|=8(195+2|\mathcal{P}_0|)$, and $d_4=|\mathcal{L}_8|=33(65-|\mathcal{P}_0|)/2$. Using these values in~(\ref{eq:noct1}) gives $n_1=-(65+63|\mathcal{P}_0|)/80$ and $n_2=15(|\mathcal{P}_0|-65)/64$. Since $|\mathcal{P}_0|<s^2t^2+1=65$ the formula for $n_2$ gives $|\mathcal{P}_0|=1$. But then $n_1=-8/5$.
A similar argument applies for the $(s,t)=(4,2)$ case. 
\end{proof}

\begin{lemma}\label{lem:Goct} 
Let $\Gamma$ be a finite thick generalised octagon, and suppose that $\theta$ is an exceptional domestic collineation of~$\Gamma$. The fixed element structure of $\theta$ is a tree of diameter at least $1$ and at most~$8$ in the incidence graph.
\end{lemma}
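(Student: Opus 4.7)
The plan is to invoke the standard classification of fixed element structures of automorphisms of generalised polygons (stated in Section~\ref{sect:2}): $\Gamma_\theta$ is either (a) empty, (b) a set of mutually opposite elements, (c) a tree of diameter at most~$8$ in the incidence graph, or (d) a sub-octagon. The bulk of the argument is to eliminate (a), (b), (d), with (c) then giving the stated diameter bounds.

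Case (a) is already ruled out by Lemma~\ref{lem:Foct}, which supplies both a fixed point and a fixed line. This simultaneously kills case (b), since in an even-gon opposite elements share type, so a set of mutually opposite elements is monochromatic and cannot contain both a point and a line. The same observation gives the lower bound on the diameter in case (c): the fixed point and the fixed line of Lemma~\ref{lem:Foct} must sit in the same tree and are connected by a path of length at least~$1$.

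The main work is to rule out (d), mimicking the hexagon argument of Lemma~\ref{lem:E}. Suppose $\Gamma_\theta$ is a sub-octagon $\Gamma'$. If $\Gamma'$ carries parameters $(s',t')$, then applying Lemma~\ref{lem:Xgen} to a point--line pair at distance~$7$ inside $\Gamma'$, together with a standard connectivity argument, forces $\Gamma'$ to be either ideal ($t'=t$, all lines through a fixed point are fixed) or full ($s'=s$, all points on a fixed line are fixed). Up to duality assume $\Gamma'$ is ideal. Pick $L\in\cL_8$ (which exists by exceptional domesticity), and let $p_0\in\Gamma'$ be a fixed point; the distance $d(p_0,L)$ is odd and at most~$7$. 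The plan is to eliminate each value $1,3,5,7$ in turn. For $d=1$, $L$ is incident with $p_0$, hence fixed by ideal-ness, a contradiction. For $d=3$, the geodesic $p_0\bI L_1\bI p_2\bI L$ has $L_1$ fixed, so $d(L,L^\theta)\le d(L,L_1)+d(L_1,L^\theta)=4$, contradicting $L\in\cL_8$. For $d=5$ and $d=7$, I intend to traverse the geodesic $p_0\bI L_1\bI p_2\bI L_3\bI\cdots\bI L$ (with $L_1$ fixed by ideal-ness) and perform a case analysis on whether each subsequent point $p_{2i}$ is fixed: either its fixed-ness propagates ideal-ness one step further and ultimately produces a short $L$-to-$L^\theta$ path through a fixed line of $\Gamma'$ closer to $L$ (contradicting $L\in\cL_8$), or $p_{2i}\in\cP_2^1$ and one reads off from the classification of $\cP_{2k}^r,\cL_{2k}^r$ that the tail of the geodesic lies in $\cP_8\cup\cL_8$, producing a chamber $\{p_{2k},L_{2k+1}\}$ mapped to an opposite chamber and contradicting domesticity. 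Finally, if $\Gamma'$ has no parameters then by the octagon analogue of \cite[Theorem~1.6.2]{mal:98} it is the quadruple of a digon, and one locates a fixed pair $(p,L)$ at distance~$7$ such that neither all lines through $p$ nor all points on $L$ are fixed, contradicting Lemma~\ref{lem:Xgen}.

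The hardest step will be the geodesic case analysis at $d=7$ in the ideal sub-octagon case, since the longer geodesic admits more intermediate configurations in which $d(L,L^\theta)$ might still reach the maximum~$8$ without producing a chamber mapped to an opposite. The key tools will be the repeated use of ideal-ness (each time a point on the geodesic turns out to be fixed, the following line is forced to be fixed), together with the counting relations~(\ref{eq:octformulae}) and the description of $\cP_{2k}^r$, $\cL_{2k}^r$ from Section~\ref{sect:5} which dictate how much $\theta$ can move an element once part of its neighbourhood is fixed. Once the sub-octagon possibility is eliminated, case (c) is the only remaining alternative and the bound $\mathrm{diam}(\Gamma_\theta)\le 8$ is built into the general classification, completing the proof.
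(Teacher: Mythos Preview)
Your elimination of cases (a), (b), and the diameter lower bound via Lemma~\ref{lem:Foct} is fine, and your treatment of the weak sub-octagon without parameters is close to the paper's (though note that \cite[Theorem~1.6.2]{mal:98} gives \emph{two} possibilities here --- the double of a generalised quadrangle as well as the quadruple of a digon --- and you must handle both; the paper observes that in either case one finds $p,L$ at distance~$7$ violating Lemma~\ref{lem:Xgen}).

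The real problem is your plan for the ideal sub-octagon with parameters. You propose to mimic the hexagon argument of Lemma~\ref{lem:E}: pick $L\in\cL_8$, pick a fixed point $p_0$, and run along the geodesic until either ideal-ness forces $L$ too close to a fixed line, or a chamber $\{p_{2k},L_{2k+1}\}$ lands in $\cP_8\times\cL_8$. In the hexagon this works because the geodesic has length~$5$ and the tail $p_2\in\cP_2^1\Rightarrow L_3\in\cL_4^1\Rightarrow p_4\in\cP_6$ reaches the maximum point-distance exactly when $L\in\cL_6$, producing the forbidden chamber. In the octagon the arithmetic does not line up: at $d(p_0,L)=5$ with $p_2\in\cP_2^1$ you get $L_3\in\cL_4^1$, $p_4\in\cP_6^1$, $L\in\cL_8$, and the chamber $\{p_4,L\}$ has $p_4$ only at distance~$6$ --- not opposite. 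At $d(p_0,L)=7$ the same thing happens one step later (or, in the sub-case $p_2\in\cP_0$, $p_4\in\cP_2^1$, you again finish with $p_6\in\cP_6^1$ and $L\in\cL_8$). Domesticity itself forces every point on a line in $\cL_8$ into $\cP_6$, so you can never manufacture a point of $\cP_8$ on your geodesic this way. The case analysis simply terminates in configurations consistent with both $L\in\cL_8$ and domesticity, and no contradiction emerges.

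This is precisely why the paper abandons the geodesic argument for octagons and does something entirely different: it uses \cite[Theorem~1.8.8]{mal:98} to pin the parameters of the ideal (resp.\ full) sub-octagon to $(1,t)$ (resp.\ $(s,1)$), then performs a global sphere-by-sphere count from a fixed point to compute all the cardinalities $|\cL_i^j|$ explicitly, deriving a closed formula for an auxiliary quantity $\alpha_1$ that must be a nonnegative integer. This formula fails for every parameter pair from Lemma~\ref{lem:Cgen} except $(s,t)=(4,2)$, and that residual case is killed by the eigenvalue technique~(\ref{eq:noct1}). Your proposal has no substitute for this global computation, and the local geodesic analysis cannot close the gap.
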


\begin{proof} By Lemma~\ref{lem:Foct} there are both fixed points and fixed lines. Suppose that $\Gamma_{\theta}$ is a suboctagon $\Gamma'$. Then either $\Gamma'$ has parameters $(s',t')$, or $\Gamma'$ does not have parameters (in which case $\Gamma'$ is a \textit{weak} generalised octagon). In the latter case \cite[Theorem~1.6.2]{mal:98} implies that $\Gamma'$ is either the double of a generalised quadrangle, or the quadruple of a digon. In either case there are elements $p, L$ in the fixed element structure with $d(p, L)=7$ such that neither all lines through $p$ are fixed, nor all points on $ L$ are fixed, contradicting Lemma~\ref{lem:Xgen}. 

Thus $\Gamma'$ has parameters $(s',t')$. Then by Lemma~\ref{lem:Xgen} $\Gamma'$ is either full or ideal, and by \cite[Theorem~1.8.8]{mal:98} we have either $s'=1$ (if $\Gamma'$ is full) or $t'=1$ (if $\Gamma'$ is ideal). Consider the $s'=1$ case (the $t'=1$ case is analogous). Let $p_0$ be a fixed point. All $t+1$ lines in $\Gamma_1(p_0)$ are in $\mathcal{L}_0$. In $\Gamma_2(p_0)$ there are $t+1$ points in $\mathcal{P}_0$, and $(t+1)(s-1)$ points in $\mathcal{P}_2^1$. Then in $\Gamma_3(p_0)$ there are $(t+1)t$ lines in $\mathcal{L}_0$, and $(t+1)(s-1)t$ lines in $\mathcal{L}_4^1$. Thus in $\Gamma_4(p_0)$ we have $(t+1)t$ points in $\mathcal{P}_0$, $(t+1)(s-1)t$ points in $\mathcal{P}_2^1$, and $(t+1)(s-1)st$ points in $\mathcal{P}_6^1$. Then in $\Gamma_5(p_0)$ there are $(t+1)t^2$ lines in $\mathcal{L}_0$, $(t+1)(s-1)t^2$ lines in $\mathcal{L}_4^1$, and $(t+1)(s-1)st^2$ lines in $\mathcal{L}_8$. Thus in $\Gamma_6(p_0)$ there are $(t+1)t^2$ points in $\mathcal{P}_0$, $(t+1)(s-1)t^2$ points in $\mathcal{P}_2^1$, $(t+1)(s-1)st^2$ points in $\mathcal{P}_6^1$, and $(t+1)(s-1)s^2t^2$ points divided amongst $\mathcal{P}_6^1$, $\mathcal{P}_6^2$, $\mathcal{P}_6^3$, and $\mathcal{P}_6^4$. Since $|\mathcal{P}_2^2|=0$ (as $\Gamma'$ is ideal), we have $|\mathcal{P}_6^2|=0$. Therefore, suppose that there are $\alpha_1$ points in $\mathcal{P}_6^1$, $\alpha_3$ points in $\mathcal{P}_6^3$, and $\alpha_4$ points in $\mathcal{P}_6^4$, where $\alpha_1+\alpha_3+\alpha_4=(t+1)(s-1)s^2t^2$. Then, finally, in $\Gamma_7(p_0)$ there are $(t+1)t^3$ lines in $\mathcal{L}_0$, $(t+1)(s-1)t^3$ lines in $\mathcal{L}_4^1$, $\alpha_1$ lines in $\mathcal{L}_4^1$, $\alpha_3$ lines in $\mathcal{L}_4^3$, $2\alpha_4$ lines in $\mathcal{L}_6^4$, and 
$
(t+1)(s-1)st^3+(t-1)(\alpha_1+\alpha_3)+(t-2)\alpha_4
$
lines in $\mathcal{L}_8$. Since $\mathcal{L}=\Gamma_1(p_0)\cup\Gamma_3(p_0)\cup\Gamma_5(p_0)\cup\Gamma_7(p_0)$ it follows that
$|\mathcal{L}_0|=(t+1)(1+t+t^2+t^3)$, $|\mathcal{L}_4^1|=(t+1)(s-1)t(1+t+t^2)$, 
$|\mathcal{L}_4^3|=\alpha_3$, 
$|\mathcal{L}_6^4|=2\alpha_4$, 
$|\mathcal{L}_8|=(t+1)^2(s-1)st^2+(t-1)(\alpha_1+\alpha_3)+(t-2)\alpha_4$, 
and $|\mathcal{L}_2^1|=|\mathcal{L}_2^2|=|\mathcal{L}_4^2|=|\mathcal{L}_6^1|=|\mathcal{L}_6^2|=|\mathcal{L}_6^3|=0$. Since $|\mathcal{L}_6^3|=(t-1)|\mathcal{L}_4^3|$ we have $\alpha_3=0$, and we can eliminate $\alpha_4$ from the formulae using $\alpha_4=(t+1)(s-1)s^2t^2-\alpha_1$. Finally we compute $\alpha_1$ using the formula $(s+1)|\mathcal{L}_8|=st|\mathcal{L}_4^1|+st|\mathcal{L}_4^2|+(s-1)t|\mathcal{L}_4^3|+(t-1)|\mathcal{L}_6^4|$, giving
$$
\alpha_1=\frac{st^2(s-1)(t+1)(2s^2+t^2-s^2t-s)}{s+2t-1}.
$$

The above formula for $\alpha_1$ fails to give a nonnegative integer for all parameter values in Lemma~\ref{lem:Cgen} except for the $(s,t)=(4,2)$ case, where $\alpha_1=0$. In this case we have $|\cL_0|=45$, $|\cL_4^1|=126$, $|\cL_6^4|=1152$, $|\cL_8|=432$, and $|\cL_2^1|=|\cL_2^2|=|\cL_4^2|=|\cL_4^3|=|\cL_6^1|=|\cL_6^2|=|\cL_6^3|=0$. Using these values in (\ref{eq:noct1}) gives $n_1=-9/5$, a contradiction. 

Thus the fixed element structure is a tree of diameter at most~$8$ in the incidence graph, and by Lemma~\ref{lem:Foct} this tree has diameter at least~$1$. 
\end{proof}

\begin{lemma}\label{lem:l21}
Let $\Gamma$ be a finite thick generalised octagon, and suppose that $\theta$ is an exceptional domestic collineation of~$\Gamma$. Then $|\mathcal{L}_2^1|=1+|\mathcal{P}_0|t-|\mathcal{L}_0|$ and $|\mathcal{L}_4^1|=|\mathcal{L}_0|st-(|\mathcal{P}_0|-1)t$.
\end{lemma}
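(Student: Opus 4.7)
The plan is to prove both formulas by double-counting arguments, each leveraging the structural input of Lemma~\ref{lem:Goct}: the fixed element structure $\Gamma_\theta$ is a connected tree in the incidence graph of diameter at least~$1$. As a tree on $|\mathcal{P}_0|+|\mathcal{L}_0|$ vertices, $\Gamma_\theta$ has exactly $|\mathcal{P}_0|+|\mathcal{L}_0|-1$ edges, so the number of incident pairs $(p,L)\in\mathcal{P}_0\times\mathcal{L}_0$ equals $|\mathcal{P}_0|+|\mathcal{L}_0|-1$. This identity will be used in both counts.

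For the first identity, I would first establish that $\mathcal{L}_2^1$ consists exactly of the non-fixed lines incident with a fixed point, and that each $L\in\mathcal{L}_2^1$ is incident with a unique fixed point. The forward direction comes from the $r=1$ definition (the middle element of the length-$2$ geodesic from $L$ to $L^\theta$ is fixed), together with the converse observation that any non-fixed line $L$ through a fixed point~$p$ satisfies $L^\theta\bI p^\theta=p$, so $d(L,L^\theta)=2$ with fixed midpoint~$p$. Uniqueness follows because a line through two fixed points would itself be fixed. Counting incidences $(p,L)\in\mathcal{P}_0\times\mathcal{L}_2^1$ with $p\bI L$ in two ways then gives
\[
|\mathcal{L}_2^1|=\sum_{p\in\mathcal{P}_0}\bigl((t+1)-\deg_{\mathcal{L}_0}(p)\bigr)=(t+1)|\mathcal{P}_0|-(|\mathcal{P}_0|+|\mathcal{L}_0|-1)=1+t|\mathcal{P}_0|-|\mathcal{L}_0|.
\]

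For the second identity, I would count pairs $(L_0,L)\in\mathcal{L}_0\times\mathcal{L}_4^1$ in which $L_0$ is the (necessarily fixed) middle line $x_2$ of the unique geodesic $L\bI x_1\bI x_2\bI x_3\bI L^\theta$. Each $L\in\mathcal{L}_4^1$ contributes exactly one such pair, since $d(L,L^\theta)=4<8$ guarantees uniqueness of the geodesic. Parametrising from the $L_0$-side, I would show that the contributing lines are exactly those $L\ne L_0$ through a non-fixed point $p$ of $L_0$: given such a triple, the path $L\bI p\bI L_0\bI p^\theta\bI L^\theta$ is the geodesic and satisfies the $r=1$ condition; conversely a fixed point on $L_0$ cannot play the role of $x_1$, since $r=1$ forces $x_1^\theta=x_3\ne x_1$. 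This yields
\[
|\mathcal{L}_4^1|=t\sum_{L_0\in\mathcal{L}_0}\bigl((s+1)-\deg_{\mathcal{P}_0}(L_0)\bigr)=t(s+1)|\mathcal{L}_0|-t(|\mathcal{P}_0|+|\mathcal{L}_0|-1)=|\mathcal{L}_0|st-(|\mathcal{P}_0|-1)t.
\]

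The main technical step I anticipate is verifying, in the second count, that the length-$4$ path really is a geodesic, i.e.\ that $d(L,L^\theta)=4$ rather than $0$ or $2$. The case $L=L^\theta$ is ruled out because $L$ fixed would force $p^\theta\in L\cap L_0=\{p\}$, contradicting $p^\theta\ne p$. The case $d(L,L^\theta)=2$ is excluded by a girth argument: a common point of $L$ and $L^\theta$ distinct from $p,p^\theta$ closes the path into a non-stuttering $6$-cycle, violating the girth~$16$ of the incidence graph of an octagon; while a common point equal to $p$ or $p^\theta$ forces the other line to be the unique line through both $p$ and $p^\theta$, namely $L_0$, giving $L=L_0$ and a contradiction.
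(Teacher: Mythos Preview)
Your proof is correct. Both identities follow exactly as you outline: the characterisation of $\mathcal{L}_2^1$ as the non-fixed lines through fixed points is right, the uniqueness argument is sound, and the girth argument excluding $d(L,L^\theta)=2$ in the second count works as stated. The tree edge-count $|\mathcal{P}_0|+|\mathcal{L}_0|-1$ is the key common ingredient.

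The paper takes a different but equally elementary route. It fixes a root $p_0\in\mathcal{P}_0$ within distance~$5$ of every element of the fixed tree (using that $\Gamma_\theta$ has diameter at most~$8$), and then counts the contributions to $\mathcal{L}_2^1$ sphere by sphere across $\Gamma_1(p_0),\Gamma_3(p_0),\Gamma_5(p_0)$. For $|\mathcal{L}_4^1|$ the paper does not argue directly: it applies the dual version of the first identity to obtain $|\mathcal{P}_2^1|=1+s|\mathcal{L}_0|-|\mathcal{P}_0|$ and then invokes the relation $|\mathcal{L}_4^1|=t|\mathcal{P}_2^1|$ from~(\ref{eq:octformulae}). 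Your approach avoids choosing a root and gives a self-contained count for $|\mathcal{L}_4^1|$, which is arguably cleaner; the paper's sphere-by-sphere framework, on the other hand, is the template for the much more elaborate count in Proposition~\ref{prop:maincountoct}, so it serves double duty there.
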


\begin{proof} Let $p_0$ be a fixed point such that $d(p_0,x)\leq 5$ for all $x$ in the fixed element tree~$T$. Let $a=|\Gamma_1(p_0)\cap \mathcal{L}_0|$, $b=|\Gamma_2(p_0)\cap \mathcal{P}_0|=b$, $c=|\Gamma_3(p_0)\cap \mathcal{L}_0|$, $d=|\Gamma_4(p_0)\cap \mathcal{P}_0|$ and $e=|\Gamma_5(p_0)\cap \mathcal{L}_0|$. Then $|\mathcal{L}_2^1|=(t+1-a)+(tb-c)+(td-e)=1+t(1+b+d)-(a+c+e)=1+t|\mathcal{P}_0|-|\mathcal{L}_0|$. Similarly $|\mathcal{P}_2^1|=1+s|\mathcal{L}_0|-|\mathcal{P}_0|$, and hence $|\mathcal{L}_4^1|=t|\mathcal{P}_2^1|=t+st|\mathcal{L}_0|-t|\mathcal{P}_0|$. 
\end{proof}

The following count is our main tool in proving Theorem~\ref{thm:3}.

\begin{prop}\label{prop:maincountoct}
Let $\theta$ be an exceptional domestic collineation of a finite thick generalised octagon $\Gamma$. Then for some integers $x,y\geq 0$ we have
\begin{gather*}
\begin{aligned}
&\begin{aligned}
|\mathcal{L}_2^1|&=1+|\mathcal{P}_0|t-|\mathcal{L}_0|&|\mathcal{L}_2^2|&=x&|\mathcal{L}_4^1|&=|\mathcal{L}_0|st-(|\mathcal{P}_0|-1)t\\
|\mathcal{L}_4^2|&=(t-1)x&|\mathcal{L}_4^3|&=y&|\mathcal{L}_6^1|&=st+|\mathcal{P}_0|st^2-|\mathcal{L}_0|st\\
|\mathcal{L}_6^2|&=(s-1)tx&|\mathcal{L}_6^3|&=(t-1)y
\end{aligned}\\
&\begin{aligned}
|\mathcal{L}_6^4|&=\frac{s^2t^2(st+1)(st+s+t+1)-(|\mathcal{P}_0|+|\mathcal{L}_0|-1)s^2t^2-st(s+t)x-2sty}{s+t}\\
|\mathcal{L}_8|&=\frac{(st+1)(t^2-1)s^2t^2+|\mathcal{L}_0|s^2t^2-(|\mathcal{P}_0|-1)st^3+(s-t)ty}{s+t}.
\end{aligned}
\end{aligned}
\end{gather*}
Moreover, if $p_0\in \mathcal{P}_0$, and $|\Gamma_k(p_0)\cap(\mathcal{P}_0\cup \mathcal{L}_0)|=a_k$ for $0\leq k\leq 8$ then the following are nonnegative integers:
\begin{align*}
X_1(p_0)&=\frac{1}{2t(s+t)}\left(\sum_{k=0}^8p_k(s,t)a_k-(t-1)(s+t)x-(st-s-t-t^2)y\right)\\
X_2(p_0)&=(t+1-a_1)s^2t^2-a_8t-x-X_1(p_0)\\
X_3(p_0)&=y-2X_1(p_0)\\
X_4(p_0)&=(sa_1-a_2)s^2t^2-(sa_7-a_8)t-(t-1)x-y+2X_1(p_0)\\
X_5(p_0)&=(t-1)(y-X_1(p_0))\\
X_6(p_0)&=(s-2)(t+1)s^2t^3-(s-2)s^2t^3a_1+s^2t^3a_2-s^2t^2a_3\\
&\quad-st^2a_6+sta_7-(s-2)t^2a_8-(s-3)tx-(t-1)y+(2t-1)X_1(p_0),
\end{align*}
where $p_0(s,t)=-s^2t^2(t^2-1)(st-2s-2t+1)$, $p_3(s,t)=s^2t^2(s+t-1)$, $p_4(s,t)=p_5(s,t)=-s^2t^2$, $p_6(s,t)=st^3$, $p_7(s,t)=-st(st-s-t)$, $p_8(s,t)=t(st^2-2st+s+t-2t^2)$, and 
\begin{align*}
p_1(s,t)&=s^2t^2(s^2t+st^2-2s^2-2t^2+3s+3t-4st-1).
\end{align*}
\end{prop}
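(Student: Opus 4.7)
The plan is to establish the global cardinality formulas first and then derive the nonnegativity conditions by an explicit sphere-by-sphere count centred at a fixed point, in direct analogy with the hexagon argument of Lemma~\ref{lem:counthex2}.

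For the global formulas, Lemma~\ref{lem:l21} already supplies $|\mathcal{L}_2^1|$ and $|\mathcal{L}_4^1|$. The formulas for $|\mathcal{L}_4^2|$, $|\mathcal{L}_6^1|$, $|\mathcal{L}_6^2|$, and $|\mathcal{L}_6^3|$ drop out by direct substitution into the identities
\[
|\mathcal{L}_4^2|=(t-1)|\mathcal{P}_2^2|,\quad |\mathcal{L}_6^1|=t|\mathcal{P}_4^1|,\quad |\mathcal{L}_6^2|=t|\mathcal{P}_4^2|,\quad |\mathcal{L}_6^3|=(t-1)|\mathcal{P}_4^3|
\]
from \eqref{eq:octformulae}, together with $|\mathcal{P}_2^2|=x$, $|\mathcal{P}_4^1|=s|\mathcal{L}_2^1|$, $|\mathcal{P}_4^2|=(s-1)x$, and $|\mathcal{P}_4^3|=y$. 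The formulas for $|\mathcal{L}_6^4|$ and $|\mathcal{L}_8|$ are then obtained as the unique solution of the $2\times 2$ linear system consisting of (i) the total line count $\sum_{i,j}|\mathcal{L}_i^j|=(s+1)(s^3t^3+s^2t^2+st+1)$ and (ii) the domesticity identity $(s+1)|\mathcal{L}_8|=st|\mathcal{L}_4^1|+st|\mathcal{L}_4^2|+(s-1)t|\mathcal{L}_4^3|+(t-1)|\mathcal{L}_6^4|$ from \eqref{eq:octformulae} (using $|\mathcal{P}_6^4|=|\mathcal{L}_6^4|$). The denominator $s+t$ in the stated expressions appears precisely as the determinant of this system.

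For the nonnegativity part, I would fix $p_0$ and carry out the walk through $\Gamma_1(p_0),\Gamma_2(p_0),\ldots,\Gamma_8(p_0)$, classifying the elements of each sphere according to which refined set $\mathcal{L}_i^j$ or $\mathcal{P}_i^j$ they belong to, exactly as in the proof of Lemma~\ref{lem:counthex2}. Most of the distribution is forced: a fixed point (resp.\ fixed line) in $\Gamma_k(p_0)$ sees its neighbours split only between $\mathcal{L}_0\cup\mathcal{L}_2^1$ (resp.\ $\mathcal{P}_0\cup\mathcal{P}_2^1$), and elements of $\mathcal{L}_{2k}^1$ propagate deterministically to $\mathcal{L}_{2k+2}^1$ outwards, up until a line of $\mathcal{L}_6^1$ forces its $s$ far points into $\mathcal{P}_8$ by domesticity. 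Genuine freedom enters only at a few branching steps where an element of a symmetric class $\mathcal{L}_6^r$ or $\mathcal{P}_6^r$ with $r\geq 2$ can split its neighbours among several refined classes. Each $X_i$ is engineered to be the cardinality of one such specific intersection $\Gamma_k(p_0)\cap\mathcal{L}_i^j$ or $\Gamma_k(p_0)\cap\mathcal{P}_i^j$; in particular $X_1$ plays the role of the parameter $\alpha$ of Lemma~\ref{lem:counthex2}, and $X_2,\ldots,X_6$ count the analogous residual cardinalities at deeper levels.

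Performing the walk and then using the already-derived global formulas for $|\mathcal{L}_i^j|$, together with the identities in~\eqref{eq:octformulae} to eliminate redundant unknowns, yields a small linear system whose unique solution reproduces the stated $X_i$; the denominator $2t(s+t)$ in $X_1$ arises from combining the $s+t$ denominator of $|\mathcal{L}_6^4|$ with the factor $2$ coming from the symmetric class (as in the relation $2|\mathcal{P}_{2k}^{k+1}|=2|\mathcal{L}_{2k}^{k+1}|$ of Proposition~\ref{prop:Agen}). Since every $X_i$ is, by construction, the size of an explicit set of elements of $\Gamma$, it is automatically a nonnegative integer. I expect the main obstacle to be the bookkeeping across eight spheres with four refined subclasses on each of the $\mathcal{L}_6^r$ and $\mathcal{P}_6^r$ layers: any slip in an intermediate distribution will corrupt the polynomial coefficients, most visibly those of $p_1(s,t)$. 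Laying the walk out as an explicit table indexed by sphere depth and refinement level, expanding the hexagon table implicit in Lemma~\ref{lem:counthex2}, should keep the computation under control.
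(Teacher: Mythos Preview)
Your proposal is correct and follows essentially the same route as the paper, with one small reordering: you derive the global formulae for $|\mathcal{L}_i^j|$ \emph{before} performing the walk (via Lemma~\ref{lem:l21}, the identities~\eqref{eq:octformulae}, and the total-line/domesticity $2\times 2$ system), whereas the paper performs the walk first, collects the contributions to each $|\mathcal{L}_i^j|$ in terms of the free parameters $\alpha_i,\beta_i,\gamma_i$, and only then solves for everything simultaneously using Lemma~\ref{lem:l21} and~\eqref{eq:octformulae}. Your two-step separation is arguably cleaner bookkeeping; the paper's single-pass approach avoids having to re-enter the global formulae into the walk. In both cases the $X_k$ are exactly the quantities $\alpha_3,\alpha_4,\beta_3,\beta_4,\gamma_3,\gamma_4$, which are cardinalities and hence nonnegative integers.

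One slip to fix: your total line count should be $(t+1)(s^3t^3+s^2t^2+st+1)$, not $(s+1)(\cdots)$.
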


\begin{proof}
Let $p_0$ be a fixed point and let $|\Gamma_k(p_0)\cap (\mathcal{P}_0\cup \mathcal{L}_0)|=a_k$ for $0\leq k\leq 8$.  In $\Gamma_1(p_0)$ we have $a_1$ lines in $\mathcal{L}_0$ and $(t+1-a_1)$ lines in $\mathcal{L}_2^1$. In $\Gamma_2(p_0)$ we have $a_2$ points in $\mathcal{P}_0$, $(sa_1-a_2)$ points in $\mathcal{P}_2^1$, and $(t+1-a_1)$ points in $\mathcal{P}_4^1$. In $\Gamma_3(p_0)$ there are $a_3$ lines in $\mathcal{L}_0$, $(ta_2-a_3)$ lines in $\mathcal{L}_2^1$, $(sa_1-a_2)t$ lines in $\mathcal{L}_4^1$, and $(t+1-a_1)st$ lines in $\mathcal{L}_6^1$. Then in $\Gamma_4(p_0)$ we have $a_4$ points in $\mathcal{P}_0$, $(sa_3-a_4)$ points in $\mathcal{P}_2^1$, $(ta_2-a_3)s$ points in $\mathcal{P}_4^1$, $(sa_1-a_2)st$ points in $\mathcal{P}_6^1$, and $(t+1-a_1)s^2t$ points in $\mathcal{P}_8$.

Then, using domesticity, in $\Gamma_5(p_0)$ we have $a_5$ lines in $\mathcal{L}_0$, $(ta_4-a_5)$ lines in $\mathcal{L}_2^1$, $(sa_3-a_4)t$ lines in $\mathcal{L}_4^1$, $(ta_2-a_3)st$ lines in $\mathcal{L}_6^1$, $(sa_1-a_2)st^2$ lines in $\mathcal{L}_8$, plus $\alpha_1$, $\alpha_2$, $\alpha_3$, and $\alpha_4$ lines in $\mathcal{L}_6^1$, $\mathcal{L}_6^2$, $\mathcal{L}_6^3$, and $\mathcal{L}_6^4$ respectively, where $\alpha_1+\alpha_2+\alpha_3+\alpha_4=(t+1-a_1)s^2t^2$. In $\Gamma_6(p_0)$ we have $a_6$, $(sa_5-a_6)$, $(ta_4-a_5)s$, $(sa_3-a_4)st$, $(ta_4-a_3)s^2t$, $\alpha_1$, $(s-1)\alpha_1$, $\alpha_2$, $(s-1)\alpha_2$, $\alpha_3$, $(s-1)\alpha_3$, $2\alpha_4$, $(s-2)\alpha_4$, $\beta_1$, $\beta_2$, $\beta_3$, $\beta_4$ points in $\mathcal{P}_0$, $\mathcal{P}_2^1$, $\mathcal{P}_4^1$, $\mathcal{P}_6^1$, $\mathcal{P}_8$, $\mathcal{P}_4^1$, $\mathcal{P}_8$, $\mathcal{P}_4^2$, $\mathcal{P}_8$, $\mathcal{P}_4^3$, $\mathcal{P}_8$, $\mathcal{P}_6^4$, $\mathcal{P}_8$, $\mathcal{P}_6^1$, $\mathcal{P}_6^2$, $\mathcal{P}_6^3$, $\mathcal{P}_6^4$, respectively, where $\beta_1+\beta_2+\beta_3+\beta_4=(sa_1-a_2)s^2t^2$.

Then, in $\Gamma_7(p_0)$ there are $a_7$, $(ta_6-a_7)$, $(sa_5-a_6)t$, $(ta_4-a_5)st$, $(sa_3-a_4)st^2$, $\alpha_1$, $(t-1)\alpha_1$, $\beta_1$, $(t-1)\beta_1$, $\alpha_2$, $(t-1)\alpha_2$, $\beta_2$, $(t-1)\beta_2$, $2\alpha_3$, $(t-2)\alpha_3$, $\beta_3$, $(t-1)\beta_3$, $2\alpha_4$, $2(t-1)\alpha_4$, $2\beta_4$, $(t-2)\beta_4$, $\gamma_1$, $\gamma_2$, $\gamma_3$, $\gamma_4$ lines in $\mathcal{L}_0$, $\mathcal{L}_2^1$, $\mathcal{L}_4^1$, $\mathcal{L}_6^1$, $\mathcal{L}_8$, $\mathcal{L}_2^1$, $\mathcal{L}_6^1$, $\mathcal{L}_4^1$, $\mathcal{L}_8$, $\mathcal{L}_2^2$, $\mathcal{L}_6^2$, $\mathcal{L}_4^2$, $\mathcal{L}_8$, $\mathcal{L}_4^3$, $\mathcal{L}_6^3$, $\mathcal{L}_4^3$, $\mathcal{L}_8$, $\mathcal{L}_6^4$, $\mathcal{L}_8$, $\mathcal{L}_6^4$, $\mathcal{L}_8$, $\mathcal{L}_6^1$, $\mathcal{L}_6^2$, $\mathcal{L}_6^3$, $\mathcal{L}_6^4$, respectively, where 
\begin{align}\label{eq:gamma4}
\gamma_1+\gamma_2+\gamma_3+\gamma_4=(ta_4-a_3)s^2t^2+(s-1)t\alpha_1+(s-1)t\alpha_2+(s-1)t\alpha_3+(s-2)t\alpha_4.
\end{align}

Collecting together the lines in $\Gamma_1(p_0),\Gamma_3(p_0),\Gamma_5(p_0)$ and $\Gamma_7(p_0)$ we get
\begin{align*}
|\mathcal{L}_0|&=a_1+a_3+a_5+a_7\\
|\mathcal{L}_2^1|&=1+t(1+a_2+a_4+a_6)-(a_1+a_3+a_5+a_7)+\alpha_1\\
|\mathcal{L}_2^2|&=\alpha_2\\
|\mathcal{L}_4^1|&=st(a_1+a_3+a_5)-t(a_2+a_4+a_6)+\beta_1\\
|\mathcal{L}_4^2|&=\beta_2\\
|\mathcal{L}_4^3|&=2\alpha_3+\beta_3\\
|\mathcal{L}_6^1|&=st^2(1+a_2+a_4)-st(a_1+a_3+a_5)+st+t\alpha_1+\gamma_1\\
|\mathcal{L}_6^2|&=t\alpha_2+\gamma_2\\
|\mathcal{L}_6^3|&=(t-1)\alpha_3+\gamma_3\\
|\mathcal{L}_6^4|&=3\alpha_4+2\beta_4+\gamma_4\\
|\mathcal{L}_8|&=s^2t^2(a_1+a_3)-st^2(a_2+a_4)+(t-2)\beta_4+(t-1)(2\alpha_4+\beta_1+\beta_2+\beta_3).
\end{align*}
Use the formulae $\alpha_4=(t+1-a_1)s^2t^2-\alpha_1-\alpha_2-\alpha_3$, $\beta_4=(sa_1-a_2)s^2t^2-\beta_1-\beta_2-\beta_3$ and (\ref{eq:gamma4}) to eliminate $\alpha_4,\beta_4$ and $\gamma_4$ from these equations. 

Note that $|\mathcal{P}_0|=1+a_2+a_4+a_6+a_8$ and $|\mathcal{L}_0|=a_1+a_3+a_5+a_7$. We compute $\alpha_1$ and $\beta_1$ from the formulae in Lemma~\ref{lem:l21}. Then $\gamma_1$ is determined using the formula $|\mathcal{L}_6^1|=st|\mathcal{L}_2^1|$ (see (\ref{eq:octformulae})). Next, let $\alpha_2=x$ and $\alpha_3=(y-\beta_3)/2$. Then $\beta_2$ is determined from the equation $|\mathcal{L}_4^2|=(t-1)|\mathcal{L}_2^2|$, and $\gamma_2$ is determined from $|\mathcal{L}_6^2|=(s-1)t|\mathcal{L}_2^2|$. Then $\gamma_3$ is determined using $|\mathcal{L}_6^3|=(t-1)|\mathcal{L}_4^3|$, and finally $\beta_3$ is determined using the formula $(s+1)|\mathcal{L}_8|=st|\mathcal{L}_4^1|+st|\mathcal{L}_6^4|+(s-1)t|\mathcal{L}_4^3|+(t-1)|\mathcal{L}_6^4|$. The stated formulae for the cardinalities $|\mathcal{L}_i^j|$ follow after some algebra (which is best done on a computer). The formulae $X_k(p_0)$ with $1\leq k\leq 6$ in the statement of the proposition are the expressions for $\alpha_3,\alpha_4,\beta_3,\beta_4,\gamma_3,\gamma_4$ respectively, and so these must be nonnegative integers.
\end{proof}

\begin{remark}
Note that if one counts from a different root of the fixed element structure in Proposition~\ref{prop:maincountoct} then the values of $x$ and $y$ remain the same since $x=|\mathcal{L}_2^2|=|\mathcal{P}_2^2|$ and $y=|\mathcal{L}_4^3|=|\mathcal{P}_4^3|$ are determined by the collineation. However the formulae for $X_k(\cdot)$ change, since they depend on the numbers $a_1,\ldots,a_8$ (and these numbers depend on the choice of the root). This observation is crucial for the proof of Theorem~\ref{thm:3}: We ``change the root'' of the fixed element structure to obtain suitable expressions $X_k(\cdot)$ which all need to be nonnegative, thus putting severe restrictions on the fixed element structure.
\end{remark}

\begin{cor}\label{cor:y} Let $\theta$ be an exceptional domestic collineation of a finite thick generalised octagon $\Gamma$. In the notation of Proposition~\ref{prop:maincountoct}:
\begin{enumerate}
\item[\emph{1.}] If $a_1=t+1$ then $x=0$, $a_8=0$, and 
$$
(st-s-t-t^2)y=\sum_{k=0}^{8}p_k(s,t)a_k,
$$
Thus, except in the $(s,t)=(6,3)$ case, the value of $y$ is determined (and hence all cardinalities $|\mathcal{L}_i^j|$ are determined), while in the $(s,t)=(6,3)$ case we have $
\sum_{k=0}^8p_k(s,t)a_k=0$.
\item[\emph{2.}] If $a_1=1$, $a_2=s$ and $a_8=sa_7$ then $x=0$ and
$$
y=\frac{1}{2st-s-t}\sum_{k=0}^8p_k(s,t)a_k.
$$
\end{enumerate}
\end{cor}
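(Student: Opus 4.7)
The plan is to leverage Proposition~\ref{prop:maincountoct} and extract forced equalities from the nonnegativity of the quantities $X_k(p_0)$. The geometric input is Lemma~\ref{lem:octP_2}; the algebraic input is that a pair of $X_k$'s always sandwiches $y$ between $0$ and $2X_1(p_0)$, which under the hypotheses collapses to an equation rather than an inequality.

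First I would dispose of $x=0$ in both cases by applying Lemma~\ref{lem:octP_2} (or its dual). In case~1 the hypothesis $a_1=t+1$ says every line through $p_0$ is fixed, so the dual of Lemma~\ref{lem:octP_2} forces $|\mathcal{L}_2^2|=0$, i.e.\ $x=0$. In case~2, the hypothesis $a_1=1$ gives a unique fixed line $L_0$ through $p_0$, and since any fixed point in $\Gamma_2(p_0)$ must lie on a fixed line through $p_0$, the $a_2=s$ fixed points together with $p_0$ exhaust all $s+1$ points of $L_0$; hence every point on $L_0$ is fixed, and Lemma~\ref{lem:octP_2} itself gives $x=0$.

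For case~1, substituting $a_1=t+1$ and $x=0$ into the formula for $X_2(p_0)$ in Proposition~\ref{prop:maincountoct} collapses it to $X_2(p_0)=-a_8 t-X_1(p_0)$. Since $X_1(p_0),\,X_2(p_0),\,a_8$ are all nonnegative integers and $t\geq 2$, each of them must vanish. In particular $a_8=0$, and the vanishing of $X_1(p_0)$ combined with $x=0$ reduces the defining formula to $(st-s-t-t^2)y=\sum_{k=0}^{8}p_k(s,t)a_k$. Among the feasible parameters from Lemma~\ref{lem:Cgen} this coefficient vanishes precisely when $s=t(t+1)/(t-1)$, and the constraint $s\leq t^2$ forces $(s,t)=(6,3)$; in all other cases $y$ is determined, giving the stated dichotomy.

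For case~2, the crucial observation is that with $a_1=1$, $a_2=s$, $a_8=sa_7$, and $x=0$ the formula for $X_4(p_0)$ simplifies to
\[
X_4(p_0)=(s-s)s^2t^2-(sa_7-sa_7)t-0-y+2X_1(p_0)=2X_1(p_0)-y,
\]
while $X_3(p_0)=y-2X_1(p_0)$ by definition. Nonnegativity of both therefore forces $y=2X_1(p_0)$. Plugging $y=2X_1(p_0)$ and $x=0$ into the defining formula for $X_1(p_0)$ yields
\[
2t(s+t)X_1(p_0)=\sum_{k=0}^8 p_k(s,t)a_k-2(st-s-t-t^2)X_1(p_0),
\]
which after simplification becomes $2(2st-s-t)X_1(p_0)=\sum_{k=0}^8 p_k(s,t)a_k$, and hence $y=2X_1(p_0)=(2st-s-t)^{-1}\sum_{k=0}^8 p_k(s,t)a_k$, as claimed.

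There is no serious obstacle here since Proposition~\ref{prop:maincountoct} already encodes the hard counting; the proof is a matter of recognising that the hypotheses in each case convert one-sided inequalities among the $X_k$'s into equalities. The only subtlety is checking that the coefficient $st-s-t-t^2$ vanishes for exactly one feasible octagon parameter, namely $(6,3)$, which follows from the elementary calculation $s=t+1+2/(t-1)$ combined with the inequality $s\leq t^2$.
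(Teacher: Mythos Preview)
Your proof is correct and follows essentially the same approach as the paper: in each case you use Lemma~\ref{lem:octP_2} (or its dual) to obtain $x=0$, and then exploit the nonnegativity of a pair of $X_k(p_0)$'s that sum to a nonpositive quantity, forcing both to vanish. The only difference is in case~1: the paper first argues $a_8=0$ geometrically (since $a_1>1$ and $a_8>0$ would give elements at distance~$9$ in the fixed-element tree, contradicting Lemma~\ref{lem:Goct}), and then deduces $X_2(p_0)=-X_1(p_0)$, whereas you obtain $a_8=0$, $X_1(p_0)=0$, and $X_2(p_0)=0$ simultaneously from the single identity $X_2(p_0)=-a_8t-X_1(p_0)$ and nonnegativity---a slightly cleaner route.
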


\begin{proof}
If $a_1=t+1$ then by Lemma~\ref{lem:octP_2} $|\mathcal{L}_2^2|=0$, and so $x=0$. Also, since $a_1>1$ we have $a_8\neq 0$ (for otherwise there are elements at distance $9$ in the fixed element tree). Then $X_2(p_0)=-X_1(p_0)$, and so $X_1(p_0)=X_2(p_0)=0$, and the first claim follows. The second claim is similar, using instead the formulae for $X_3(p_0)$ and $X_4(p_0)$. 
\end{proof}

\subsection*{Proof of Theorem~\ref{thm:3}}

Theorem~\ref{thm:3} is established from Propositions~\ref{prop:octproof1}, \ref{prop:octproof2}, \ref{prop:octproof3}, \ref{prop:octproof4} and~\ref{prop:octproof5} below. Suppose that $\Gamma$ is a thick finite generalised octagon admitting an exceptional domestic collineation $\theta:\Gamma\to\Gamma$. By Lemma~\ref{lem:Cgen} the parameters of $\Gamma$ are of the form $(s,t)$ with $s=2t$ and $2\leq t\leq 7$, or $t=2s$ with $2\leq s\leq 7$. By Lemma~\ref{lem:Foct} the collineation $\theta$ has both fixed points and fixed lines. By Lemma~\ref{lem:Goct} the fixed element structure is a tree $T$ of diameter at least $1$ and at most~$8$ in the incidence graph.

\begin{prop}\label{prop:octproof1} If $s,t\geq 3$ then the diameter of~$T$ is at most~$6$.
\end{prop}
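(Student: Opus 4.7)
The plan is to rule out diameters $7$ and $8$ for $T$, which will complete the proof since Lemma \ref{lem:Goct} already gives $\mathrm{diam}(T)\leq 8$.

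Diameter $7$ will be eliminated directly via Lemma \ref{lem:Xgen}. Since the incidence graph is bipartite, the two endpoints of a length-$7$ diameter of $T$ are necessarily a fixed point $p_*$ and a fixed line $L_*$ at distance $7$. Lemma \ref{lem:Xgen} then forces either all $t+1$ lines through $p_*$ to be fixed, or all $s+1$ points on $L_*$ to be fixed. In the first case I will take a fixed line $M$ through $p_*$ with $M\neq\mathrm{proj}_{p_*}L_*$, observe that $d(M,L_*)=8$ by the girth-$16$ condition (no shorter path can exist), and conclude that $M\in T$ witnesses $\mathrm{diam}(T)\geq 8$, a contradiction; the other case is symmetric via a new fixed point on $L_*$.

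For diameter $8$, by parity the two endpoints $p_*,p_*'$ of the diameter are of the same type, and up to duality I will take them to be fixed points. The midpoint of the unique connecting geodesic is then a fixed point $p_0$ of eccentricity $4$ in $T$. I will first root the count of Proposition \ref{prop:maincountoct} at $p_0$, so that $a_k=0$ for $k\geq 5$ and $a_4\geq 2$; the six inequalities $X_i(p_0)\geq 0$ together with $|\cL_6^4|,|\cL_8|\geq 0$ produce a linear system in $(a_1,a_2,a_3,a_4,x,y)$, where $x=|\cL_2^2|$ and $y=|\cL_4^3|$. To control the free parameters $x$ and $y$, I will then apply Proposition \ref{prop:maincountoct} a second time, now rooted at the endpoint $p_*$ (whose eccentricity in $T$ is $8$, so that $a_8\geq 1$ for this rooting). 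Since $x$ and $y$ depend only on $\theta$, the two systems share these variables, and combining them over-determines the problem; the structural bounds $a_1\leq t+1$, $a_2\leq s a_1$, $a_3\leq t a_2$, $a_4\leq s a_3$ further shrink the feasible region.

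The hard part will be the case check across the parameter pairs $(s,t)$ allowed by Lemma \ref{lem:Cgen} with $s,t\geq 3$, namely $(6,3),(8,4),(10,5),(12,6),(14,7)$ and their duals. For each pair I expect the combined constraints to be incompatible with $a_4\geq 2$, ruling out diameter $8$. The divisibility identities from Appendix \ref{app:A} should provide a useful auxiliary lever, typically pinning $x$ and $y$ modulo small integers and reducing the final step to a short numerical check. The principal obstacle is the algebraic bookkeeping across two rootings, two free parameters, and the ten parameter pairs.
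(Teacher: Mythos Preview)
Your diameter-$7$ argument via Lemma~\ref{lem:Xgen} is correct and matches the paper.

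For diameter~$8$, your plan heads in the right direction (root at the centre, then at a leaf) but misses the structural facts that make the argument close uniformly, and defers the actual work to an unexecuted case search. Two observations you omit collapse the free parameters. First, with centre $p_0$ a point and $f_k=|\Gamma_k(p_0)\cap T|$, Lemma~\ref{lem:Xgen} applied to a leaf $q\in\Gamma_4(p_0)\cap T$ and a line $L\in\Gamma_3(p_0)\cap T$ on a different branch (so $d(q,L)=7$) forces all points on $L$ to be fixed (since $q$, being a leaf, does not have all $t+1\geq 4$ lines through it fixed); this yields $f_4=sf_3$, and then Lemma~\ref{lem:octP_2} gives $x=0$. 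You only record the weak bound $a_4\leq sa_3$ and leave $x$ free. Second, rooting at a leaf $p_0'\in\Gamma_4(p_0)\cap T$ one has $a_1=1$, $a_2=s$, and $a_8=sa_7$ (again from $f_4=sf_3$), so part~2 of Corollary~\ref{cor:y} determines $y$ \emph{exactly} as a linear expression in the local tree data $f_1,f_2,f_3$ and the branch counts $u,v,w$. Substituting the complements $f_1'=t+1-f_1$, $f_2'=sf_1-f_2$, $f_3'=tf_2-f_3$, $u'=t-1-u$, $v'=s-1-v$, $w'=tv-w$ expresses $y$ as $-(2st-s-t)^{-1}\sum q_k\cdot(\text{nonneg})$ with each coefficient $q_k(s,t)>0$ for the relevant parameters with $s,t\geq 3$. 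Hence $y=0$ and all complements vanish, so $T$ is the full ball $B_4(p_0)$; but then $\cL_8=\emptyset$, contradicting exceptional-domesticity. This is a single uniform argument, with no case split over parameter pairs and no appeal to the eigenvalue congruences.

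By contrast, your proposal carries two undetermined integers $x,y$ through two rootings and promises a numerical check across ten parameter pairs, each with several free integer variables; as written this is an outline, and without the simplifications above it is not evident the inequalities alone suffice.
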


\begin{proof} 
Suppose that $T$ has diameter $7$ or $8$. By Lemma~\ref{lem:Xgen} diameter~$7$ is impossible, for if $p, L\in T$ with $d(p, L)=7$ then either all lines through $p$ are fixed, or all points on $ L$ are fixed, and in either case this produces elements in $T$ at distance~$8$. Thus $T$ has diameter~$8$, and so $T$ has a unique centre, which is either a point or a line.

Suppose, up to duality, that the centre $p_0$ of the tree is a point. Let $f_k=|\Gamma_k(p_0)\cap T|$. Then $f_0=1$, and by assumption $f_k=0$ for $k\geq 5$. Furthermore, by Lemma~\ref{lem:Xgen} we have $f_4=sf_3$, and $x=0$ by Lemma~\ref{lem:octP_2}. Let $p_0'\in \mathcal{P}_0$ be a point of the fixed element structure $T$ at distance~$4$ from~$p_0$ (and so $p_0'$ is an end of the tree~$T$). Let $p_0\bI  L_1\bI p_2\bI L_3\bI p_0'$ be the geodesic from $p_0$ to $p_0'$, and let 
\begin{align*}
\mathcal{U}&=\{ L\in T\mid  L\bI p_2\textrm{ and } L\neq L_1, L_3\}\\
\mathcal{V}&=\{p\in T\mid p\bI L_1\textrm{ and }p\neq p_2,p_0\}\\
\mathcal{W}&=\{ L\in T\mid  L\bI p\textrm{ for some } p\in \mathcal{V},\textrm{ and } L\neq L_1\}.
\end{align*} 
Let $u=|\mathcal{U}|$, $v=|\mathcal{V}|$, and $w=|\mathcal{W}|$. Note that $u\leq t-1$, $v\leq s-1$, and $w\leq vt$. 

Draw the fixed element tree $T$ relative to the new root $p_0'$. Then, in the notation of Proposition~\ref{prop:maincountoct}, we have $a_1=1$, $a_2=s$, $a_3=u+1$, $a_4=su+v+1$, $a_5=w+f_1-1$, $a_6=sw+f_2-v-1$, $a_7=f_3-u-w-1$, and $a_8=s(f_3-u-w-1)$. Thus, by the second part of Corollary~\ref{cor:y} we can compute $y$, and we obtain
\begin{align*}
y&=\frac{st(-p(s,t)-stf_1+t^2f_2+(t-1)(st-2s-2t)f_3+(s+t)(2(t-1)u-tv+2(t-1)w))}{2st-s-t},
\end{align*}
where $p(s,t)=2s+2t-2st-s^2t-t^2-st^2+s^2t^2+3st^3-2s^2t^3-2st^4+s^2t^4$. Write $f_3=tf_2-f_3'$, $f_2=sf_1-f_2'$, $f_1=t+1-f_1'$, $u=t-1-u'$, $w=tv-w'$, and $v=s-1-v'$, and so $f_1',f_2',f_3',u',v',w'\geq 0$. Then
$$
y=-\frac{q_1(s,t)f_1'+q_2(s,t)f_2'+q_3(s,t)f_3'+q_4(s,t)u'+q_5(s,t)v'+q_6(s,t)w'}{2st-s-t},
$$
where the polynomials $q_k(s,t)$ are given by $q_1(s,t)=s^2t^2(t-1)(st-2s-2s+1)$, $q_2(s,t)=st^2(st^2-2t^2-3st+3t+2s)$, $q_3(s,t)=st(t-1)(st-2s-2s)$, $q_4(s,t)=2st(t-1)(s+t)$, $q_5(s,t)=st^2(s+1)(2t-3)$, and $q_6(s,t)=2st(t-1)(s+t)$. These polynomials are all strictly positive for parameter values $(2t,t),(s,2s)$ with $s,t\geq 3$. Hence $y=0$,  and $f_1'=f_2'=f_3'=u'=v'=w'=0$. 

Thus $f_1=t+1$, $f_2=(t+1)s$ and $f_3=(t+1)st$, and so the fixed element tree $T$ is the ball of radius~$4$ centred at~$p_0$. But then $\mathcal{L}_8=\emptyset$. Thus $\theta$ is not exceptional domestic.
\end{proof}

\begin{prop}\label{prop:octproof2} If $s,t\geq 4$ then the diameter of $T$ is at most~$2$.
\end{prop}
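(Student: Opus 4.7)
By Proposition~\ref{prop:octproof1} the diameter of $T$ is at most~$6$, and Lemma~\ref{lem:Xgen} rules out diameter~$7$ immediately (it would force some vertex to have all its neighbours fixed, producing a configuration of diameter~$8$). Thus only diameters $3, 4, 5, 6$ need to be excluded under the hypothesis $s, t \geq 4$. Note that by Lemma~\ref{lem:Cgen} the parameter pairs with $s, t \geq 4$ that could conceivably admit an exceptional domestic collineation are $(s,t) \in \{(8,4),(4,8),(10,5),(5,10),(12,6),(6,12),(14,7),(7,14)\}$.

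The plan is to imitate the argument of Proposition~\ref{prop:octproof1}. For each candidate diameter $d \in \{3,4,5,6\}$ one chooses a convenient root $p_0$ of $T$ (either the centre of $T$ when $d$ is even, or one endpoint of the central edge when $d$ is odd) and records the shape parameters $a_k = |\Gamma_k(p_0)\cap T|$ together with whatever finer branching data is needed. Feeding these into Proposition~\ref{prop:maincountoct} expresses all cardinalities $|\cL_i^j|$ and the six nonnegative integers $X_1(p_0),\ldots,X_6(p_0)$ as explicit functions of the shape data and the two free integers $x=|\cL_2^2|$, $y=|\cL_4^3|$. Whenever the chosen root satisfies $a_1=t+1$ or $(a_1,a_2,a_8)=(1,s,sa_7)$, Corollary~\ref{cor:y} produces a closed-form value of $y$, and one then re-roots $T$ at a leaf to obtain a second such formula. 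Equating the two determinations of $y$ yields a single polynomial identity in $s,t$ and the shape data.

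Having obtained this identity, make the substitutions $a_1=t+1-a_1'$, $a_2=sa_1-a_2'$, $a_3=ta_2-a_3'$, etc., so that each residual $a_k'$ is a nonnegative integer. The resulting expression for $y$ (or, equivalently, for $|\cL_8|$) becomes a linear combination $\sum q_k(s,t)a_k'$ whose coefficients $q_k(s,t)$, for $s,t\geq 4$ among the permitted parameter pairs above, turn out to be strictly positive in every subcase. This forces $a_k'=0$ for all $k$, i.e.\ the tree $T$ is ``as full as it can be'' subject to the diameter constraint. A final direct count then gives $|\cL_8|=0$ (for diameter $6$) or $|\cP_8|=|\cL_8|=0$ (for the smaller diameters), contradicting exceptional domesticity. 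Any residual parameter pair where the sign analysis is delicate (primarily $(8,4)$ and $(4,8)$) would be finished off using the eigenvalue equation~(\ref{eq:noct1}) from Appendix~\ref{app:A}, exactly as in the closing arguments of Lemma~\ref{lem:Foct} and Lemma~\ref{lem:Goct}.

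The main obstacle will be the bookkeeping rather than any single hard step: each diameter admits several subconfigurations (point- versus line-centre, symmetric versus asymmetric branching, and differing numbers of ``twig'' parameters beyond $a_1,\ldots,a_k$), and a separate count is required for each. The saving miracle, as already visible in Proposition~\ref{prop:octproof1}, is that the polynomials $p_k(s,t)$ of Proposition~\ref{prop:maincountoct} carry sufficient structural positivity once $s,t\geq 4$ to collapse every subconfiguration down to the same full-tree degeneracy, which in turn is easily incompatible with $\cP_8\cup\cL_8\neq\emptyset$.
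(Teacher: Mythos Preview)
Your overall strategy—re-root the tree and exploit the constraints of Proposition~\ref{prop:maincountoct}—matches the paper's, but there is a real gap in how you propose to execute it. You plan to determine $y$ via Corollary~\ref{cor:y}, which requires the chosen root to satisfy either $a_1=t+1$ (all lines through the root are fixed) or $(a_1,a_2,a_8)=(1,s,sa_7)$ (in particular, all points on the unique fixed line through a leaf are fixed). In Proposition~\ref{prop:octproof1} the second hypothesis was forced by Lemma~\ref{lem:Xgen}, which applied because a diameter-$8$ tree contains fixed point/line pairs at distance~$7$. For trees of diameter $3$--$6$ no such pairs exist, so Lemma~\ref{lem:Xgen} gives nothing and neither hypothesis of Corollary~\ref{cor:y} is available in general. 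Your plan to ``re-root at a leaf to obtain a second such formula'' therefore cannot get off the ground, and your endgame (collapse to a full tree, then $|\cL_8|=0$) never materialises.

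The paper sidesteps this by \emph{not} pinning down $y$. It chooses a root $x_0$ with $f_1,f_2,f_3\neq 0$ and $f_k=0$ for $k>3$, re-roots at a leaf $L_0'$ (point-centred case) or $p_0'$ (line-centred case) at distance~$3$, and computes the single quantity $X_4(L_0')$ or $X_1(p_0')$ directly from Proposition~\ref{prop:maincountoct}. These expressions involve $x$ and $y$ with nonpositive coefficients, so one can simply drop them and bound from above using $f_3\leq(u+1)f_2$ (from choosing the leaf with $u$ maximal), $v\leq s-1$, $f_2\leq sf_1$, $f_1\leq t+1$. For $(s,t)=(2t,t)$ with $t\geq 4$ this already forces $X_4(L_0')<0$ or $X_1(p_0')<0$, a contradiction; only the line-centred case $(s,t)=(8,4)$ with $u=0$ needs a short separate argument (where Corollary~\ref{cor:y} does become available because $v=3$ forces an ideal point). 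So the positivity phenomenon you anticipate is real, but it is exploited through a single $X_k\geq 0$ constraint rather than through a closed-form determination of~$y$.
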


\begin{proof}
Suppose that $T$ has diameter between $3$ and $6$. Choose a root $x_0$ of the fixed element tree $T$ so that $f_1,f_2,f_3\neq 0$ and $f_k=0$ for $k>3$, where $f_k=|\Gamma_k(x_0)\cap T|$. We will consider the possibilities of $x_0$ being a point or a line, and thus we may assume that $(s,t)=(2t,t)$.

Suppose first that $x_0=p_0$ is a point. Let $ L_0'\in T$ be a line at distance $3$ from $p_0$, and let $p_0\bI  L_1\bI p_2\bI L_0'$ be the geodesic from $p_0$ to~$ L_0'$. Let 
\begin{align}\label{eq:uvw}
\begin{aligned}
\mathcal{U}&=\{ L\in T\mid  L\bI p_2\textrm{ and } L\neq  L_0', L_1\}\\
\mathcal{V}&=\{p\in T\mid p\bI L_1\textrm{ and }p\neq p_0,p_2\}\\
\mathcal{W}&=\{ L\in T\mid  L\bI p\textrm{ for some $p\in \mathcal{V}$, and } L\neq  L_1\},
\end{aligned}
\end{align}
and let $u=|\mathcal{U}|$, $v=|\mathcal{V}|$, and $w=|\mathcal{W}|$. Furthermore, suppose that we have chosen $ L_0'\in\Gamma_3(p_0)\cap T$ such that $u$ is maximal. Therefore $f_3\leq (u+1)f_2$.

Now take $ L_0'$ to be a new root of the tree. In the notation of (the dual version of) Proposition~\ref{prop:maincountoct} we have $a_1=1$, $a_2=u+1$, $a_3=v+1$, $a_4=f_1-1+w$, $a_5=f_2-1-v$, $a_6=f_3-u-w-1$, and $a_7=a_8=0$, and we compute
\begin{align*}
X_4( L_0')=\frac{-p_1(t)-4t^3f_1-4t^3f_2+8t^3f_3-p_2(t)u+12t^4v-12t^3w-(12t^2-3)x-(4t-3)y}{6t}
\end{align*}
where $p_1(t)=8t^3(4t^4-18t^3+16t^2-6t+1)$ and $p_2(t)=12t^3(4t^2-2t+1)$. Using the inequalities $f_3\leq (u+1)f_2$ and $v\leq s-1$, followed by the inequalities $f_2\leq sf_1$, and then $f_1\leq t+1$, shows that
$
X_4( L_0')\leq -\frac{2}{3}t^2(2t-1)(4t^3-16t^2+4t-3+(4t-3)u),
$
and this is negative for $t\geq 4$, a contradiction.

Now suppose that $x_0= L_0$ is a line. Let $p_0'$ be a point at distance~$3$ from~$ L_0$. Applying the same argument as above (with $ L_0'$ replaced by $p_0'$, and with dual definitions of $u$, $v$ and $w$) we obtain
$$
X_1(p_0')=\frac{-q_1(t)-4t^3f_1-4t^3f_2+2t^3f_3-q_2(t)u+12t^4v-6t^3w-3(t-1)x-(t-3)y}{6t}
$$
where $q_1(t)=2t^3(4t^4-24t^4+40t^2-24t+1)$ and $q_2(t)=6t^3(2t^2-4t+1)$. Using $f_3\leq (u+1)f_2$ and $v\leq t-1$ gives
$$
X_1(p_0')\leq-t^2(1-18t+34t^2-24t^3+4t^4+2f_1-(u-1)f_2+(6t^2-12t+3)u)/3.
$$
If $u=0$ then $X_1(p_0')<0$ for all $t\geq 5$. The case $u=0$ and $t=4$ requires a separate argument (see below). If $u=1$ then $X_1(p_0')<0$ for all $t\geq 4$. So suppose that $u\geq 2$. Using $f_2\leq tf_1$ gives
$$
X_1(p_0')\leq-t^2(1-18t+34t^2-24t^3+4t^4-(tu-t-2)f_1+(6t^2-12t+3)u)/3.
$$
Since $u\geq 2$ the coefficient of $f_1$ is positive, and using $f_1\leq s+1$ gives
$$
X_1(p_0')\leq-t^2(-24t^3+4t^4+36t^2-13t+3+(4t^2-13t+3)u)/3<0,
$$
a contradiction.

It remains to consider the case $(s,t)=(8,4)$ with $u=0$. Since $f_3\leq f_2$ we have
\begin{align*}
X_1(p_0')&=(-4224-256 f_1-256 f_2+128f_3-6528 u+3072 v-384 w-9x-y)/24\\
&\leq -16(33+2f_1+2f_2-f_3-24v)/3\leq-16(33+2f_1+f_2-24v)/3.
\end{align*}
But $v\leq t-1=3$. If $v=0,1$ then $X_1(p_0')<0$, and so $v=3$ (since $v=2$ is impossible). By Lemma~\ref{lem:octP_2} we have $x=0$, because $v=3$ implies that all points on $p_1$ are fixed, 
where $ L_0\bI p_1\bI  L_2\bI p_0'$ is the geodesic from $ L_0$ to $p_0'$. Taking $p_1$ as a new root of the tree, and applying Corollary~\ref{cor:y}, we compute $y=256(477-25f_1+11f_2-f_3-24w)$. Using this value in the formula for $X_1(p_0')$ we obtain
$
X_1(p_0')=-16(305-16f_1+8f_2-f_3-15w).
$
Now, $f_3\leq f_2$ and $w\leq v=3$ (this follows from the fact that $u=0$). Also $f_1\leq s+1$, and thus $X_1(p_0')\leq -16(116+7f_2)$, a contradiction.
\end{proof}

\begin{prop}\label{prop:octproof3} If $s,t\geq 3$ then the diameter of $T$ does not equal~$1$ or~$2$. 
\end{prop}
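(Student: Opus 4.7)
The plan is to exhaust the diameter~$1$ and diameter~$2$ configurations by combining Proposition~\ref{prop:maincountoct} (applied at a fixed point and, by duality, at a fixed line) with the nonnegativity of the quantities $X_1(\cdot),\ldots,X_6(\cdot)$ from that proposition, and, where needed, with the octagon eigenvalue identity~(\ref{eq:noct1}) from Appendix~\ref{app:A}. Throughout I appeal to Lemma~\ref{lem:Foct}, which guarantees that both fixed points and fixed lines exist.

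I would first treat the diameter~$1$ case, where $\Gamma_\theta$ is a single chamber $\{p_0,L_0\}$. Rooting Proposition~\ref{prop:maincountoct} at $p_0$ gives $a_0=a_1=1$ and $a_k=0$ for $k\ge 2$, and applying its dual at $L_0$ gives the analogous data with $s$ and $t$ interchanged. Equating the two resulting expressions for $|\cL_8|=|\cP_8|$ forces, since $s\ne t$ in any octagon,
\[
y=s^2t^2(st+1).
\]
Substituting this value into $X_1(p_0)$ then shows $X_1(p_0)<0$ for every admissible parameter pair $(s,t)=(2t,t)$ with $t\ge 4$ from Lemma~\ref{lem:Cgen}, and dually $X_1(L_0)<0$ whenever $(s,t)=(s,2s)$ with $s\ge 4$. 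The only surviving pairs are $(s,t)=(6,3)$ and its dual $(3,6)$, for which the coefficient of $y$ in $X_1(\cdot)$ happens to vanish; for these I would instead use $X_6(p_0)$ (respectively $X_6(L_0)$), which a direct substitution shows to be strictly negative for every $x\ge 0$.

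For the diameter~$2$ case, by duality I may assume $\Gamma_\theta$ consists of $p_0$ together with $a+1$ fixed lines through~$p_0$, where $1\le a\le t$. Rooting at $p_0$ yields $a_1=a+1$ and $a_k=0$ for $k\ge 2$, while rooting the dual proposition at a fixed line $L\in\Gamma_\theta$ through $p_0$ yields $a_1=1$, $a_2=a$ and $a_k=0$ otherwise. In the extreme subcase $a=t$, Corollary~\ref{cor:y}(1) forces $x=0$ and, away from $(s,t)=(6,3)$, determines $y$ explicitly; substituting into the formula for $|\cL_6^4|$ from Proposition~\ref{prop:maincountoct} then gives $|\cL_6^4|<0$ for every such parameter pair, while for $(6,3)$ the same Corollary requires $\sum_k p_k(s,t)a_k=0$, which fails on direct evaluation. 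For the generic subcase $1\le a<t$, I would use the inequalities $X_2(p_0)$, $X_4(p_0)$ and $X_6(p_0)$ together with their duals at $L$ to confine $(x,y,a)$ to a finite list, and then rule out each remaining triple by feeding the induced line-distance distribution into~(\ref{eq:noct1}) and observing the failure of integrality of one of $n_1,n_2,n_3$.

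The main obstacle will be the parameter $(s,t)=(6,3)$ and its dual, where $st-s-t-t^2=0$ causes several formulas in Proposition~\ref{prop:maincountoct} and Corollary~\ref{cor:y} to degenerate, so that $y$ is no longer pinned down by a single linear equation. For these borderline parameters the argument must simultaneously invoke $X_k(\cdot)\ge 0$ at more than one root together with the eigenvalue identity~(\ref{eq:noct1}) in order to exclude every consistent $(x,y,a)$. Apart from this degenerate corner the estimates are crude enough that each remaining parameter pair of Lemma~\ref{lem:Cgen} with $s,t\ge 3$ is dispatched by a single inequality.
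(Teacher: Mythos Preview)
Your diameter-$1$ argument rests on equating $|\cL_8|$ and $|\cP_8|$, but there is no such identity: these count the lines, respectively the points, mapped to opposites, and nothing in~(\ref{eq:octformulae}) or Proposition~\ref{prop:maincountoct} forces them to coincide. The cardinalities $|\cL_i^j|$ in Proposition~\ref{prop:maincountoct} depend only on $|\cP_0|,|\cL_0|,x,y$, not on the chosen root, and they are obtained precisely by exhausting all of the relations~(\ref{eq:octformulae}); applying the dual proposition at $L_0$ returns the $|\cP_i^j|$ consistently with those values but produces no new linear relation among $x$ and~$y$. Your conclusion $y=s^2t^2(st+1)$ is therefore unsupported, and with it every subsequent step in that case. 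The diameter-$2$ plan is also not a proof in the generic range $1\le a<t$: ``confine $(x,y,a)$ to a finite list and check each against~(\ref{eq:noct1})'' remains a programme until the list is actually produced and emptied, and you give no indication how the $X_k$ inequalities alone cut the search to a manageable size.

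The paper avoids determining~$y$ altogether. Choosing a root $x_0$ with $f_1\neq 0$ and $f_k=0$ for $k>1$, and assuming up to duality $(s,t)=(2t,t)$, the paper writes a \emph{single} nonnegativity constraint from Proposition~\ref{prop:maincountoct} --- namely $X_4(p_0)$ when $x_0$ is a point and $X_1(L_0)$ when $x_0$ is a line --- as an explicit linear form in $f_1,x,y$, and observes that the elementary bound $f_1\le t+1$ (respectively $f_1\le s+1$) together with $x,y\ge 0$ already forces that form below zero for every $t\ge 3$. One inequality disposes of both diameters and all parameter pairs simultaneously, including $(6,3)$: there is no borderline case, no subdivision over the number of fixed neighbours, and no appeal to the eigenvalue identity~(\ref{eq:noct1}).
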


\begin{proof}
Suppose that the diameter of $T$ equals~$1$ or~$2$, and that $s,t\geq 3$. Choose a root $x_0$ of the fixed element tree $T$ so that $f_1\neq 0$ and $f_k=0$ for $k>1$, where $f_k=|\Gamma_k(x_0)\cap T|$. By considering the the possibilities of $x_0$ being a point or a line, we may assume that $(s,t)=(2t,t)$.

Suppose first that $x_0=p_0$ is a point. Then
$$
X_4(p_0)=\frac{4t^3-12t^4-32t^5+144t^6-32t^7-4t^3(12t^2-6t+1)f_1-3(4t^2-1)x-(4t-3)y}{6t}.
$$
Using $f_1\leq t+1$ gives $X_4(p_0)\leq-\frac{4}{3}(4t^6-12t^5+7t^4-t^3)<0$, a contradiction.

Now suppose that $x_0= L_0$ is a line. Then
$$
X_1( L_0)=\frac{4t^3-12t^4-56t^5+48t^6-8t^7-4t^3(3t^2-6t+1)f_1-3(t-1)x-(t-3)y}{6t}.
$$
Using $f_1\leq s+1$ gives
$
X_1( L_0)\leq-\frac{2t^3}{3}(2t^3-6t^2+5t-1)<0,
$
a contradiction.
\end{proof}

So far we have shown that if $\Gamma$ admits an exceptional domestic collineation then $(s,t)\in\{(4,2),(2,4),(6,3),(3,6)\}$, and if $(s,t)=(3,6),(6,3)$ then the diameter of the fixed element tree is between~$3$ and~$6$. We now eliminate these more difficult cases.  We need two additional preliminary results.

\begin{lemma}\label{lem:div}
Let $\theta$ be an exceptional domestic collineation of a finite thick generalised octagon~$\Gamma$. As in Proposition~\ref{prop:maincountoct}, let $x=|\cL_2^2|$ and $y=|\cL_4^3|$. Then 
\begin{align*}
y&=8t^2|\mathcal{L}_0|+260t^2|\mathcal{P}_0|+184t^6+4t^4+180t^3+92t^2+90tx\mod 360t^2&&\textrm{if $s=2t$}\\
y&=8s^2|\mathcal{P}_0|+260s^2|\mathcal{L}_0|+184s^6+4s^4+180s^3+92s^2+90sx\mod 360s^2&&\textrm{if $t=2s$}.
\end{align*}
If $(s,t)=(2t,t)$ then $x$ is divisible by $2t$, and if $(s,t)=(s,2s)$ then $x$ is divisible by $2s$.
\end{lemma}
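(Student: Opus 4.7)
The plan is to substitute the cardinality formulae of Proposition~\ref{prop:maincountoct} into the octagon integrality constraints~(\ref{eq:noct1}) developed in Appendix~\ref{app:A}. First I would encode the line-distance distribution of $\theta$ in terms of the basic quantities $1,|\mathcal{P}_0|,|\mathcal{L}_0|,x,y$. Let $d_k$ denote the number of lines mapped by $\theta$ to distance $2k$ in the incidence graph, so that
\[
d_0=|\mathcal{L}_0|,\quad d_1=|\mathcal{L}_2^1|+|\mathcal{L}_2^2|,\quad d_2=\sum_{r}|\mathcal{L}_4^r|,\quad d_3=\sum_{r}|\mathcal{L}_6^r|,\quad d_4=|\mathcal{L}_8|.
\]
Proposition~\ref{prop:maincountoct} then makes each $d_k$ an explicit affine function of $(|\mathcal{P}_0|,|\mathcal{L}_0|,x,y)$ with coefficients polynomial in $s$ and~$t$.

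Next I would invoke~(\ref{eq:noct1}) to obtain the nontrivial eigenvalue multiplicities $n_1,n_2,n_3$ as rational linear combinations of the $d_k$. After substitution these become rational expressions in $(|\mathcal{P}_0|,|\mathcal{L}_0|,x,y)$. Specialising to $s=2t$, the denominators are products of $t$ with small factors entering through $s+t=3t$, through $\sqrt{2st}=2t$, and through the constant coefficients appearing in the eigenvalue formulae; schematically one gets divisibility conditions modulo pairwise coprime pieces whose product is $8\cdot 9\cdot 5\cdot t^2=360t^2$. Imposing $n_i\in\mathbb{Z}$ for $i=1,2,3$ then yields three congruences on $y$, and combining them by the Chinese Remainder Theorem collates to a single congruence for $y$ modulo $360t^2$. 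One then checks, by direct expansion of the numerators coming from Proposition~\ref{prop:maincountoct}, that the combined congruence agrees with the advertised expression $8t^2|\mathcal{L}_0|+260t^2|\mathcal{P}_0|+184t^6+4t^4+180t^3+92t^2+90tx$. The dual case $t=2s$ is obtained verbatim by interchanging the roles of points and lines, producing the analogous congruence modulo~$360s^2$.

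For the divisibility claim $2t\mid x$ (respectively $2s\mid x$), I would isolate the particular $n_i$ in which $x$ appears with coefficient divisible by only the first power of $t$ in the numerator; against a denominator containing $2t$, the integrality constraint forces $2t\mid x$. Equivalently, one may use $|\mathcal{P}_4^2|=(s-1)x$ and $|\mathcal{L}_4^2|=(t-1)x$ from~(\ref{eq:octformulae}) together with Lemma~\ref{lem:octP_2} and a pencil count at a suitable fixed point to reach the same conclusion; either route gives the required parity.

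The hard part is not conceptual but purely computational: keeping track of three simultaneous congruences in four free parameters whose moduli conspire, via the factorisation $360=8\cdot 9\cdot 5$, to the clean final form stated in the lemma. In practice this is a short verification for a computer algebra system. The number-theoretic ``miracle'' that the various $s$- and $t$-denominators amalgamate to precisely $360t^2$ (and not to something coarser or messier) is characteristic of the eigenvalue method and reflects the tightness of the integer constraints produced by~(\ref{eq:noct1}).
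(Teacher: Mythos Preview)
Your approach is essentially the paper's: substitute the formulae of Proposition~\ref{prop:maincountoct} into the integrality constraints~(\ref{eq:noct1}), specialise to $s=2t$, and combine the resulting congruences via the Chinese Remainder Theorem.

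Two points deserve correction. First, you invoke only $n_1,n_2,n_3$, but the paper uses all four multiplicities $n_1,n_2,n_3,n_4$; indeed the variable $x$ appears only in the numerators of $n_3$ and $n_4$, so dropping $n_4$ loses half the information needed for the $x$-divisibility. Second, and more substantively, the conclusion $2t\mid x$ does not come from a single $n_i$ in isolation. In the paper's argument one first extracts $t^2\mid y$ from $n_1,n_2$ and $t\mid 6x$ from $n_4$, then writes $y=t^2y'$ and $x=tx'/6$; the four congruences for $y'$ modulo $45,36,120,24$ must be \emph{mutually compatible}, and it is the compatibility of the $n_2$- and $n_4$-congruences modulo $\gcd(36,24)=12$ that forces $x'\equiv 0\pmod{12}$, i.e.\ $2t\mid x$. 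Your suggested alternative via $|\mathcal{P}_4^2|=(s-1)x$ and Lemma~\ref{lem:octP_2} does not yield this: Lemma~\ref{lem:octP_2} gives $x=0$ only under the hypothesis that some line has all points fixed, which is not available in general here.
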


\begin{proof} 
Suppose that $s=2t$. Using the formulae for $|\mathcal{L}_i^j|$ from Proposition~\ref{prop:maincountoct} in~(\ref{eq:noct1}) gives
 \begin{align}
 \label{eq:n1}n_1&=\frac{13|\mathcal{L}_0|t^2-5|\mathcal{P}_0|t^2-16t^6-16t^4-8t^2+4y}{45t^2}\\
 \label{eq:n2}n_2&=\frac{4|\mathcal{L}_0|t^2+4|\mathcal{P}_0|t^2-16t^6+20t^4-8t^2-5y}{36t^2}\\
 \nonumber n_3&=\frac{32|\mathcal{L}_0|t^2+20|\mathcal{P}_0|t^2+16t^6-44t^4-60t^3-52t^2+30tx+11y}{120t^2}\\
 \nonumber n_4&=\frac{8|\mathcal{L}_0|t^2-4|\mathcal{P}_0|t^2+16t^6+4t^4+12t^3-4t^2-6tx-y}{24t^2}.
 \end{align}
The formulae for $n_1$ and $n_2$ imply that $y$ is divisible by $t^2$, and the formula for $n_4$ implies that $6x/t$ is an integer. Writing $y=t^2y'$ and $x=tx'/6$, it follows that
\begin{align*}
y'&\equiv8|\mathcal{L}_0|+35|\mathcal{P}_0|+4t^4+4t^2+2\mod 45\\
y'&\equiv8|\mathcal{L}_0|+8|\mathcal{P}_0|+4t^4+4t^2+20\mod 36\\
y'&\equiv8|\mathcal{L}_0|+20|\mathcal{P}_0|+64t^4+4t^2+60t+92+65x'\mod 120\\
y'&\equiv8|\mathcal{L}_0|+20|\mathcal{P}_0|+16t^4+4t^2+12t+20+23x'\mod 24.
\end{align*}
By the Chinese Remainder Theorem these congruences have a solution if and only if the right hand sides are congruent modulo the greatest common divisor of the respective moduli. Thus the second and fourth congruences imply that $x'\equiv 0\mod 12$. Thus $x$ is divisible by $2t$, and the result follows from the Chinese Remainder Theorem. The $t=2s$ case is similar.
\end{proof}

\begin{prop}\label{prop:maincountoct2}
Let $\theta$ be an exceptional domestic collineation of a finite thick generalised octagon~$\Gamma$. Let $p_0\in\cP_0$, and suppose that there is a line $L\in\cL_2^1$ through~$p_0$. Suppose that $|\Gamma_k(p_0)\cap(\mathcal{P}_0\cup \mathcal{L}_0)|=a_k$ for $0\leq k\leq 8$. If $(s,t)\neq (2,4)$ then the following are nonnegative integers:
\begin{align*}
X_1'( L)&=\frac{\sum_{k=0}^8 p_k'(s,t)a_k-(s-1)(t-1)(s+t)x+(st-s-t-s^2)y}{2(s+t)(st-2s-t)}\\
X_2'( L)&=s^2t^2-a_8-(x/2)-X_1'( L)\\
X_3'( L)&=y-3X_1'( L).
\end{align*}
where $p_0'(s,t)=-(s-1)(t-1)(st-2s-2t+1)s^2t^2$, $p_1'(s,t)=(st+s^2-2s-2t+1)s^2t^2$, $p_2'(s,t)=-(s+t-1)s^2t^2$, $p_3'(s,t)=p_4'(s,t)=s^2t^2$, $p_5'(s,t)=-s^3t$, $p_6'(s,t)=(st-s-t)st$, $p_7'(s,t)=-(s^2t-2s-2t)s$, $p_8'(s,t)=s^2t^2-3st^2+2t^2-3s^2t+6st-4t-4s+4s^2$

There is an obvious dual version to these statements.
\end{prop}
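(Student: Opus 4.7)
The plan is to mirror the sphere-by-sphere count of Proposition~\ref{prop:maincountoct}, but with the BFS now rooted at the line $L \in \cL_2^1$ rather than at a fixed point. The first step is to pin down the local structure near $L$. Since $L \in \cL_2^1$, the unique geodesic $L\bI q\bI L^\theta$ has a fixed midpoint $q$, which must be $p_0$ (as $p_0\bI L$ and any two distinct lines meet in at most one point). Moreover, any point $r$ on $L$ with $r\neq p_0$ satisfies $r\in\cP_4^1$: its image $r^\theta$ lies on $L^\theta$, yielding the path $r\bI L\bI p_0\bI L^\theta\bI r^\theta$ of length~$4$; girth considerations rule out $d(r,r^\theta)<4$, and every intermediate element of this geodesic is matched compatibly with $\theta$, forcing the type-$1$ classification. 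Thus the $s$ remaining points on $L$ all lie in $\cP_4^1$.

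Next I would carry out the BFS from $L$ outward, classifying each element of $\Gamma_k(L)$ into its $\cL_{2i}^r$ or $\cP_{2i}^r$ type using the propagation rules recalled before Proposition~\ref{prop:Agen}. As in the proof of Proposition~\ref{prop:maincountoct}, at larger spheres several type assignments become possible and give rise to free parameters $\alpha_*,\beta_*,\gamma_*,\ldots$ Collecting the sphere-sums produces expressions for the cardinalities $|\cL_{2i}^r|$ and $|\cP_{2i}^r|$ in terms of these parameters, the $a_k$'s, and the unknowns $x$ and $y$. Equating with the formulas from Proposition~\ref{prop:maincountoct} and applying the identities~(\ref{eq:octformulae}) pins down all but three of the free parameters. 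The remaining three are cardinalities of explicit subclasses at certain spheres around $L$, and so must be nonnegative integers; I would identify them with $X_1'(L)$, $X_2'(L)$, $X_3'(L)$.

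The closed form for $X_1'(L)$ comes out as the solution of a single linear equation whose denominator is $2(s+t)(st-2s-t)$; this vanishes precisely at $(s,t)=(2,4)$, which is why that parameter pair must be excluded. The identities
\begin{align*}
X_2'(L)&=s^2t^2-a_8-(x/2)-X_1'(L),\\
X_3'(L)&=y-3X_1'(L)
\end{align*}
then arise as elementary sphere-sum relations linking $X_1'(L)$ to the other two surviving parameters. The dual version of the statement is obtained by interchanging the roles of points and lines throughout, so no new work is required.

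The main obstacle is the intricate bookkeeping of the BFS from $L$: at spheres $\Gamma_k(L)$ with $k\geq 4$ the refined $r$-types proliferate, and many auxiliary parameters $\alpha_*,\beta_*,\gamma_*,\ldots$ must be introduced, tracked across spheres, and patiently eliminated using the global formulas of Proposition~\ref{prop:maincountoct}. The final algebraic verification that the three surviving degrees of freedom simplify to the claimed closed-form expressions $X_k'(L)$ is best delegated to computer algebra, as was done implicitly in the proof of Proposition~\ref{prop:maincountoct}.
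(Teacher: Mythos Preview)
Your proposal is correct and follows essentially the same approach as the paper: a sphere-by-sphere count rooted at $L\in\cL_2^1$, with free parameters $\alpha_i',\beta_i',\gamma_i'$ introduced at the outer spheres and then eliminated using the $|\cL_i^j|$ formulae from Proposition~\ref{prop:maincountoct} together with~(\ref{eq:octformulae}), leaving $X_1'(L),X_2'(L),X_3'(L)$ as the surviving nonnegative integers (the paper identifies them explicitly as $\alpha_3',\alpha_4',\beta_3'$). Your identification of the $(2,4)$ exclusion via the vanishing denominator and your local analysis near $L$ are exactly what is needed.
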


\begin{proof} This is analogous to the proof of Proposition~\ref{prop:maincountoct}, and we only sketch the proof. One begins the count at $ L\in \mathcal{L}_2^1$, and computes the point sets $|\mathcal{P}_i^j|$. By counting, and using domesticity, we have
$|\mathcal{P}_0|=1+a_2+a_4+a_6+\alpha_1'$, $|\mathcal{P}_2^1|=s(a_1+a_3+a_5)-(a_2+a_4+a_6)$, $|\mathcal{P}_2^2|=2\alpha_2'$, $|\mathcal{P}_4^1|=(t-a_1+ta_2-c_3+ta_4-a_5)s+s\alpha_1'+\beta_1'$, $|\mathcal{P}_4^2|=(s-1)\alpha_2'+\beta_2'$, $|\mathcal{P}_4^3|=3\alpha_3'+\beta_3'$, $|\mathcal{P}_6^1|=(sa_1+sa_3-a_2-a_4)st+\gamma_1'$, $|\mathcal{P}_6^2|=\gamma_2'$, $|\mathcal{P}_6^3|=2(s-1)\alpha_3'+\gamma_3'$, $|\mathcal{P}_6^4|=4\alpha_4'+2\beta_4'+\gamma_4'$, and 
\begin{align*}
|\mathcal{P}_8|&=s^2t(1+t-a_1+ta_2-a_3)+(s-1)(\alpha_1'+\alpha_2'+\alpha_3')+(s-2)\alpha_4'\\
&\quad+(t-1)s(\alpha_1'+\alpha_2')+(t-2)s\alpha_3'+2(s-1)\alpha_4'+(s-1)(\beta_1'+\beta_2'+\beta_3')+(s-2)\beta_4'
\end{align*}
for integers $\alpha_i',\beta_i',\gamma_i'\geq 0$ with $\sum\alpha_i'=s^2t^2$, $\sum\beta_i'=(t-a_1)s^2t^2+(s-1)t(\alpha_1'+\alpha_2'+\alpha_3')+(s-2)t\alpha_4'$, and $\sum\gamma_i=(sa_1-a_2)s^2t^2+2s(t-1)\alpha_4'$. One now computes the nonnegative integers $\alpha_i',\beta_i',\gamma_i'$ by using the formulae for $|\mathcal{L}_i^j|$ in Proposition~\ref{prop:maincountoct} and the equations~(\ref{eq:octformulae}). The formulae for $X_1'( L),X_2'( L)$ and $X_3'( L)$ are the formulae for $\alpha_3',\alpha_4'$ and $\beta_3'$ respectively. Note that the $(2,4)$ case needs to be excluded, for in this case the denominator of $X_1'( L)$ is zero.
\end{proof}

\begin{prop}\label{prop:octproof4} There are no exceptional domestic collineations for generalised octagons with parameters $(s,t)=(4,2)$ or $(s,t)=(2,4)$.
\end{prop}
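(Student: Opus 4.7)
The plan is a case analysis on the shape of the fixed element tree $T$, since the diameter restrictions of Propositions~\ref{prop:octproof1}--\ref{prop:octproof3} all require $s,t\geq 3$ and do not apply in the parameter ranges $(4,2)$ or $(2,4)$. By Lemma~\ref{lem:Foct} and Lemma~\ref{lem:Goct}, $T$ has both fixed points and fixed lines and diameter between~$1$ and~$8$, so in principle every rooted tree of depth at most~$8$ in the incidence graph is a candidate. The two driving ingredients are the counting formulae of Propositions~\ref{prop:maincountoct} and~\ref{prop:maincountoct2} together with the divisibility conditions of Lemma~\ref{lem:div}.

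For $(s,t)=(4,2)$ the denominator $2(s+t)(st-2s-t)=-24$ appearing in Proposition~\ref{prop:maincountoct2} is nonzero, so that proposition applies directly. For $(s,t)=(2,4)$ that denominator vanishes, but the dual denominator $2(s+t)(st-2t-s)=-24$ is nonzero, so the (promised) dual of Proposition~\ref{prop:maincountoct2} applies instead. Thus in both parameter pairs we have at our disposal the nonnegativity conditions $X_i(p_0)\geq 0$, $X_j(L_0)\geq 0$, $X'_k(L)\geq 0$ and $X'_\ell(p)\geq 0$; moreover Lemma~\ref{lem:div} pins $y\bmod 1440$ as an explicit function of $|\mathcal{P}_0|$, $|\mathcal{L}_0|$ and $x$, and forces $2t\mid x$, so only a handful of residues are open.

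For each admissible rooted shape of $T$, I parametrise the branching by the integers $a_0,\ldots,a_8$, substitute into the nonnegativity formulae above, and reroot $T$ at each fixed element in turn, exactly as in the proofs of Propositions~\ref{prop:octproof1}--\ref{prop:octproof3}, to generate further simultaneously valid inequalities. Together with the global point/line sum constraints, the elementary bounds of the form $a_{k+1}\leq s\cdot a_k$ or $a_{k+1}\leq t\cdot a_k$, and the divisibility from Lemma~\ref{lem:div}, the resulting linear system in $x$, $y$ and the $a_k$ is highly overdetermined and in every case will collapse to a contradiction (typically by pinning $x$ and $y$ to a unique value that then violates one of the remaining $X_i$, $X'_i$ or the eigenvalue congruences).

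The hard part will be the small-diameter configurations (diameter $1$ or $2$), where rerooting gains essentially no new information; there the eigenvalue formulae~(\ref{eq:noct1}), applied directly to the $|\mathcal{L}_i^j|$ expressions from Proposition~\ref{prop:maincountoct}, are expected to deliver the contradictions, in the same spirit as in the proof of Proposition~\ref{prop:octproof3}. Once every rooted configuration of $T$ is eliminated for both $(s,t)=(4,2)$ and $(s,t)=(2,4)$, the proposition follows.
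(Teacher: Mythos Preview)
Your overall plan matches the paper's approach: a case analysis on the diameter of the fixed element tree, driven by the counting formulae of Proposition~\ref{prop:maincountoct}, rerooting, and the eigenvalue congruences. So strategically you are on the right track. But as written this is a strategy outline, not a proof, and there are two specific ideas in the paper's argument that your plan does not make visible and without which the computations do not close up cleanly.

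First, for $(s,t)=(4,2)$ the parameter $t=2$ means that any fixed point with two fixed lines through it automatically has all $t+1=3$ lines through it fixed. This immediately forces $x=|\mathcal{L}_2^2|=0$ as soon as the tree has diameter at least~$4$ (via Lemma~\ref{lem:octP_2}), and more importantly it makes Corollary~\ref{cor:y} applicable at many roots, so that $y$ is \emph{determined} (not just congruent modulo~$1440$) by the tree data $a_1,\ldots,a_8$. The paper exploits this repeatedly: in the diameter~$8$ case one reroots at two different vertices, obtains two closed-form expressions for $y$, and equates them to collapse the parameters. Your proposal treats $x$ and $y$ as free variables constrained only by inequalities and a single congruence from Lemma~\ref{lem:div}; in practice that is not tight enough, and the argument stalls without the exact determination coming from Corollary~\ref{cor:y}.

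Second, you place Proposition~\ref{prop:maincountoct2} at the centre of your plan, but in the paper it is used only once, to finish the diameter~$1$ case after the eigenvalue formulae have already pinned $y=320$ and $x=0$. The workhorses throughout are Corollary~\ref{cor:y} (for exact values of $y$) and the explicit integers $n_1,n_2$ from~(\ref{eq:n1})--(\ref{eq:n2}) (which give sharp congruences on $f_1,f_2,f_3$), not the $X'_k$ inequalities. If you rewrite your plan around Corollary~\ref{cor:y} and the integrality of $n_1,n_2$, the case analysis becomes finite and each branch closes with a short explicit computation; as it stands, your proposed system of inequalities is not visibly overdetermined and the phrase ``will collapse to a contradiction'' is doing work that has not been justified.
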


\begin{proof} We will prove every statement and its dual, and so we may assume that $(s,t)=(4,2)$. If the fixed element tree has diameter at least~$4$ then Lemma~\ref{lem:octP_2} gives $x=0$ (because there is at least one fixed  point with $2$ of the $3$ lines through it fixed, and thus all lines through this point are fixed).

Suppose that the fixed element tree $T$ has diameter~$7$ or~$8$. As in Proposition~\ref{prop:octproof1}, diameter~$7$ is impossible and so $T$ has diameter~$8$. Let $x_0$ be the unique centre of~$T$, and let $f_k=|\Gamma_k(x_0)\cap T|$. Thus $f_1,f_2,f_3,f_4\geq 2$, and $f_k=0$ for $k>4$. Moreover, by Lemma~\ref{lem:Xgen} we have $f_4=sf_3$ in the case that $x_0$ is a point, and $f_4=tf_3$ in the case that $x_0$ is a line.

Suppose first that $x_0=p_0$ is a point. Then $f_1=t+1=3$ (because fixing $2$ of the $3$ lines through~$p_0$ is impossible), and from Corollary~\ref{cor:y} we compute $y=32(12+f_2-f_3)$. Then observe that $X_4(p_0)=32(12-3f_2+f_3)=-X_6(p_0)$, and so $f_3=3(f_2-4)$. Using these values in the formula (\ref{eq:n1}) gives $n_1=\frac{62}{5}-2f_2$, a contradiction. 

Now suppose that $x_0= L_0$ is a line. Let $ L$ be a fixed line at distance $4$ from $ L_0$. Let $ L_0\bI p_1\bI  L_2\bI p_3\bI  L$ be the geodesic from $ L_0$ to $ L$. Since $2$ of the $3$ lines through $p_3$ are fixed, necessarily all lines through $p_3$ are fixed. Similarly, all lines through $p_1$ are fixed. Let 
\begin{align*}
\mathcal{U}&=\{p\in T\mid p\bI L_2\textrm{ and }p\neq p_1,p_3\}\\
\mathcal{V}&=\{ L'\in T\mid  L'\bI p_1\textrm{ and } L'\neq  L_0\}\\
\mathcal{W}&=\{p\in T\mid p\bI L'\textrm{ for some $ L'\in\mathcal{V}$, and $p\neq p_1$}\}.
\end{align*}
Then $|\mathcal{V}|=1$. Let $u=|\mathcal{U}|$ and $w=|\mathcal{W}|$. Relative to the new root $ L$, in the notation of (the dual of) Proposition~\ref{prop:maincountoct} we have $a_1=1$, $a_2=t$, $a_3=u+1$, $a_4=t(u+1)$, $a_5=f_1+w-1$, $a_6=f_2-t-tw$, $a_7=f_3-u-w-1$, and $a_8=t(f_3-u-w-1)$. By Corollary~\ref{cor:y} we compute
$$
y=16(27-2f_1+4f_2-3f_3+9(u+w))/5.
$$
On the other hand, relative to the root $p_1$ we have $a_1=t+1$, $a_2=f_1+u+w$, $a_3=f_2+tu+tw$, $a_4=f_3-u-w-1$, $a_5=t(f_3-u-w-1)$, $a_6=a_7=a_8=0$, and by Corollary~\ref{cor:y} we compute
$$
y=32(21+f_1-5f_2+3f_3-12(u+w)).
$$
From these two expressions for $y$ we deduce that
$$
u+w=(21+4f_1-18f_2+11f_3)/43.
$$
Note that $f_2$ is even (for each point of $T$ in $\Gamma_1( L_0)$ has either $0$ or $2$ neighbours in $\Gamma_2( L_0)\cap T$) and that $f_1\neq 4$. We also have $2\leq f_1\leq 5$, $2\leq f_2\leq 2f_1$, and $2\leq f_3\leq 4f_2$. Then it is elementary that the only values of $f_1,f_2,f_3$ that make the right hand side of the formula for $u+w$ an integer are $(f_1,f_2,f_3)=(5,6,10), (5,8,25), (5,10,40)$. In these cases $f_1=5=s+1$, and so by Corollary~\ref{cor:y} we compute a third formula for $y$:
$$
y=32(13f_2-3f_3-10)/7.
$$
In the cases $(f_1,f_2,f_3)=(5,6,10),(5,8,25)$ this formula fails to give an integer, leaving only the case $(f_1,f_2,f_3)=(5,10,40)$. But then the fixed element tree equals the ball of radius $4$ around~$ L_0$, and so $\mathcal{P}_8=\emptyset$ and the automorphism is not exceptional domestic. 

Suppose that the fixed element tree has diameter between~$3$ and~$6$. Choose a root $x_0$ of the fixed element tree $T$ so that $f_1,f_2,f_3\neq 0$ and $f_k=0$ for $k>3$, where $f_k=|\Gamma_k(x_0)\cap T|$. Suppose first that $x_0=p_0$ is a point. Then $f_1=1$ or $f_1=3$. Suppose that $f_1=1$. Then $(f_2,f_3)=(1,2),(2,2),(2,4),(4,2),(4,4),(4,6)$. Let $ L\in T$ be a line at distance $3$ from the root~$p_0$. Let $p_0\bI L_1\bI p_2\bI L$ be a the geodesic from $p_0$ to $ L$. Then all lines through $p_2$ are fixed (since $ L_1$ and $ L$ are fixed), and so $x=0$. Relative to the new root $ L$ we have $a_1=1$, $a_2=2$, $a_3=f_2$, $a_4=f_3-2$, and $a_5=a_6=a_7=a_8=0$. From Corollary~\ref{cor:y} we compute $y=32(14+5f_2-f_3)/5$. Only the cases $(f_2,f_3)=(2,4),(4,4)$ make this an integer (with $y=4,6$ respectively), and in these last cases we compute $X_3(p_0)<0$, a contradiction.  

Now suppose that $f_1=3$. Then $x=0$ and from Corollary~\ref{cor:y} we have $y=32(12+f_2-5f_3)$. Then
$
n_1=(30+9f_2-49f_3)/15
$
(see (\ref{eq:n1})). Thus if $f_2=2,4,6,8,10,12$ then $f_3=12,9,6,3,0,12\mod 15$, respectively. But it is clear geometrically that $f_3=0\mod 2$, and so only the $f_2=2,6,10,12$ cases remain. These cases all give either $X_3(p_0)<0$ or $X_4(p_0)<0$, ruling out all cases.

Now suppose that $x_0= L_0$ is a line. We may assume that the diameter of~$T$ is exactly~$6$ (for otherwise we may shift the root $x_0$ and apply the above argument). Thus $f_1,f_2,f_3\geq 2$. Let $p$ be a fixed point at distance~$3$ from the centre~$ L_0$, and let $ L_0\bI p_1\bI L_2\bI p$ be the geodesic from $ L_0$ to $p$. Then all lines through $p_1$ are fixed (since $ L_0$ and $ L_2$ are fixed). Relative to the root $p_1$ we have $a_1=3$, $a_2=f_1-1+u$, $a_3=f_2-2$, $a_4=f_3-u$ and $a_5=a_6=a_7=a_8=0$, where $u=|\mathcal{U}|$, where $\mathcal{U}$ is the set of all $p'\in T$ such that the geodesic from $ L_0$ to $p'$ passes through $p_1$. Then by Corollary~\ref{cor:y} we have $y=32(21+f_1-5f_2+f_3)$ (independent of $u$), and the formula (\ref{eq:n1}) for $n_1$ gives $9f_1+11f_2+9f_3=1\mod 15$. Since $f_2$ is even, and since $f_1\neq 4$, the only values of $(f_1,f_2,f_3)$ which satisfy this congruence and give $y\geq 0$ are $(f_1,f_2,f_3)=(2,2,4), (3,2,3), (3,2,8),(5,2,6), (5,8,17), (5,8,22), (5,8,27), (5,8,32)$. The first  case gives $X_4( L_0)<0$, and the remaining cases give $X_2( L_0)<0$.

Suppose that $T$ has diameter~$2$. Let $x_0$ be the unique centre of~$T$, and let $f_1=|\Gamma_1(x_0)\cap T|$. If $x_0=p_0$ is a point then $f_1=3$ and so $x=0$. By Corollary~\ref{cor:y} we compute $y=384$. But then $X_6(p_0)=-384<0$. If $x_0= L_0$ is a line then $f_1=2,3,5$. If $f_1=2$ then by Lemma~\ref{lem:div} we have $y=1360\mod 1440$. But by the formula for $|\mathcal{L}_6^4|$ we have $y\leq 540$. A similar argument applies for $f_2=3$. If $f_2=5$ then $y=160\mod 1440$, and so $y=160$. But then $X_2( L_0)<0$.

Finally, suppose that $T$ has diameter~$1$, and so there is a unique fixed chamber $\{p_0, L_0\}$. Then by (\ref{eq:n1}) and (\ref{eq:n2}) we have $n_1=(y-320)/45$ and $n_1=-(5y+704)/144$, and so $y\equiv 320\mod 720$. Since $|\mathcal{L}_6^4|=8(536-3x-y)/3$ we have $y=320$. Then $X_4(p_0)=-3x/2$, and so $x=0$. Let $ L\in \mathcal{L}_2^1$ be a line through $p_0$. Then $X_2'( L)=-64<0$, a contradiction. 
\end{proof}

\begin{prop}\label{prop:octproof5} There are no exceptional domestic collineations for generalised octagons with parameters $(s,t)=(6,3)$ or $(s,t)=(3,6)$.
\end{prop}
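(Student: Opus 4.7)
The plan is to mirror the case analysis of Proposition~\ref{prop:octproof4}, eliminating the possible diameters of the fixed element tree $T$ one at a time. By Propositions~\ref{prop:octproof1}, \ref{prop:octproof2} and~\ref{prop:octproof3}, any exceptional domestic collineation $\theta$ for parameters $(s,t)\in\{(6,3),(3,6)\}$ must have fixed element tree $T$ whose diameter $d$ satisfies $3\le d\le 6$. Because each claim can be proved together with its dual, I may assume throughout that $(s,t)=(6,3)$ while allowing the centre of $T$ to be either a point or a line, and treat the four cases $d=3,4,5,6$ separately.

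In each case I will choose a convenient root $x_0$ of $T$ (the unique centre when $d$ is even, one end of the central edge when $d$ is odd), record the shape parameters $a_k=|\Gamma_k(x_0)\cap T|$, and combine the following ingredients: the explicit formulas of Proposition~\ref{prop:maincountoct} expressing every $|\mathcal{L}_i^j|$ in terms of the $a_k$ and the two free parameters $x=|\mathcal{L}_2^2|$ and $y=|\mathcal{L}_4^3|$; the nonnegativity of the integers $X_1(x_0),\ldots,X_6(x_0)$, and of $X_1'(L),X_2'(L),X_3'(L)$ from Proposition~\ref{prop:maincountoct2} applied at a suitable $L\in\mathcal{L}_2^1$ incident with a fixed point; the modular constraints of Lemma~\ref{lem:div} on $x$ and $y$ (for $t=3$ yielding congruences modulo $3240$ for $y$ and divisibility of $x$ by $6$), together with the integrality of $n_1$ and $n_2$ in equations~\eqref{eq:n1}--\eqref{eq:n2}; and Lemma~\ref{lem:octP_2}, which forces $x=0$ whenever some fixed point has all lines through it fixed (or dually).

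A subtlety specific to $(s,t)=(6,3)$ is that the first part of Corollary~\ref{cor:y} is degenerate here: when $a_1=t+1$ it does not determine $y$ but instead imposes only the linear shape constraint $\sum_{k=0}^{8}p_k(6,3)a_k=0$. I will compensate by re-rooting $T$ at a second, carefully chosen fixed element (as in the proof of Proposition~\ref{prop:octproof4}) to extract an independent expression for $y$ from the second part of Corollary~\ref{cor:y}, then equate the two and substitute back into the modular conditions. The main obstacle will be the larger diameters $d=5,6$, where $T$ branches at several levels and one must introduce auxiliary branch counts $u,v,w$ analogous to those in Propositions~\ref{prop:octproof1} and~\ref{prop:octproof4}; these must be bounded using the local valencies $s+1=7$ and $t+1=4$, together with the requirement from Lemma~\ref{lem:Xgen} that at any fixed element incident with two fixed neighbours the entire neighbourhood is fixed. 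Once $u,v,w$ are bounded, the simultaneous modular, integrality and nonnegativity conditions should leave only finitely many candidate tuples $(a_0,\ldots,a_8,u,v,w,x,y)$, each of which will be ruled out by direct substitution producing either a non-integer $n_1$ via~\eqref{eq:n1} or a negative value of some $X_i(x_0)$ or $X_i'(L)$, exactly as in the $(4,2)$ analysis of Proposition~\ref{prop:octproof4}.
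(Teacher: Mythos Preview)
Your plan is essentially the paper's own route: restrict to diameter $3\le d\le 6$, root $T$ suitably, feed the shape numbers $a_k$ into Proposition~\ref{prop:maincountoct} and Proposition~\ref{prop:maincountoct2}, exploit the $(6,3)$-degeneracy of Corollary~\ref{cor:y}(1) by re-rooting to get an independent formula for $y$, impose the congruence of Lemma~\ref{lem:div}, and eliminate the finite residual list by direct check (the paper does this with short computer searches). Two small corrections: Proposition~\ref{prop:octproof2} requires $s,t\ge 4$ and does not apply here---only Propositions~\ref{prop:octproof1} and~\ref{prop:octproof3} are needed to pin down $3\le d\le 6$; and Lemma~\ref{lem:Xgen} does \emph{not} say that two fixed neighbours force the whole neighbourhood---the principle you want (that one cannot fix exactly $t$ of the $t+1$ lines through a fixed point) is just the pigeonhole fact that a permutation of $t+1$ objects fixing $t$ of them fixes the last one.
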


\begin{proof}
By Propositions~\ref{prop:octproof1} and~\ref{prop:octproof3} we may assume that the diameter of $T$ is between $3$ and~$6$. Since the $(s,t)=(6,3),(3,6)$ cases are relatively uninteresting in the sense that it is widely believed that no octagon exists with these parameters, we shorten the exposition here by appealing to basic computer searches at a few points in the argument.

Choose a root $x_0$ of the fixed element tree $T$ so that $f_1,f_2,f_3\neq 0$ and $f_k=0$ for $k>3$, where $f_k=|\Gamma_k(x_0)\cap T|$. By considering the the possibilities of $x_0$ being a point or a line, we may assume that $(s,t)=(6,3)$. 

Suppose that $x_0=p_0$ is a point. Suppose first that $f_1\neq 4$ and so $1\leq f_1\leq 2$, and there is a line $ L\in \mathcal{L}_2^1$ through~$p_0$. Then
\begin{align*}
X_3(p_0)&=(288-936f_1+360f_2-288f_3+2x+3y)/3\\
X_1'( L)&=(-360+1332f_1-288f_2+36f_3-10x-3y)/6.
\end{align*}
The formula for $|\mathcal{L}_6^4|$ gives $x\leq 1064$ and $y\leq 4788$, and Lemma~\ref{lem:div} gives
\begin{align}\label{eq:yyt}
y\equiv 72(f_1+f_3)+2340(1+f_2)+270x+828\mod 3240,
\end{align}
which limits $y$ to 1 or 2 values for each given $f_1,f_2,f_3,x$. It is now routine to check by computer that for $x\leq 1064$ (with $x$ divisible by~$6$, see Lemma~\ref{lem:div}), $1\leq f_1\leq 2$, $1\leq f_2\leq sf_1$ and $1\leq f_3\leq tf_2$ that there are no solutions satisfying $X_3(p_0)\geq0$, $X_1'( L)\geq0$, and~(\ref{eq:yyt}).

Thus we have $f_1=4$, and so $x=0$. Then by Corollay~\ref{cor:y} we compute $f_3=(5f_2-48)/4$. Therefore $f_2$ is divisible by $4$, and $f_2\geq 10$. Thus $f_2=12,16,20,24$ are the only possibilities, with $f_3=3,8,13,18$ respectively. For each possibility we compute $y=2592\mod 3240$ (using Lemma~\ref{lem:div}), and so $y=2592$. The cases $(f_2,f_3)=(20,13),(24,18)$ give $X_4(p_0)<0$. Thus the only surviving cases are $(f_1,f_2,f_3)=(4,12,3),(4,16,8)$ with $x=0$ and $y=2592$. We now eliminate these final cases. Let $ L_0'$ be a fixed line in $\Gamma_3(p_0)$, and let $u,v,w$ be as in~(\ref{eq:uvw}). Thus $0\leq v\leq 5$. In the case $(f_1,f_2,f_3)=(4,12,3)$ we compute $X_4( L_0)=-6(156+93u-9v+w)<0$, and in the case $(f_1,f_2,f_3)=(4,16,8)$ we compute $X_4( L_0)=-6(150+93u-9v+w)<0$.

Now suppose that $x_0= L_0$ is a line. We may assume that the diameter of $T$ is precisely~$6$ (for otherwise the above case applies by shifting the root of the tree). Thus $ L_0$ is the unique centre of $T$, and we have $f_1,f_2,f_3\geq 2$. Suppose first that $f_1\neq 7$. Suppose further that no point in $\Gamma_1( L_0)\cap T$ is ideal, and that no line in $\Gamma_2( L_0)\cap T$ is full (we will call a point \textit{ideal} if all lines through it are fixed, and we will call a line \textit{full} if all points on it are fixed). Let $p_0'$ be a fixed point at distance $3$ from $ L_0$, and let $ L_0\bI p_1\bI  L_2\bI p_0'$ be the geodesic from $ L_0$ to $p_0'$. Let
\begin{align}\label{eq:uvw2}
\begin{aligned}
\mathcal{U}&=\{p \in T\mid p\bI  L_2\textrm{ and }p\neq p_0',p_1\}\\
\mathcal{V}&=\{ L\in T\mid  L\bI p_1\textrm{ and } L\neq  L_0, L_2\}\\
\mathcal{W}&=\{p\in T\mid p\bI  L\textrm{ for some $ L\in \mathcal{V}$, and }p\neq p_1\}.
\end{aligned}
\end{align}
Let $u=|\mathcal{U}|$, $v=|\mathcal{V}|$, and $w=|\mathcal{W}|$. Furthermore, suppose that we have chosen $p_0'$ amongst the points in $\Gamma_3( L_0)\cap T$ such that $u$ is maximal. Therefore $f_3\leq (u+1)f_2$.
Then
$$
X_1(p_0')=(315-18f_1-18f_2+9f_3-189u+162v-27w-x)/3.
$$
Since no point in $\Gamma_1( L_0)\cap T$ is ideal we have $v=0$, and hence $w=0$, and also $f_2\leq f_1$. Since no line in $\Gamma_2( L_0)\cap T$ is full we have $f_3\leq 4 f_2$. It follows that
$
X_1(p_0')\leq(315-189u-x)/3.
$
Therefore $u=0$ or $u=1$. If $u=0$ then $f_3\leq f_2$, and if $u=1$ then $f_3\leq 2f_2$. Note also that $x\leq 315$. Furthermore, from the formula for $|\mathcal{L}_6^4|$ we have $y\leq 4775$.

Let $p\in \mathcal{P}_2^1$ be a point through $ L_0$ (such a point exists by assumption), and let $ L\in \mathcal{L}_2^1$ be a line through~$p_0'$ (such a line exists because $p_0'$ is an end point of the tree). We compute
\begin{align*}
X_1( L_0)&=(-1260+936f_1-1332f_2+288f_3-5x+3y)/12\\
X_3( L_0)&=(1260-936f_1+1332f_2-288f_3+5x+3y)/6\\
X_3'(p)&=(180-180f_1+144f_2-18f_3+5x+2y)/2\\
X_1'( L)&=(756+36f_1-72f_2-288u-10x-3y)/6
\end{align*}
(recall that $v=w=0$ and $u=0,1$ in the formula for $X_1'( L)$). We have 
\begin{align}\label{eq:ycong}
y\equiv 72(f_2+1)+2340(f_1+f_3)+270x+828\mod 3240.
\end{align}
It is now routine to check on a computer that for $0\leq x\leq 315$ (with $x$ divisible by~$6$), $2\leq f_1\leq 5$, $2\leq f_2\leq f_1$, $2\leq f_3\leq 2f_2$, $y\leq 4775$ there are no solutions for $(f_1,f_2,f_3,x,y)$ satisfying (\ref{eq:ycong}) with $X_1( L_0),X_3( L_0),X_3'(p),X_1'( L)\geq 0$ (in this search, use $u=0$ in the range $2\leq f_3\leq f_2$, and $u=1$ in the range $f_2<f_3\leq 2f_2$).

Thus either $f_1=7$ (and so the central line is full), or at least one point in $\Gamma_1( L_0)\cap T$ is ideal, or at least one line in $\Gamma_2( L_0)\cap T$ is full. Suppose that $f_1=7$, and so $x=0$. From Corollary~\ref{cor:y} to compute $y=12(-147+37f_2-8f_3)$. We also have $y\equiv 1080+72f_2+2340f_3\mod 3240$, and thus $237+239f_2+203f_3\equiv 0\mod 270$. It is elementary that the only values of $f_2,f_3$ with $1\leq f_2\leq 3f_1$ and $1\leq f_3\leq 6f_2$ that satisfy this congruence, and give $y\geq 0$, are $(f_2,f_3)=(10,11),(17,40),(21,18)$. The second and third cases give $X_4( L_0)<0$, and so the only surviving case is $(f_1,f_2,f_3)=(7,10,11)$ with $x=0$ and $y=1620$. We now eliminate this case. Let $p_0$ be a point in $\Gamma_3(p_0)\cap T$, and let $u,v,w$ be as in the previous arguments. Since $f_3>f_2$ we may choose $p_0$ so that $u\geq 1$. Since $v\leq 2$ we have $X_4(p_0)=18(4-25u+6v-w)\leq -18(9+w)<0$.

Now suppose that there is an ideal point $p_1\in\Gamma_1( L_0)\cap T$. Then $x=0$ and $f_2\geq 3$. 
We may assume that $ L_0$ is not full (for otherwise the above argument applies). Let $\mathcal{Z}$ be the set of points in $\Gamma_3( L_0)\cap T$ for which the geodesic from $p$ to $ L_0$ passes through~$p_1$, and let $z=|\mathcal{Z}|$. Relative to the root $p_1$ we have $a_1=4$, $a_2=f_1+z-1$, $a_3=f_2-3$ and $a_4=f_3-z$. Then by Corollary~\ref{cor:y} we deduce that $z=(82-10f_1+8f_2-f_3)/9$, and hence $f_1+f_2+f_3\equiv 1\mod 9$. Furthermore we have $z>0$, 
for if not then $f_3=82-10f_1+8f_2$ and we calculate
$
X_2( L_0)=(1620-648f_1-324f_2-y)/4,
$
a contradiction since $f_1,f_2\geq 2$.

Thus, let $p_0'\in\mathcal{Z}$, and let $ L\in \mathcal{L}_2^1$ be a line through $p_0'$. Then we compute
$$
X_1'( L)=(318+12f_1-24f_2+6f_3-102u+6w-y)/2,
$$
where $u,v$ and $w$ are as in~(\ref{eq:uvw2}). Then $v=2$ by hypothesis (since $p_1$ is ideal). Thus, since $u\geq 0$ and $w\leq 6v=12$, we have
$$
X_1'( L)\leq(330+12f_1-24f_2+6f_3-y)/2.
$$
We compute
\begin{align*}
X_1( L_0)&=(-420+312f_1-444f_2+96f_3+y)/4\\
X_3( L_0)&=(420-312f_1+444f_2-96f_3+y)/2\\
X_4( L_0)&=(-420+2256f_1-1092f_2+96f_3-y)/2.
\end{align*}
A search shows that the only solutions $(f_1,f_2,f_3,y)$ with $2\leq f_1\leq 5$, $3\leq f_2\leq 3f_1$, $2\leq f_3\leq 6f_2$ with $f_1+f_2+f_3\equiv 1\mod 9$, and (\ref{eq:ycong}) (with $x=0$) which satisfy $X_1( L_0),X_3( L_0),X_4( L_0)\geq 0$ and $330+12f_1-24f_2+6f_3-y\geq 0$ are
$
(f_1,f_2,f_3,y)=(2,5,21,0)$, $(3,7,27,324)$, $(4,7,26,324)$ and $(5,12,47,324)$. The case $(f_1,f_2,f_3)=(2,5,21)$ is impossible because $f_2=3f_1-1$ is impossible. Consider the case $(f_1,f_2,f_3)=(3,7,27)$. Let the three points in $\Gamma_1( L_0)\cap T$ be $p_1,p_2$ and $p_3$. Then $p_1$ is ideal and (after renaming if necessary) $p_2$ is also ideal, and $p_3$ has exactly $7-2\times 3=1$ sibling $ L\in\Gamma_2( L_0)\cap T$. We have $z=9$, and so there are exactly $9$ points $p\in \Gamma_3( L_0)\cap T$ for which the geodesic from $p$ to $ L_0$ passes through $p_1$. Similarly, since $p_2$ is also ideal, there are exactly $9$ points $p\in\Gamma_3( L_0)\cap T$ for which the geodesic from $p$ to $ L_0$ passes through $p_2$. Thus there are exactly $27-2\times 9=9$ points $p\in\Gamma_3( L_0)\cap T$ for which the geodesic from $p$ to $ L_0$ passes through $p_3$. But this is impossible, because $p_3$ has exactly $1$ sibling $ L\in \Gamma_2( L_0)\cap T$, and then $ L$ has at most $3$ siblings in $\Gamma_3( L_0)\cap T$. The remaining 2 cases are impossible for analogous reasons. 

Finally, suppose that there is a full line $ L_0'\in \Gamma_2( L_0)\cap T$. Then $x=0$. Let $ L_0\bI p_1\bI L_0'$ be the geodesic from $ L_0$ to $ L_0'$. Let
\begin{align*}
\mathcal{V}&=\{ L\in T\mid  L\bI p_1\textrm{ and } L\neq  L_0, L_0'\}\\
\mathcal{W}&=\{p\in T\mid p\bI  L\textrm{ for some $ L\in \mathcal{V}$, and }p\neq p_1\}.
\end{align*}
Let $v=|\mathcal{V}|$ and $w=|\mathcal{W}|$. Note that $v=0$ or $v=2$, and if $v=0$ then $w=0$. Relative to the root $ L_0'$ we have $a_1=7$, $a_2=v+1$, $a_3=f_1-1+w$, $a_4=f_2-1-v$, and $a_5=f_3-6-w$. Then by Corollary~\ref{cor:y} we have 
$
y=12(-109-8f_1+f_2+f_3+36v-9w).
$
From this formula it follows that $v\neq 0$. For if $v=0$ then $w=0$, and so $f_2+f_3\geq 109$. But if $v=0$ then $f_2\leq f_1$ and so $f_3\leq 6f_2\leq 6f_1$, and so $f_2+f_3\leq 7f_1\leq 7\times 6=42$. Thus $v=2$, and so there is an ideal point in $\Gamma_1(p_0)\cap T$, and thus this case is eliminated by the previous argument.
\end{proof}

\section{Exceptional domestic collineations of Moufang hexagons}\label{sect:8}

In this section we prove Theorem~\ref{thm:5}. 

\begin{lemma}\label{lem:pldom}
Let $\theta$ be a domestic collineation of a generalised hexagon. If $\cP_4^2=\cP_4^3=\emptyset$ then $\theta$ is either point-domestic or line-domestic.
\end{lemma}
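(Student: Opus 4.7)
The plan is to argue by contradiction. Suppose $\theta$ is domestic with $\cP_4^2=\cP_4^3=\emptyset$, but $\theta$ is neither point-domestic nor line-domestic, so one may fix $p\in\cP_6$ and $L\in\cL_6$. The relations in~(\ref{eq:hexformulae}) immediately give $\cP_2^2=\cL_2^2=\cL_4^2=\cL_4^3=\emptyset$ as well, so the only strata that can carry elements are $\cP_0,\cP_2^1,\cP_4^1,\cP_6$ and their duals. The first task is to spell out the local structure at $p$ and $L$. Domesticity forces every line through $p$ into $\cL_4=\cL_4^1$; for such a line $L_1$, the geodesic $L_1\bI q_1\bI M(L_1)\bI q_3\bI L_1^\theta$ has $q_1\neq p$ (otherwise $p,p^\theta$ would share the fixed line $M(L_1)$), and a non-backtracking check then shows that every point on $L_1$ other than $q_1$ lies in $\cP_6$, while $q_1\in\cP_2^1$. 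Dually, every point $q\bI L$ lies in $\cP_4^1$, the geodesic $q\bI N_1(q)\bI q_m(q)\bI N_3(q)\bI q^\theta$ has fixed middle point $q_m(q)$, and every line through $q$ other than the distinguished $N_1(q)\in\cL_2^1$ is in $\cL_6$.

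Since $\{p,L\}$ cannot be a chamber mapped to opposite, $p\not\bI L$, so $d(p,L)\in\{3,5\}$. The case $d(p,L)=3$ is immediate: with geodesic $p\bI L_1\bI q\bI L$, the line $L_1$ must be in $\cL_4^1$ (as a line through $p\in\cP_6$) and simultaneously in $\cL_2^1\cup\cL_6$ (as a line through $q\in\cP_4^1$ with $q\bI L\in\cL_6$), which is impossible. So the real work is in the case $d(p,L)=5$, with geodesic $p\bI L_1\bI p_2\bI L_3\bI p_4\bI L$: here $L_1\in\cL_4^1$ and $p_4\in\cP_4^1$ as before. Since the points on $L_1\in\cL_4^1$ are either $q_1(L_1)\in\cP_2^1$ or lie in $\cP_6$, we have $p_2\in\cP_2^1\cup\cP_6$. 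The subcase $p_2\in\cP_6$ is killed exactly as in Case $d=3$: one would need $L_3$ to lie in both $\cL_4^1$ (from $p_2\in\cP_6$) and $\cL_2^1\cup\cL_6$ (from $p_4\in\cP_4^1$). So $p_2=q_1(L_1)\in\cP_2^1$, and the fixed middle $M(L_1)$ coincides with the fixed line $N:=N(p_2)$ through $p_2$ and $p_2^\theta$.

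Next, bounding $d(L',L'^\theta)\leq 4$ at any line $L'\bI p_2$ excludes $L_3\in\cL_6$, and combined with the $\cP_4^1$-constraint at $p_4$, this forces $L_3\in\cL_2^1$ with $L_3=N_1(p_4)$ and fixed middle point $q_m:=q_m(p_4)$. We have now produced a fixed line $N$ and a fixed point $q_m$ with $d(N,q_m)=3$, joined by the unique geodesic $N\bI p_2\bI L_3\bI q_m$ whose interior elements are not fixed. Since $\Gamma_\theta$ contains elements of both types, it is neither empty nor a set of mutually opposite elements, so by the general classification recalled in Section~\ref{sect:2} it is either a tree of diameter at most $6$ or a sub-generalised hexagon; in either case there must be a path in $\Gamma_\theta$ from $N$ to $q_m$. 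A path of length $3$ would have to coincide with the unique graph geodesic (by girth $12$ of the incidence graph), contradicting $p_2,L_3\notin\Gamma_\theta$; any longer path has odd length at least $5$, and combined with the $\Gamma$-geodesic of length $3$ produces a simple cycle of length at most $8$ in the incidence graph, contradicting girth $12$. This contradiction completes the proof.

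The main obstacle is the subcase $d(p,L)=5$ with $p_2\in\cP_2^1$: the earlier local contradictions are immediate, but here one has to extract the two fixed elements $N$ and $q_m$ from the interplay between the $\cL_4^1$-structure at $L_1$ and the $\cP_4^1$-structure at $p_4$, and then play this local data against both the classification of fixed element structures and the girth of the incidence graph.
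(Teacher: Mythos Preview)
Your proof is correct, but in the $d(p,L)=5$ subcase with $p_2\in\cP_2^1$ you work much harder than necessary. The paper dispatches the whole $d=5$ case by simply propagating the local constraints along the geodesic $p\bI M\bI q\bI N\bI r\bI L$: from $p\in\cP_6$ one gets $M\in\cL_4^1$, hence $q\in\cP_2^1\cup\cP_6$, hence $N\in\cL_0\cup\cL_4^1$, hence $r\in\cP_0\cup\cP_2^1\cup\cP_6$, hence $L\in\cL_0\cup\cL_2^1\cup\cL_4^1$, contradicting $L\in\cL_6$. No global structure of $\Gamma_\theta$ is needed.

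In fact you had this contradiction in hand and walked past it. Once $p_2\in\cP_2^1$, every line through $p_2$ lies in $\cL_0\cup\cL_4^1$; combined with your own observation (from $p_4\in\cP_4^1$) that $L_3\in\cL_2^1\cup\cL_6$, this already forces $L_3$ into the empty intersection. Instead you used only half of the $p_2$-constraint (that $L_3\notin\cL_6$), concluded $L_3\in\cL_2^1$, and then mounted a global argument using the classification of fixed element structures and the girth of the incidence graph. That argument is valid, but it is heavy machinery for a contradiction that was local and immediate.

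One minor point: you invoke the identities~(\ref{eq:hexformulae}) to obtain $\cL_4^2=\cL_4^3=\cP_2^2=\cL_2^2=\emptyset$, but those are stated for finite thick hexagons, whereas this lemma is applied in Section~\ref{sect:8} to infinite Moufang hexagons. The conclusions still hold via the elementary local observations preceding Proposition~\ref{prop:Agen} (which need only thickness), so the reference should be to those rather than to the numerical identities.
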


\begin{proof}
Suppose that $\cP_4^3=\cP_4^2=\emptyset$. It follows that $\cL_4^3$ and $\cL_4^2$ are also empty, and so up to duality we may assume that there is a point $p\in\cP_6$. Suppose that there is also a line $L\in\cL_6$. Then by domesticity $d(p,L)\neq 1$, and so $d(p,L)=3$ or $d(p,L)=5$. Suppose that $d(p,L)=3$, and let $p\bI M\bI q\bI L$ be the geodesic joining $p$ to $L$. Then $M\in\cL_4^1$ (by domesticity and the assumption that $\cP_4^2$ and $\cP_4^3$ are empty). Then $q\in\cP_2^1\cup\cP_6$, and so $L\in\cL_0\cup\cL_4^1$, a contradiction. Similarly, if $d(p,L)=5$ and if $p\bI M\bI q\bI N\bI r\bI L$ is the geodesic joining $p$ to $L$ then $N\in \cL_0\cup \cL_4^1$ and so $r\in\cP_0\cup\cP_2^1\cup\cP_6$, and thus $L\in\cL_0\cup\cL_2^1\cup\cL_4^1$, a contradiction.
\end{proof}

\begin{proof}[Proof of Theorem~\ref{thm:5}]
Let $\theta$ be an exceptional domestic collineation of a Moufang hexagon $\Gamma$ with both parameters infinite. By \cite[Theorem~2]{ron:80} we can choose the point set in such a way that, with the terminology of \cite[\S1.9]{mal:98}, all points are regular points. This means that given two opposite points $p,q$, the set of points collinear to $p$ and at distance 4 from $q$ is determined by any pair of its points. We will call such a set of points a \emph{hyperbolic line}, and we denote it by~$p^q$. 

The key observation for the proof is as follows: Suppose that the point $p$ is mapped onto an opposite point. The set of points collinear to $p$ and not mapped onto an opposite point is $p^{p^\theta}\cup ((p^\theta)^p)^{\theta^{-1}}$. Hence this set consists of the union of at least one and at most two hyperbolic lines. Consequently, if three points collinear with $p$ and contained in a hyperbolic line $H$ are not mapped onto opposite points, then no point of the hyperbolic line $H$ is mapped onto an opposite point.

By Lemma~\ref{lem:pldom} we may assume that either $\cP_4^3\neq\emptyset$ or that $\cP_4^2\neq\emptyset$. If $\cP_4^3\neq\emptyset$, choose $q\in\cP_4^3$ and choose a point $p_2\in\cP_6$ collinear with~$q$ (with the line $qp_2$ in $\cL_4^3$). Then choose $p_1\in\cP_6$ collinear with~$p_2$ and at distance~$4$ from~$q$, and choose $p\in\cP_6$ collinear with~$p_1$ and at distance~$6$ from~$q$. If $\cP_4^3=\emptyset$ then instead we choose $q\in\cP_4^2$, and then choose $p_2\in\cP_2^2$ collinear with $q$ (with the line $qp_2\in\cL_2^2$). Then choose $p_1\in\cP_6$ collinear with $p_2$ and at distance~$4$ from~$q$, and finally choose $p\in\cP_6$ collinear with $p_2$ and at distance~$6$ from~$q$. Note that in both cases we have $p,p_1\in\cP_6$ and~$q\in\cP_4$.

Since at most two lines through $q$ belong to $\mathcal{L}_4$ we can find a line $L\in \mathcal{L}_6$ through~$q$. Let $q_1$ be the unique point of $L$ not opposite $p$. By domesticity all points on $L$ belong to $\mathcal{P}_4$ and so in particular $q_1\in\mathcal{P}_4$. At most two lines through $q_1$ do not belong to $\mathcal{L}_6$, hence there are an infinite number of lines through $q_1$ in $\mathcal{L}_6\setminus\{L\}$. Let $\cR$ be the set of points on these lines belonging to~$q_1^{p_1}$.

Let $q_2$ be the unique point collinear to both $p$ and $q_1$. Since $p_1\in p^q\cap \mathcal{P}_6$ there are at most $2$ points in $p^q\setminus\{q_2\}$ that are not in~$\mathcal{P}_6$. Let $\mathcal{S}$ be the set of points in $p^q$ belonging to $\mathcal{P}_6$ (and so $\mathcal{S}$ is an infinite set). Since each of these points is at distance 4 from both $q_2$ and $q$, and these two points belong to $q_1^{p_1}$, every point in $\cR$ is at distance $4$ from every point of $\cS$. Given $y\in\cS$, let $\cA_{y}$ denote the set of lines at distance $3$ from $y$ and incident with a member of $\cR$.  Through each element of $\cR$ there are at most two lines that do not belong to $\mathcal{L}_6$. We may choose $y\in\cS$ such that $\cA_y$ contains at least~$3$ elements of $\cL_6$ (because for two different points $y_1,y_2$ of $\cS$ and one  point $x\in\cR$, the lines through $x$ at distance $3$ from $y_1,y_2$, respectively, differ). Thus there are three points $x_1,x_2,x_3\in\cR$ such that for $i=1,2,3$ the line $L_i$ through $x_i$ at distance 3 from $y$ belongs to $\mathcal{L}_6$. Let $z_i$ be the point on $L_i$ collinear to $y$, $i=1,2,3$. Then $z_i\in \mathcal{P}_4$, as $L_i\in \mathcal{L}_6$. Since $z_i\in y^{q_1}$, $i=1,2,3$, we deduce $y^{q_1}\subseteq \mathcal{P}_4$ (by the key observation above). But $p\in y^{q_1}\cap \mathcal{P}_6$, a contradiction
\end{proof}

\section{Anisotropic automorphisms}\label{sect:9}

An automorphism of a generalised polygon (and more generally of a twin building) is \textit{anisotropic} if it maps every chamber to an opposite chamber. There are plenty of examples of anisotropic automorphisms of infinite generalised polygons. For example, the duality of the real projective plane $\mathsf{PG}(2,\mathbb{R})$ given by $(a,b,c)\leftrightarrow [a,b,c]$ is anisotropic (here $(a,b,c)$ is a point of the projective plane in homogeneous coordinates, and $[a,b,c]$ is the line of the projective plane corresponding to the plane $ax+by+cz=0$ in $\mathbb{R}^3$). 

For another example, consider the split Cayley hexagon $\mathsf{H}(\mathbb{F})$. By the standard embedding the points of $\mathsf{H}(\mathbb{F})$ can be identified with the points of the parabolic quadric $\mathsf{Q}(6,\mathbb{F})$ in $\mathsf{PG}(6,\mathbb{F})$ with equation $X_0X_4+X_1X_5+X_2X_6=X_3^2$, and the lines of $\mathsf{H}(\mathbb{F})$ can be identified with the lines of $\mathsf{Q}(6,\mathbb{F})$ whose Grassmann coordinates satisfy $6$ explicit linear equations (see \cite[\S2.4.13]{mal:98}). The map $(X_0,X_1,X_2,X_3,X_4,X_5,X_6)\mapsto(-X_4,-X_5,-X_6,X_3,-X_0,-X_1,-X_2)$ preserves the  quadric and the equations amongst the Grassmann coordinates, and thus induces an involutory collineation $\theta$ of the hexagon~$\mathsf{H}(\mathbb{F})$. We claim that if $a^2+b^2=-1$ has no solution in $\mathbb{F}$ (for example, if $\mathbb{F}=\mathbb{R}$) then $\theta$ is anisotropic. A point $p$ is opposite $q$ (in both the quadric and the hexagon) if and only if $\langle p,q\rangle\neq0$, where $\langle\cdot,\cdot\rangle$ is the inner product associated to the quadratic form associated to the quadric.
Thus if $p$ and $p^{\theta}$ are not opposite, we have $-2X_0^2-2X_1^2-2X_2^2=2X_3^2$, a contradiction. Thus $\theta$ maps every point to an opposite point, and it follows that it maps every chamber to an opposite chamber.

The finite case is in stark contrast to this situation. In \cite[Remark~4.5]{TTM:13} it is shown that there exist quadrangles of order $(2^n-1,2^n+1)$ which admit anisotropic automorphisms, and so far these are the \textbf{only} known examples for finite polygons. In this section we show that no finite thick generalised hexagon or octagon admits an anisotropic automorphism (thus proving Theorem~\ref{thm:6}). In fact we prove quite a bit more. But first an elementary lemma.

\begin{lemma}\label{lem:fibonacci} Let $a,b\geq 1$ be integers. 
\begin{itemize}
\item[\emph{1.}] Suppose that $a$ divides $b^2+1$ and that $b$ divides $a^2+1$. If $a\leq b$ then $(a,b)=(F_{2n-1},F_{2n+1})$ for some $n\geq 1$, where $(F_k)_{k\geq 0}$ is the sequence of Fibonacci numbers.
\item[\emph{2.}] Suppose that $2a$ divides $b^2+1$ and that $b$ divides $2a^2+1$. If $b<\sqrt{2}a$ then $(a,b)=(P_{2n+1},P_{2n}+P_{2n-1})$ for some $n\geq 1$, and if $b>\sqrt{2}a$ then $(a,b)=(P_{2n-1},P_{2n}+P_{2n-1})$ for some $n\geq 1$, where $(P_k)_{k\geq 0}$ is the sequence of Pell numbers, with $P_0=0$, $P_1=1$ and $P_{k+2}=2P_{k+1}+P_k$. 
\end{itemize}
\end{lemma}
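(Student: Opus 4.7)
The plan is to convert the divisibility hypotheses in each part into a Markov-type Diophantine equation and then classify its positive-integer solutions by Vieta jumping (infinite descent).

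For Part 1, I would first observe that $\gcd(a,b)$ divides $a$ and also divides $b$, which itself divides $a^2+1$; hence $\gcd(a,b)\mid 1$, so $\gcd(a,b)=1$. Consequently $ab\mid(a^2+1)(b^2+1)=a^2b^2+(a^2+b^2+1)$, which gives $ab\mid a^2+b^2+1$. Setting $k:=(a^2+b^2+1)/(ab)\in\mathbb{Z}_{>0}$, the quadratic $a^2-kb\,a+(b^2+1)=0$ has Vieta partner $a^*=kb-a=(b^2+1)/a$, and symmetrically $b^*=ka-b=(a^2+1)/b$. When $a<b$ one checks $b^*\leq(a^2+1)/(a+1)\leq a$, so $(b^*,a)$ is a strictly smaller positive solution for the same $k$. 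Infinite descent reduces to the base case $a=b$, where $2a^2+1=ka^2$ forces $a=b=1$ and $k=3$. Thus $k=3$ throughout, and every solution with $a\leq b$ is generated from $(1,1)$ by the forward jump $(a,b)\mapsto(b,\,3b-a)$. Iterating $F_{m+1}=F_m+F_{m-1}$ twice yields the three-term recursion $F_{m+2}=3F_m-F_{m-2}$, so the orbit is exactly $(F_{2n-1},F_{2n+1})$, and Cassini's identity $F_{2n-1}F_{2n+1}-F_{2n}^2=1$ provides the converse verification that each such pair satisfies the divisibilities.

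For Part 2, the same strategy applies with a two-branch twist. From $2a\mid b^2+1$ one reads off $\gcd(2a,b)=1$, so $b$ is odd, $\gcd(a,b)=1$, and $(b^2+1)/2\in\mathbb{Z}$. Then $2ab\mid(2a^2+1)(b^2+1)$ and expansion yields $2ab\mid 2a^2+b^2+1$; set $k:=(2a^2+b^2+1)/(2ab)$. There are now two Vieta partners: from the quadratic in $a$ one gets $a^*=kb-a=(b^2+1)/(2a)$, and from the quadratic in $b$ one gets $b^*=2ka-b=(2a^2+1)/b$. A direct comparison shows $a^*<a\iff b^2<2a^2-1$ and $b^*<b\iff b^2>2a^2+1$, so at least one descent step always applies unless $b^2\in\{2a^2-1,\,2a^2+1\}$; combined with $\gcd(a,b)=1$ and $b$ odd, this pins the minimal solution to $(a,b)=(1,1)$ with $k=2$. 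Ascending from $(1,1)$ with $k=2$, the jumps alternate between $a$-moves and $b$-moves and produce the chain
\[
(1,1)\to(1,3)\to(5,3)\to(5,17)\to(29,17)\to(29,99)\to\cdots.
\]
The Pell recursion $P_{n+1}=2P_n+P_{n-1}$ together with its half-companion $Q_n:=P_n+P_{n-1}$ identifies the $a$-coordinates as $P_{2n\pm1}$ and the $b$-coordinates as $Q_{2n}=P_{2n}+P_{2n-1}$. The two members of each twin lie on opposite sides of the line $b=\sqrt{2}\,a$, so the inequality $b\lessgtr\sqrt{2}\,a$ selects the correct Pell index, matching the two cases of the statement; the Pell analogue of Cassini, $P_{m-1}P_{m+1}-P_m^2=(-1)^m$, gives the converse verification.

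The main obstacle I anticipate is the bookkeeping in Part 2: one must verify that the descent always terminates at $(1,1)$, that every ascent step flips the inequality $b\lessgtr\sqrt{2}\,a$, and that the two branches of the stated answer correspond exactly to the two parities of the Pell index. By contrast, Part 1 is a clean single-branch descent once the Markov equation $a^2+b^2+1=3ab$ has been established.
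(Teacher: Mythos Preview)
Your proposal is correct and follows essentially the same descent strategy as the paper: the paper's recursion $(a_k,b_k)=((a_{k-1}^2+1)/b_{k-1},\,a_{k-1})$ in Part~1 is exactly your $b^*$-jump, and in Part~2 the paper applies precisely your $a^*$- or $b^*$-move according to the sign of $b-\sqrt{2}a$, proving the alternation you flag as the main bookkeeping issue. The only cosmetic difference is that you package the argument via the Markov-type equations $a^2+b^2+1=3ab$ and $2a^2+b^2+1=4ab$ (so the invariant $k$ is made explicit), whereas the paper runs the descent directly on the divisibility hypotheses without naming~$k$.
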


\begin{proof}
1. To prove the first statement, let $a_0=a$ and $b_0=b$, and for $k\geq 1$ recursively define $a_k$ and $b_k$ by $a_k=(a_{k-1}^2+1)/b_{k-1}$ and $b_k=a_{k-1}$. Then $a_k$ divides $b_k^2+1$, and so $a_k$ and $b_k$ are coprime. It follows that $b_k$ divides $a_k^2+1$, because
$$
a_k^2+1=\frac{a_{k-1}^2(a_{k-1}^2+2)+(b_{k-1}^2+1)}{b_{k-1}^2}.
$$
Furthermore, by induction $a_k\leq b_k$ (with strict inequality if $a_k>1$). Therefore there is an index $N$ for which $a_N=1$, and then $b_N=1$ or $b_N=2$ (because $b_N$ divides $a_N^2+1=2$). If $b_N=2$ then we can apply the algorithm one more time, and so we may assume that $(a_{N},b_{N})=(1,1)$. We can now work backwards to recover $(a,b)$, and it follows (with the help of Cassini's identity) that $a=F_{2n-1}$ and $b=F_{2n+1}$ for some~$n$.

2. We first make the following observation. Suppose that $2a$ divides $b^2+1$ and 
that $b$ divides $2a^2+1$. We construct a new pair $(a',b')$ according to the rules: (1) If $b<\sqrt{2}a$ let $(a',b')=((b^2+1)/(2a),b)$, and (2) if $b>\sqrt{2}a$ let $(a',b')=(a,(2a^2+1)/b)$. It is simple to check that in both cases the new pair $(a',b')$ satisfies $2a'$ divides $b'^2+1$ and $b'$ divides $2a'^2+1$. We claim that if $(a,b)$ satisfies $b<\sqrt{2}a$ then $(a',b')$ satisfies $b'>\sqrt{2}a'$, and that if $(a,b)$ satisfies $b>\sqrt{2}a$ then $(a',b')$ satisfies $b'<\sqrt{2}a'$ (and so inductively continuing the process we alternate between cases (1) and (2)). The claim is easily verified for small values of $a$ and $b$, and so we may assume that $a> 1$ and $b> 4$. For example, suppose that $b<\sqrt{2}a$ and that also $b'^2<\sqrt{2}a'$. The inequality $b'^2<\sqrt{2}a'$ gives $b^2-\sqrt{2}ab+1>0$. Since $a>1$, the smaller of the two roots of this quadratic expression is smaller than $1$, and so we see that $b$ must be larger than the larger of the two roots. Thus $b>(a/\sqrt{2})(1+\sqrt{1-2a^2})$. Combining this with the inequality $b<\sqrt{2}a$ gives
$$
\bigg|\sqrt{2}-\frac{b}{a}\bigg|<\frac{1}{\sqrt{2}}\left(1-\sqrt{1-\frac{2}{a^2}}\right)<\frac{\sqrt{2}}{a^2}
$$
(where we use $\sqrt{1-x^2}>1-x^2$). However since $b>4$ we have $2a^2-b^2>4$ (because $2a^2-b^2=0$ is clearly impossible, and if $2a^2-b^2=1$ then since $b$ divides $2a^2+1$ we have that $b$ divides $b^2+2$, and so $b=1$ or $b=2$, and if $2a^2-b^2=2$ then $b=1$ or $b=3$, and similarly for $2a^2-b^2=3$). Thus 
$$
\bigg|\sqrt{2}-\frac{b}{a}\bigg|=\frac{|2a^2-b^2|}{a^2|\sqrt{2}+b/a|}>\frac{4}{a^2(\sqrt{2}+\sqrt{2})}=\frac{\sqrt{2}}{a^2},
$$
a contradiction. The argument is similar if we suppose that both $b>\sqrt{2}a$ and $b'>\sqrt{2}a'$, completing the proof of the claim.

We now inductively construct a sequence $(a_{k},b_{k})$ by setting $a_0=a$, $b_0=b$, and 
$$
(a_{k+1},b_{k+1})=\begin{cases}\left(\frac{b_k^2+1}{2a_k},b_k\right)&\textrm{if $b_k<\sqrt{2}a_k$}\\
\left(a_k,\frac{2a_k^2+1}{b_k}\right)&\textrm{if $b_k>\sqrt{2}a_k$.}
\end{cases}
$$
From the above observations we see that we alternate between the two cases, and moreover the ``size'' of the pair decreases at each step, in the sense that $\sqrt{2}a_{k+1}+b_{k+1}\leq\sqrt{2}a_k+b_k$ (with strict inequality if $a\geq 2$). Thus the process finally terminates at $(a_N,b_N)=(1,1)$. The lemma now easily follows by working backwards to recover $(a,b)$. 
\end{proof}

The following theorem implies Theorem~\ref{thm:6}, and is the main theorem of this section.

\begin{thm}\label{thm:rest} No duality of a thick generalised $2n$-gon is anisotropic. Moreover, if $\Gamma$ is a finite thick generalised $2n$-gon with parameters~$(s,t)$ then:
\begin{itemize}
\item[\emph{1.}] If $n=2$ then anisotropic collineations can only exist if $s$ and $t$ are coprime. 
\item[\emph{2.}] If $n=3$ then anisotropic collineations do not exist. Moreover, if a collineation maps all lines (or all points) to distance $4$ and $6$ only then $s$ and $t$ are coprime.\item[\emph{3.}] If $n=4$ then anisotropic collineations do not exist. Moreover, no collineation maps all lines (or all points) to distance $6$ and $8$ only, and if a collineation maps all lines (or all points) to distance $4,6$ and $8$ only then $s$ and $t$ are coprime.
\end{itemize}
\end{thm}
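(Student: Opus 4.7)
For the duality claim, suppose $\theta$ is a duality of a thick generalised $2n$-gon. For any point $p$, the image $p^\theta$ is a line, and since opposition in a $2n$-gon occurs only between elements of the same type, $d(p,p^\theta)$ is odd and at most $2n-1$. In particular $d(p,p^\theta)<2n$, so the projection $L_0:=\mathrm{proj}_{p^\theta}p$ is a well-defined line through $p$ satisfying $d(L_0,p^\theta)=d(p,p^\theta)-1\leq 2n-2$. For the chamber $\{p,L_0\}$ to be mapped to the opposite chamber $\{L_0^\theta,p^\theta\}$, one would require $d(L_0,p^\theta)=2n$, which fails. Hence $\theta$ is not anisotropic, independent of any finiteness assumption.

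For the collineation claims the plan is to invoke the eigenvalue identities developed in Appendix~\ref{app:A}. Writing $d_k$ for the number of lines with $d(L,L^\theta)=2k$, the rational combinations $n_1,n_2,\ldots$ appearing in $(\ref{eq:nhex1})$ (hexagons) and $(\ref{eq:noct1})$ (octagons) are eigenvalue multiplicities and must therefore be non-negative integers. The strategy is to substitute the hypothesis on which $d_k$ vanish into these identities, together with the total $\sum_k d_k=(s+1)(s^2t^2+st+1)$ in the hexagon case and $(s+1)(s^3t^3+s^2t^2+st+1)$ in the octagon case, and then extract arithmetic conditions on $(s,t)$ from integrality (using also the Feit-Higman conditions $\sqrt{st}\in\mathbb{Z}$, resp.\ $\sqrt{2st}\in\mathbb{Z}$, to rationalise the eigenvalue denominators).

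For generalised hexagons ($n=3$), setting $d_0=d_1=0$ (collineations sending lines only to distances $4$ and $6$) makes each $n_i$ a rational function of $s,t,d_2,d_3$ whose integrality simplifies to divisibility conditions of the form $\gcd(s,t)\mid 1$, giving the claimed coprimality. The anisotropic case further imposes $d_2=0$; the tightened integrality then forces $s\mid t^2+1$ and $t\mid s^2+1$. By Lemma~\ref{lem:fibonacci}(1), $(s,t)$ must be a pair $(F_{2m-1},F_{2m+1})$ of consecutive odd-indexed Fibonacci numbers, and Cassini's identity gives $F_{2m-1}F_{2m+1}=F_{2m}^2-1$, which is never a perfect square for $m\geq 1$, contradicting $\sqrt{st}\in\mathbb{Z}$. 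The octagon case ($n=4$) runs in parallel. The hypothesis $d_0=d_1=0$ yields $\gcd(s,t)=1$ by the same integrality route. Strengthening to $d_0=d_1=d_2=0$ gives, up to swapping $s\leftrightarrow t$, the divisibility conditions $2s\mid t^2+1$ and $t\mid 2s^2+1$; by Lemma~\ref{lem:fibonacci}(2), $(s,t)$ then lies in a Pell family, and a Cassini-type identity for Pell numbers shows $2st$ is never a perfect square in any such pair, contradicting $\sqrt{2st}\in\mathbb{Z}$. Full anisotropy is the further specialisation $d_3=0$ and inherits the contradiction.

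The main obstacle will be the arithmetic of extracting the clean Fibonacci/Pell divisibility conditions from the raw eigenvalue output: the $n_i$ are linear combinations of the $d_k$ whose denominators involve $\sqrt{st}$ or $\sqrt{2st}$, so one must carefully separate rational from irrational parts and then interpret the integrality of the resulting expressions modulo common prime factors of $s$ and $t$. The number-theoretic step of matching Fibonacci/Pell pairs against the Feit-Higman integrality of $\sqrt{st}$ and $\sqrt{2st}$ is elegant via Cassini-like identities, but checking the base cases of the recursions in Lemma~\ref{lem:fibonacci} and ensuring no small exceptional pair escapes is the delicate point.
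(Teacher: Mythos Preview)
Your overall strategy matches the paper's: derive divisibility conditions on $(s,t)$ from the integrality of the eigenvalue expressions $n_j$, feed these into Lemma~\ref{lem:fibonacci}, and finish with a contradiction against the Feit--Higman integrality of $\sqrt{st}$ or $\sqrt{2st}$. However, the specific divisibility conditions you extract are wrong, and this breaks the finish.

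In the hexagon case you assert that anisotropy forces $s\mid t^2+1$ and $t\mid s^2+1$. What the formula for $n_3-n_2$ in~(\ref{eq:nhex1}) actually yields is the following: once you know $s,t$ are coprime and $\sqrt{st}\in\ZZ$, write $s=a^2$, $t=b^2$; reducing the numerator $p(s,t)$ modulo~$a$ gives $p(a^2,b^2)\equiv b^2(b^2+1)\pmod a$, so the integrality of $n_3-n_2$ forces only $a\mid b^2+1$, i.e.\ $\sqrt{s}\mid t+1$, and dually $\sqrt{t}\mid s+1$. (Your stronger claim is false already for $s=4$, $t=25$: here $\sqrt{s}=2$ divides $t+1=26$, but $s=4$ does not divide $t^2+1=626$.) Lemma~\ref{lem:fibonacci} therefore applies to $(\sqrt{s},\sqrt{t})$, giving $s=F_{2m-1}^2$ and $t=F_{2m+1}^2$. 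At this point your intended contradiction evaporates: Cassini gives $\sqrt{st}=F_{2m-1}F_{2m+1}=F_{2m}^2+1$ (you have the sign reversed), so $st=(F_{2m}^2+1)^2$ \emph{is} a perfect square and there is no clash with $\sqrt{st}\in\ZZ$. The paper closes the argument by a different route: substituting $s=F_{2m-1}^2$, $t=F_{2m+1}^2$ into the expression for $n_1$ and performing polynomial division in the variable $F_{2m}$ to see that $n_1$ is not an integer for $m\geq 2$, while $m=1$ gives $s=1$.

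The octagon case has the same defect. For the anisotropic subcase the paper uses $n_2$ to obtain the much stronger conditions $s\mid t+1$ and $t\mid s+1$, which are immediately incompatible with $s\neq t$ and $\sqrt{2st}\in\ZZ$; no Pell numbers are needed. For the ``distance $6$ and $8$ only'' subcase the correct move is to write (up to duality) $s=2a^2$, $t=b^2$ and extract $2a\mid b^2+1$, $b\mid 2a^2+1$ from the $n_j$; Lemma~\ref{lem:fibonacci}(2) then applies to $(a,b)$, and the contradiction comes not from $\sqrt{2st}$ but from the parameter divisibility condition of Section~\ref{sect:2}, again via polynomial division. Your conditions $2s\mid t^2+1$ and $t\mid 2s^2+1$ do not follow from the eigenvalue formulae.

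Two minor points: the $n_j$ are integers but need not be non-negative (they are sums of roots of unity, not multiplicities), and your Cassini sign should read $F_{2m-1}F_{2m+1}=F_{2m}^2+1$.
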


\begin{proof} It follows from \cite[Theorem~1.3]{DPM:13} that dualities of $2n$-gons are never anisotropic.

1. This is proved in \cite[Corollary~4.3]{TTM:09} (see also \cite{ben:70}).

2. Suppose that $\theta$ is a collineation of a finite thick hexagon such that all lines are mapped to distance $4$ or $6$. Thus $|\cL_0|=|\cL_2|=0$, and it follows from the formula for $n_3-n_2$ in~(\ref{eq:nhex1}) that $s$ and $t$ are coprime (see also \cite[Corollary~5.2]{TTM:09}). Suppose further that $|\cL_4|=0$ (and so $\theta$ is anisotropic). Since $\sqrt{st}$ is an integer it follows that $s$ and $t$ are perfect squares. Then, again from the formula for $n_3-n_2$, we see that $\sqrt{s}$ divides $t+1$, and $\sqrt{t}$ divides $s+1$. We may suppose that $s\leq t$. Then by Lemma~\ref{lem:fibonacci} we see that $s=F_{2n-1}^2$ and $t=F_{2n+1}^2$ for some~$n$. Using $F_{2n+1}-F_{2n-1}=F_{2n}$ and Cassini's identity $F_{2n-1}F_{2n+1}=F_{2n}^2+1$ we see that the denominator of $n_1$ is
$$
s^2+st+t^2=((\sqrt{s}-\sqrt{t})^2+3\sqrt{st})((\sqrt{s}-\sqrt{t})^2+\sqrt{st})=(4F_{2n}^2+3)(2F_{2n}^2+1),
$$
and similarly the numerator of $n_1$ is 
$
s^2t^2+st+1=(F_{2n}^4+3F_{2n}^2+3)(F_{2n}^4+F_{2n}^2+1).
$
Thus $n_1$ is a rational function in $F_{2n}$, and after polynomial division we see that
$$
\frac{21(6F_{2n}^2+11)}{(4F_{2n}^2+3)(2F_{2n}^2+1)}
$$
must be an integer. But this only occurs when $n=1$, and in this case $\sqrt{s}=F_1=1$, contradicting thickness.

3. Suppose that $\theta$ is a collineation of an octagon mapping all lines to distance $4,6$ or $8$, and so $|\cL_0|=|\cL_2|=0$. If follows from (\ref{eq:noct1}) that $s$ and $t$ are coprime (see also \cite[Lemma~6.2]{TTM:09}). Suppose now that $\theta$ is anisotropic, and so $|\cL_0|=|\cL_2|=|\cL_4|=|\cL_6|=0$. Then the formula for $n_2$ implies that $s$ divides $t+1$, and that $t$ divides $s+1$, a contradiction. 

Suppose now that only $|\cL_0|=|\cL_2|=|\cL_4|=0$. Since $\sqrt{2st}\in\mathbb{Z}$ and $s$ and $t$ are coprime we have (up to duality) $s=2a^2$ and $t=b^2$ for some integers $a,b>1$. The formula for $n_2$ in (\ref{eq:noct1}) implies that $4a^2$ divides $b^2+1-|\cL_6|$, and the formula for $n_3-n_4$ implies that $2a$ divides $1-b^4-|\cL_6|$. Thus $2a$ divides $b^2(b^2+1)$, and so $2a$ divides $b^2+1$ (since $s$ and $t$ are coprime). Similarly $b$ divides $2a^2+1$. Thus by Lemma~\ref{lem:fibonacci} we have $(a,b)=(P_{2n+1},P_{2n}+P_{2n-1})$ (in the case that $b<\sqrt{2}a$) or $(a,b)=(P_{2n-1},P_{2n}+P_{2n-1})$ (in the case that $b>\sqrt{2}a$) for some $n\geq 1$, where $(P_k)_{k\geq 0}$ is the sequence of Pell numbers. Now, since $s=2a^2$ and $t=b^2$ are coprime, the divisibility condition from Section~\ref{sect:2} implies that the number $$N=\frac{(2a^2b^2+1)(4a^4b^4+1)}{(2a^2+b^2)(4a^4+b^4)}$$ is a positive integer. It is now possible to see directly that if $(a,b)=(P_{2n+1},P_{2n}+P_{2n-1})$ or $(a,b)=(P_{2n-1},P_{2n}+P_{2n-1})$ then this divisibility condition is violated. For example, consider the case $(a,b)=(P_{2n+1},P_{2n}+P_{2n-1})$. Using the defining formula and the Cassini identity $P_{n+1}P_{n-1}-P_n^2=(-1)^n$ for Pell numbers we see that $a=P_{2n}+\sqrt{2P_{2n}^2+1}$ and $b=\sqrt{2P_{2n}^2+1}$. Thus we can write $N$ as $N=(p_1(z)+p_2(z)\sqrt{2z^2+1})/q(z)$ where $p_1,p_2,q$ are polynomials, and $z=P_{2n}$. After performing polynomial division for $p_1(z)/q(z)$ and $p_2(z)/q(z)$ one sees (with some calculation) that $N$ is not an integer, a contradiction. 
\end{proof}

We provide the following table to summarise the result of Theorem~\ref{thm:rest}, where \textit{codistance} in a generalised $n$-gon is defined by $\mathrm{codist}(x,y)=n-d(x,y)$. 
$$
\begin{array}{|c||c|c|c|}
\hline
\textrm{All lines mapped to codistance:}&0\textrm{ only}&0\textrm{ and }2&0,2\textrm{ and }4\\
\hline\hline
\textrm{quadrangles}&\gcd(s,t)=1&\textrm{possible}&\textrm{possible}\\
\hline
\textrm{hexagons}&\textrm{impossible}&\gcd(s,t)=1&\textrm{possible}\\
\hline
\textrm{octagons}&\textrm{impossible}&\textrm{impossible}&\gcd(s,t)=1\\
\hline
\end{array}
$$
This can be reworded as follows: No automorphism of a finite generalised $2m$-gon with parameters $(s,t)$ maps every line (or every point) to at least distance~$6$, and if some automorphism maps every line (or every point) to distance at least~$4$, then $s$ and $t$ are relatively prime.

As mentioned above, examples are known of anisotropic collineations in some generalised quadrangles with parameters $(s,t)=(2^n-1,2^n+1)$. No finite thick hexagons or octagons with coprime parameters are known, and we conjecture that none exist. As a consequence, we conjecture that it is impossible for a collineation of a finite thick hexagon or octagon to map all lines (or all points) to distance $4$ or more.

\begin{appendix}

\section{Eigenvalue techniques}\label{app:A}

In this appendix we describe the eigenvalue techniques that are used at various points of this paper, extending the exposition in \cite{ben:70,tem:10} (see also \cite{KS:73}). It is convenient to set up the theory in the general context of distance regular graphs (in our case the line or point graph of a generalised $n$-gon).

Let $G$ be a finite connected graph with vertex set~$V$, and let $d(x,y)$ denote the graph distance between vertices $x$ and~$y$. Let $N$ be the diameter of $G$, and for natural numbers $0\leq k\leq N$ let $S_k(x)=\{y\in V\mid d(x,y)=k\}$ be the sphere of radius~$k$ centred at~$x$. We assume that $G$ is \textit{distance regular}, meaning that for each $0\leq k, \ell,m\leq N$ there is an integer $a_{k, \ell}^m\geq0$ such that
$$
a_{k, \ell}^m=|S_k(x)\cap S_{ \ell}(y)|\qquad\textrm{whenever $d(x,y)=m$.}
$$
That is, the cardinality of the intersection $S_k(x)\cap S_{\ell}(y)$ depends only on $k, \ell$, and $m=d(x,y)$.

Let $A_k$ be the $|V|\times |V|$ matrix whose $(x,y)$-entry is $1$ if $d(x,y)=k$ and $0$ otherwise. Then $A_kA_{ \ell}=\sum_m a_{k, \ell}^mA_m$ for all $0\leq k, \ell\leq N$, and thus the span over $\mathbb{C}$ of $\{A_k\mid 0\leq k\leq N\}$ is an associative algebra~$\mathcal{A}$ with identity~$A_0$ (called the \textit{Bose-Mesner algebra}). Since $a_{k, \ell}^m=a_{ \ell,k}^m$ the algebra $\mathcal{A}$ is commutative, and from the formula
\begin{align}\label{eq:recurrence}
A_1A_k=a_{1,k}^{k-1}A_{k-1}+a_{1,k}^kA_k+a_{1,k}^{k+1}A_{k+1}\qquad\textrm{for $1\leq k\leq N-1$}
\end{align}
we see that $\mathcal{A}$ is generated by $A_1$ (because by induction each $A_k$ is a polynomial in $A_1$).

Thus every irreducible representation of $\mathcal{A}$ is~$1$-dimensional. These representations are the (nontrivial) algebra homomorphisms $\chi:\mathcal{A}\to \mathbb{C}$, and each homomorphism is determined by its value on~$A_1$. Equivalently, the values of the $1$-dimensional representations at $A_1$ are the eigenvalues of the matrix~$A_1$. Since $A_1$ is symmetric, each $\chi(A_1)$ is real, and by Perron-Frobenius there is a unique largest  eigenvalue, and this eigenvalue has multiplicity~$1$. 

Let $\theta:G\to G$ be an automorphism of~$G$, and let $\Theta$ be the $|V|\times |V|$ permutation matrix of~$\theta$. Thus $(\Theta)_{x,y}$ equals $1$ if $y=\theta(x)$ and $0$ otherwise. The eigenvalues of $\Theta$ are $n$-th roots of unity, where $n=\mathrm{ord}(\theta)$. A calculation shows that $\Theta$ commutes with $A_1$, and thus $\Theta$ commutes with each element of the algebra~$\mathcal{A}$. Therefore the unitary matrix $\Theta$ stabilises each eigenspace of the symmetric matrix $A_1$, and thus there is a unitary matrix~$P$ simultaneously diagonalising $\Theta$ and each $A_k$. Write $\Theta=PTP^*$ and $A_k=PD_kP^*$. 

For each $0\leq k\leq N$ let $d_k=|\{x\in V\mid d(x,\theta(x))=k\}|$. Then
$$
d_k=\tr(\Theta A_k)=\tr(PTD_kP^*)=\tr(TD_k)=\sum_{j=0}^Mn_j\chi_j(A_k)
$$
for some numbers $n_j$ (not depending on~$k$), where $\chi_0,\ldots,\chi_M$ is the complete list of nontrivial algebra homomorphisms $\chi_j:\mathcal{A}\to\mathbb{C}$ (necessarily $M\leq N$).

The key point is that if each $\chi_j(A_k)$ (with $0\leq j,k\leq N$) is an integer, then the numbers $n_j$ are necessarily integers as well. (For if $\zeta$ and $\zeta'$ are primitive $d$th roots of unity, and $\lambda$ is an eigenvalue of $A_1$, then if $\zeta\lambda$ is an eigenvalue of $\Theta A_1$ if and only if $\zeta'\lambda$ is an eigenvalue of $\Theta A_1$. Moreover, since the coefficients of the characteristic polynomial of $\Theta A_1$ are integers, and the minimal polynomials of $\zeta\lambda$ and $\zeta'\lambda$ coincide, the eigenvalues $\zeta\lambda$ and $\zeta'\lambda$ have the same multiplicity; see \cite[Lemma~3.1]{TTM:09}).

Arrange the algebra homomorphisms so that  $\chi_0(A_1)$ is the largest eigenvalue of~$A_1$. Since this eigenvalue has multiplicity~$1$ we have $n_0=1$, and so we have formulae:
\begin{align}\label{eq:neqns}
d_k-\chi_0(A_k)=\sum_{j=1}^{M}n_j\chi_j(A_k)\qquad\textrm{for $0\leq k\leq N$}.
\end{align}
This (overdetermined) system can be solved to give formulae for the $n_j$ in terms of the $d_k$. Since the $n_j$ are integers, the formulae that one obtains typically put very severe constraints on the numbers $d_k$. This is what we refer to as `the eigenvalue technique'.

Let us now specialise to the line graph of a finite generalised $2n$-gon $\Gamma=(\mathcal{P},\mathcal{L},\mathbf{I})$. Thus $G$ is the graph with vertex set $\mathcal{L}$, with $ L$ connected to $ L'$ by an edge if and only if there is a point $p\in\mathcal{P}$ with $ L\bI p\bI  L'$. Thus $d_k=|\cL_{2k}|$. In this paper we only require the hexagon and octagon cases, but we include the quadrangle case for completeness.  

In the quadrangle case $G$ has diameter~$2$, and the matrices $A_k$ satisfy
\begin{align*}
A_1A_1&=t(s+1)A_0+(t-1)A_1+(s+1)A_2\\
A_1A_2&=stA_1+(t-1)(s+1)A_2.
\end{align*} 
The algebra homomorphisms $\chi:\mathcal{A}\to\mathbb{C}$ are easily computed from these formulae. There are 3 nontrivial homomorphisms $\chi_0,\chi_1,\chi_2$ given by
$$
\begin{array}{|c||c|c|c|}
\hline
&\chi_0&\chi_1&\chi_2\\
\hline\hline
A_1&(s+1)t&-(s+1)&t-1\\
\hline
A_2&st^2&s&-t\\
\hline
\end{array}
$$
and solving the equations~(\ref{eq:neqns}) gives
\begin{align*}
n_1&=\frac{st+1+(t-1)|\cL_0|-|\cL_2|}{s+t}&n_2&=-\frac{st+s+t+1-(s+1)|\cL_0|-|\cL_2|}{s+t}.
\end{align*}
Since each $\chi_j(A_k)$ is an integer, the above theory forces $n_1$ and $n_2$ to be integers as well.

In the hexagon case $G$ has diameter~$3$, and the matrices $A_k$ satisfy
\begin{align*}
A_1A_1&=t(s+1)A_0+(t-1)A_1+A_2\\
A_1A_2&=stA_1+(t-1)A_2+(s+1)A_3\\
A_1A_3&=stA_2+(t-1)(s+1)A_3.
\end{align*}
There are $4$ nontrivial algebra homomorphisms $\chi_j:\mathcal{A}\to\mathbb{C}$ given by (with $r=\sqrt{st}\in\mathbb{Z}$)
$$
\begin{array}{|c||c|c|c|c|}
\hline
&\chi_0&\chi_1&\chi_2&\chi_3\\
\hline\hline
A_1&(s+1)t&-(s+1)&t+r-1&t-r-1\\
\hline
A_2&(s+1)st^2&s(s+1)&(t-1)r-t&-(t-1)r-t\\
\hline
A_3&s^2t^3&-s^2&-tr&tr\\
\hline
\end{array}
$$
and solving the equations~(\ref{eq:neqns}) we see that $n_1,n_2,n_3$ satisfy
\begin{align}
\label{eq:nhex1}\begin{aligned}
n_1&=\frac{(t^2-t+1)|\cL_0|-(t-1)|\cL_2|+|\cL_4|-(s^2t^2+st+1)}{s^2+st+t^2}\\
n_3+n_2&=\frac{(s^2-1)(t^2-1)+(s+1)(s+t-1)|\cL_0|+(t-1)|\cL_2|-|\cL_4|}{s^2+st+t^2}\\
n_3-n_2&=\frac{p(s,t)+(s^2t-s^2-s-t)|\cL_0|-(s^2+s+t)|\cL_2|-(s+t)|\cL_4|}{\sqrt{st}(s^2+st+t^2)},
\end{aligned}
\end{align}
where $p(s,t)=(s+1)(t+1)(s+t)(st+1)-(s+1)(t+1)st$. Again, since each $\chi_j(A_k)$ is an integer (because $\sqrt{st}$ is an integer) the theory implies that $n_1,n_2$ and $n_3$ are integers as well.

In the octagon case $G$ has diameter~$4$, and the matrices $A_k$ satisfy 
\begin{align*}
A_1A_1&=t(s+1)A_0+(t-1)A_1+A_2\\
A_1A_2&=stA_1+(t-1)A_2+A_3\\
A_1A_3&=stA_2+(t-1)A_3+(s+1)A_4\\
A_1A_4&=stA_3+(t-1)(s+1)A_4.
\end{align*}
There are $5$ nontrivial algebra homomorphisms $\chi_j:\mathcal{A}\to\mathbb{C}$, given by (with $r=\sqrt{2st}\in\mathbb{Z}$)
$$
\begin{array}{|c||c|c|c|c|c|}
\hline
&\chi_0&\chi_1&\chi_2&\chi_3&\chi_4\\
\hline\hline
A_1&(s+1)t&-(s+1)&t-1&t+r-1&t-r-1\\
\hline
A_2&(s+1)st^2&(s+1)s&-(s+1)t&t(s-1)+(t-1)r&t(s-1)-(t-1)r\\
\hline
A_3&(s+1)s^2t^3&-(s+1)s^2&-(t-1)st&st(t-1)-tr&st(t-1)+tr\\
\hline
A_4&s^3t^4&s^3&st^2&-st^2&-st^2\\
\hline
\end{array}
$$
and solving the equations~(\ref{eq:neqns}) gives
\begin{align}\label{eq:noct1}
\begin{aligned}
n_1&=\frac{q_1(s,t)+(t^2+1)(t-1)|\cL_0|-(t^2-t+1)|\cL_2|+(t-1)|\cL_4|-|\cL_6|}{(s+t)(s^2+t^2)}\\
n_2&=\frac{q_2(s,t)+(s+1)(st-1)|\cL_0|+(st-s-1)|\cL_2|-(s+1)|\cL_4|-|\cL_6|}{2st(s+t)}\\
n_3+n_4&=\frac{-q_3(s,t)+q_5(s,t)|\cL_0|+q_6(s,t)|\cL_2|+(s+s^2+t-st)|\cL_4|+(s+t)|\cL_6|}{2st(s^2+t^2)}\\
n_3-n_4&=\frac{q_4(s,t)-(s^2-1)(t-1)|\cL_0|+(s^2+t-1)|\cL_2|+(t-1)|\cL_4|-|\cL_6|}{\sqrt{2st}(s^2+t^2)}
\end{aligned}
\end{align}
where the polynomials $q_j(s,t)$ are $q_1(s,t)=(st+1)(s^2t^2+1)$, $q_2(s,t)=(s+1)(t+1)(s^2t^2+1)$, $q_3(s,t)=(s+1)(t+1)(st+1)(s^2t+st^2+s+t-2st)$, $q_4(s,t)=(s^2-1)(t^2-1)(st+1)$, $q_5(s,t)=(s+1)(s+t+s^2t+st^2-2st)$ and $q_6(s,t)=(st^2-s^2t-st+s+t+s^2)$. Again, since each $\chi_j(A_k)$ is an integer (since $\sqrt{2st}$ is an integer), so too are $n_1,n_2,n_3$ and~$n_4$.

\end{appendix}


\bibliographystyle{plain}
\bibliography{domestic_automorphisms_AnnComb}

\begin{thebibliography}{10}

\bibitem{AB:08}
P.~{A}bramenko and K.~Brown.
\newblock {\em Buildings: Theory and Applications}, volume 248.
\newblock Graduate Texts in Mathematics, Springer, 2008.

\bibitem{AB:09}
P.~{A}bramenko and K.~Brown.
\newblock Automorphisms of non-spherical buildings have unbounded displacement.
\newblock {\em Innov. Incidence Geom.}, 10:1--13, 2009.

\bibitem{ben:70}
C.~T. {B}enson.
\newblock On the structure of generalized quadrangles.
\newblock {\em J. Algebra}, 15:443--454, 1970.

\bibitem{BM:98}
L.~{B}rouns and H.~{V}an {M}aldeghem.
\newblock Characterizations for classical finite hexagons.
\newblock In F.~De~Clerck et~al., editor, {\em Finite Geometry and
  Combinatorics}, volume~5, pages 163--176. Bull. Belg. Math. Soc. Simon
  Stevin, 1998.

\bibitem{CT:85}
A.~M. {C}ohen and J.~{T}its.
\newblock On generalized hexagons and a near octagon whose lines have three
  points.
\newblock {\em European J. Combin.}, 6:13--27, 1985.

\bibitem{atlas}
J.~H. {C}onway, R.~T. {C}urtis, S.~P. {N}orton, R.~A. {P}arker, and R.~A.
  {W}ilson.
\newblock {\em Atlas of Finite Groups}.
\newblock Clarendon Press, Oxford, 1985.

\bibitem{CM:00}
K.~{C}oolsaet and H.~{V}an {M}aldeghem.
\newblock Some new upper bounds for the size of partial ovoids in slim
  generalized polygons and generalized hexagons of order $(s,s^3)$.
\newblock {\em J. Algebraic Combin.}, 12:107--113, 2000.

\bibitem{deb:13}
B.~{D}e {B}ruyn.
\newblock The uniqueness of a certain generalized octagon of order $(2,4)$.
\newblock {\em Preprint}, 2011.

\bibitem{DPM:13}
A.~{D}evillers, J.~{P}arkinson, and H.~{V}an {M}aldeghem.
\newblock Automorphisms and opposition in twin buildings.
\newblock {\em J. Aust. Math. Soc.}, 94(2):189--201, 2013.

\bibitem{DZ:76}
S.~Dixmier and F.~Zara.
\newblock Etude d'un quadrangle g\'{e}n\'{e}ralis\'{e} autour de deux de ses
  points non li\'{e}s.
\newblock {\em Unpublished manuscript}, 1976.

\bibitem{FH:64}
W.~{F}eit and G.~Higman.
\newblock The nonexistence of certain generalized polygons.
\newblock {\em J. Algebra}, 1:114--131, 1964.

\bibitem{gra:09}
R.~{G}ramlich.
\newblock Developments in finite {P}han theory.
\newblock {\em Innov. Incidence Geom.}, 9:123--175, 2009.

\bibitem{GHM:11}
R.~{G}ramlich, M.~Horn, and B.~M{\"{u}}hlherr.
\newblock Abstract involutions of algebraic groups and of {K}ac-{M}oody groups.
\newblock {\em J. Group Theory}, 14(2):213--249, 2011.

\bibitem{hae:81}
W.~H. {H}aemers and C.~{R}oos.
\newblock An inequality for generalized hexagons.
\newblock {\em Geom. Dedicata}, 10:219--222, 1981.

\bibitem{hig:75}
D.~G. {H}igman.
\newblock Invariant relations, coherent configurations and generalized
  polygons.
\newblock In M.~Hall and J.~H.~Van Lint, editors, {\em Combinatorics}, pages
  347--363. D. Reidel, Dordrecht, 1975.

\bibitem{kan:84}
W.~{K}antor.
\newblock Generalized polygons, {SCAB}s and {GAB}s.
\newblock In L.~Rosati, editor, {\em Buildings and the Geometry of Diagrams,
  Proceedings Como}, volume 1181 of {\em Lecture Notes in Math.}, pages
  79--158. Springer-Verlag, 1984.

\bibitem{KS:73}
R.~{K}ilmoyer and L.~{S}olomon.
\newblock On the theorem of {F}eit-{H}igman.
\newblock {\em J. Combin. Theory Ser. A}, 15:310--322, 1973.

\bibitem{lee:00}
B.~{L}eeb.
\newblock {\em A characterization of irreducible symmetric spaces and Euclidean
  buildings of higher rank by their asymptotic geometry}, volume 326 of {\em
  Bonner Mathematische Schriften}.
\newblock Universit\"{a}t Bonn, 2000.

\bibitem{off:02}
A.~{O}ffer.
\newblock On the order of a generalized hexagon admitting an ovoid or spread.
\newblock {\em J. Combin. Theory Ser. A}, 97:184--186, 2002.

\bibitem{PT:09}
S.~E. {P}ayne and J.~A. {T}has.
\newblock {\em Finite Generalized Quadrangles}.
\newblock EMS Series of Lectures in Mathematics. European Mathematical Society,
  2 edition, 2009.

\bibitem{ron:80}
M.~{R}onan.
\newblock A geometric characterization of moufang hexagons.
\newblock {\em Invent. Math.}, 57:227--262, 1980.

\bibitem{tem:10}
B.~{T}emmermans.
\newblock {\em Dualities and Collineations of Projective and Polar spaces and
  of Related Geometries}.
\newblock PhD Thesis, Ghent University, 2010.

\bibitem{TTM:09}
B.~{T}emmermans, J.A. Thas, and H.~Van Maldeghem.
\newblock Collineations and dualities of finite generalized polygons.
\newblock {\em Combinatorica}, 29:569--594, 2009.

\bibitem{TTM:11}
B.~{T}emmermans, J.A. Thas, and H.~Van Maldeghem.
\newblock Domesticity in projective spaces.
\newblock {\em Innov. Incid. Geom.}, 12:141--149, 2011.

\bibitem{TTM:12}
B.~{T}emmermans, J.A. Thas, and H.~Van Maldeghem.
\newblock Collineations of polar spaces with restricted displacements.
\newblock {\em Des. Codes Cryptogr.}, 64:61--80, 2012.

\bibitem{TTM:13}
B.~{T}emmermans, J.A. Thas, and H.~Van Maldeghem.
\newblock Domesticity in generalized quadrangles.
\newblock {\em Ann. Combin.}, 16:905--916, 2012.

\bibitem{tha:76}
J.~{T}has.
\newblock A restriction on the parameters of a subhexagon.
\newblock {\em J. Combin. Theory Ser. A}, 21:115--117, 1976.

\bibitem{tit:74}
J.~{T}its.
\newblock {\em Buildings of spherical type and finite BN-pairs}, volume 386.
\newblock Lecture Notes in Mathematics, Springer-Verlag, 1974.

\bibitem{TW:02}
J.~{T}its and R.~{W}eiss.
\newblock {\em Moufang Polygons}.
\newblock Springer-Verlag, 2002.

\bibitem{mal:98}
H.~{V}an Maldeghem.
\newblock {\em Generalized Polygons}, volume~93 of {\em Monographs in
  Mathematics}.
\newblock Birkh\"{a}user, Basel, Boston, Berlin, 1998.

\bibitem{mal:12}
H.~{V}an Maldeghem.
\newblock Symplectic polarities of buildings of type $\mathsf{E}_6$.
\newblock {\em Des. Codes Cryptogr.}, 65:115--125, 2012.

\bibitem{mal:13}
H.~{V}an Maldeghem.
\newblock Characterizations of trialities of type $\mathsf{I}_{\mathrm{id}}$ in
  buildings of type $\mathsf{D_4}$.
\newblock {\em Springer Proceedings in Mathematics, to appear}, 2013.

\end{thebibliography}

\medskip

\noindent\begin{minipage}{0.5\textwidth}
 James Parkinson\newline
School of Mathematics and Statistics\newline
University of Sydney\newline
Carslaw Building, F07\newline
NSW, 2006, Australia\newline
\texttt{jamesp@maths.usyd.edu.au}
\end{minipage}
\begin{minipage}{0.5\textwidth}
\noindent Beukje Temmermans\newline
Department of Mathematics\newline
Ghent University\newline
Krijgslaan 281, S22\newline
9000 Gent, Belgium\newline
\texttt{beukje@gmail.com}
\end{minipage}
\bigskip

\noindent Hendrik Van Maldeghem\newline
Department of Mathematics\newline
Ghent University\newline
Krijgslaan 281, S22\newline
9000 Gent, Belgium\newline
\texttt{hvm@cage.UGent.be}

\end{document}